\pgfplotsset{compat=1.18}
\definecolor{purple}{rgb}{0.4,0,0.7}
\definecolor{green}{rgb}{0.1,0.2,0.9}
\DeclareMathOperator{\id}{id}
\DeclareMathOperator{\End}{End}
\DeclareMathOperator{\Hom}{Hom}
\DeclareMathOperator{\Aut}{Aut}
\DeclareMathOperator{\tr}{tr}
\DeclareMathOperator{\Bun}{\mathsf{Bun}}
\DeclareMathOperator{\vect}{\mathsf{Vect}}
\DeclareMathOperator{\Fun}{\mathsf{Fun}}
\DeclareMathOperator{\Rep}{\mathsf{Rep}}
\DeclareMathOperator{\K}{\mathbb{K}}
\DeclareMathOperator{\C}{\mathbb{C}}
\DeclareMathOperator{\R}{\mathbb{R}}
\DeclareMathOperator{\Sp}{\mathbb{S}}
\DeclareMathOperator{\Cob}{\mathsf{Cob}}
\DeclareMathOperator{\cala}{\mathcal{A}}
\DeclareMathOperator{\calc}{\mathcal{C}}
\DeclareMathOperator{\cald}{\mathcal{D}}
\DeclareMathOperator{\calo}{\mathcal{O}}
\DeclareMathOperator{\calx}{\mathcal{X}}
\DeclareMathOperator{\caly}{\mathcal{Y}}
\DeclareMathOperator{\calg}{\mathcal{G}}
\DeclareMathOperator{\calh}{\mathcal{H}}
\DeclareMathOperator{\cali}{\mathcal{I}}
\DeclareMathOperator{\Fac}{\mathrm{Fac}}
\newcommand{\sslash}{\mathbin{/\mkern-6mu/}}
\newtheorem{theorem}{Theorem}
\numberwithin{theorem}{section}
\newtheorem{lemma}[theorem]{Lemma}
\newtheorem{proposition}[theorem]{Proposition}
\newtheorem{corollary}[theorem]{Corollary}
\newtheorem{hypothesis}[theorem]{Hypothesis}
\newtheorem{mainthm}{Theorem}
\newtheorem{question}[mainthm]{Question}
\theoremstyle{definition}
\newtheorem{definition}[theorem]{Definition}
\newtheorem{example}[theorem]{Example}
\newtheorem{remark}[theorem]{Remark}
\newtheorem{observation}[theorem]{Observation}
\Crefname{lemma}{Lemma}{Lemmas}
\numberwithin{equation}{section}
\title{A boundary characterization of Turaev-Viro TQFTs}
\begin{document}
\author{Max-Niklas Steffen and Christoph Schweigert}
\date{}
\maketitle
\vspace{-0.8cm}
\begin{center}
\textit{Fachbereich Mathematik \\
Universität Hamburg \\
Bereich Algebra und Zahlentheorie \\
Bundesstraße 55 \\
D-20146 Hamburg, Germany \\}
\end{center}

\begin{abstract}
    We consider three-dimensional topological field theories on manifolds
    with boundary defects and identify explicit boundary locality conditions. These conditions imply a state sum construction of the given TQFT. As a consistency check, we prove that Turaev-Viro state sum models obey the
    boundary locality conditions.
    Recent progress \cite{FMM} in the description of defects in
    Dijkgraaf-Witten theories
    enables us to show that these theories likewise satisfy boundary locality. This directly
    implies that Dijkgraaf-Witten theories with boundary defects admit a
    state sum description.
\end{abstract}
\tableofcontents

\section{Introduction}
There are two main paradigms in topological quantum field theories (TQFTs). The
bottom-up approach assigns data to low-dimensional manifolds, for example to a circle or a point,
and derives a TQFT from these data. In particular, the well-known cobordism hypothesis starts with structure assigned to a point. Another approach, top-down, starts from top-dimensional manifolds and aims to assign data
to lower-dimensional manifolds. This perspective is sometimes motivated by formalizations of path integrals. It has the advantage of enabling explicit computations of quantities
associated with high-dimensional manifolds that are of interest for representation
theory, such as, for example, representations of mapping class groups or Frobenius-Schur
indicators.

The latter approach is the one we adopt in this paper. The central idea is to extend a TQFT
by a sufficiently rich boundary theory and establish criteria that ensure the existence of a state sum description. The concept of boundary locality gives a precise meaning to
the words ‘sufficiently rich’. We show that it applies to Dijkgraaf-Witten
and Turaev-Viro state sum theories.

\medskip
We begin with some motivation. Let \(\omega_A\) be a 3-manifold invariant -- more specifically, a map assigning to every three-dimensional closed compact oriented manifold \(M\) a scalar \(\omega_A(M)\) in an algebraically closed field \(\K\) of characteristic zero. We assume that the construction of \(\omega_A\) depends on some algebraic input datum \(A\), without making concrete what this means in detail, and further assume that \(A\) can be naturally linked to some spherical fusion category \(\calc_A\). Of course, \(A\) might itself be a spherical fusion category, in which case we simply write \(\calc_A = \calc\). 

One example of such an invariant is the Kuperberg invariant \(\mathrm{Ku}_H\) associated with an involutary Hopf algebra \(H\) over \(\K\) and the spherical fusion category \(\calc_H = \mathrm{mod}_H\) of finite-dimensional left \(H\)-modules. Another prominent example is the Dijkgraaf-Witten invariant \(Z_{G, \nu}\) defined for an arbitrary finite group \(G\) and the cohomology class \([\nu] \in H^3(G, \K^\times)\) of a 3-cocycle \(\nu\). The spherical fusion category in question is the category \(\calc_{(G, \mspace{1mu} \nu)} = \vect_G^{\nu}\) of finite-dimensional \(G\)-graded vector spaces over \(\K\). In the case of a trivial 3-cocycle, which is often referred to as `untwisted Dijkgraaf-Witten theory', these invariants will be studied in detail later in the text. Finally, for every spherical fusion category \(\calc\), there is the Turaev-Viro state sum invariant \(|\cdot|_{\calc}\).

A natural question to ask given an invariant \(\omega_A\) is the following.
\begin{question}
    \label{quest}
    Does \(\omega_A\) admit a state sum description for the spherical fusion category \(\calc_A\), that is, is it equal to the Turaev-Viro invariant \(|\cdot|_{\calc_A}\)?
\end{question}
A striking feature of the Turaev-Viro invariant is that it extends to a topological quantum field theory (TQFT), which is a symmetric monoidal functor \(|\cdot|_{\calc_A}\colon \Cob_3 \rightarrow \vect_{\C}\). A positive answer to \cref{quest} then establishes the same for \(\omega_A\), which means that the invariant can be computed locally by cutting the manifold into smaller pieces and using functoriality of the TQFT. A necessary requirement on \(\omega_A\) is thus that it maps the disjoint union of two 3-manifolds to the product of the invariants of both pieces.

The Turaev-Viro invariant of a manifold \(M\) is constructed and computed with the help of additional geometric data, namely skeleta \(P \subset M\), which are certain stratified 2-polyhedra embedded in \(M\) that give a combinatorial description of the manifold. Every skeleton \(P\) can be equipped with a labeling of its 2-cells by simple objects in \(\calc_A\). The invariant is then computed by drawing small balls around the vertices of the skeleton, whose boundary spheres are decorated with \(\calc_A\)-labeled graphs (\textit{link graphs}) capturing the local skeletal structure. 

In general, there is no reason to assume that the language of skeletons and link graphs is appropriate for a general invariant \(\omega_A\).
One solution to this problem is to consider balls decorated with link graphs as independent geometric entities, namely cobordisms \(\emptyset \rightarrow \emptyset\) in some suitable cobordism category \(\Cob_3^{\calc_A}\). Since 3-balls have a nonempty boundary, \(\Cob_3^{\calc_A}\) needs to allow for unparametrized boundary parts, called free boundaries, along which no composition (which is gluing in the cobordism category) is possible. Moreover, we want the boundary of the 3-ball to be decorated, so we allow \(\calc_A\)-labeled graphs embedded in the free boundary of any cobordism \(M\colon \Sigma \rightarrow \Sigma'\).
We then make the following assumption: The invariant \(\omega_A\) can be extended to allow for an evaluation on cobordisms in this category, which means that it extends to an invariant of oriented compact 3-manifolds with decorated free boundaries
\begin{align*}
    \omega_A\colon \Cob_3^{\calc_A}(\emptyset, \emptyset) \rightarrow \K.
\end{align*}
As long as we expect \(\omega_A\) to have a state sum description, this is not an unreasonable assumption, since the Turaev-Viro TQFT itself can be extended to the category \(\Cob_3^{\calc_A}\) (relying on \cref{hyp} stating that the Turaev-Viro invariant is independent of the choice of skeleton, which was proved for defect structures in the interior of manifolds in \cite{TV}). The general concept is that of a \textit{defect} TQFT, and there are several such constructions in the literature, for instance, in \cite{TV, M}. We shall call \(\omega_A\) a \textit{boundary-defect invariant}, meaning that it allows, in particular, boundary defects of the above kind. Generalizing to TQFTs, a \textit{boundary-defect TQFT} is a topological field theory that allows boundary defects (cf. \cref{defTQFTDef}).

Starting from a closed compact 3-manifold \(M\) with an embedded skeleton \(P\), according to the principle outlined above, we would like to reduce it to a collection of decorated 3-balls to compare the invariants \(\omega_A\) and \(|\cdot|_{\calc_A}\). Guided by the skeletal geometry, we can do this step by step, moving from the top-dimensional 3-cells (the connected components of \(M \setminus P\)) down to the 0-dimensional cells -- the vertices -- around which the 3-balls are located. For each codimension, we therefore require the existence of a corresponding geometric move decomposing the manifold. These moves are performed within the confines of the category \(\Cob_3^{\calc_A}\) and, in particular, transform the free boundary: Starting from \(M\), the first move removes a 3-ball from each 3-cell, thereby introducing free boundary 2-spheres. The second move breaks down the 2-cells of the skeleton and introduces defect lines, while the final move decomposes the 1-cells and introduces defect nodes at which the defect lines attach. We are then left with a collection of decorated 3-balls, one for each vertex of the original skeleton \(P\).

We have to impose conditions that ensure that the invariant \(\omega_A\) is compatible with these moves and the decorated 3-balls. For this purpose, we introduce the notion of \textit{boundary locality} (\cref{defPropInv}). This condition comprises four properties \labelcref{P1}, \labelcref{P2}, \labelcref{P3} and \labelcref{P4}, one for each move and a final one for decorated 3-balls. The first main result of this article is that these requirements are sufficient to answer \cref{quest} positively.

\begin{mainthm} (\cref{thmMain})
    \label{thmB}
    In the case that \(\omega_A\) extends to a multiplicative boundary local invariant \(\omega_A\colon \Cob_3^{\calc_A}(\emptyset, \emptyset) \rightarrow \K\), it follows that \(\omega_A = |\cdot|_{\calc_A}\) on \(\Cob_3^{\calc_A}(\emptyset, \emptyset)\), that is, it coincides with the Turaev-Viro invariant for \(\calc_A\).
\end{mainthm}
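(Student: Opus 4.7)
The plan is to prove the equality by reducing the value of $\omega_A$ on an arbitrary cobordism $M \in \Cob_3^{\calc_A}(\emptyset,\emptyset)$ to the Turaev-Viro state sum, using the four boundary locality moves applied cell by cell to an auxiliary skeleton. The conceptual picture is the one sketched in the introduction: the properties \labelcref{P1}-\labelcref{P4} are precisely designed so that, move by move, the computation of $\omega_A(M)$ mirrors the standard construction of $|M|_{\calc_A}$.

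First I would fix such a morphism $M$ and choose a skeleton $P \subset M$ compatible with the decorated free boundary; by multiplicativity of both $\omega_A$ and $|\cdot|_{\calc_A}$ it suffices to treat one connected component at a time. Next I would apply the three bulk moves in descending codimension. Property \labelcref{P1} is applied once per 3-cell of $M \setminus P$, carving out a small open 3-ball and replacing it by a new free boundary 2-sphere, weighted by the appropriate scalar and label sum. Property \labelcref{P2} is then applied along each 2-cell of $P$, opening it up into defect lines labeled by simple objects of $\calc_A$, again with quantum-dimension weights. Property \labelcref{P3} is applied along each 1-cell of $P$ to resolve these defect lines into defect nodes localized near the vertices of $P$. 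At every stage $\omega_A$ transforms predictably, and the prefactors accumulated along the way reproduce the normalization prescribed by the Turaev-Viro state sum.

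After these three reductions, $M$ has been decomposed inside $\Cob_3^{\calc_A}$ into a disjoint union of decorated 3-balls, one for each vertex $v$ of $P$, whose free boundary 2-sphere $\partial B_v$ carries exactly the link graph of $P$ at $v$. By \labelcref{P4}, $\omega_A$ evaluates on each of these 3-balls as the link graph evaluation in $\calc_A$, which is precisely the local datum that Turaev-Viro attaches to $v$. Combining these local contributions with the scalars and sums collected from \labelcref{P1}-\labelcref{P3} yields an expression that literally matches the defining formula for $|M|_{\calc_A}$, giving the desired equality. The argument for a general cobordism in $\Cob_3^{\calc_A}(\emptyset,\emptyset)$ with nontrivial free boundary decoration proceeds in the same way, with the initial skeleton chosen to be compatible with the prescribed boundary data so that no additional free-boundary move is needed beyond \labelcref{P1}-\labelcref{P3}.

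The main obstacle I anticipate is bookkeeping rather than conceptual: the scalar factors introduced by \labelcref{P1}-\labelcref{P3} must match the Turaev-Viro normalization exactly, including powers of $\dim(\calc_A)$, products of quantum dimensions of the 2-cell labels, and the precise summation convention over simple objects on defect lines. This forces \labelcref{P1}-\labelcref{P4} to be stated with care, but once the weights are correctly embedded in the definitions the comparison becomes a direct term-by-term identification. A secondary subtlety is independence of the chosen skeleton $P$: on the Turaev-Viro side this is guaranteed by \cref{hyp}, while on the $\omega_A$ side it is automatic because $\omega_A$ is a well-defined invariant of $M$ in $\Cob_3^{\calc_A}$ to begin with.
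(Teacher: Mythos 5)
Your proposal is correct and follows essentially the same strategy as the paper's proof of \cref{thmMain}: reduce $M$ codimension by codimension to a disjoint union of decorated 3-balls by applying \labelcref{P1}, \labelcref{P2} and \labelcref{P3} once per 3-cell, 2-cell and 1-cell of a chosen skeleton, then conclude by \labelcref{P4} and multiplicativity that the resulting product of link-graph evaluations reproduces the state sum. The one concrete detail worth making explicit in the bookkeeping is that the skeleton must be taken so that every face is an open disk (e.g.\ the 2-skeleton of a triangulation), so that all Euler characteristics equal $1$ and the $\dim(x)$ weights contributed by \labelcref{P2} match the factors $\dim(c(a))^{\chi(a)}$ in the Turaev-Viro formula; for arbitrary skeleta one needs the generalized tunnel move of \cref{remGenTun}.
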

For the consistency of the notion of a boundary local theory, it is important to show that the state sum invariant \(|\cdot|_{\calc}\) is boundary local for every spherical fusion category \(\calc\). 

This establishes a converse of \cref{thmB}.
\begin{mainthm} (\cref{thmMainTV})
    \label{thmC}
    The Turaev-Viro defect invariant \(|\cdot|_{\calc}\) is boundary local for every spherical fusion category \(\calc\).
\end{mainthm}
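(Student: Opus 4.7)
The plan is to verify each of the four boundary-locality properties \labelcref{P1}--\labelcref{P4} in turn, using as the single main tool the fact (\cref{hyp}) that the Turaev-Viro state sum does not depend on the choice of skeleton. Recall that for a cobordism $M$ in $\Cob_3^\calc$ with decorated free boundary, the invariant $|M|_\calc$ is computed from any skeleton $P \subset M$ whose intersection with the free boundary realizes the prescribed defect graph, as a normalized sum over colorings of the 2-strata by simple objects of $\calc$ of a product of quantum dimensions and local link-graph evaluations. Each of the three ``moves'' of the excerpt corresponds to a specific controlled refinement or deformation of such a skeleton.

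For \labelcref{P1}, given a skeleton $P$ of $M$, I would pick an interior point $x_C$ in each 3-cell $C$ of $M \setminus P$, attach to $P$ a small 2-sphere $S_C$ around $x_C$ together with a thin tube linking $S_C$ to $P$, and check that the resulting polyhedron is again a valid skeleton. Cutting out the 3-ball bounded by $S_C$ then writes $M$ as a composition in $\Cob_3^\calc$ of the drilled cobordism with a disjoint union of trivially-decorated 3-balls. By the factorization of the state-sum product over cells, together with the fact that the Turaev-Viro invariant of a trivially decorated ball yields the correct normalization factor, the two sides of \labelcref{P1} agree. Property \labelcref{P4} is then handled along the same lines: on a decorated 3-ball the coning skeleton (cone on the boundary defect graph with apex at the center) has a single interior vertex whose link graph is exactly the prescribed boundary decoration, so the state-sum collapses to the graphical evaluation in $\calc$ up to dimension factors that cancel against $D^{-2|P|_0}$.

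For \labelcref{P2} and \labelcref{P3}, I would work locally near a 2-cell, respectively a 1-cell, that is to be broken. For \labelcref{P2} I refine $P$ by inserting a small 1-stratum through the chosen 2-cell $F$, splitting $F$ in two and creating two new 3-valent edges; this amounts to performing the gluing across $F$ by means of the completeness relation $\sum_{c \in \mathrm{Irr}(\calc)} \dim(c)\, \mathrm{ev}_c \circ \mathrm{coev}_c = \id$, which is exactly the sum over defect-line colors that appears on the right-hand side of \labelcref{P2}. The analogous argument for \labelcref{P3} introduces a new 0-stratum inside a 1-cell, and the identity reduces to the completeness relation for the pairing on the corresponding $\Hom$-spaces in $\calc$, evaluated against the two newly produced link graphs at the inserted vertex. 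In both moves the fact that the new skeleton computes the same invariant is a direct consequence of \cref{hyp}.

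The main obstacle I expect is not any single step but careful bookkeeping: ensuring that the refined polyhedra remain admissible stratified 2-skeleta (with the correct local models near the inserted strata), that the boundary decorations continue to match those inherited from the underlying morphism in $\Cob_3^\calc$ after each move, and that the normalization $D^{-2}$ per 0-cell and the dimension factors per colored 2-cell balance correctly between the two sides of each equation \labelcref{P1}--\labelcref{P4}. Once these local identifications are made, skeleton independence supplies the global consistency, and the four properties fall out simultaneously.
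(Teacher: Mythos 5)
Your proposal is structured around a genuinely different strategy from the paper's. The paper localizes each move via functoriality of the TQFT (cutting out a small cobordism around the modified region, then comparing the two resulting maps by explicit graph-skeleton computations and lemmas from the graphical calculus — see \cref{propProp1TV}, \cref{lemTorxy}, \cref{lemProp2}, \cref{propProp2}, \cref{propProp3TV}, \cref{propProp4TV}). You instead want to compare state sums for $M$ and $\Delta_i(M)$ directly by exhibiting compatible skeleta and invoking skeleton independence. That general direction is not unreasonable, but as written your argument has two substantive gaps that go beyond bookkeeping.

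First, for \labelcref{P2} and \labelcref{P3} you describe \emph{skeleton refinements of $M$}: inserting a $1$-stratum through a $2$-cell $F$ (splitting it in two) for \labelcref{P2}, and inserting a $0$-stratum inside a $1$-cell for \labelcref{P3}, then appealing to a completeness relation. But these refinements keep the underlying $3$-manifold fixed; they are internal recoupling moves that \cref{hyp} says leave $|M|_\calc$ unchanged. The moves $\Delta_2$ and $\Delta_3$, by contrast, \emph{change the underlying manifold and its free boundary}: $\Delta_2$ excises a cylinder $\mathbb{D}^2 \times I$ and adds a defect loop, $\Delta_3$ excises a cylindrical slab and glues on two decorated caps. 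So \cref{hyp} alone cannot convert the one state sum into the other; you must compare two state sums computed on two genuinely different manifolds, with skeleta chosen compatibly so that their state sums differ in a controlled way (this is what the explicit computations in \cref{lemProp2} and \cref{propProp3TV} achieve). The step where your argument would actually establish \cref{eqGenTunPrep} or \labelcref{P3} is missing.

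Second, for \labelcref{P4} the cone skeleton you propose is not an admissible graph skeleton in the sense of \cref{defGraphSkel}: the cone edges from the apex terminate at the vertices of $\Gamma$, and the definition explicitly forbids any vertex of $\Gamma$ from lying in $P^{(1)} \cap \Gamma$. Even if you move the cone edges to attach at interior points of defect edges (creating switches), the link graph at the apex is decorated by the simple objects of the skeleton coloring and by basis vectors of the rim Hom-spaces, not by $\Gamma$'s own object and coupon labels, so it is \emph{not} ``exactly the prescribed boundary decoration.'' There is no free collapse to $\langle\Gamma\rangle$. The paper's proof of \cref{propProp4TV} instead takes the boundary sphere itself as skeleton (one interior $3$-cell) and runs a fairly intricate spanning-tree argument to re-assemble $\langle\Gamma\rangle$ from the link graphs; some argument of comparable substance is needed here.
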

Thus, the Turaev-Viro invariant is completely determined by the requirement of boundary locality. Relying on the compatibility with the moves transforming the defect boundary, we prove that not only the invariant, but in fact the entire boundary-defect TQFT \(|\cdot|_{\calc}\colon \Cob_3^{\calc} \rightarrow \vect_{\K}\) is uniquely characterized by boundary locality.
\begin{mainthm} (\cref{thmComb})
    \label{thmD}
    The Turaev-Viro boundary defect TQFT \(|\cdot|_{\calc}\) is up to monoidal isomorphism the unique boundary local defect TQFT \(\Cob_3^{\calc} \rightarrow \vect_{\K}\).
\end{mainthm}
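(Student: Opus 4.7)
The plan is to bootstrap \cref{thmB} and \cref{thmC}---which together identify any boundary local invariant with the Turaev--Viro invariant on closed cobordisms---into a monoidal natural isomorphism $\phi \colon Z \Rightarrow |\cdot|_\calc$ between an arbitrary boundary local defect TQFT $Z \colon \Cob_3^\calc \to \vect_\K$ and the Turaev--Viro TQFT. The guiding principle is that once two TQFTs agree on every closed defect cobordism, the gluing pairings they induce on state spaces of surfaces must also agree, and this is enough to force the two functors to be naturally isomorphic.

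First I would show that for each decorated surface $\Sigma$, the state space $Z(\Sigma)$ is spanned by vectors $Z(M)(1)$ as $M$ ranges over cobordisms $\emptyset \to \Sigma$ in $\Cob_3^\calc$. I would apply property (P1) to such a cobordism to decompose it into decorated 3-balls attached to $\Sigma$ along newly introduced free-boundary 2-spheres, then iterate (P2)--(P4) on the remaining skeletal data to reduce any cobordism to a linear combination of canonical ball configurations. This yields an explicit generating family for $Z(\Sigma)$ indexed by skeletal combinatorics over $\Sigma$.

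Next I would define $\phi_\Sigma(Z(M)(1)) := |M|_\calc(1)$ on generators. Well-definedness reduces to the standard sewing argument: if $Z(M_1)(1) = Z(M_2)(1)$, then $Z(N \circ M_1) = Z(N \circ M_2)$ for every $N \colon \Sigma \to \emptyset$ by functoriality, and both sides equal the corresponding Turaev--Viro invariants by \cref{thmB}. Combining this with a generating family of such $N$ (the analogue of Step~1 applied to $\bar\Sigma$) and nondegeneracy of the gluing pairing for $|\cdot|_\calc$---available because $|\bar\Sigma|_\calc$ is the categorical dual of $|\Sigma|_\calc$ in $\vect_\K$---forces $|M_1|_\calc(1) = |M_2|_\calc(1)$. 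Naturality along any cobordism $K \colon \Sigma \to \Sigma'$ and monoidality under disjoint union are then formal consequences of functoriality and the symmetric monoidal structure on both theories, while injectivity of $\phi_\Sigma$ follows by swapping the roles of $Z$ and $|\cdot|_\calc$ in the same argument, and surjectivity is immediate from Step~1 applied to $|\cdot|_\calc$.

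The hardest point, I expect, will be Step~1 itself: for an abstract boundary local functor this is not a formal consequence of the TQFT axioms, and it genuinely requires the decomposition moves (P1)--(P4), together with the subtlety that the resulting state sum expression must be compatible with the $\calc$-decorations already present on the target surface $\Sigma$, not merely with those introduced by the moves. A secondary concern is the nondegeneracy of the gluing pairing for $|\cdot|_\calc$ on defect surfaces; this should follow from the symmetric monoidal structure of the extended Turaev--Viro TQFT together with the Frobenius-type data intrinsic to state sum models, but would need a careful verification when $\Sigma$ carries a nontrivial defect graph.
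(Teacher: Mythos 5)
The gap in your proposal is Step~1 as applied to $Z$: you cannot conclude that $Z(\Sigma)$ is spanned by vectors $Z(M)(1)$ for $M\colon\emptyset\to\Sigma$ by invoking \labelcref{P1}--\labelcref{P4}, because in \cref{defPropInv} these are imposed only on the scalar invariant $Z|_{\Cob_3^{\calc}(\emptyset,\emptyset)}$, i.e.\ on cobordisms with \emph{empty} gluing boundary. A cobordism $\emptyset\to\Sigma$ with $\Sigma\neq\emptyset$ is not of this type, so an abstract boundary local $Z$ is not guaranteed to transform as you need under $\Delta_1,\Delta_2,\Delta_3$ applied inside it. This is exactly the distinction between \labelcref{P1}--\labelcref{P4} and their starred generalizations from \cref{remPPrime}: the Turaev--Viro theory satisfies the starred ones (\cref{remTVPPrime}), but a general boundary local $Z$ is only assumed to satisfy the unstarred ones. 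Trying to bootstrap (P1)* from (P1) for $Z$ by closing up with cobordisms $\emptyset\to\Sigma$ and $\Sigma'\to\emptyset$ runs straight back into the spanning problem you are trying to prove, so the argument is circular. As a consequence, $\phi_\Sigma$ in Step~2 is only defined on the subspace spanned by the $Z(M)(1)$, and nothing guarantees that this subspace is all of $Z(\Sigma)$, so injectivity and surjectivity of that restriction do not give an isomorphism $Z(\Sigma)\cong|\Sigma|_\calc$.

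The paper avoids this with two ingredients absent from your outline. First, the \emph{dimension count}: since $\K$ has characteristic zero and $Z$ is symmetric monoidal with all objects of $\Cob_3^{\calc}$ dualizable, $\dim_\K Z(\Sigma)=Z(\Sigma\times\Sp^1)$; by \cref{thmMain} this equals $|\Sigma\times\Sp^1|_\calc=\dim_\K|\Sigma|_\calc$. Second, \emph{non-degeneracy of $|\cdot|_\calc$ only} (\cref{propND}), which is established using the starred properties available for the Turaev--Viro theory. With these two facts plus agreement on closed defect invariants, \cite[Lemma 17.2]{TV} applies and produces the monoidal isomorphism. Your sewing/well-definedness step together with the formal non-degeneracy of the gluing pairing is essentially the mechanism inside that lemma, but the dimension count is what substitutes for the spanning property of $Z$, which you do not have.
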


Turning to a concrete example, we study Dijkgraaf-Witten theory with trivial cocycle. In a recent article (\cite{FMM}), Faría Martins and Meusburger construct untwisted Dijkgraaf-Witten theory as a TQFT with defects in all codimensions labeled by data associated to a finite group \(G\). From this description, we extract a boundary-defect TQFT \(Z_G\colon \Cob_3^{\vect_G} \rightarrow \vect_G\) in our sense, specializing to defects confined to the free boundary (cf. \cref{secDW}). We obtain the subsequent result, which directly exhibits Dijkgraaf-Witten theories as state-sum TQFTs.
\begin{mainthm} (\cref{thmDW})
    \label{thmE}
    The Dijkgraaf-Witten boundary-defect TQFT \(Z_G\) for a finite group \(G\) is boundary local, and therefore monoidally isomorphic to the Turaev-Viro boundary-defect theory \(|\cdot|_{\vect_G}\) associated to the spherical fusion category of \(G\)-graded vector spaces \(\vect_G\).
\end{mainthm}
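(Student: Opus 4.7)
The plan is to invoke \cref{thmD} as the conclusion engine: since $|\cdot|_{\vect_G}$ is already known to be, up to monoidal isomorphism, the unique boundary local defect TQFT $\Cob_3^{\vect_G} \rightarrow \vect_\K$, it suffices to show that $Z_G$ is boundary local, i.e.\ that it satisfies the four properties \labelcref{P1}--\labelcref{P4}. The asserted isomorphism $Z_G \cong |\cdot|_{\vect_G}$ is then immediate. This reduction is crucial because it avoids having to match the two theories directly on every decorated cobordism.

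\textbf{Setup.} I would begin by extracting from \cite{FMM} a working description of $Z_G$ on objects and morphisms of $\Cob_3^{\vect_G}$. Concretely, the state space attached to a decorated surface is a space of $\K$-valued functions (or sections) on the groupoid of flat $G$-bundles compatible with the boundary defect labels; the partition function of a decorated cobordism is a weighted groupoid cardinality of flat $G$-bundles extending the boundary data, with the standard normalizing powers of $|G|$. The three geometric moves underlying \labelcref{P1}--\labelcref{P3} then have transparent bundle-theoretic meaning: cutting out a 3-ball from a 3-cell trivializes the bundle on that ball, passing to a defect line fixes a holonomy along a dual edge, and passing to a defect node imposes a vertex compatibility among incident holonomies. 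The geometric moves are thus mirrored precisely by restriction/extension operations on groupoids of bundles.

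\textbf{Checking \labelcref{P1}--\labelcref{P3}.} For each move I would compare the two sides by identifying the groupoid of $G$-bundles on the cobordism before the move with the groupoid of $G$-bundles on the cobordism after the move, together with extra labeling data on the newly introduced free boundary or defect stratum. Multiplicativity of groupoid cardinalities under pushouts of bundles — which is the technical backbone of the FMM formalism — then translates each geometric decomposition into the required equality of scalars, provided the $|G|$-normalizations are tracked correctly via Euler-characteristic arguments on the strata affected by the move. Property \labelcref{P4} is then a direct computation: on a decorated 3-ball, $Z_G$ reduces to a sum over compatible $G$-labelings of the link graph in the boundary sphere subject to holonomy closure at each node, which is exactly the evaluation of the link graph in $\vect_G$ given by the coherence isomorphisms of $G$-graded vector spaces.

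\textbf{Main obstacle.} The conceptual steps are routine once the dictionary between FMM's stratified defect data and the boundary-defect cobordism category $\Cob_3^{\vect_G}$ has been fixed. The real difficulty lies in this dictionary and in the bookkeeping of the powers of $|G|$: the normalizations of FMM, those intrinsic to the Turaev-Viro sum in $\vect_G$, and those encoded in the statements of \labelcref{P1}--\labelcref{P4} are each fixed by different Euler-characteristic conventions, and the equalities claimed only become manifest after these conventions are reconciled. Once this alignment is done, each of \labelcref{P1}--\labelcref{P4} reduces to a finite, purely group-theoretic identity that follows from standard properties of counting $G$-bundles on CW complexes.
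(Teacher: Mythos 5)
Your overall strategy coincides with the paper's: verify that \(Z_G\) satisfies \labelcref{P1}--\labelcref{P4} and then appeal to \cref{thmComb} for the isomorphism \(Z_G \cong |\cdot|_{\vect_G}\). However, there is a genuine gap at the very first step of the setup. You write that you would "begin by extracting from \cite{FMM} a working description of \(Z_G\) on objects and morphisms of \(\Cob_3^{\vect_G}\)," and you treat the subsequent "dictionary" as a matter of aligning labeling conventions and powers of \(|G|\). But the Dijkgraaf-Witten TQFT of \cite{FMM} is defined on \(\Cob_3^{\mathsf{def}}\), whose cobordisms have \emph{no free boundary at all} (the parametrization maps \(h\colon \Sigma \sqcup \overline{\Sigma} \to \partial M\) are required to be isomorphisms). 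There is therefore no functor \(Z_G\colon \Cob_3^{\vect_G} \to \vect_\C\) to "extract"; one must first \emph{construct} it. This is a nontrivial mathematical step, not a bookkeeping one: the paper introduces a new cobordism category \(\Cob_3^{\mathsf{def}}(\partial)\) with free boundary decompositions, axiomatizes "free boundary extensions" \(R\colon \Cob_3^{\mathsf{def}}(\partial) \to \Cob_3^{\mathsf{def}}\), proves independence of the choice of \(R\) (\cref{propEqExt}), realizes one explicitly via the double \(M_c = M \cup_{\mathrm{cl}(\partial_f M)} \overline{M}\) with trivial group labels on the added region (\cref{propCobF}), and then proves a monoidal equivalence \(\Theta\colon \Cob_3^{\vect_G} \simeq \Cob_3^G\) (\cref{lemEqCob}) converting \(\vect_G\)-colored boundary graphs into FMM-style stratifications and defect data. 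Only after all of this does \(Z_G := Z_{DW}\circ(-)_c\circ\Theta\) exist.

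Your proposal also underestimates what it takes to verify \labelcref{P1}--\labelcref{P4} once \(Z_G\) exists. The "bundle-theoretic meaning" language you use suggests a direct groupoid-cardinality argument, but the verifications in the paper hinge on fibrant spans in \(\mathsf{Span}^{\mathsf{fib}}(\mathsf{repGrpd)}\) and the explicit linearization functor \(L\) -- in particular, one cannot replace a gauge groupoid by an equivalent one if doing so breaks fibrancy of the span (see the remark after \cref{propDW1}, where naively simplifying \(\mathcal{A}^D \sslash \mathcal{G}^D\) to a point would give a wrong answer). Each of \cref{propDW1}--\cref{propDW4} computes the reduced gauge groupoid of a specific local piece (a ball, a decorated solid torus, the double of a decorated ball, a capped tube), identifies its fibers over boundary objects, and evaluates the formula \cref{eqSpGrpd} or its representation-enriched analog. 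Your sketch omits the identification \(\mathcal{A}^{\Sp_\Gamma}\sslash\mathcal{G}^{\Sp_\Gamma} \simeq G^{P}\sslash G\) and the bijection \(\varphi\colon G^P/G \xrightarrow{\sim} \Omega\) between orbits of plaquette labelings and admissible edge colorings, which is what actually makes \(Z_G(B_\Gamma) = \langle\Gamma\rangle\) follow. So while the plan is structurally right, you would not be able to execute it without supplying both the free-boundary extension construction and the groupoid computations that the high-level phrases "restriction/extension operations on groupoids of bundles" and "standard properties of counting \(G\)-bundles" are standing in for.
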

\medskip
Our results make the slogan `bulk from boundary' more precise: Boundary locality comprises the conditions that ensure that state sum invariants and even topological field theories can be constructed from boundary data only, and that they are uniquely characterized by their transformation behavior under certain boundary moves. This perspective fits nicely with the known fact that bulk Wilson lines are naturally labeled by objects in a Drinfeld center, which is a quantity derived from the boundary labeling datum, a spherical fusion category. Related ideas have been put forward in several contexts and confirm the validity of this point of view. For instance, similar moves and their implications on state sum models are prominently featured in \cite{CGPT, LMWW, LMWW2, W}. 

The main application pursued in this article is to Dijkgraaf-Witten theories with boundary defects. The results allow us to understand them as state sum constructions without the necessity to compute the two relevant theories explicitly and compare them. 
\medskip

We outline the structure of this article. In \cref{secPrel}, we summarize the necessary background on spherical fusion categories and topological field theories (\cref{subsecCatCon}, \cref{subSecTQFTs}) and recapitulate the explicit constructions of Turaev-Viro and Dijkgraaf-Witten theory with defects (\cref{subSecTV}, \cref{subSecDW}) used in the sequel. Our main results are contained in \cref{secTVRes}. We formulate the geometric moves for boundary locality in the cobordism category \(\Cob_3^{\calc}\) with free defect boundaries and the corresponding algebraic properties and prove \cref{thmMain} (\cref{thmB} in this section) in \cref{subSecChar}. In \cref{subSecProof}, we prove \cref{thmMainTV} (\cref{thmC}) by showing that the Turaev-Viro TQFT satisfies the algebraic properties \labelcref{P1} through \labelcref{P4} one by one. \cref{thmComb} (\cref{thmD}) is proved in \cref{subSecTQFTs} by establishing that the boundary-defect Turaev-Viro TQFT is non-degenerate (\cref{propND}), which is achieved using boundary locality of the state sum. \cref{subSecApp} contains smaller applications and comments and may be skipped in a first reading. In \cref{secDW}, we study Dijkgraaf-Witten theory with boundary defects in detail. The purpose of \cref{subSecFB} is to explain how free boundaries are introduced in the setting of \cite{FMM} and how a defect TQFT with labels in the spherical fusion category of \(G\)-graded vector spaces is obtained in the sense of \cref{defTQFTDef}. Finally, in \cref{subSecDWProp}, parallel to \cref{subSecProof}, we establish the properties \labelcref{P1} to \labelcref{P4} for Dijkgraaf-Witten theory and prove \cref{thmDW} (\cref{thmE}).

\medskip
The lemma and theorem numbers in references cited throughout the text refer to the arXiv version, whenever both an arXiv and a published version are available. 

\subsubsection*{Acknowledgments}
We thank Julian Farnsteiner, Aaron Hofer, Catherine Meusburger and Lukas Woike for
valuable discussions. The authors acknowledge support by the Deutsche Forschungsgemeinschaft
(DFG, German Research Foundation) – SFB 1624 – ``Higher structures,
moduli spaces and integrability''
- 506632645. C.S. is partially funded under Germany’s Excellence
Strategy – EXC 2121 ``Quantum
Universe'' – 390833306.

\section{Preliminaries}
\label{secPrel}
In this section, we set our general notation and recall the TQFT constructions that are relevant for the article. The main reference consulted for the Turaev-Viro construction is \cite{TV}, and for the case that includes free boundaries \cite{F} and \cite{FS}. We will make use of a mix of the notation and conventions employed in these texts. For Dijkgraaf-Witten theory, we will mostly rely on a recent defect construction presented in \cite{FMM} and recall it to the level of detail we need at the end of this chapter.
\subsection{Spherical fusion categories}
\label{subsecCatCon}
Let \(\mathcal{X}\) be a category. We will denote by \(\mathcal{X}(X,Y) = \Hom_{\mathcal{X}}(X,Y)\) the set of morphisms between two objects \(X, Y \in \mathcal{X}\). If \(X = Y\), we will further write its set of endomorphisms as \(\End_{\calx}(X)\). For an object \(X \in \mathcal{X}\), we denote by \([X]\) its isomorphism class in \(\mathcal{X}\) and by \(\pi_0(\mathcal{X})\) the set of all isomorphism classes in \(\calx\).

For another category \(\mathcal{Y}\), we denote by \(\Fun(\mathcal{X}, \mathcal{Y})\) the category of functors between \(\calx\) and \(\caly\) and by \(\mathrm{Nat}(F, G)\) the set of natural transformations between two functors \(F, G \in \Fun(\calx, \caly)\). If the categories \(\calx\) and \(\caly\) are \(\K\)-linear for some field \(\K\), the set of natural transformations \(\mathrm{Nat}(F, G)\) inherits the structure of a \(\K\)-vector space.

\medskip
Let \((\calx, \otimes, \mathbbm{1})\) denote a monoidal category. In all of the examples occurring in this article, it is natural to suppress associators and left and right unitors, so we will refrain from introducing notation for them and implicitly assume strictness of the monoidal category \(\calx\). Monoidal functors between monoidal categories \(\mathcal{X}\) and \(\mathcal{Y}\) will simply be written as \(F\colon \calx \rightarrow \caly\), where we drop the structure isomorphisms relating the monoidal structure of the two categories.

All monoidal categories considered in the text will be rigid and equipped with a pivotal structure. Thus, we may identify left and right duals and use for the dual of an object \(X \in \calx\) the notation \(X^*\) with the left and right evaluation and coevaluation pairings
\begin{align*}
    \mathrm{ev}_X\colon X^* \otimes X \rightarrow \mathbbm{1}, \, \,\mathrm{coev}_X\colon \mathbbm{1} \rightarrow X \otimes X^*, \,\,
    \widetilde{\mathrm{ev}}_X\colon  X \otimes X^* \rightarrow \mathbbm{1}, \,\, \widetilde{\mathrm{coev}}_X\colon \mathbbm{1} \rightarrow X^* \otimes X.
\end{align*}
Furthermore, we introduce for an object \(X\) the notion of its left and right dimensions \(\dim_{\ell}(X)\) and \(\dim_r(X)\), defined as the endomorphisms \(\dim_{\ell}(X) := \mathrm{ev}_X \circ \widetilde{\mathrm{coev}}_X\) and \(\dim_r(X) =  \widetilde{\mathrm{ex}}_X \circ \mathrm{coev}_X\) in \(\End_{\calx}(\mathbbm{1})\). More generally, for an endomorphism \(f \in \End_{\calx}(X)\), we define its left trace as \(\tr_{\ell}(f) = \mathrm{ev}_X \circ (\id_{X^*} \otimes f) \circ  \widetilde{\mathrm{coev}}_X \in \End_{\calx}(\mathbbm{1})\) and analogously its right trace \(\tr_r(f)\).

We denote by \(\K\) an arbitrary algebraically closed field of characteristic zero. A \textit{fusion category} \(\calc\), following the conventions in \cite{EGNO}, is a semisimple \(\K\)-linear finite abelian rigid monoidal category such that \(\mathbbm{1}\) is a simple object in \(\calc\). By Schur's lemma, we can in particular identify \(\End_{\calc}(\mathbbm{1}) \cong \K\), which we will do implicitly in the rest of the text. We use the notation \(\calo(\calc) \subset \pi_0(\calc)\) for the set of isomorphism classes of simple objects. By the requirements above, this is a finite set. By abuse of notation, \(\calo(\calc)\) will also stand for a specific (but arbitrary) choice of representatives of all simple objects in \(\calc\).

A \textit{spherical} (fusion) category \(\calc\) is a (fusion) category in which the left and right traces of endomorphisms \(f\) coincide, which we will denote simply by \(\tr(f) = \tr_{\ell}(f) = \tr_r(f)\). In particular, for every object \(X \in \calc\), its left and right dimensions of \(X\) coincide, and we will simply speak of the dimension of \(X\) and write \(\dim(X) \in \K\). 
The \(\K\)-linear structure on \(\calc\) allows us to further define the categorical dimension 
\begin{align*}
    \dim(\calc) = \sum_{x \mspace{1mu} \in \mspace{1mu} \calo(\calc)}\dim(x)^2 \in \K.
\end{align*}
It is a well-known fact that for \(\K\) an algebraically closed field of characteristic zero, the categorical dimension \(\dim(\calc)\) is nonzero (see e.g. \cite[Theorem 7.21.12]{EGNO}).

Typical examples for spherical fusion categories which will show up repeatedly in this work are the category of finite dimensional \(\K\)-vector spaces \(\vect_{\K}\), the category of finite dimensional \(G\)-graded \(\K\)-vector spaces \(\vect_G\) and the category of finite dimensional \(\K\)-linear representations \(\Rep_{\K}(G)\). Here, \(G\) denotes a finite group. The categorical dimensions of these categories are \(\dim(\vect_{\K}) = 1\) and \(\dim(\vect_G) = \dim(\Rep_{\K}(G)) = |G|\).

For \(\vect_{\K}\), the rigid structure is defined by the (left) coevaluation morphism \(\mathrm{coev}_{V}\colon \K \rightarrow V \otimes_{\K} V^*\), given by \(\mathrm{coev}_V(1) = \sum_{i = 1}^k\alpha_i \otimes \alpha_i^*\) and the obvious (left) evaluation \(\mathrm{ev}_V\colon V^* \otimes_{\K} V \rightarrow \K\). Here, \(\{\alpha_i\}_{i = 1}^k\) is a basis of \(V\) and \(\{\alpha_i^*\}_{i = 1}^k\) denotes the corresponding dual basis of \(V^*\), which says that \(\alpha_i^*(\alpha_j) = \delta_{i,j}\). In this situation, we will speak of a \textit{pair of dual bases}. In particular, the case \(V = \Hom_{\calc}(X, Y)\) for objects \(X, Y \in \calc\) will be of interest in the construction and evaluation of the Turaev-Viro theory. The trace induces in any spherical fusion category \(\calc\) isomorphisms
\begin{align}
    \label{eqDual}
    \Hom_{\calc}(X, Y)^* \cong \Hom_{\calc}(X^*, Y^*) \cong \Hom_{\calc}(Y, X).
\end{align}
Identification of the dual basis \(\{\alpha_i^*\}_{i = 1}^k\) with its image under this isomorphism translates the dual bases relation above to \(\tr(\alpha_i^* \circ \alpha_j) = \delta_{i,j}\). Assuming that \(X\) is simple, denoting it by \(x\) and using \(\End_{\calc}(x) \cong \K\), this implies that \(\alpha_i^* \circ \alpha_j = \frac{\delta_{i,j}}{\dim(x)}\id_x\).\footnote{Note that our normalization convention differs from the one in \cite[Section 4.4.1]{TV}. There, the authors use the notion of an \(x\)-partition of \(Y\), which is a pair of bases \(\{p_{\alpha}\}, \{q_{\alpha}\}\) of \(\Hom_{\calc}(Y, x)\) respectively \(\Hom_{\calc}(x, Y)\) such that \(p_{\alpha} \circ q_{\beta} = \delta_{\alpha, \beta}\id_x\) for all \(\alpha, \beta\).} In particular, for \(x = \mathbbm{1}\), it follows that \(\alpha_i^* \circ \alpha_j = \delta_{i,j} \id_{\mathbbm{1}}\). Equation (4.9) in \cite{TV} now states that for any morphism \(f \in \Hom_{\calc}(\mathbbm{1}, Y)\) from the tensor unit to the object \(Y\), the relation
\begin{align}
\label{eqTV49}
    \sum_{i = 1}^k \alpha_i \circ \alpha_i^* \circ f = f
\end{align}
holds, where \(k = \dim_{\K}\left(\Hom_{\calc}(\mathbbm{1}, Y)\right)\) as introduced above. Recall also that for any cyclic permutation \(\sigma \in \mathfrak{S}_n\), there exists an isomorphism 
\begin{align}
    \label{eqCycl}
    \Hom_{\calc}(\mathbbm{1}, X_1 \otimes \hdots \otimes X_n) \cong \Hom_{\calc}(\mathbbm{1}, X_{\sigma(1)} \otimes \hdots \otimes X_{\sigma(n)})
\end{align}
induced by the pivotal structure of \(\calc\).

We will use standard conventions for the graphical calculus in pivotal categories. In particular, by \cref{eqCycl}, coupons representing morphisms in \(\calc\) can be drawn in a circular shape. This is admissible whenever there is no ambiguity as to which morphism is represented by the coupon. Often, we will implicitly sum over a pair of dual bases of `dual' coupons by leaving out indices and sums,\footnote{For details regarding these conventions, see \cite{F} and \cite{BK}. For a careful treatment and generalities on the graphical calculus, see \cite{TV}.} as in the following picture, where the depicted morphisms \(\alpha_i \otimes \alpha_i^*\) are in \(\Hom_{\calc}(X \otimes Y, Y \otimes X)\):
\begin{align}
   \sum_{i = 1}^k \adjincludegraphics[valign=c, scale = 0.75]{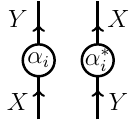} = \adjincludegraphics[valign=c, scale = 0.75]{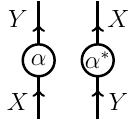}.
\end{align}
We cite without proof the following lemmas from \cite{BK} and \cite{FS}, which will be very useful later when we work explicitly with the Turaev-Viro construction (see \cref{subSecTV}). Here, pairs of lower case Greek letters always represent pairs of dual bases.
\begin{lemma} (\cite[Lemma 4]{FS})
    \label{lemTrace}
    Let \(X, Y \in \calc\) be two objects and consider a linear endomorphism \(F\colon \Hom_{\calc}(X, Y) \rightarrow \Hom_{\calc}(X, Y)\). The trace \(\tr(F)\) of the map \(F\) can be expressed as
    \begin{align*}
        \tr(F) \, = \, \adjincludegraphics[valign=c, scale = 0.75]{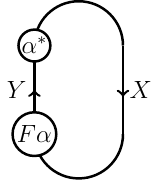}.
    \end{align*}
\end{lemma}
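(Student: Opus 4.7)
The plan is to reduce the claim to a direct computation in coordinates and use the defining property of the dual basis. I would fix a basis $\{\alpha_i\}_{i=1}^k$ of the finite-dimensional $\K$-vector space $\Hom_{\calc}(X,Y)$ and let $\{\alpha_i^*\}_{i=1}^k \subset \Hom_{\calc}(Y,X)$ be the dual basis with respect to the trace pairing, so that $\tr(\alpha_i^* \circ \alpha_j) = \delta_{ij}$, exactly as normalized in the discussion preceding the lemma. I then read off the right-hand diagram via the graphical calculus in $\calc$: the closed loop carrying $F$ together with the summed coupon pair $\alpha_i, \alpha_i^*$ encodes the scalar
\[
    \sum_{i=1}^k \tr\bigl(\alpha_i^* \circ F(\alpha_i)\bigr) \in \End_{\calc}(\mathbbm{1}) \cong \K.
\]

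Next I would expand $F$ in matrix form, writing $F(\alpha_i) = \sum_j F_{ji}\, \alpha_j$ with coefficients $F_{ji} \in \K$, so that by definition of the trace of a linear endomorphism $\tr(F) = \sum_i F_{ii}$. Substituting this expansion into the displayed expression and using $\K$-linearity of the categorical trace, the sum becomes $\sum_{i,j} F_{ji}\, \tr(\alpha_i^* \circ \alpha_j)$, which collapses to $\sum_i F_{ii} = \tr(F)$ by the dual-basis normalization. This gives the claimed equality.

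The only subtlety worth checking is that the right-hand side is independent of the chosen basis: under a change of basis $\alpha_i \mapsto \sum_j M_{ji}\,\alpha_j$, the dual basis transforms by the inverse matrix, and the two transformations cancel inside the trace pairing, so the diagrammatic expression is intrinsically defined. Beyond this, no genuine obstacle is expected; the lemma is essentially the statement that pairing $F$ against the canonical element $\sum_i \alpha_i^* \otimes \alpha_i$ of $\Hom_{\calc}(X,Y)^* \otimes \Hom_{\calc}(X,Y)$ computes its trace, here phrased diagrammatically via the pivotal structure and the isomorphism $\Hom_{\calc}(X,Y)^* \cong \Hom_{\calc}(Y,X)$ from the preliminaries.
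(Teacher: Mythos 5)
Your argument is correct, and it is the standard computation. Note that the paper itself states this lemma without proof, citing it directly from \cite[Lemma 4]{FS}; so there is no in-text proof to compare against, but your calculation is the canonical one. Reading the diagram as the implicit dual-basis sum $\sum_i \tr\!\bigl(\alpha_i^\ast \circ F(\alpha_i)\bigr)$ is consistent with the paper's conventions (where dual coupons carry an implied sum, and the pairing is normalized so that $\tr(\alpha_i^\ast \circ \alpha_j) = \delta_{ij}$), and then expanding $F(\alpha_i) = \sum_j F_{ji}\alpha_j$ and contracting with the biorthogonality relation collapses the sum to $\sum_i F_{ii} = \tr(F)$. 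Your closing remark that the diagram is intrinsically the pairing of $F$ with the canonical element $\sum_i \alpha_i^\ast \otimes \alpha_i \in \Hom_{\calc}(X,Y)^\ast \otimes \Hom_{\calc}(X,Y)$, via the isomorphism $\Hom_{\calc}(X,Y)^\ast \cong \Hom_{\calc}(Y,X)$ from the pivotal structure, is exactly the right conceptual summary and handles the basis-independence cleanly. No gaps.
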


\begin{lemma} (\cite[Lemma 1.1]{BK})
\label{lemDecId}
For \(X \in \calc\), we can rewrite the identity \(\id_X \in \End_{\calc}(X)\) as
    \begin{align*}
        \sum_{i \mspace{1mu} \in \mspace{1mu} \calo(\calc)} \dim(i) \adjincludegraphics[valign=c, scale = 0.75]{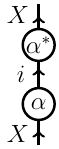} \, \,  = \, \, \adjincludegraphics[valign=c, scale = 0.75]{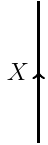}.
    \end{align*}
\end{lemma}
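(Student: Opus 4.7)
The plan is to verify equality of two endomorphisms in $\End_\calc(X)$ by exploiting semisimplicity: by Schur's lemma, $\End_\calc(X) \cong \bigoplus_{j \in \calo(\calc)} \End_{\K}(\Hom_\calc(j,X))$ via $f \mapsto (\alpha \mapsto f \circ \alpha)$. Consequently, two endomorphisms of $X$ coincide if and only if they agree after precomposition with every basis element $\alpha_l^{(j)} \in \Hom_\calc(j, X)$ for every simple object $j \in \calo(\calc)$. I would therefore translate the pictorial identity into the algebraic statement
\[
\sum_{i \mspace{1mu}\in\mspace{1mu} \calo(\calc)} \dim(i) \sum_{k} \alpha_k^{(i)} \circ \bigl(\alpha_k^{(i)}\bigr)^{\!*} \;=\; \id_X,
\]
where $\{\alpha_k^{(i)}\}_k$ is a basis of $\Hom_\calc(i,X)$ and $\{(\alpha_k^{(i)})^{*}\}_k$ is the dual basis in $\Hom_\calc(X,i)$ under the isomorphism \cref{eqDual}.

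The key computation is then to apply the LHS to a basis morphism $\alpha_l^{(j)}$ and invoke the normalization recalled in the preliminaries: $(\alpha_k^{(i)})^{*} \circ \alpha_l^{(j)}$ is an element of $\Hom_\calc(j, i)$. By Schur's lemma this vanishes unless $i = j$, in which case the normalization convention of the excerpt forces $(\alpha_k^{(j)})^{*} \circ \alpha_l^{(j)} = \tfrac{\delta_{kl}}{\dim(j)} \id_j$. Plugging this into the double sum collapses it to
\[
\Bigl(\text{LHS}\Bigr) \circ \alpha_l^{(j)} \;=\; \dim(j) \sum_k \alpha_k^{(j)} \cdot \tfrac{\delta_{kl}}{\dim(j)} \;=\; \alpha_l^{(j)},
\]
which agrees with $\id_X \circ \alpha_l^{(j)}$. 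The factor $\dim(i)$ in the bubble sum is precisely what cancels the $\tfrac{1}{\dim(j)}$ produced by the dual-basis normalization, so the weighting is dictated by consistency rather than an independent input.

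To conclude, the equality holds after precomposition with every element of every $\Hom_\calc(j,X)$, and by the semisimple decomposition above this suffices to identify the two endomorphisms of $X$. The main conceptual point is that in a pivotal semisimple category the pairing $\langle f, g \rangle = \tr(f \circ g)$ makes $\Hom_\calc(i, X)$ and $\Hom_\calc(X, i)$ canonically dual, with the dimension factor $\dim(i)$ appearing as the inverse of the normalization constant. I do not anticipate a genuine obstacle: the only point to handle carefully is reconciling the normalization $\alpha_i^{*} \circ \alpha_j = \tfrac{\delta_{ij}}{\dim(x)} \id_x$ of the excerpt with the diagrammatic convention used in the bubble picture, so that the stated weight $\dim(i)$ (rather than $1$ or $\dim(i)^2$) comes out correctly.
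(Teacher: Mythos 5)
The paper explicitly states, just before this lemma, ``We cite without proof the following lemmas from \cite{BK} and \cite{FS},'' so there is no in-paper proof to compare against. Your argument is, however, correct and is the standard one for this ``decomposition of identity'' statement. The reduction to precomposition with $\alpha_l^{(j)} \in \Hom_{\calc}(j,X)$ is legitimate: semisimplicity gives $X \cong \bigoplus_{j} j^{\oplus n_j}$ with $n_j = \dim_{\K}\Hom_{\calc}(j,X)$, and an endomorphism of $X$ that annihilates all $\Hom_{\calc}(j,X)$ under postcomposition is zero (this is where Schur's lemma actually enters, together with semisimplicity; attributing the whole isomorphism $\End_{\calc}(X)\cong\bigoplus_j\End_{\K}(\Hom_{\calc}(j,X))$ to Schur alone is slightly loose but not a gap). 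You also read the normalization correctly: since the dual basis is taken with respect to the trace pairing $\tr(\alpha^*\circ\alpha)=\delta$, one has $(\alpha_k^{(i)})^*\circ\alpha_l^{(i)}=\frac{\delta_{kl}}{\dim(i)}\id_i$, and the factor $\dim(i)$ in the bubble sum is exactly what is needed to cancel it. Had the paper adopted the Turaev--Virelizier convention $p_\alpha\circ q_\beta=\delta_{\alpha\beta}\id$, the weight in the lemma would have been $1$ rather than $\dim(i)$; your final remark about this is the right sanity check. No gap.
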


\begin{lemma} (\cite[Lemma 1.3]{BK})
    \label{lemDisconn}
    We have the following identity for an object \(X \in \calc\) and morphisms \(f \in \Hom_{\calc}(\mathbbm{1}, X)\), \(g \in \Hom_{\calc}(X, \mathbbm{1})\), connecting two previously disconnected string diagrams in \(\calc\):
    \begin{align*}
        \adjincludegraphics[valign=c, scale = 0.75]{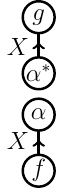} \, \, = \, \, \adjincludegraphics[valign=c, scale = 0.75]{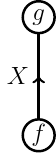}.
    \end{align*}
\end{lemma}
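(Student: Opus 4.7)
The statement asserts the equality of a product of two disjoint string diagrams in $\calc$, one involving $f \in \Hom_{\calc}(\mathbbm{1}, X)$ and the other $g \in \Hom_{\calc}(X, \mathbbm{1})$, with a single string diagram in which the two subdiagrams are joined by a strand labeled $X$ carrying $f$ and $g$ at its endpoints. My plan is to reduce the connected side to the disconnected one by expanding the linking strand in the simple basis of $\calc$, exploiting semisimplicity to collapse the expansion to a single term.

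Concretely, I would insert the identity $\id_X$ on the $X$-labeled strand joining the two subdiagrams and rewrite it via \cref{lemDecId} as a weighted sum $\sum_{i \in \calo(\calc)} \dim(i)\, \iota_i \circ \pi_i$, where $\pi_i \colon X \to i$ and $\iota_i \colon i \to X$ are projection and inclusion onto the simple summand. The linking strand is thereby factored through a simple object $i$, and after sliding $\pi_i$ past $f$ and $\iota_i$ past $g$, the contribution of each simple $i$ reduces to a pair of insertions $\pi_i \circ f \in \Hom_{\calc}(\mathbbm{1}, i)$ on one side and $g \circ \iota_i \in \Hom_{\calc}(i, \mathbbm{1})$ on the other, sitting inside the respective subdiagrams.

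By semisimplicity of $\calc$ and simplicity of $\mathbbm{1}$, both Hom spaces vanish unless $i = \mathbbm{1}$, so only a single summand contributes. Since $\dim(\mathbbm{1}) = 1$ and we identify $\End_{\calc}(\mathbbm{1}) \cong \K$ as fixed in \cref{subsecCatCon}, this surviving summand factors into a scalar product of two independent string diagrams, in one of which $f$ appears through the scalar evaluation $\pi_{\mathbbm{1}} \circ f$ and in the other $g$ appears through $g \circ \iota_{\mathbbm{1}}$. This is precisely the disconnected form on the other side of the stated equality.

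The main obstacle is purely graphical bookkeeping: the pivotal structure and the circular-coupon convention licensed by \cref{eqCycl} must be tracked consistently when $\pi_{i}$ and $\iota_{i}$ are slid across $f$ and $g$, so that the orientations of the remaining $X$-stubs in the subdiagrams are the ones that appear in the disconnected picture. No normalization factors beyond $\dim(\mathbbm{1}) = 1$ should survive, since the $\dim(i)$-weights of \cref{lemDecId} are killed together with the vanishing Hom spaces for $i \neq \mathbbm{1}$.
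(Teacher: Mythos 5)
The paper cites \cref{lemDisconn} from \cite{BK} without reproducing a proof, so there is no in-text argument to compare against; I evaluate your proposal on its own terms. Your argument is sound: inserting \cref{lemDecId} on the $X$-labeled bridge joining the two pieces and then using that $\Hom_{\calc}(\mathbbm{1}, i) = 0 = \Hom_{\calc}(i, \mathbbm{1})$ for any simple $i$ not isomorphic to $\mathbbm{1}$ annihilates all terms except $i = \mathbbm{1}$, and since $\dim(\mathbbm{1}) = 1$ the surviving term factors into a disjoint product, which is the disconnected side. Two small points deserve correction. First, calling the survivor a ``single summand'' obscures that the $i = \mathbbm{1}$ term of \cref{lemDecId} still carries the implicit sum over a dual-basis pair of $\Hom_{\calc}(X, \mathbbm{1})$ and $\Hom_{\calc}(\mathbbm{1}, X)$; this sum is exactly the implicit coupon summation on the disconnected side and should be retained rather than collapsed into singular symbols $\pi_{\mathbbm{1}}, \iota_{\mathbbm{1}}$ (in general $\Hom_{\calc}(X, \mathbbm{1})$ is not one-dimensional). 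Second, the ``sliding'' concern is a non-issue: once $\id_X$ is replaced by the terms $\iota_i \circ \pi_i$ on the bridge, the expressions $\pi_i \circ f$ and $g \circ \iota_i$ arise by plain composition, and their factorization into two disjoint closed diagrams uses only that both are endomorphisms of $\mathbbm{1}$, hence scalars; no additional pivotal bookkeeping is required beyond what the graphical calculus already licenses. Finally, a more direct route avoids \cref{lemDecId} entirely: expand $f = \sum_\alpha (\pi^\alpha \circ f)\,\iota^\alpha$ in a dual-basis pair for $\Hom_{\calc}(\mathbbm{1}, X)$ and $\Hom_{\calc}(X, \mathbbm{1})$ as normalized in \cref{subsecCatCon} via the isomorphism \cref{eqDual}, then compose with $g$ to obtain $g \circ f = \sum_\alpha (g \circ \iota^\alpha)(\pi^\alpha \circ f)$ in one step, which is the stated identity once the implicit dual-basis summation on the disconnected side is unpacked.
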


\begin{lemma} (\cite[Lemma 7]{FS})
    \label{lemQuad}
    We have the following rule for string diagrams, where \(X_1, \hdots, X_4 \in \calc\) are four objects and \(f\) is a morphism in \(\Hom_{\calc}(X_1 \otimes X_2, X_3 \otimes X_4)\):
    \begin{align*}
        \sum_{i \mspace{1mu} \in \mspace{1mu} \calo(\calc)} \dim(i) \adjincludegraphics[valign=c, scale = 0.75]{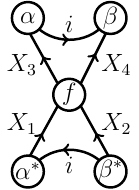} \, \, = \, \, \adjincludegraphics[valign=c, scale = 0.75]{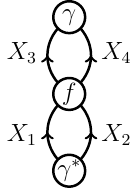}.
    \end{align*}
\end{lemma}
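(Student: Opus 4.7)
My reading of the statement is that it expresses a completeness/fusion relation: starting from the morphism $f : X_1 \otimes X_2 \to X_3 \otimes X_4$ drawn as a single coupon, one can insert an internal strand labeled by a simple object $i \in \calo(\calc)$ (connecting, say, the two horizontal sides of the coupon) together with a sum over a pair of dual bases of the corresponding Hom-spaces, and the whole expression — weighted by $\dim(i)$ — reproduces $f$. This fits the pattern already established by \cref{lemDecId}, which is the basic resolution of the identity $\id_X$ through simple summands with weights $\dim(i)$.

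The plan is to reduce the statement to \cref{lemDecId} by an appropriate application of the identity to an internal strand of the diagram. I would start from the right-hand side (the bare morphism $f$) and insert a trivial strand representing $\id_X$ for the object $X$ dictated by the geometry of the diagram (most naturally, the object carried by the strand that the left-hand side contracts through the simple $i$, for example $X_1 \otimes X_3^*$ or similar, depending on which pair of legs the sum groups together). Applying \cref{lemDecId} to this $\id_X$ then yields a sum over $i \in \calo(\calc)$, weighted by $\dim(i)$, of a diagram containing a pair of dual coupons with legs $(X, i)$. Using the pivotal structure and the isomorphism (\ref{eqDual}), together with the cyclic symmetry \eqref{eqCycl}, I would rearrange the resulting coupons into the shape appearing on the left-hand side, where the pair of Greek letters is to be read as a pair of dual bases of the corresponding Hom-space with $i$.

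The verification that the coupon labels match up boils down to checking that the dual bases on each side agree under the trace normalization $\tr(\alpha_i^* \circ \alpha_j) = \delta_{i,j}$ spelled out just before \cref{eqTV49}; this is a matter of carefully tracking the position of the identification \(\Hom_{\calc}(X,Y)^* \cong \Hom_{\calc}(Y,X)\) in (\ref{eqDual}) across the manipulation. For this step the fact that $\calc$ is spherical (so that left and right traces agree) is crucial, since otherwise the two possible readings of the dual coupon would differ by a pivotal twist and the identity would fail.

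The main obstacle I expect is purely bookkeeping: verifying that, once \cref{lemDecId} is applied and the resulting strands are bent using the pivotal structure, the resulting morphism space is the same as the one implicitly indexed by the Greek letters on the left-hand side, and that no spurious factor of $\dim(i)$ or its inverse is introduced by the cyclic reidentification \eqref{eqCycl}. Once this is straight, the equality is immediate from \cref{lemDecId} and the definition of dual bases; no further semisimplicity or fusion-category input is needed.
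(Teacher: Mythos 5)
The paper does not supply a proof of \cref{lemQuad}: the surrounding text explicitly cites it, together with the neighboring lemmas in this block, without proof from \cite{BK} and \cite{FS}; this one is \cite[Lemma~7]{FS}. There is accordingly no argument in this paper against which to compare yours.

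On its own terms, your plan is the standard one and would go through. \cref{lemQuad} is a ``diagonal'' version of the completeness relation \cref{lemDecId}: after bending one leg of $f\colon X_1\otimes X_2\to X_3\otimes X_4$ using the pivotal (co)evaluations, the morphism becomes a composable pair; inserting the identity on the intermediate object and invoking \cref{lemDecId} produces the sum over simple $i$ weighted by $\dim(i)$ together with a pair of dual coupons, and bending the leg back recovers the quadrilateral picture. The emphasis you put on sphericity, and not merely pivotality, is exactly right: for the dual bases (defined via the trace pairing underlying \eqref{eqDual}) to remain dual across the cyclic reshuffling \eqref{eqCycl}, one needs the left and right traces to agree. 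You leave the normalization bookkeeping unwritten, but you correctly identify where it sits, and nothing in the plan would fail.
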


\begin{lemma} (\cite[Lemma 8]{FS})
    \label{lemBub}
    For \(X \in \calc\) and \(f \in \End_{\calc}(X)\) an endomorphism, it holds that
    \begin{align*}
        \sum_{i \mspace{1mu} \in \mspace{1mu} \calo(\calc)} \dim(i) \adjincludegraphics[valign=c, scale = 0.75]{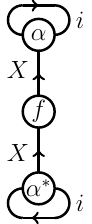} = \, \, \dim(\calc) \adjincludegraphics[valign=c, scale = 0.75]{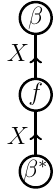}.
    \end{align*}
\end{lemma}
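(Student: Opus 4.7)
The diagram on the left of \cref{lemBub} depicts $f \in \End_\calc(X)$ on an $X$-strand together with a closed $i$-labeled bubble attached via an implicitly summed pair of dual trivalent coupons; the right-hand side is $f$ on the bare $X$-strand, rescaled by $\dim(\calc)$. My plan is to derive the identity from the preceding graphical lemmas rather than by a first-principles calculation, so that both sides are recognized as the same natural $\K$-linear endomorphism of $\End_\calc(X)$.

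The most efficient route specializes \cref{lemQuad}: with the identification $X_1 = X_3 = X$, $X_2 = X_4 = \mathbbm{1}$ and inserted morphism $f \otimes \id_{\mathbbm{1}}$, the left-hand side of \cref{lemQuad} matches the $i$-bubble around $f$. Applying \cref{lemQuad} then rewrites the $i$-summed bubble as a disconnected configuration containing the $X$-strand with $f$ alongside a closed $\mathbbm{1}$-loop. The $\mathbbm{1}$-loop is a scalar; expanding the identity on the $\mathbbm{1}$-strand by \cref{lemDecId} produces $\sum_{j \in \calo(\calc)} \dim(j)$ copies of the closed loop of the simple object $j$, and each such loop evaluates to $\dim(j)$. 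Summing yields $\sum_{j \in \calo(\calc)} \dim(j)^2 = \dim(\calc)$, which is the desired scalar.

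If the pictorial match with \cref{lemQuad} turns out to be awkward, a backup route is a semisimplicity reduction: both sides are $\K$-linear in $f$, and the bubble operator is $\End_\calc(X)$-bilinear (morphisms in $\End_\calc(X)$ slide along the $X$-strand past the bubble). By the bimodule form of Schur's lemma it then suffices to verify the identity for $X = x$ simple and $f = \id_x$, where both sides are scalar multiples of $\id_x$. The scalar can be pinned down by taking a trace via \cref{lemTrace}, rewriting the bubble sum as a weighted count of fusion multiplicities, and collapsing the resulting expression using $\sum_i \dim(i)^2 = \dim(\calc)$ together with the standard fusion identity $\sum_i \dim(i) \, N^i_{x, x^*} = \dim(x)^2$.

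The main obstacle I anticipate in either approach is purely combinatorial: the normalization convention $\alpha_i^* \circ \alpha_j = \delta_{ij}\, \dim(x)^{-1} \id_x$ recorded in the preliminaries must be tracked carefully so that no spurious factors of $\dim(x)$ appear in the final scalar. Once the conventions are aligned, \cref{lemBub} is essentially a one-line consequence of \cref{lemQuad} together with the defining identity for $\dim(\calc)$.
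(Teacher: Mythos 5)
The paper cites this lemma from [FS, Lemma 8] without proof, so there is no internal argument to compare against; I evaluate your proposal on its own terms. Your reading of the left-hand diagram --- ``a closed $i$-labeled bubble attached via an implicitly summed pair of dual trivalent coupons'' --- is not consistent with the stated right-hand side. If the $i$-loop were attached to the $X$-strand by summed dual coupons $\alpha_j \in \Hom_\calc(X, X\otimes i)$, $\alpha_j^* \in \Hom_\calc(X\otimes i, X)$, then for $X = x$ simple and $f = \id_x$ the paper's normalization $\alpha_j^* \circ \alpha_k = \delta_{jk}\dim(x)^{-1}\id_x$ gives
$\sum_i \dim(i)\sum_j \alpha_j^*\circ \alpha_j = \dim(x)^{-1}\sum_i \dim(i)\,N^i_{x,x^*}\,\id_x = \dim(x)\,\id_x$,
not $\dim(\calc)\,\id_x$. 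That is exactly what your backup route via $\sum_i\dim(i)\,N^i_{x,x^*} = \dim(x)^2$ would yield, and it contradicts the lemma. In fact the $i$-bubble in the left figure is disconnected from the $X$-strand --- in a non-braided planar calculus a closed $i$-loop cannot pass through or wind around another strand --- so the loop simply evaluates to $\dim(i)$, and the identity is the one-line computation $\sum_{i\in\calo(\calc)}\dim(i)\cdot\dim(i) = \dim(\calc)$; neither \cref{lemQuad} nor \cref{lemDecId} is needed.

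Your primary route has an additional, independent error. You claim that expanding a closed $\mathbbm{1}$-loop via \cref{lemDecId} produces $\sum_{j\in\calo(\calc)}\dim(j)$ closed $j$-loops, summing to $\dim(\calc)$. But $\Hom_\calc(\mathbbm{1},j) = 0$ for every simple $j \ncong \mathbbm{1}$, so \cref{lemDecId} applied to $\id_\mathbbm{1}$ collapses to the single term $j = \mathbbm{1}$, and a closed $\mathbbm{1}$-loop evaluates to $\dim(\mathbbm{1}) = 1$. The factor $\dim(\calc)$ cannot arise this way, so the primary route would not close even if the proposed specialization of \cref{lemQuad} produced the configuration you describe.
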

Obviously, by the rules of the graphical calculus, these identities still hold if any of the diagrams is replaced by an isotopic version of it.

\subsection{Cobordism categories and boundary-defect TQFTs}
\label{subSecTQFTs}
In this section, we recall the necessary details and notational conventions on cobordism categories, topological field theories and certain auxiliary geometric data, skeleta of manifolds. The standard reference for this section is \cite{TV}, while the geometric generalizations discussed in this section were introduced in \cite{F} and \cite{FS}. 

We start by giving the definition of the category of cobordisms in three dimensions. The term \(n\)-manifold will stand for a compact oriented \(n\)-dimensional topological manifold (unless stated otherwise), possibly with boundary.
In the case \(n = 2\), we will also call a 2-manifold a surface. The orientation of a 3-manifold \(M\) with boundary induces an orientation of its boundary surface \(\partial M\) such that an outward-pointing normal vector followed by the orientation of \(\partial M\) recovers the one on \(M\).
\begin{definition}
    \label{defCob}
    The category \(\Cob_3\) is the category with objects given by closed surfaces \(\Sigma\) and morphisms being equivalence classes of 3-manifolds \(M\) with a boundary decomposition into source and target manifolds. 
    
    More precisely, a morphism \(\Sigma \rightarrow \Sigma'\) in \(\Cob_3\) is an equivalence class of pairs \((M, h)\), called \textit{cobordisms}, of a compact 3-manifold \(M\) together with an orientation-preserving homeomorphism \(h\colon \overline{\Sigma} \sqcup \Sigma' \rightarrow \partial M\) that parametrizes the boundary of \(M\). Two cobordisms \((M,h), (M',h'): \Sigma \rightarrow \Sigma'\) are equivalent (and hence define the same morphism) whenever there is an orientation-preserving homeomorphism \(f\colon M \rightarrow M'\) such that \(f \circ h = h'\). From here on, we will omit the datum \(h\) from the notation, treating it as implicitly provided by context whenever considering specific examples.

    Composition in \(\Cob_3\) works by gluing: If \(M \in \Cob_3(\Sigma, \Sigma')\) and \(M' \in \Cob_3(\Sigma', \Sigma'')\), then \(M' \circ M = M' \cup_{\Sigma'} M \in \Cob_3(\Sigma, \Sigma'')\). The identity cobordism on a surface \(\Sigma\) can be represented by the cylinder \(\Sigma \times I\), where \(I = [0,1]\) denotes the unit interval.
\end{definition}

The category \(\Cob_3\) carries a natural symmetric monoidal structure given by disjoint union \(\sqcup\colon \Cob_3 \times \Cob_3 \rightarrow \Cob_3\) and unit object \(\emptyset\), the empty manifold. It is rigid and spherical, with duals given by reversing orientation, \(\Sigma^* = \overline{\Sigma}\). For further details, see \cite{TV}, Section 10.1. Similarly, one defines cobordism categories \(\Cob_n\) in any dimension \(n\).
\begin{remark}
    \label{remTopPL}
    Analogous definitions exist for cobordism categories of smooth or piecewise linear manifolds. In three dimensions, all of these categories are monoidally equivalent, so we adopt the simplest choice of topological manifolds in this text. Nonetheless, as is standard, we will freely make use of concepts specific to differential manifolds only, such as normal vectors.
\end{remark}
The following definition is standard.
\begin{definition}
    \label{defTQFT}
    A \textit{topological (quantum) field theory (TQFT)} is a symmetric monoidal functor
    \begin{align*}
        Z\colon \Cob_3 \rightarrow \vect_{\K}
    \end{align*}
    between the category of cobordisms and the category of finite-dimensional \(\K\)-vector spaces.
\end{definition}
In the TQFT-literature, various flavors of cobordism categories have been considered. For the purpose of this paper, we are interested in a slightly more general version of the standard cobordism category, incorporating a certain class of topological defects and corresponding algebraic labelings. We no longer demand surjectivity of the parametrizing maps as in \cref{defCob}, and may therefore consider defect graphs sitting in the unparametrized part of the boundary of generalized cobordisms, the free boundary, with labelings coming from a spherical fusion category \(\calc\).

In the following, we briefly outline the concept of the category \(\Cob_3^{\calc}\) consisting of \(\calc\)-colored surfaces and cobordisms, which, for our purposes, is restricted to boundary defect structures. We will simplify the concepts and refer to the original sources for details. Recall the following.
\begin{itemize}
    \item (\cite[Definition 4.2.5]{F}, generalizing \cite[Section 15.1.1]{TV}) A \(\calc\)-\textit{colored surface} is a not necessarily closed surface \(\Sigma\) with finitely many points inserted in the boundary \(\partial \Sigma\) labeled by objects of \(\calc\) and a sign. These points are called \textit{colored points}.
    \item (\cite[Definition 4.2.7]{F}, \cite[Section 15.1.2]{TV}) A \(\calc\)-\textit{colored graph} is an oriented graph \(\Gamma\), regarded as a topological space, with rectangular coupons with distinguished bottom and top sides as vertices. The edges of \(\Gamma\) are labeled by objects in \(\calc\), and the vertices/coupons by morphisms in \(\calc\).\footnote{\label{footNoLoops}Note that in \cite{F} and similarly in \cite{TV}, this object would be called a \(\calc\)-colored plexus. In this article, we go with the simpler terminology of a graph.} The edges connect to the top and bottom of the coupons.
    
    We also allow edges with free ends -- that is, edges that are attached to a coupon with only one end. A \(\calc\)-colored graph \(\Gamma\) can be embedded in a surface \(\Sigma\), and we will always assume that free ends embed into the boundary \(\partial \Sigma\). 
    
    Given such an embedding \(\Gamma \subset \Sigma\), we will denote by \(V(\Gamma), E(\Gamma)\) and \(P(\Gamma)\) the sets of vertices, edges and plaquettes (faces), respectively.\footnote{Strictly speaking, we do not allow isolated loops consisting of a single edge with no vertex attached. Whenever loops labeled by some object \(X \in \calc\) appear in the main text, it is implicitly understood that they are decorated with a `transparent' coupon labeled by \(\id_X\). Since this has no effect on the values of the TQFTs, we may (and will) disregard this vertex in practice.}
\end{itemize}
We introduce some further notation for later use. Given an embedded graph \(\Gamma \subset \Sigma\), we denote for a vertex \(v \in V(\Gamma)\) by \(E(v)\) and \(P(v)\) the sets of edges and plaquettes adjacent to \(v\), and for a plaquette \(p \in P(\Gamma)\), we denote by \(V(p)\) and \(E(p)\) the vertices and edges adjacent to \(p\).

\begin{definition}
    \label{defCobDef}
    The category \(\Cob_3^{\calc}\) of \(\calc\)-colored cobordisms has as objects \(\calc\)-colored surfaces \(\Sigma\). Its morphisms generalize the notion of a cobordism from \cref{defCob} as follows:
    
    Given two surfaces \(\Sigma, \Sigma'\), a cobordism consists of a pair \((M, h)\), where \(M\) is a 3-manifold and \(h\colon \overline{\Sigma} \sqcup \Sigma' \hookrightarrow \partial M\) an orientation-preserving \textit{embedding}, referred to as the parametrization map. We call \(\partial_gM := \mathrm{im}(h)\) the gluing boundary and its complement \(\partial_fM = \partial M \setminus \partial_gM\) in \(\partial M\) the free boundary of the manifold \(M\).
    
    A morphism in \(\Cob_3^{\calc}(\Sigma, \Sigma')\) is then an equivalence class of cobordisms \((M, h)\), together with an embedding of a \(\calc\)-colored graph \(\Gamma\) into the closure \(\mathrm{cl}(\partial _f M)\) of the free boundary, such that \(h\) identifies the colored points of the \(\calc\)-colored surface \(\overline{\Sigma} \sqcup \Sigma'\) with the free ends of \(\Gamma\) in \(\partial_gM\). 
    
    More precisely, the embedding of \(\Gamma\) endows the gluing boundary \(\partial_gM\) with the structure of a \(\calc\)-colored surface, by the convention that edges of \(\Gamma\) associated with free ends terminating in \(\partial_gM\) are oriented away from colored points with negative sign and toward those with positive sign (this convention is opposite to that used in \cite{F}). The map \(h\) is then required to preserve the positions, labels and orientations of the colored points.
    
    Two such cobordisms are equivalent whenever there is an orientation-preserving homeomorphism compatible with the parametrization maps, that maps one \(\calc\)-colored graph to the other. 

    We will usually denote morphisms as \(M_{\Gamma} \in \Cob_3^{\calc}(\Sigma, \Sigma')\), omitting the parametrization map and sometimes even the graph \(\Gamma\) from the notation. By abuse of terminology, we will refer to morphisms simply as cobordisms.
    Composition in this category is defined by gluing the underlying topological structures. The identity cobordism for a \(\calc\)-colored surface \(\Sigma\), with colored points \(p_1, \hdots, p_k\), is represented by the cylinder \(\Sigma \times I\), decorated with the defect graph \(\{p_1, \hdots, p_k\} \times I\) with labels and orientations inherited from the colored points of \(\Sigma\).
\end{definition}
The category \(\Cob_3^{\calc}\) can be equipped with a natural monoidal structure by disjoint union. Duals of \(\calc\)-colored surfaces are defined by swapping orientations and signs of colored points, and this assignment turns \(\Cob_3^{\calc}\) into a spherical category.

We can now generalize the concept of a TQFT from \cref{defTQFT} to that of a defect TQFT -- more precisely, a TQFT with (certain) boundary defects.
\begin{definition}
    \label{defTQFTDef}
    A \textit{topological quantum field theory with boundary defects} is a symmetric monoidal functor \(Z\colon \Cob_3^{\calc} \rightarrow \vect_{\K}\) from the category of \(\calc\)-colored cobordisms into the category of finite-dimensional vector spaces over \(\K\). 
\end{definition}
For brevity, we will often use the shorthand \textit{boundary-defect TQFT} or simply \textit{defect TQFT}.
Hence, for our purposes, a defect theory knows how to assign values to manifolds with defects of dimension 0 (the vertices) and 1 (the edges) embedded in the (free) boundary. In the literature, significantly more general notions of defect theories exist, allowing for defects of every codimension including those in the interior of manifolds. As indicated earlier, the restricted class of defects considered in this paper suffices for our results and in particular for stating \cref{thmMain}.

There is a natural inclusion functor \(\Cob_3 \hookrightarrow \Cob_3^{\calc}\) that embeds the category of cobordisms into the category of \(\calc\)-colored cobordisms. Recall that in the former, no free boundaries are allowed. Any boundary-defect TQFT \(\Cob_3^{\calc} \rightarrow \vect_{\K}\) can be restricted along the inclusion to obtain a TQFT (without defects) \(\Cob_3 \rightarrow \vect_{\K}\).
We will encounter further variants of cobordism categories when we discuss Dijkgraaf-Witten theory in \cref{subSecDW}.

Instead of working with the entire cobordism category \(\Cob_3^{\calc}\), we may restrict attention to the morphism set \(\Cob_3^{\calc}(\emptyset, \emptyset)\).

\begin{definition}
    \label{defInv}
    We call a map \(\omega\colon \Cob_3^{\calc}(\emptyset, \emptyset) \rightarrow \K\) a \textit{boundary-defect invariant} of \(\calc\)-colored 3-manifolds. If the invariant respects the monoidal product \(\sqcup\) in \(\Cob_3^{\calc}\) in the sense that it satisfies \(\omega(M \sqcup N) = \omega(M) \cdot \omega(N)\), we call \(\omega\) \textit{multiplicative}.
\end{definition}
The terminology `invariant' for any such map \(\omega\) is justified by the fact that \(\Cob_3^{\calc}(\emptyset, \emptyset)\) already consists of equivalence classes of cobordisms. Thus, any map from \(\Cob_3^{\calc}(\emptyset, \emptyset)\) to the ground field yields a genuine topological invariant of 3-manifolds with a \(\calc\)-colored boundary graph.
\begin{example}
    Clearly, any defect TQFT \(Z\colon \Cob_3^{\calc} \rightarrow \vect_{\K}\) defines a multiplicative defect invariant by setting \(\omega  = Z|_{\Cob_3^{\calc}(\emptyset, \emptyset)}\).
\end{example}

We recall some auxiliar geometric structures embedded in cobordisms.
\begin{definition}{(\cite[Definition 4.2.13]{F}, \cite[Section 15.7.1]{TV})}
    \label{defSkel}
    A \textit{skeleton} \(G\) of a \(\calc\)-colored surface \(\Sigma\) is an embedded oriented graph containing the boundary of \(\Sigma\), i.e. \(\partial \Sigma \subset G\), such that the following conditions are satisfied: Every colored point \(p \in \partial\Sigma\) is contained in the interior of an edge of \(G\), all vertices have valence greater than or equal to 2 and \(\Sigma \setminus G\) is a disjoint union of open disks.
\end{definition}
Thus, skeleta can be seen as combinatorial descriptions of surfaces. We need a similar description for 3-manifolds. This is somewhat more involved, as the following definition shows.
\begin{definition}{(\cite[Definition 4.2.17, 4.2.18]{F}, \cite[Section 11.1.2, 11.5.1]{TV})}
    \label{defGraphSkel}
    Let \(M_{\Gamma}\) be a cobordism in \(\Cob_3^{\calc}\) and \(G\) a skeleton of its gluing boundary \(\partial_gM\). A \textit{(graph) skeleton} \(P\) of the pair \((M_{\Gamma}, G)\) is a stratified 2-polyhedron suitably embedded in the underlying manifold \(M\). As such, it is a compact space that can be triangulated with cells of dimension smaller than or equal to 2, and comes with a distinguished stratification defined by a graph \(P^{(1)} \subset P\). The stratification is such that all points that do not admit an open neighborhood in \(P\) homeomorphic to \(\mathbb{R}^2\) are contained in the graph \(P^{(1)}\). 
    
    The connected components of \(P\setminus P^{(1)}\) are called \textit{regions} of \(P\) and the set containing them is denoted by \(\mathrm{Reg}(P)\). 
    Every region is endowed with an orientation.
    
    A branch of \(P\) is a homotopy class of paths starting from the interior of an edge and ending in \(P \setminus P^{(1)}\).
    
    The skeleton \(P\) is required to satisfy several conditions in relation to \(M\) and \(G\): It has to be compatible with \(G\) in the sense that \(\partial P = G\) as oriented graphs, where by \(\partial P \subset P\) we denote the graph given by edges with only one branch (and induced orientation) and their vertices, and it must be \(\partial\)-cylindrical (see \cite{TV}, Section 11.1.4). Moreover, \(P \setminus \partial P\) lies in the complement \(M \setminus \partial _gM\), the skeleton covers the free boundary, \(\partial _f M \subset P\), and \(M \setminus P\) is a disjoint union of open 3-balls and a rest homeomorphic to \(\partial_gM \setminus G \times [0,1)\). Finally, we demand that \(P\) behaves well with respect to the \(\calc\)-colored graph \(\Gamma\) embedded in \(\partial_fM\): For this, we require that the intersection points of \(\Gamma\) and \(P^{(1)}\), called switches, form a finite set, and that no vertex of \(P^{(1)}\) or \(\Gamma\) is in \(P^{(1)} \cap \Gamma\).\footnote{Technically, in \cite{F} and \cite{TV}, graph skeleta are defined not in relation to \(\calc\)-colored graphs embedded in \(M\) (or plexuses in their terminology), but to certain embeddings of plexuses entirely in \(P\), called \textit{knotted plexuses}. These represent a \(\calc\)-colored graph \(\Gamma\) via an ambient isotopy (here in \(\partial_f M\)). This approach is primarily motivated by the inclusion of additional internal defect lines, which are not required to lie in the skeleton from the outset. Since we consider merely boundary defect structures, which are already contained in \(P\), we ignore this distinction and tacitly assume that a suitable homotopy has been applied to \(\Gamma\) if necessary.}
\end{definition}

For a defect cobordism \(M_{\Gamma}\) and a graph skeleton \(P\) chosen with respect to a skeleton \(G\) of \(\partial_gM\), we fix terminology and notation for geometric substructures related to the combined graph \(P^{(1)} \cup \Gamma\).

The switches together with the vertices of \(P^{(1)}\) are referred to as nodes, the connected components of the complement of the nodes in \(P^{(1)} \cup \Gamma\) are the rims, and the connected components of \(P \setminus (P^{(1)} \cup \Gamma)\) are the faces. We call a rim or node \textit{internal} if it does not lie inside the gluing boundary and denote the sets of internal rims of \(P\) by \(R(P)\) and the complete set of nodes by \(N(P)\). The latter set decomposes into the set of nodes \(N_{\partial}\) in the gluing boundary \(\partial_gM\) and the internal nodes \(N_0\) (which are allowed to lie inside the free boundary). The set of faces, which subvivide the regions contained in \(\mathrm{Reg}(P)\), is denoted by \(\mathrm{Fac}(P)\) and can be decomposed into the set of faces \(\mathrm{Fac}_{\partial}(P)\) adjacent to the gluing boundary and the fully internal faces \(\mathrm{Fac}_0(P)\). Branches with respect to faces are defined analogously as with respect to regions. 

Each rim \(r\) also defines two half-rims. We denote by \(R^h\) the set of internal half-rims, which are those that do not lie in the gluing boundary \(\partial_gM\), by \(R_{\partial}^h \subset R^h\) the subset of half-rims adjacent to the gluing boundary and by \(R_0^h\) its complement in \(R^h\), the internal rims. 

\subsection{The Turaev-Viro construction with boundary defects}
\label{subSecTV}
In this section, we briefly outline the construction of Turaev-Viro theory generalized to the setting of free boundaries and surface defect graphs. The outcome is a boundary-defect TQFT \(|\cdot|_{\calc}\colon \Cob_3^{\calc} \rightarrow \vect_{\K}\) constructed using geometric data, the graph skeleta introduced in \cref{defSkel}. A priori, for a cobordism \(M\), the linear map \(|M|_{\calc}\) depends on a chosen graph skeleton for \(M\). We rely on the following hypothesis.
\begin{hypothesis}
    \label{hyp}
    Every cobordism \(M\colon \Sigma \rightarrow \Sigma'\) in \(\Cob_3^{\calc}\) admits a graph skeleton, and the linear map \(|M|_{\calc}\colon |\Sigma|_{\calc} \rightarrow |\Sigma'|_{\calc}\) constructed below is independent of the choice of skeleton. 
\end{hypothesis}
In the case of internal defect lines and in the absence of free boundaries, this assumption has been proved in \cite[Theorem 15.7, 15.8]{TV}. We formulate this as a hypothesis, since the proof requires a subtle analysis of topological moves relating different graph skeleta in the presence of free boundaries and defect lines, for which the authors are not experts.

We now recall the construction of \(|\cdot|_{\calc}\) and set the necessary conventions that will be relevant in \cref{secTVRes}. Once again, we refer to the sources \cite{TV, F, FS} for a detailed presentation of the TQFT-construction. The exposition here closely follows that of \cite{F}, which in turn is based on \cite[Chapters 12 -- 15]{TV}.
\begin{enumerate}
    \item Let \(M = M_{\Gamma} \in \Cob_3^{\calc}(\Sigma, \Sigma')\) be a nonempty cobordism with a \(\calc\)-colored graph \(\Gamma\) in its free boundary \(\partial_fM\). Pick skeleta \(G\) and \(G'\) of \(\Sigma\) respectively \(\Sigma'\) and a graph skeleton \(P\) of the pair \((M, G^{\mathrm{op}} \sqcup G')\). Moreover, fix a \textit{coloring} of \(P\), namely a map
    \begin{align*}
        c\colon \mathrm{Fac}(P) \rightarrow \calo(\calc)
    \end{align*}
    labeling each face of \(P\) with a simple object in \(\calc\).
    \item \label{itemTVCon2}Let \(r \in R(P)\) be an internal rim that is an edge of \(P^{(1)}\) and denote by \(e, e' \in R^h\) the half-rims associated to \(r\). We define two vector spaces \(H_c(e)\) and \(H_c(e')\) that are dual to each other. 
    
    The orientation of \(M\) and the choice of orienting each half-rim away from its attached node defines two opposite cyclic orders of the branches of \(r\), one for \(e\) and one for \(e'\). Let \(x_1, \hdots, x_k \in \calo({\calc})\) denote the objects labeling the branches of \(r\) determined by the coloring \(c\), listed in accordance with the cyclic order with respect to \(e\). The orientation of each branch defines a sign \(\varepsilon(i)\), which is positive if the orientation of the branch induces the one of \(e\), and negative if it does not. Let \(x_i^+ = x_i\), and \(x_i^- = x_i^{*}\) and set
    \begin{align*}
        H_c(e) = \Hom_{\calc}\left(\mathbbm{1}, x_1^{\varepsilon(1)} \otimes \hdots \otimes x_k^{\varepsilon(k)}\right).
    \end{align*}
    Note that due to \cref{eqCycl}, the vector space \(H_c(e)\) does not depend on the choice of a first object, only the cyclic order of the \(x_i\). With this definition, it is clear that \(H_c(e') = \Hom_{\calc}\left(\mathbbm{1}, x_k^{-\varepsilon(k)} \otimes \hdots \otimes x_1^{-\varepsilon(1)}\right)\) and that the two vector spaces are in fact dual to one another (cf. \cref{eqDual}).

    If \(r\) is a rim coming from a defect edge, the procedure that assigns a vector space is very similar. Note however that here, we have an additional object \(Y \in \calc\) labeling \(r\). The rim \(r\) has only two neighboring branches, a left and a right one, and both orientations are compatible. For concreteness, we assume that the orientation of the left one induces the orientation of the half-rim \(e\) (which is possibly different from the orientation of the defect edge in \(\Gamma\)). Then, we set \(H_c(e) = \Hom_{\calc}\left(\mathbbm{1}, x_{\ell} \otimes Y^{\varepsilon} \otimes x_r^*\right)\), where \(x_{\ell}\) decorates the left branch, \(x_r\) the right one and the sign \(\varepsilon\) is positive precisely if the orientation of the edge in \(E(\Gamma)\) defining \(r\) agrees with the one of the half-edge \(e\).
    \item \label{itemSpaces} To each node \(v \in N(P)\), we assign the unordered tensor product \(H_c(v) = \bigotimes_{e \in R^h(v)}H_c(e)\) over the internal half-rims adjacent to \(v\). Tensoring further over all \(v \in N(P)\), we assign to \(M\) the space \(H_c(M) = \bigotimes_{v \in N(P)}H_c(v) = \bigotimes_{e \in R^h}H_c(e)\). Here, we have organized the space in the first expression in terms of nodes, and in the second in terms of internal half-rims. The space \(H_c(M)\) can moreover be decomposed in two other ways: We have
    \begin{align}
        \label{eqDec}
        H_c(M) = H_c^0(M) \otimes_{\K} H_c^{\partial}(M) = \bigotimes_{r \in R(P)} H_c(e_r) \otimes_{\K} H_c(e'_r),
    \end{align}
    where in the first tensor decomposition, the tensorand \(H_c^{\partial}(M) = \bigotimes_{e \in R_{\partial}^h}H_c(e)\) is a tensor product running over all half-rims adjacent to the gluing boundary, and \(H_c^0(M)\) is the tensor product over the remaining half-rims in \(R_0^h\). In the second decomposition, \(e_r, e'_r\) denote the half-rims of a chosen rim \(r\).
    
    Since the gluing boundary decomposes as \(\partial_gM \cong \overline{\Sigma} \sqcup \Sigma'\), we can rewrite the former space as \(H_c^{\partial}(M) \cong H_c(\overline{\Sigma}) \otimes_{\K} H_c(\Sigma')\), where we collect in the first factor all half-rims adjacent to \(\Sigma\), and in the second all adjacent to \(\Sigma'\).
    \item Let \(r\) be an internal rim and \(e, e'\) its two half-rims. The duality \(H_c(e') \cong H_c(e)^*\), as for any pair of dual vector spaces, allows to identify a vector \(*_r^c = \mathrm{coev}_{H_c(e)}(1_{\K}) \in H_c(e) \otimes H_c(e')\). By the latter decomposition in \cref{eqDec}, we therefore find a distinguished vector 
    \begin{align*}
        *_c = \otimes_{r \in R(P)} \, *_r^c \in H_c(M).
    \end{align*}
    For later purposes, we introduce here what we will call a \textit{rim basis} \(\alpha = \{\alpha_{j_r}^r\}_{j_r=1}^{k_r}\). Pick a basis \(\{\alpha_{j_{e'}}^{e'}\}_{j_{e'}=1}^{k_r}\) of \(H_c(e')\) and the corresponding dual basis \(\{\alpha_{j_{e}}^{e}\}_{j_{e}=1}^{k_r}\) of \(H_c(e)\), that is \(\alpha_{j_e}^e = (\alpha_{j_{e'}}^{e'})^* \). We then set \(\alpha_{j_r}^r := \alpha_{j_{e'}}^{e'}\) and express the vector \(*_r^c\) as
    \begin{align*}
        *_r^c = \sum_{j_e = 1}^{k_r} \alpha_{j_e}^e \otimes (\alpha_{j_e}^e)^* = \sum_{j_r = 1}^{k_r} (\alpha_{j_r}^r)^*\otimes \alpha_{j_r}^r.
    \end{align*}
    Occasionally, we will simply write \(*_r = \alpha \otimes \alpha^*\) with implied indices and sum. Note that with the conventions from point \labelcref{itemTVCon2}, the collection \(\{\alpha_{j_r}^r\}\) is a basis of the vector space \( \Hom_{\calc}\left(X_1^{\varepsilon(1)} \otimes \hdots \otimes X_k^{\varepsilon(k)}, \mathbbm{1}\right)\), where the objects \(X_i\) might come from simple objects labeling regions defined by the coloring \(c\) or from the decoration of defect lines. Note that we have made a choice of a preferred half-rim \(e\) inducing the order of the objects and the signs.
    \item We construct a family of maps \(\Gamma_c(v)\colon H_c(v) \rightarrow \K\), one for each internal node \(v \in N_0\), which are ultimately used to evaluate the vector \(*_c \in H_c(M)\) defined in the previous step to a scalar. The key idea is the observation that the skeletal geometry and the coloring induce graphs with \(\calc\)-colored edges on small spheres surrounding the nodes, which, due to sphericity of \(\calc\), define scalars in \(\K\) when evaluated on a vector in \(H_c(v)\).

    Explicitly, this works as follows. If \(v\) is a vertex of the skeleton, embed a small 3-ball \(B^3\) around \(v\).\footnote{As explained in \cite{F}, for free boundary vertices one embeds a neighborhood in \(\mathbb{R}^3\) and performs the construction there.} The intersection \(P \cap B^3\) defines a graph on the boundary sphere \(\partial B^3\), called the \textit{link graph}, whose vertices are the intersection points with the edges of \(P\) and whose edges are the intersection lines with the faces (or equivalently regions) of \(P\) with induced orientation and object labelings. With appropriate sign conventions, each vertex of this graph -- corresponding to a half-edge \(e\) -- can naturally be labeled by morphisms in \(H_c(e) = \Hom_{\calc}\left(\mathbbm{1}, x_1^{\varepsilon(1)} \otimes \hdots \otimes x_k^{\varepsilon(k)}\right)\). A choice of vector in \(H_c(e)\) for each half-edge \(e \in R^h(v)\) thus determines a labeling of the link graph's vertices. Since \(\calc\) is spherical, we can evaluate this colored graph on the sphere to a morphism in \(\End_{\calc}(\mathbbm{1}) \cong \K\). Extending linearly, this defines the desired map \(\Gamma_c(v)\).

    For \(v \in N_0\) a switch (respectively, a coupon) we proceed similarly, with the exception that we add the defect edge (respectively, the coupon with its attached defect edges) to the link graph. These defect edges are attached at their intersection points with the sphere bounding the 3-ball \(B^3\). For more details, we refer to Section 15.5.1 of \cite{TV}.
    \item Evaluating the tensor product of the maps \(\Gamma_c(v)\) for internal nodes \(v \in N_0\) with the identity morphism on nodes in \(N_{\partial}\) on the vector \(*_c\) defines a vector
    \begin{align*}
        |c| = V_c(*_c) :=  \left(\otimes_{v \in N_0}\Gamma_c(v) \otimes \id_{H_c^{\partial}(M)} \right)(*_c) \in H_c^{\partial}(M).
    \end{align*}
    If the gluing boundary \(\partial_gM\) is empty, the vector \(|c|\) is a scalar in the field \(\K\). For a given coloring \(c\), we sometimes use the notation \(|c| = |c(a_1), \hdots, c(a_{\ell})|\), where the \(a_i\) label the faces \(\mathrm{Fac}(P)\). 
    
    Recall the notation \(\Fac_{\partial}(P)\) for faces adjacent to the gluing boundary and \(\Fac_0(P)\) for fully internal faces. Denote by \(c_{\partial} = c|_{\Fac_{\partial}(P)}\) the restriction of the coloring \(c\) to the boundary faces. Clearly, \(H_c^{\partial}(M)\) only depends on \(c_{\partial}\), so we can denote it by \(H_{c_{\partial}}^{\partial}(M) = H_c^{\partial}(M)\). Regarding \(c_{\partial}\) as fixed, every coloring of fully internal faces \(c_0\colon \Fac_0(P) \rightarrow \calo(\calc)\) defines a coloring \(c =c_0 \sqcup c_{\partial}\) of all faces. We define the following vector in \(H_{c_{\partial}}^{\partial}(M)\), where the sum ranges over all internal colorings of \(P\):
    \begin{align}
        \label{eqVecSS}
        |M, c_{\partial}|_{\calc} = \dim(\calc)^{-|M \setminus P|} \sum_{c_0: \, \Fac_0(P) \, \rightarrow \, \calo(\calc)} \dim(c_0 \sqcup c_{\partial}) |c_0 \sqcup c_{\partial}|.
    \end{align}
    Here, \(|M \setminus P|\) denotes the number of connected components of the space \(M \setminus P\) and 
    \begin{align*}
        \dim(c) = \prod_{a \mspace{1mu} \in \mspace{1mu} \Fac(P)}\dim(c(a))^{\chi(a)}
    \end{align*}
    is the dimension of the coloring, where \(\chi(a)\) denotes the Euler characteristic of \(a\).
    
    We pause for a moment and remark that the vector \(|M, c_{\partial}|_{\calc}\) still depends on the boundary coloring \(c_{\partial}\) and the skeleton \(G^{\mathrm{op}} \sqcup G'\) of the gluing boundary, as well as on the graph skeleton \(P\). A more concrete formulation of \cref{hyp} asserts that this assignment is independent of \(P\). In the special case where the gluing boundary is empty, we obtain a scalar invariant of \(\Cob_3(\emptyset, \emptyset)\), which can be expressed as
    \begin{align}
        \label{eqInv}
        |M_{\Gamma}|_{\calc} = \dim(\calc)^{-|M \setminus P|} \sum_{c: \, \Fac(P) \, \rightarrow \, \calo(\calc)} \dim(c) |c| \in \K,
    \end{align}
    and fully determines the functor \(| \cdot |_{\calc}\) on endomorphism spaces of the empty manifold in \(\Cob_3^{\calc}\). The result is a boundary-defect invariant in the sense of \cref{defInv}, the \textit{(defect) Turaev-Viro invariant}, and it is easy to see that it is in fact multiplicative. In order to extend this to a full boundary-defect TQFT, a few additional steps are required. 
    \item We enrich the boundary skeleta \(G^{\mathrm{op}}\) and \(G'\) of the boundary components \(\overline{\Sigma}\) and \(\Sigma'\), respectively, by inserting vertices at the location of the colored points. By properties of graph skeleta, the respective restrictions \(c_{\Sigma}\) and \(c_{\Sigma'}\) of the boundary coloring \(c_{\partial}\) are in one-to-one correspondence with colorings of the enriched graphs.
    In this way, the spaces \(H_{c_{\Sigma}}(\overline{\Sigma})\) and \(H_{c_{\Sigma'}}(\Sigma')\) are defined independently from the cobordism \(M\) and the graph skeleton \(P\), and depend only on \(G^{\mathrm{op}}\) and \(G'\). 
    
    Moreover, one can observe that there is a duality \(H_{c_{\Sigma}}(\overline{\Sigma}) \cong H_{c_{\Sigma}}(\Sigma)^*\), from which it follows that \(H_{c_{\partial}}^{\partial}(M) \cong H_{c_{\Sigma}}(\Sigma)^* \otimes_{\K} H_{c_{\Sigma'}}(\Sigma')\). We may therefore interpret the vector from \cref{eqVecSS} as a linear map
    \begin{align*}
        |M, c_{\partial}|_{\calc} \in \Hom_{\K}\left(H_{c_{\Sigma}}(\Sigma), H_{c_{\Sigma'}}(\Sigma')\right).
    \end{align*}
    Summing over all possible colorings, we define the vector space \(H(\Sigma, G) = \bigoplus_{c_{\Sigma}}H_{c_{\Sigma}}(\Sigma)\), which is now independent of the choice of boundary colorings. The maps defined above induce linear maps \(|M, G, G'|_{\calc}\colon H(\Sigma, G) \rightarrow H(\Sigma', G')\), given by the normalized sum\footnote{Here, for a family of linear maps \(f_{ij}\colon V_i \rightarrow W_j\), their sum \(\oplus_{ij}f_{ij}\colon \oplus_iV_i \rightarrow \oplus_j W_j\) is defined by \(\oplus_{ij}f_{ij}((v_k)_k) = \left(\sum_i f_{ij}(v_j)\right)_j\).}
    \begin{align*}
        |M, G, G'|_{\calc} = \bigoplus_{c_{\Sigma}, c_{\Sigma'}} \frac{\dim(\calc)^{|\Sigma' \setminus G'|}}{\dim^{\mathrm{tr}}(c_{\Sigma'})}|M, c_{\Sigma} \sqcup c_{\Sigma'}|_{\calc}, 
    \end{align*}
    where the trivial dimension \(\dim^{\mathrm{tr}}(c_{\Sigma'})\) of the coloring \(c_{\Sigma'}\) is defined as 
    \begin{align*}
        \dim^{\mathrm{tr}}(c_{\Sigma'}) = \prod_{a \mspace{1mu} \in \mspace{1mu} \Fac_{\partial}(P), \, a \text{ adjacent to } \Sigma'} c(a).
    \end{align*}
    The normalization is chosen to ensure the functoriality of the construction with respect to composition via gluing. Under \cref{hyp}, the maps \(|M, G, G'|_{\calc}\) can be assumed to be independent of the chosen graph skeleton \(P\).
    \item The following is readily verified and ensures both skeleton independence and compatibility of the functor with identities: The linear map \(|\Sigma \times I, G, G|_{\calc}\) is an idempotent for any graph skeleton \(G\), suggesting the definition of the subspace \(|\Sigma, G|_{\calc} = \mathrm{im}(|\Sigma \times I, G, G|_{\calc}) \subset H(\Sigma, G)\) associated to \(\Sigma\) and \(G\). In addition, for any other skeleton \(G'\) of \(\Sigma\), the map \(|\Sigma \times I, G, G'|_{\calc}\) restricts to an isomorphism \(|\Sigma, G|_{\calc} \rightarrow |\Sigma, G'|_{\calc}\). Taking the limit 
    \begin{align*}
        |\Sigma|_{\calc} = \lim_G |\Sigma, G|_{\calc}
    \end{align*}
    over the subcategory of objects \(|\Sigma, G|_{\calc}\) and isomorphisms \(|\Sigma \times I, G, G'|_{\calc}\) for any skeleta \(G, G'\) of \(\Sigma\) in \(\vect_{\K}\) ensures independence of surface skeleta. The universal property of the limit then induces a map \(|M|_{\calc}\colon |\Sigma|_{\calc} \rightarrow |\Sigma'|_{\calc}\) from the previously defined maps \(|M, G, G'|_{\calc}\). Finally, setting \(|\emptyset|_{\calc} = \K\), one has the following theorem, proven in \cite{TV} and generalized in \cite{F} to the setting with free boundaries, subject to \cref{hyp}.
\end{enumerate}
\begin{theorem}{\textnormal{(\cite[Theorem 15.9]{TV}, \cite[Section 4.2)]{F})}}
        The assignment \(| \cdot |_{\calc}\colon \Cob_3^{\calc} \rightarrow \vect_{\K}\) defines a symmetric monoidal functor from the category of \(\calc\)-colored cobordisms to vector spaces. In other words, it constitutes topological quantum field theory with boundary defects.
\end{theorem}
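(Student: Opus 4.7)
The plan is to establish the theorem as a structured sequence of verifications, following the outline in \cite{TV} and \cite{F} but adapted to the boundary-defect setting with \(\calc\)-colored graphs in the free boundary. Since \cref{hyp} is assumed, the linear map \(|M, G, G'|_{\calc}\) attached to a cobordism is well-defined independent of the graph skeleton \(P\), so the main remaining tasks are (a) well-definedness of the assignments on objects and morphisms as limits over boundary skeleta, (b) functoriality under gluing, (c) preservation of identities, and (d) compatibility with the symmetric monoidal structure. I would treat these in the order below.

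First, for a fixed \(\calc\)-colored surface \(\Sigma\) with boundary skeleton \(G\), I would verify that \(|\Sigma \times I, G, G|_{\calc}\) is an idempotent on \(H(\Sigma, G)\); this uses the skeletal computation of \(\Sigma \times I\) together with \cref{lemBub} to absorb the \(\dim(\calc)\)-factors produced by internal bubbles created upon composing the cylinder with itself. The image subspace \(|\Sigma, G|_{\calc}\) then inherits finite-dimensionality from the fact that \(\calo(\calc)\) is finite, so the colorings \(c_\Sigma\) are finite in number and each summand \(H_{c_\Sigma}(\Sigma)\) is a finite sum of spaces of the form \(\Hom_{\calc}(\mathbbm{1}, X)\). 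Taking the limit over boundary skeleta using the isomorphisms \(|\Sigma \times I, G, G'|_{\calc}\) gives a well-defined object \(|\Sigma|_{\calc} \in \vect_{\K}\). For the morphism side, I would verify that the maps \(|M, G, G'|_{\calc}\) are compatible with the transition isomorphisms between different boundary skeleta, i.e. \(|M, G_1, G'_1|_{\calc} \circ |\Sigma \times I, G_0, G_1|_{\calc} = |\Sigma' \times I, G_0', G_1'|_{\calc} \circ |M, G_0, G_0'|_{\calc}\), which again follows from \cref{hyp} by gluing cylinders and noting that the composite cobordism is homeomorphic to \(M\) with a different boundary skeleton. This yields the induced map \(|M|_{\calc}\) on the limits by the universal property.

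The hard step is functoriality: for \(M \in \Cob_3^{\calc}(\Sigma, \Sigma')\) and \(M' \in \Cob_3^{\calc}(\Sigma', \Sigma'')\), one needs \(|M' \circ M|_{\calc} = |M'|_{\calc} \circ |M|_{\calc}\). I would pick graph skeleta \(P, P'\) for \(M, M'\) that are compatible along \(\Sigma'\) (so their union is a graph skeleton for \(M' \cup_{\Sigma'} M\) after enriching the shared boundary skeleton with the colored points). The identity then decomposes as a sum over internal colorings of \(P \cup P'\), which splits into a sum over internal colorings of each piece together with a sum over boundary colorings \(c_{\Sigma'}\) that now run internally. The composition on the right-hand side involves, for each intermediate coloring, a contraction of rim basis vectors along \(\Sigma'\); by the definition of \(*_r\) as the coevaluation, this contraction pairs the rim bases correctly and produces exactly the coloring-weighted evaluation of the glued skeleton. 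The combinatorial accounting of \(\dim(\calc)^{-|M \setminus P|}\) and the \(\dim(c)^{\chi(a)}\)-factors, together with the normalization \(\dim(\calc)^{|\Sigma' \setminus G'|}/\dim^{\mathrm{tr}}(c_{\Sigma'})\) hidden in \(|M', G', G''|_{\calc} \circ |M, G, G'|_{\calc}\), must match up on the nose; this is where I expect most of the bookkeeping work, and where \cref{lemBub} and \cref{lemDecId} again play a decisive role to absorb the bubbles arising from faces that become closed after gluing.

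Finally, for the cylinder \(\Sigma \times I\) regarded as an identity morphism, the induced map on \(|\Sigma|_{\calc}\) is the identity by the very definition of the limit, since \(|\Sigma, G|_{\calc} = \mathrm{im}(|\Sigma \times I, G, G|_{\calc})\) by construction. For the symmetric monoidal structure, note that a graph skeleton of \(M_1 \sqcup M_2\) can be chosen as a disjoint union of graph skeleta of \(M_1\) and \(M_2\); the state sum in \cref{eqVecSS} then factorizes over the two components since the internal colorings, the dimension weights, the node evaluations \(\Gamma_c(v)\), and the connected component count \(|M \setminus P|\) are all additive or multiplicative in the expected way. The braiding/symmetry isomorphism of \(\Cob_3^{\calc}\) is represented by the trivial cobordism exchanging two cylinders, which the state sum treats symmetrically, yielding compatibility with the symmetry of \(\vect_{\K}\). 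The monoidal unit \(\K = |\emptyset|_{\calc}\) is built into the construction. The main conceptual obstacle is the gluing argument, because it is the only place where the interaction between the rim-basis coevaluations, the Euler-characteristic weighting, and the combinatorics of faces newly created at the gluing locus must be shown to conspire correctly.
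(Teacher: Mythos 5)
The paper does not itself prove this theorem; it states it as a citation to \cite[Theorem 15.9]{TV} and \cite[Section 4.2]{F}, with the preceding construction (the eight numbered steps in \cref{subSecTV}) serving only as set-up. Your sketch is therefore being compared to the standard argument in those references rather than to an in-text proof, and it does follow essentially the same route: skeletal well-definedness granted by \cref{hyp}, idempotency of the cylinder map as the key lemma that makes the limit work, skeletal gluing as the core combinatorial verification, and factorization of the state sum over disjoint unions for monoidality. One small ordering issue worth flagging: you justify the compatibility of the maps $|M, G, G'|_{\calc}$ with the transition isomorphisms $|\Sigma \times I, G_0, G_1|_{\calc}$ by ``gluing cylinders,'' but that gluing is itself an instance of the functoriality you are trying to establish. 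In \cite{TV} the logical order is reversed: the skeletal gluing identity is proved first at the level of fixed skeleta (before any limit is taken), and both the idempotency of $|\Sigma \times I, G, G|_{\calc}$ and the compatibility with transition isomorphisms then fall out as special cases. That reordering removes the apparent circularity; otherwise your outline correctly identifies where the real work lies, namely the matching of the $\dim(\calc)^{-|M\setminus P|}$, $\dim(c)^{\chi(a)}$, and $\dim(\calc)^{|\Sigma'\setminus G'|}/\dim^{\mathrm{tr}}(c_{\Sigma'})$ factors across the gluing locus, with \cref{lemBub} and \cref{lemDecId} absorbing the newly created bubbles.
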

Restricting the defect Turaev-Viro boundary-defect TQFT to the subcategory \(\Cob_3\) gives the standard Turaev-Viro theory without defects (Section 13 in \cite{TV}), which we denote by the same symbol \(|\cdot|_{\calc}\).

\subsection{Dijkgraaf-Witten theory with general defects}
\label{subSecDW}
In this section, we present a concise summary of the construction of untwisted defect Dijkgraaf-Witten theory as defined in \cite{FMM}, primarily to establish notation and to make the article self-contained. Dijkgraaf-Witten theory will feature in \cref{secDW} and we will recall in detail only those parts of the construction that are relevant for our later discussion. Readers not interested in Dijkgraaf-Witten theory may skip ahead to \cref{secTVRes}.

Let us first review the definition of Dijkgraaf-Witten theory without defects. For a  detailed introduction to this subject, we refer the reader to \cite{FSW}. Let \(G\) be a finite group and fix the ground field \(\K = \C\), the complex numbers. The untwisted Dijkgraaf-Witten invariant assigns to an arbitrary closed (oriented, compact) manifold \(M\) of dimension \(n\) the groupoid cardinality of the groupoid of principal \(G\)-bundles on \(M\). Recall that for any essentially finite groupoid \(\calg\), its groupoid cardinality is defined as the rational number
\begin{align}
    \label{eqGrCard}
    |\calg| = \sum_{[X] \mspace{1mu} \in \mspace{1mu} \pi_0(\calg)}\frac{1}{|\Aut_{\calg}(X)|} \in \mathbb{Q}.
\end{align}
The Dijkgraaf-Witten invariant is then defined as \(Z_G(M) = |\Bun_G(M)|\), where \(\Bun_G(M)\) denotes the groupoid of principal \(G\)-bundles. If \(M\) is connected, the category \(\Bun_G(M)\) is equivalent to the action groupoid \(\Hom(\pi_1(M), G)\sslash^{\mathrm{ad}} G\), where objects are group homomorphisms from the fundamental group of \(M\) to \(G\) and morphims act by conjugation. This leads to the alternative expression
\begin{align*}
    Z_G(M) = \frac{|\Hom(\pi_1(M), G)|}{|G|}.
\end{align*}
The invariant can, in fact, be extended to linear maps for arbitrary cobordisms \(M\colon \Sigma \rightarrow \Sigma'\) and the result is an \(n\)-dimensional TQFT \(Z_G^n\colon \Cob_n \rightarrow \vect_{\C}\). The process of obtaining the invariant \(Z_G(M)\) can be seen as a two-step procedure: In a first step, encode the geometry in the category of principal bundles, and in a second step, linearize the output by taking the groupoid cardinality of this category.

In \cite{FMM}, this construction is generalized to a TQFT with defects \(Z_{\mathrm{DW}}\) in three dimensions. Following the aforementioned two-step principle, it arises as the composition
\begin{align}
    \label{eqDefCob}
    \Cob_3^{\mathsf{def}} \, \xrightarrow{C} \, \mathsf{Span}^{\mathsf{fib}}(\mathsf{repGrpd)} \, \xrightarrow{L} \, \vect_{\C}
\end{align}
of two symmetric monoidal functors. The two categories \(\Cob_3^{\mathsf{def}}\) and \(\mathsf{Span}^{\mathsf{fib}}(\mathsf{repGrpd)}\) will be explained in detail in the subsequent sections. For now, let us point out that \(\Cob_3^{\mathsf{def}}\), as the name suggests, is a category of defect cobordisms built from stratified manifolds. This category allows, in contrast to the defect cobordism category \(\Cob_3^{\calc}\) from \cref{defCobDef}, also for defects contained in the interior of 3-manifolds, and of any codimension.

On the flipside, \(\Cob_3^{\mathsf{def}}\) is more restrictive than \(\Cob_3^{\calc}\) in the sense that free boundaries are not allowed from the outset. However, we will see in \cref{secDW} that they are practically already accounted for, and we will moreover specialize to the framework from \cref{defCobDef} and \cref{defTQFTDef}, which essentially amounts to restriction to a subcategory, once free boundaries are properly included.

The second category appearing is a category of (fibrant) spans of groupoid representations. The precise definition of this category and the linearization functor \(L\) will be recalled in the subsequent section.

\subsubsection{The category of fibrant spans of groupoid representations}
We follow Section 2 of \cite{FMM} with occasional changes in notation. 
The category \(\mathsf{Grpd}\) of groupoids is enriched over itself, so the functor category \(\Fun(\calg, \calh) = \mathsf{Grpd}(\calg, \calh)\) naturally carries the structure of a groupoid. 

A functor \(p\colon \mathcal{E} \rightarrow \mathcal{B}\) between groupoids is called a fibration if for every morphism \(f \in \Hom_{\mathcal{B}}(B, p(E))\) in \(\mathcal{B}\) there exists a morphism \(h \in \Hom_{\mathcal{E}}(E', E)\) such that \(p(h) = f\). Let us give an example of a fibration: the interval groupoid \(\mathcal{I}\) has two objects \(0\) and \(1\) and one isomorphism \(0 \rightarrow 1\). It gives rise to the groupoid \(\mathcal{G}^{\mathcal{I}} = \Fun(\mathcal{I}, \calg)\), whose objects can be identified with morphisms \(f\colon G \rightarrow G'\) in \(\calg\) and whose morphisms \(f \rightarrow f'\) are pairs \((h, h')\) of morphisms  in \(\calg\) making the obvious naturality square commute. The groupoid \(\calg^{\mathcal{I}}\) comes with two natural projection functors \(p_0, p_1\colon \mathcal{G}^{\mathcal{I}} \rightarrow \mathcal{G}\) defined by \(p_0(f) = G, p_1(f) = G'\) and \(p_0(h, h') = h, p_1(h, h') = h'\). The functor \((p_0, p_1)\colon \calg^{\cali} \rightarrow \calg \times \calg\) sending \(f\) to the pair \((G, G')\) is a fibration.

For objects \(X, X_1, X_2\) of a category \(\calx\), a diagram \(X_1 \xleftarrow{f_1} X \xrightarrow{f_2} X_2\) is called a span in \(\calx\). If \(\calx = \mathsf{Grpd}\), by definition, the span is a fibrant span if the functor \((f_1, f_2)\colon X \rightarrow X_1 \times X_2\) is a fibration of groupoids. In particular, the span \(\calg \xleftarrow{p_0} \calg^{\cali} \xrightarrow{p_1} \calg\) is fibrant.

A functor \(\rho\colon \calg \rightarrow \vect_{\C}\) is called a groupoid representation. For example, a group representation of a group \(G\) amounts to a groupoid representation of the single-object groupoid \(\bullet \sslash G\). Many known facts can be extended from finite groups to essentially finite groupoids.  In fact, the category \(\vect_{\C}^{\calg} = \Fun(\calg, \vect_{\C})\) of groupoid representations is a rigid symmetric monoidal category. The monoidal structure is induced by the tensor product in \(\vect_{\C}\), namely \((\rho \otimes \rho')(G) := \rho(G) \otimes_{\C} \rho'(G)\) for \(\rho, \rho' \in \vect_{\C}^{\calg}\) and \(G, G' \in \calg\), together with the monoidal unit \(\C\colon \calg \rightarrow \vect_{\C}\) mapping every object to the tensor unit \(\C\) in \(\vect_{\C}\) and each morphism to the identity \(\id_{\C}\). Dual objects are given by \(\rho^*(G) = \rho(G)^*\) and \(\rho^*(f) = \rho(f^{-1})^*\) for \(f \in \Hom_{\calg}(G, G')\) a morphism and \(\vect_{\C}^{\calg}\). This endows \(\vect_{\C}^{\calg}\) with a spherical structure coming from \(\vect_{\C}\).
\begin{remark}
    The discrete groupoid \(\underline{G}\), with objects elements of a group \(G\) and no nontrivial morphisms, gives rise to the category \(\vect_{\C}^{\underline{G}}\). As a category, it is equivalent to the category \(\vect_G\) of \(G\)-graded vector spaces. However, as a monoidal category, \(\vect_{\C}^{\underline{G}}\) is different from \(\vect_G\). If the latter category is represented 
    as a functor category, the natural monoidal product can be identified with Day convolution, rendering it a spherical fusion category. The former category \(\vect_{\C}^{\underline{G}}\), equipped with the tensor product outlined above, is not spherical fusion in the sense of \cref{subsecCatCon}. For instance, the monoidal unit is not simple.
\end{remark}

Henceforth, unless stated otherwise, we will tacitly assume all groupoids to be essentially finite. For the following definition, recall that the category \(\mathsf{Grpd}\) has pullbacks.

\begin{definition}(Lemma 2.7, Lemma 2.8 in \cite{FMM})
    \label{defFibSpan}
    The category of fibrant spans of groupoid representations \(\mathsf{Span}^{\mathsf{fib}}(\mathsf{repGrpd)}\) consists of
    \begin{itemize}
        \item objects given by pairs \((\calg, \rho)\) of (essentially finite) groupoids \(\calg\) and representations \(\rho\colon \calg \rightarrow \vect_{\C}\),
        \item morphisms from \((\calg_1, \rho_1)\) to \((\calg_2, \rho_2)\) given by equivalence classes of pairs \((\calg_1 \xleftarrow{f_1} \calg \xrightarrow{f_2} \calg_2, \sigma)\) of fibrant spans of groupoids and natural transformations \(\sigma\colon \rho_1 \circ f_1 \Rightarrow \rho_2 \circ f_2\).
    \end{itemize}
    Two pairs \((\calg_1 \xleftarrow{f_1} \calg \xrightarrow{f_2} \calg_2, \sigma)\) and \((\calg_1 \xleftarrow{f_1'} \calg' \xrightarrow{f_2'} \calg_2, \sigma')\) representing morphisms in the category \(\mathsf{Span}^{\mathsf{fib}}(\mathsf{repGrpd)}\) are equivalent if there are functors \(F\colon \calg \substack{\longrightarrow \\ \longleftarrow} \calg' : \!F' \) satisfying \(f_i' \circ F = f_i\) and \(f_i \circ F' = f_i'\) and \(\sigma' F = \sigma\) and \(\sigma F' = \sigma'\) such that there are natural isomorphisms \(\eta\colon F' \circ F \Rightarrow \id_{\calg}\) and \(\eta'\colon F \circ F' \Rightarrow \id_{\calg'}\) satisfying \(f_i \eta = \id_{f_i}\) and \(f_i' \eta' = \id_{f_i'}\). One defines composition by pullback,
    \begin{align*}
        [\calg_2 \xleftarrow{h_1} \calh \xrightarrow{h_2} \calg_3, \tau] \circ [\calg_1 \xleftarrow{f_1} \calg \xrightarrow{f_2} \calg_2, \sigma] = [\calg_1 \xleftarrow{f_1 \circ T_1} \calg \times_{\calg_2} \calh \xrightarrow{h_2 \circ T_2} \calg_3, \tau T_2 \circ \sigma T_1]
    \end{align*}
    with the two projection functors \(T_1, T_2\) from the pullback to the respective factors. The identity with respect to this composition law and an object \((\calg, \rho)\) is given by the class \([\calg \xleftarrow{p_0} \calg^{\cali} \xrightarrow{p_1} \calg, \sigma^{\rho}]\), where \(\sigma^{\rho}_{f}  = \rho(f)\) for any morphism \(f\colon G \rightarrow G'\).
\end{definition}
The category \(\mathsf{Span}^{\mathsf{fib}}(\mathsf{repGrpd)}\) is a symmetric monoidal category, where the monoidal structure is defined by \((\calg, \rho) \otimes (\calh, \rho') = (\calg \times \calh, \rho \otimes \rho')\) with \((\rho \otimes \rho')(G, H) = \rho(G) \otimes_{\C} \rho'(H)\), and the monoidal unit is \((\bullet, \C)\), the pair of the singleton groupoid and the trivial representation. Restricting to trivial groupoid representations and identity natural transformations defines a subcategory \(\mathsf{Span}^{\mathsf{fib}}(\mathsf{Grpd)} \subset \mathsf{Span}^{\mathsf{fib}}(\mathsf{repGrpd)}\) of groupoids and fibrant spans between groupoids.

\subsubsection{Linearization functor to vector spaces}
\label{subSecLin}
Next, we recall the definition of the linearization functor \(L\colon \mathsf{Span}^{\mathsf{fib}}(\mathsf{repGrpd)} \rightarrow \vect_{\C}\), following Section 2.3 in \cite{FMM}. 

For a functor \(F\colon \calg \rightarrow \calh\) between groupoids and an object \(H \in \calh\), the fiber \(\mathrm{Fib}(F,H)\) is the groupoid consisting of objects \(G \in \calg\) such that \(F(G) = H\) and morphisms \(f\colon G \rightarrow G'\) such that \(F(f) = \id_{H}\). For a fibrant span \(\calg_1 \xleftarrow{f_1} \calg \xrightarrow{f_2} \calg_2\), denote the fiber of \((f_1, f_2)\colon \calg \rightarrow \calg_1 \times \calg_2\) and \((G_1, G_2) \in \calg_1 \times \calg_2\) by
\begin{align*}
    \{G_1|\calg|G_2\} = \mathrm{Fib}\left((f_1, f_2), (G_1, G_2) \right).
\end{align*}
Now, let \(\rho\colon \calg \rightarrow \vect_{\C}\) be a groupoid representation. The limit of \(\rho\) can be written as 
\begin{align*}
    \lim \rho = \bigoplus_{[G] \mspace{1mu} \in \mspace{1mu} \pi_0(\calg)} \rho(G)^{\Aut_{\calg}(G)},
\end{align*}
the sum of invariants with respect to the natural \(\Aut_{\calg}(G)\)-action on the components \(\rho(G)\). We denote by \(\iota_G\colon \rho(G)^{\Aut_{\calg}(G)} \rightarrow \lim \rho\) and \(\pi_G\colon \lim \rho \rightarrow \rho(G)^{\Aut_{\calg}(G)}\) the inclusion and projection morphisms for the direct sum.

Consider a pair \((\calg_1 \xleftarrow{f_1} \calg \xrightarrow{f_2} \calg_2, \sigma)\) of a fibrant span together with a natural transformation representing a morphism between two objects \((\calg_1, \rho_1)\) and \((\calg_2, \rho_2)\) in \(\mathsf{Span}^{\mathsf{fib}}(\mathsf{repGrpd)}\). For every tuple \((G_1, G_2) \in \calg_1 \times \calg_2\), there is a linear map (\cite[Equation (7)]{FMM})
\begin{align}
    S_{\sigma}^{G_1, G_2}:= \frac{1}{|\Aut_{\calg_2}(G_2)|} \sum_{[H] \mspace{1mu} \in \mspace{1mu}\pi_{0}\left(\{G_1|\calg|G_2\} \right)} \frac{1}{|\Aut_{\{G_1|\calg|G_2\}}(H)|} \sigma_H\colon \rho_1(G_1) \rightarrow \rho_2(G_2),
\end{align}
which is well-defined in the sense that it does not depend on representatives of the isomorphism classes \([H] \in \pi_0\{G_1|\calg|G_2\} \), and whose image is contained in the subspace of invariants \(\rho_2(G_2)^{\Aut_{\calg_2}(G_2)} \subset \rho_2(G_2)\) by \cite[Lemma 2.13]{FMM}. We are ready to give the definition of the functor \(L\colon \mathsf{Span}^{\mathsf{fib}}(\mathsf{repGrpd)} \rightarrow \vect_{\C}\).
\begin{proposition}(\cite[Proposition 2.14, Corollary 2.15]{FMM})
    \label{propFunL}
    There exists a symmetric monoidal functor \(L\colon \mathsf{Span}^{\mathsf{fib}}(\mathsf{repGrpd)} \rightarrow \vect_{\C}\), assigning to an object \((\calg, \rho)\) the limit \(L(\calg, \rho) = \lim \rho\) and to a morphism \([s = (\calg_1 \xleftarrow{f_1} \calg \xrightarrow{f_2} \calg_2), \sigma]\) between \((\calg_1, \rho_1)\) and \((\calg_2, \rho_2)\) the linear map
    \begin{align}
        L(s, \sigma) = \sum_{[G_1] \mspace{1mu} \in \mspace{1mu} \pi_0\calg_1, \, [G_2] \mspace{1mu} \in \mspace{1mu}\pi_0\calg_2} \iota_{G_2} \circ S_{\sigma}^{G_1, G_2} \circ \pi_{G_1}\colon \lim \rho_1 \rightarrow \lim \rho_2.
    \end{align}
    On the subcategory \(\mathsf{Span}^{\mathsf{fib}}(\mathsf{Grpd)}\), the functor \(L\) takes the explicit form \(L(\calg, \C) = \langle \pi_0(\calg) \rangle_{\C}\) on objects, and the matrix elements \(\langle G_1|L(s, \id)|G_2\rangle\colon \C \! G_1 \rightarrow \C \! G_2\) of the map
    \begin{align*}
        L(s, \id) = \bigoplus_{[G_1] \mspace{1mu} \in \mspace{1mu} \pi_0\calg_1, \, [G_2] \mspace{1mu} \in \mspace{1mu} \pi_0\calg_2} \langle G_1|L(s, \id)|G_2\rangle\colon \langle \pi_0(\calg) \rangle_{\C} \rightarrow \langle \pi_0(\calg) \rangle_{\C}
    \end{align*}
    are defined, using groupoid cardinality introduced in \cref{eqGrCard}, as
    \begin{align}
        \label{eqSpGrpd}
        \langle G_1|L(s, \id)|G_2\rangle := \big| \mathrm{PC_{G_2}(\calg_2)} \big| \cdot \big| \{G_1|\calg|G_2\} \big| \in \C,
    \end{align}
    where \(\mathrm{PC_{G_2}(\calg_2)} \subset \calg_2\) denotes the full subgroupoid of objects in the equivalence class \([G_2] \in \pi_0(\calg_2)\).
\end{proposition}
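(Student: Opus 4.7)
The plan is to prove the statement in five layers, mirroring the structure of the claim: well-definedness of the linear map $L(s,\sigma)$, independence of representatives of the equivalence class of spans, preservation of identities, compatibility with composition, and monoidality. I will then unravel the explicit formula on the subcategory $\mathsf{Span}^{\mathsf{fib}}(\mathsf{Grpd})$ as a sanity check.

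First I would verify that $S_\sigma^{G_1,G_2}$ is well-defined: for any choice of a morphism $f\colon H\to H'$ in $\{G_1|\calg|G_2\}$, naturality of $\sigma$ together with the fact that $(f_1,f_2)(f)=\id_{(G_1,G_2)}$ yields $\sigma_{H'} = \rho_2(\id_{G_2})\sigma_{H'} = \sigma_{H'}\rho_1(\id_{G_1}) = \sigma_H$, so the sum over representatives $[H]\in\pi_0\{G_1|\calg|G_2\}$ is independent of the chosen representative. By the cited \cite[Lemma 2.13]{FMM}, the image lands in the invariants $\rho_2(G_2)^{\Aut_{\calg_2}(G_2)}$, so $L(s,\sigma):=\sum\iota_{G_2}\circ S_\sigma^{G_1,G_2}\circ \pi_{G_1}$ genuinely takes values in $\lim\rho_2$. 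Invariance under the equivalence relation of spans then reduces to the observation that an equivalence $(F,F',\eta,\eta')$ induces bijections $\pi_0\{G_1|\calg|G_2\}\cong\pi_0\{G_1|\calg'|G_2\}$ preserving automorphism groups, and through the compatibility $\sigma'F=\sigma$ and $\sigma F'=\sigma'$, it identifies the corresponding components of $\sigma$ and $\sigma'$; summing over isomorphism classes then gives the same map.

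Next I would treat the identity. For the identity span $(\calg\xleftarrow{p_0}\calg^{\cali}\xrightarrow{p_1}\calg, \sigma^{\rho})$, the fiber $\{G_1|\calg^{\cali}|G_2\}$ is (canonically equivalent to) the discrete groupoid of morphisms $G_1\to G_2$ in $\calg$, which is empty unless $[G_1]=[G_2]$, in which case its isomorphism classes are in bijection with $\Aut_{\calg}(G_2)\backslash\{\text{morphisms } G_1\to G_2\}$, each with trivial automorphism group in the fiber. A short computation using $\sigma^{\rho}_{f}=\rho(f)$ and averaging against $|\Aut_{\calg_2}(G_2)|^{-1}$ shows that on the invariants $\rho(G)^{\Aut_{\calg}(G)}$ the map $S_{\sigma^{\rho}}^{G,G}$ acts as the identity, while the off-diagonal matrix elements vanish. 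Hence $L$ of the identity is $\id_{\lim\rho}$.

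The main obstacle is compatibility with composition, which requires a Fubini-style argument for the pullback of fibrant spans. Given composable morphisms with spans $\calg_1\xleftarrow{f_1}\calg\xrightarrow{f_2}\calg_2$ and $\calg_2\xleftarrow{h_1}\calh\xrightarrow{h_2}\calg_3$, together with $\sigma$ and $\tau$, the composite span has apex $\calg\times_{\calg_2}\calh$. The key technical input is that the fiber $\{G_1|\calg\times_{\calg_2}\calh|G_3\}$ admits a decomposition
\begin{equation*}
\{G_1|\calg\times_{\calg_2}\calh|G_3\} \;\simeq\; \bigsqcup_{[G_2]\in\pi_0(\calg_2)} \bigl(\{G_1|\calg|G_2\}\times \{G_2|\calh|G_3\}\bigr)\sslash \Aut_{\calg_2}(G_2),
\end{equation*}
where fibrancy ensures the equivalence (this is essentially Lemma 2.10 in \cite{FMM}, which I would invoke). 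Taking groupoid cardinality (weighted by $|\Aut|^{-1}$) transforms a sum over $[H]\in\pi_0\{G_1|\calg\times_{\calg_2}\calh|G_3\}$ into a sum over $[G_2]$, $[H_{12}]\in\pi_0\{G_1|\calg|G_2\}$, and $[H_{23}]\in\pi_0\{G_2|\calh|G_3\}$, with the extra $|\Aut_{\calg_2}(G_2)|^{-1}$ precisely matching the normalization factor in $S_\sigma^{G_1,G_2}$. Using that $\sigma$ takes values in invariants so that the composition $\tau_{H_{23}}\circ\sigma_{H_{12}}$ is well-defined independently of representatives and commutes with $\pi_{G_2}$, this sum reassembles as $\sum_{[G_2]}\iota_{G_3}\circ S_\tau^{G_2,G_3}\circ\pi_{G_2}\circ\iota_{G_2}\circ S_\sigma^{G_1,G_2}\circ\pi_{G_1}$, which is the composite $L(h,\tau)\circ L(s,\sigma)$.

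Finally, the monoidal structure follows because the fiber of a product span is the product of fibers, $\pi_0$ and $\Aut$ behave multiplicatively on products, and $\lim(\rho\otimes\rho')\cong\lim\rho\otimes_{\C}\lim\rho'$; the coherence with unit $(\bullet,\C)$ and symmetry is formal. For the subcategory $\mathsf{Span}^{\mathsf{fib}}(\mathsf{Grpd})$, substituting $\rho_i=\C$, $\sigma=\id$, and expanding the definition of $S_{\id}^{G_1,G_2}$ directly produces the claimed matrix element $|\mathrm{PC}_{G_2}(\calg_2)|\cdot|\{G_1|\calg|G_2\}|$ after recognising $|\Aut_{\calg_2}(G_2)|^{-1}\sum_{[H]}|\Aut_H|^{-1}=|\{G_1|\calg|G_2\}|$ and absorbing the projection onto $\C\text{-invariants}=\C$. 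The hard conceptual step is the fiber decomposition of the pullback; everything else is bookkeeping with groupoid cardinalities.
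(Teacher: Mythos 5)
The paper does not prove this proposition --- it imports it verbatim from \cite[Proposition 2.14, Corollary 2.15]{FMM}, so there is no in-paper argument to compare yours against. Your blind sketch is, however, a sensible reconstruction of what such a proof must look like: well-definedness of \(S_{\sigma}^{G_1,G_2}\) via naturality, invariance of \(L\) under the equivalence relation on spans, preservation of identities, composition via a fiber decomposition of the pullback groupoid (the crux, which you correctly pin on a Beck--Chevalley/Fubini-type lemma for fibrant spans, i.e.~\cite[Lemma 2.10]{FMM}), and monoidality by termwise products.

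Two small but real slips, neither of which propagates to a wrong conclusion. First, in the identity step you correctly note that the fiber \(\{G_1|\calg^{\cali}|G_2\}\) is the \emph{discrete} groupoid on \(\Hom_{\calg}(G_1,G_2)\), but then in the same sentence you describe its isomorphism classes as the quotient \(\Aut_{\calg}(G_2)\backslash\Hom_{\calg}(G_1,G_2)\), which contradicts the discreteness: the iso classes are \emph{all} of \(\Hom_{\calg}(G_1,G_2)\), each with trivial automorphism group. The averaging you invoke is produced by the external prefactor \(|\Aut_{\calg_2}(G_2)|^{-1}\) in the definition of \(S_{\sigma}^{G_1,G_2}\), not by any quotient inside the fiber; fortunately that is also what you end up using, so the conclusion \(S_{\sigma^{\rho}}^{G,G}|_{\rho(G)^{\Aut}} = \mathrm{id}\) is right. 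Second, in the last paragraph the displayed identity \(|\Aut_{\calg_2}(G_2)|^{-1}\sum_{[H]}|\Aut_H|^{-1}=|\{G_1|\calg|G_2\}|\) has an extraneous factor of \(|\Aut_{\calg_2}(G_2)|^{-1}\): the right-hand side is by definition \(\sum_{[H]}|\Aut_H|^{-1}\), and the factor \(|\Aut_{\calg_2}(G_2)|^{-1} = |\mathrm{PC}_{G_2}(\calg_2)|\) is then multiplied on to produce the claimed matrix element \(|\mathrm{PC}_{G_2}(\calg_2)|\cdot|\{G_1|\calg|G_2\}|\), which you do in fact reach. With those two lines corrected the sketch is sound; the composition step is of course only as rigorous as the cited fiber-decomposition lemma, which you are right to invoke rather than reprove.
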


\subsubsection{Categories of defect cobordisms for Dijkgraaf-Witten theory}
\label{subSubSecDWCob}
\paragraph{Stratifications and local strata.}
Here, we recall the definition of the defect cobordism category \(\Cob_3^{\mathsf{def}}\) from \cref{eqDefCob} as defined in \cite{FMM}. Again, we will mostly, but not always, stay with the terminology and notation from the original source. Although in \cite{FMM} the authors work with piecewise linear manifolds, we will ignore this subtle difference and continue working with topological manifolds. This is permissible since every topological manifold admits a PL structure unique up to PL homeomorphism by Moise's theorem. 

In the following, we assume \(n \leq 3\) whenever we use the term \(n\)-manifold. Following Section 3.2 of \cite{FMM}, a \textit{stratification} of an \(n\)-manifold \(X\) is a sequence
\begin{align*}
    \emptyset = X^{-1} \subset X^0 \subset X^1 \subset \hdots \subset X^n = X
\end{align*}
of closed subspaces such that \(X^k \setminus X^{k-1}\) is a (not necessarily compact) \(k\)-manifold and \( \partial (X^k \setminus X^{k-1}) \subset \partial X\) (\cite[Definition 3.6]{FMM}). Here, a subspace \(X^k\) is referred to as the \(k\)-skeleton of \(X\) and the connected components of \(X^k \setminus X^{k-1}\) as \(k\)-strata. The codimension of a stratum \(s \in S_k^X := \pi_0(X^k \setminus X^{k-1})\) is the natural number \(\mathrm{codim}(s) = n-k\) and the strata of codimension 1, 2 and 3 are the faces, edges and vertices, respectively. A continuous map \(f\colon X \rightarrow Y\) between stratified manifolds satisfying \(f(X^k) \subset Y^k\) is called filtered.

Unless otherwise stated, all stratifications considered in this text will be \textit{homogeneous} and \textit{saturated}, which are two conditions ensuring that local neighborhoods of points on strata have a somewhat regular appearance; among other things, they guarantee that strata are transversal at the boundary and that \(k\)-strata are contained in the closure of at least one \(\ell\)-stratum for any dimension \(\ell \geq k\) (we refer to \cite[Definition 3.7]{FMM} for details).

A stratification is called \textit{oriented} if strata of codimension 1 and 2 are endowed with an orientation. For an \((n-1)\)-stratum \(s \in S_{n-1}^X\) in an \(n\)-manifold \(X\) with an oriented stratification, \(L(s)\) denotes the neighboring \(n\)-stratum the normal vector of \(s\) is directed away from, and \(R(s)\) denotes the one it points toward.
Finally, a stratification is \textit{framed} if one can make a locally coherent choice of a neighboring \(n\)-stratum for any \((n-2)\)-stratum (cf. \cite[Definition 3.10]{FMM} for details). We will call a compact \(n\)-manifold together with a homogeneous, saturated, oriented and framed stratification a \textit{stratified} \(n\)-\textit{manifold}.

The four properties discussed in the last paragraph are inherited by the boundary, which carries for a homogeneous and saturated stratified manifold \(X\) a natural stratification specified by \(\partial X^k = X^{k+1} \cap \partial X\) with the same properties. Edges intersecting the boundary are oriented away from boundary vertices with negative sign and toward boundary vertices with positive sign. Every boundary \(k\)-stratum \(t \in S_k^{\partial X}\) is associated with a unique \((k+1)\)-stratum \(s\) such that \(t \subset s\), defining a map \(\iota_k\colon S_k^{\partial X} \rightarrow S_{k+1}^X\).

For a given stratum \(s \in S_k^X\), there is the notion of a \textit{local} \(j\)-\textit{stratum} of \(s\), where \(j \geq k\). The local strata are essentially germs of \(j\)-strata with respect to local neighborhoods of \(s\). The set of local \(j\)-strata of \(s\) is denoted by \(S_j^{\mathrm{loc}}(s)\) (\cite[Definition 3.13]{FMM}). Since any local stratum \(q \in S_j^{\mathrm{loc}}(s)\) is associated with a \(j\)-stratum \(t\), one also writes \(q\colon s \rightarrow t\). For example, for an edge \(e \in S_1^X\) starting at a vertex \(v \in S_0^X\), there is a local stratum \(s(e)\colon v \rightarrow e\), and for an edge \(e\) that ends at \(v'\), there is a local stratum \(t(e)\colon v' \rightarrow e\). The notation suggests that strata and local strata are grouped into a category \(\mathcal{Q}^X\). It is the category freely generated by the graph with vertices consisting of strata and edges consisting of local strata (cf. \cite[Definition 3.22]{FMM}).

\paragraph{Defect labels and cobordisms.} The following is based on \cite{FMM}, Section 4. To a stratified \(n\)-manifold \(X\) one wishes to assign group data in order to produce a defect Dijkgraaf-Witten theory. In a first step, one assigns \textit{classical} defect data to \(n\)- and \((n-1)\)-strata: To every \(n\)-stratum \(s\), one associates a finite group \(G_s\), and to every \((n-1)\)-stratum \(t\) that is positioned between the two strata \(L(t)\) and \(R(t)\), one associates a finite \(G_{L(t)} \times G_{R(t)}^{\mathrm{op}}\)-set \(M_t\).

From these data, one obtains action groupoids \(\cald_u\) for any stratum \(u \in S_k^X\) by collecting the data assigned to local strata at \(u\). Define the groupoid
\begin{align*}
    \cald_u := \Bigg(\prod_{(q: \mspace{1mu} u \mspace{1mu} \rightarrow \mspace{1mu} t) \mspace{1mu} \in \mspace{1mu} S_{n-1}^{\mathrm{loc}}(u)} M_t \Bigg) \sslash \Bigg( \prod_{(q: \mspace{1mu} u \mspace{1mu} \rightarrow \mspace{1mu} s) \mspace{1mu} \in \mspace{1mu} S_n^{\mathrm{loc}}(u)} G_s \Bigg)
\end{align*}
with the action induced by the action of the groups \(G_{L(t)}\) and \(G_{R(t)}\) on the sets \(M_t\). Similarly, for a local stratum \(q\colon u \rightarrow u'\) one can assign a functor \(D_q\colon \cald_u \rightarrow \cald_{u'}\) (see \cite[Lemma 4.3]{FMM} and the paragraphs leading up to it). This assignment can be repackaged into a functor \(D^X\colon \mathcal{Q}^X \rightarrow \mathsf{Grpd}\) defined on objects as \(D^X(u) = \cald_u\).

For a (full) assignment of defect data, the classical defect data are augmented with \textit{quantum} defect data. Recall that \(n \leq 3\).
\begin{definition}(\cite[Definition 4.4]{FMM})
    \label{defDefData}
    An \textit{assignment of defect data} to a stratified \(n\)-manifold \(X\) comprises assigning
    \begin{itemize}
        \item to any \(n\)-stratum \(s\) a  finite group \(G_s\),
        \item to any \((n-1)\)-stratum \(t\) a finite \(G_{L(t)} \times G_{R(t)}^{\mathrm{op}}\)-set \(M_t\),
        \item to any \((n-2)\)-stratum \(u\) a representation \(\rho_u\colon \cald_u \rightarrow \vect_{\C}\) and
        \item to any \((n-3)\)-stratum \(v\) a natural transformation \(\sigma_v\colon \rho_v^t \Rightarrow \rho_v^s\), where for example
        \begin{align*}
            \rho_v^s := \bigotimes_{s(u): \, v \, \rightarrow \, u} \rho_u \circ D_{s(u)} \colon \cald_v \rightarrow \vect_{\C}
        \end{align*}
        is the tensor product over all local \((n-2)\)-strata associated to outgoing edges at \(v\), and \(\rho_v^t\) is defined similarly with respect to all incoming edges at \(v\). 
    \end{itemize}
\end{definition}
A morphism of stratified manifolds with defect data is a filtered map respecting orientations, framings and the assignment of defect data. We can now define the category \(\Cob_3^{\mathsf{def}}\) of defect cobordisms in the Dijkgraaf-Witten context.
\begin{definition} (\cite{FMM}, Definition 4.6)
    \label{defCobDW}
    The symmetric monoidal category \(\Cob_3^{\mathsf{def}}\) is obtained by repeating \cref{defCob} with the following replacements: Objects are stratified surfaces \(\Sigma\) with an assignment of defect data and morphisms \(\Sigma \rightarrow \Sigma'\) are equivalence classes represented by cobordisms, which are pairs \((M, h)\) of a stratified manifold \(M\) with an assignment of defect data together with an isomorphism \(h\colon \Sigma \sqcup \overline{\Sigma} \rightarrow \partial M \) of stratified surfaces with defect data. 

    The identity cobordism on \(\Sigma\) is represented by the cylinder \(\Sigma \times I\) with the stratification \((\Sigma \times I)^{k+1} = \Sigma^k \times I\) and induced framing, orientations and defect data.
\end{definition}
\begin{remark}
    We emphasize that cobordisms in \(\Cob_3^{\mathsf{def}}\) do not possess a free boundary since we require that the maps \(h\colon \Sigma \sqcup \overline{\Sigma} \rightarrow \partial M \) in the above definition be isomorphisms. As already pointed out, we will see how to include free boundaries in the formalism in \cref{secDW}.
\end{remark}
By abuse of terminology, we will call a symmetric monoidal functor \(Z\colon \Cob_3^{\mathsf{def}} \rightarrow \vect_{\C}\) a defect TQFT, even though the cobordism category differs from that used for \cref{defTQFTDef}.

\subsubsection{Gauge groupoids and Dijkgraaf-Witten TQFT with defects}
\label{subSecGG}
\paragraph{Gauge groupoids.} We follow Section 5 of \cite{FMM}. The functor \(D^X\colon \mathcal{Q}^X \rightarrow \mathsf{Grpd}\) assigns groupoids assembled from the classical defect data to the strata of the manifold \(X\), based on the incidence data of the stratification. To handle the topology of the strata, in a first step, one assigns to any \(k\)-stratum \(s\) a related topological space \(\hat{s}\). This is done by picking a triangulation of \(X\) containing the \(k\)-skeleta \(X^k\) as subcomplexes, and then gluing all \(k\)-simplices intersecting \(s\) along their common faces. The result is a (compact) \(k\)-manifold \(\hat{s}\) with interior \(\mathrm{Int}(\hat{s}) = s\). For a local stratum \(q\colon s \rightarrow t\), one can further assign a continuous map \(\iota_q\colon \hat{s} \rightarrow \hat{t}\) such that the assignment is functorial and does not depend on the chosen triangulation. This defines a functor \(\mathcal{Q}^X \rightarrow \mathrm{Top}\) (\cite[Proposition 5.1]{FMM}).

In a second step, postcomposing with the fundamental groupoid functor defines the functor \(T^X\colon \mathcal{Q}^X \rightarrow \mathsf{Grpd}, T^X(s) = \Pi_1\left(\hat{s}\right)\). The gauge groupoid of \(X\) relates the functors \(D^X\) and \(T^X\).

\begin{definition} (\cite[Definition 5.7]{FMM})
    The \textit{gauge groupoid} \(\cala^X \sslash \calg^X\) is defined as the end of the functor \(\mathsf{Grpd}(T^X(-), D^X(-))\colon \mathcal{Q}^X \times (\mathcal{Q}^X)^{\mathrm{op}} \rightarrow \mathsf{Grpd}\). In formulas, this means
    \begin{align*}
        \cala^X \sslash \calg^X := \int_{s \mspace{1mu} \in \mspace{1mu} \mathcal{Q}^X}\mathsf{Grpd}\left( \Pi_1\left(\hat{s}\right), \cald_s\right) = \int_{s \mspace{1mu} \in \mspace{1mu} \mathcal{Q}^X}\cald_s^{\Pi_1\left(\hat{s}\right)}.
    \end{align*}
\end{definition}
The objects of this groupoid are called \textit{gauge configurations} and the morphisms \textit{gauge transformations}. By definition of the end there are structure morphisms \(P_s\colon \cala^X \sslash \calg^X \rightarrow \cald_s^{\Pi_1\left(\hat{s}\right)}\) for any stratum \(s\) of \(X\). For a manifold \(X\) with boundary, there is also a natural projection functor \(P_{\partial}\colon \cala^X \sslash \calg^X \rightarrow \cala^{\partial X} \sslash \calg^{\partial X}\) that restricts gauge configurations and transformations to the boundary (\cite[Corollary 5.8]{FMM}). For a cobordism \(M \in \Cob_3^{\mathsf{def}}(\Sigma, \Sigma')\), this defines a fibrant span \(\cala^{\Sigma} \sslash \calg^{\Sigma} \xleftarrow{P} \cala^M \sslash \calg^M \xrightarrow{P'} \cala^{\Sigma'} \sslash \calg^{\Sigma'}\), where \(P_{\partial} = (P, P')\) (\cite[Corollary 5.13]{FMM}).

As our goal is the explicit computation of gauge groupoids, we will mainly use a more concrete characterization of the gauge groupoid, which makes use of basepoints. A coherent basepoint set is a finite set \(B \subset X\) that meets the closure of every stratum and boundary stratum in a way that inclusions defined by local strata and boundary inclusions are respected (for a more precise definition, see \cite[Definition 5.14]{FMM}). For a stratum \(s\), denote by \(V_s\) the preimage of \(B\) under the natural map \(\hat{s} \rightarrow \mathrm{cl}(s)\) to the closure of \(s\). The full subgroupoid \(\Pi_1(\hat{s}, V_s) \subset \Pi_1(\hat{s})\) of the fundamental groupoid has as set of objects \(V_s\).

\begin{proposition} (\cite[Corollary 5.18, Proposition 5.19]{FMM})
    \label{propGG}
    For any coherent basepoint set \(B \subset X\) there is a groupoid \(\cala^{X \bullet} \sslash \calg^{X \bullet}\), called the \textit{reduced gauge groupoid}, which is equivalent to \(\cala^X \sslash \calg^X\) and is explicitly characterized by the following data:
    \begin{itemize}
        \item Objects (\textnormal{reduced gauge configurations}) are collections \((A, h) = (\{A_s\}, \{h_t\})\) of functors \(A_s\colon \Pi_1(\hat{s}, V_s) \rightarrow \bullet \sslash G_s\) indexed by \(n\)-strata \(s \in S_n^X\) and maps \(h_t\colon V_t \rightarrow M_t\) indexed by \((n-1)\)-strata \(t \in S_{n-1}^X\) such that for every path \(\delta\colon [0,1] \rightarrow \hat{t}\) with \(\delta(0) = y, \delta(1) = y'\) and \(y, y' \in V_t\), the condition
        \begin{align*}
            h_t(y') = A_{L(t)}\left([ \iota_{\ell(t)} \circ \delta]\right) \triangleright h_t(y) \triangleleft A_{R(t)}\left([\iota_{r(t)} \circ \delta]\right)^{-1}
        \end{align*}
        holds, where \(\ell(t)\colon t \rightarrow L(t)\) and \(r(t)\colon t \rightarrow R(t)\) are the two local \(n\)-strata at \(t\). Here, \(G_s\) are the groups assigned to the \(n\)-strata and \(M_t\) the \(G_{L(t)} \times G_{R(t)}^{\mathrm{op}}\)-sets assigned to the \((n-1)\)-strata with the left and right action denoted by \(\triangleleft\) and \(\triangleright\).
        \item Morphisms (\textnormal{reduced gauge transformations}) \((A, h) \Rightarrow (A', h')\) are collections of maps \(\gamma_s\colon V_s \rightarrow G_s\) parametrized by \(n\)-strata \(s\) such that for any \(n\)-stratum \(s\) and any path \(\tau\colon [0,1] \rightarrow \hat{s}\) from \(\tau(0) = x\) to \(\tau(1) = x'\), where \(x, x' \in V_s\), and for any \((n-1)\)-stratum \(t\) and any \(y \in V_t\), one has the two conditions
        \begin{align*}
            A_s'([\tau]) = \gamma_s(x') \cdot A_s([\tau]) \cdot \gamma_s(x)^{-1}, \,\,\,\,\, h_t'(y) = (\gamma_{L(t)} \circ \iota_{\ell(t)})(y) \triangleright h_t(y) \triangleleft (\gamma_{R(t)} \circ \iota_{r(t)})(y)^{-1}.
        \end{align*}
    \end{itemize}
\end{proposition}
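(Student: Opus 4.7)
The strategy is to prove the equivalence $\cala^X \sslash \calg^X \simeq \cala^{X \bullet} \sslash \calg^{X \bullet}$ by first reducing each fundamental groupoid to its full subgroupoid on basepoints, and then reading off what a gauge configuration is by unpacking the end explicitly. The coherent basepoint condition guarantees that for each stratum $s$, the set $V_s$ meets every connected component of $\hat{s}$, so the inclusion $\Pi_1(\hat{s}, V_s) \hookrightarrow \Pi_1(\hat{s})$ is an equivalence of groupoids as the inclusion of a skeleton. Since the 2-functor $\cald_s^{(-)}\colon \mathsf{Grpd}^{\mathrm{op}} \rightarrow \mathsf{Grpd}$ preserves equivalences, one obtains a levelwise equivalence that descends to an equivalence of ends
\begin{align*}
    \cala^X \sslash \calg^X = \int_{s \mspace{1mu} \in \mspace{1mu} \mathcal{Q}^X} \cald_s^{\Pi_1(\hat{s})} \simeq \int_{s \mspace{1mu} \in \mspace{1mu} \mathcal{Q}^X} \cald_s^{\Pi_1(\hat{s}, V_s)} =: \cala^{X \bullet} \sslash \calg^{X \bullet}.
\end{align*}

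Next I unpack an object in this end, which is a family $\{A_u\}_u$ of functors $A_u\colon \Pi_1(\hat{u}, V_u) \rightarrow \cald_u$ indexed by all strata, satisfying $D^X(q) \circ A_u = A_{u'} \circ T^X(q)$ for every local stratum $q\colon u \rightarrow u'$. For an $n$-stratum $s$, $\cald_s = \bullet \sslash G_s$, so $A_s$ is equivalent to the assignment $[\tau] \mapsto A_s([\tau]) \in G_s$ respecting composition. For an $(n-1)$-stratum $t$, $\cald_t = M_t \sslash (G_{L(t)} \times G_{R(t)})$; applying the end conditions for the two local strata $\ell(t)\colon t \rightarrow L(t)$ and $r(t)\colon t \rightarrow R(t)$ forces $A_t$ on a morphism $[\delta]\colon y \rightarrow y'$ to be the pair $(A_{L(t)}([\iota_{\ell(t)} \circ \delta]), A_{R(t)}([\iota_{r(t)} \circ \delta]))$, while on objects $A_t$ yields a map $h_t\colon V_t \rightarrow M_t$ by restriction. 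Functoriality translated through the action-groupoid structure then produces exactly the equivariance identity in the statement. For lower-dimensional strata $u$, the groupoid $\cald_u$ is built as an action groupoid whose underlying set and group are products over local $(n-1)$- and $n$-strata at $u$, so the end conditions at each of these local strata uniquely determine $A_u$ from the $A_s$ and $A_t$ at the incident higher-dimensional strata, contributing no independent data.

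For morphisms, I apply the same reduction at the level of natural transformations. A morphism in $\cala^{X \bullet} \sslash \calg^{X \bullet}$ between $(A, h)$ and $(A', h')$ is a family $\{\Gamma_u\colon A_u \Rightarrow A_u'\}_u$ compatible with all $D^X(q)$. For an $n$-stratum $s$, a natural transformation between functors to $\bullet \sslash G_s$ amounts to a map $\gamma_s\colon V_s \rightarrow G_s$, and naturality on a path $\tau$ from $x$ to $x'$ yields precisely the conjugation formula $A_s'([\tau]) = \gamma_s(x') \cdot A_s([\tau]) \cdot \gamma_s(x)^{-1}$. Compatibility with the projections $D^X(\ell(t))$ and $D^X(r(t))$ forces $\Gamma_t$ at $y \in V_t$ to act by the pair $(\gamma_{L(t)}(\iota_{\ell(t)}(y)), \gamma_{R(t)}(\iota_{r(t)}(y)))$, and the requirement that this be a natural transformation from $A_t$ to $A_t'$ in the action groupoid $\cald_t$ yields the stated formula for $h_t'(y)$. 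Data on lower-dimensional strata is again automatic.

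The main obstacle is the careful verification that the end conditions on codim-$\geq 2$ local strata produce no independent constraints beyond the explicit equivariance and compatibility already imposed on the $(A_s, h_t)$ data. This requires a systematic identification of $\cald_u$ with a limit-type object assembled from the data at incident top- and codim-1 local strata, exploiting the functoriality of $D^X$ and the combinatorial structure of $\mathcal{Q}^X$. A further subtle point is the use of the coherent basepoint axiom to guarantee that the inclusions $V_s \hookrightarrow V_u$ induced by local strata $u \rightarrow s$ assemble compatibly, so that the levelwise equivalence of fundamental groupoids transfers cleanly through the end over $\mathcal{Q}^X$.
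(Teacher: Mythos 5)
The proposition is cited verbatim from \cite[Corollary 5.18, Proposition 5.19]{FMM} and the paper gives no proof of its own, so there is no internal argument to compare against. Evaluating your proposal on its merits: the overall strategy --- replace each $\Pi_1(\hat s)$ by its full subgroupoid on the coherent basepoints, show this is an equivalence levelwise, and then unpack the end --- is the right one, but there is a genuine gap in the phrase ``one obtains a levelwise equivalence that descends to an equivalence of ends.'' Strict ends of groupoid-valued diagrams do \emph{not} preserve equivalences in general: a levelwise equivalence of two $\mathcal{Q}^X$-indexed diagrams need not induce an equivalence on their limits. What saves the argument here is stronger than an equivalence: because the inclusion $\Pi_1(\hat s, V_s) \hookrightarrow \Pi_1(\hat s)$ is the inclusion of a full subgroupoid meeting every component, the induced restriction $\cald_s^{\Pi_1(\hat s)} \to \cald_s^{\Pi_1(\hat s, V_s)}$ is not merely an equivalence but a \emph{trivial fibration} (surjective on objects and fully faithful), and trivial fibrations of groupoids, as the right class of a weak factorization system, are closed under limits. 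That is the fact you must invoke, and without it the step from ``equivalence at each stratum'' to ``equivalence of ends'' is unjustified.

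A second, smaller gap: you define $\cala^{X\bullet} \sslash \calg^{X\bullet}$ as the end $\int_s \cald_s^{\Pi_1(\hat s, V_s)}$, which carries data at \emph{every} object of $\mathcal{Q}^X$, whereas the reduced gauge groupoid in the proposition keeps only the $A_s$ on $n$-strata and the $h_t$ on $(n-1)$-strata. The claim that data on lower-dimensional strata is determined by these is plausible (since the $\cald_u$ for $u$ of codimension $\geq 2$ are assembled as action groupoids from the same $G_s$ and $M_t$ appearing at incident higher-dimensional strata, and the end conditions at local strata $q\colon u \to s$ and $q\colon u \to t$ pin them down), but this needs to be verified, in particular that no additional compatibility conditions among the $A_s$ and $h_t$ appear beyond the stated equivariance relations. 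You flag both points as ``obstacles'' but do not address them, and they are the crux of the argument rather than afterthoughts.
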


\paragraph{Dijkgraaf-Witten theory with defects.} The gauge groupoid processes the codimension 0 and 1 layers of an assignment of defect data. To account for the remaining representation data of the codimension 2 and 3 layers, one needs to construct representations of the gauge groupoids of surfaces, and a natural transformation relating them for every cobordism between the surfaces. All of these data are then assembled into a functor \(C\colon \Cob_3^{\mathsf{def}} \rightarrow \mathsf{Span}^{\mathsf{fib}}(\mathsf{repGrpd)}\). 

We follow Section 6 of \cite{FMM}. Let \(X\) be a stratified manifold with defect data. Recall that there are projection functors \(P_s\) from the gauge groupoid to the groupoid \(\cald_s^{\Pi_1\left(\hat{s}\right)}\) assigned to every stratum \(s\). By \cite[Lemma 6.5]{FMM}, for each edge \(e \in S_2^X\) connected to at least one vertex, there are functors \(P_e^s, P_e^t\colon \cala^X \sslash \calg^X \rightarrow \cald_e \) such that \(P_e^s = D_{s(e)} \circ P_v\), whenever \(e\) is oriented away from a vertex \(v\), and \(P_e^t = D_{t(e)} \circ P_v\) whenever \(e\) is oriented toward \(v\). Here, we view \(P_v\) as a functor \(\cala^X \sslash \calg^X \rightarrow \cald_v^{\Pi_1(\hat{v})} \cong \cald_v\).

There are similar compatibilities with the functor \(P_{\partial}\) for incoming or outgoing edges at a boundary vertex. There is also a natural transformation \(P_e^d\colon P_e^s \Rightarrow P_e^t\) relating the two functors (\cite[Lemma 6.6]{FMM}).

We describe the representations and natural transformations \(C\) assigns to stratified surfaces and cobordisms with defect data, respectively.
\begin{enumerate}
    \item Let \(\Sigma\) be a stratified surface with defect data associating to vertices representations \(\rho_v\colon \cald_v \rightarrow \vect_{\C}\) (cf. \cref{defDefData}). If a vertex \(v \in S_0^{\Sigma}\) has positive orientation, define \(F_v^{\Sigma}\colon \cala^{\Sigma} \sslash \calg^{\Sigma} \rightarrow \vect_{\C}\) as \(F_v^{\Sigma} := \rho_v \circ P_v\), and if \(v\) has negative orientation, set \(F_v^{\Sigma} := \rho_v^* \circ P_v\). Then, define the representation of the gauge groupoid as the tensor product of functors
    \begin{align}
        \label{eqGGRep}
        F_{\Sigma}:= \bigotimes_{v \mspace{1mu} \in \mspace{1mu} S_0^{\Sigma}} F_v^{\Sigma}\colon \cala^{\Sigma} \sslash \calg^{\Sigma} \rightarrow \vect_{\C}.
    \end{align}
    \item Let \(M\colon \Sigma \rightarrow \Sigma'\) be a stratified cobordism with defect data assigning to edges representations \(\rho_e\colon \cald_e \rightarrow \vect_{\C}\) and to vertices natural transformations \(\sigma_v\colon \rho_v^t \Rightarrow \rho_v^s\). Inserting a new bivalent vertex in the middle of each edge defines an oriented graph \(\Gamma^M\) whose free ends lie in the boundary \(\partial M\). One colors this graph by assigning to the two edges in \(E(\Gamma^M)\) corresponding to a defect edge \(e \in S_1^M\) the functors
    \begin{align*}
        F_e^s := \rho_e \circ P_e^s, \, \,\, \, \, \, \, F_e^t := \rho_e \circ P_e^t\colon \, \cala^M \sslash \calg^M \rightarrow \vect_{\C},
    \end{align*}
    where the first is assigned to the edge in \(E(\Gamma^M)\) that is outgoing, and the second is assigned to the edge that is incoming at a vertex of \(M\) or \(\partial M\). 
    
    For a vertex \(v\) of \(M\), set \(\mu_v := \sigma_v P_v\colon \rho_v^t \circ P_v \Rightarrow \rho_v^s \circ P_v\) and for a bivalent vertex \(v_e \in V(\Gamma^M)\) not coming from \(S_0^M\) sitting on an edge \(e \in S_1^M\), set \(\mu_e:= \rho_eP_e^d\colon F_e^s \Rightarrow F_e^t\). There is an assignment to edges that form isolated loops, which we will not recap in detail as we will not work explicitly with edges of this kind (compare \cref{footNoLoops}). 
    
    Altogether, the result is a graph \(\Gamma^M\) labeled in the monoidal category \(\vect_{\C}^{\cala^M \sslash \calg^M}\). The graph has a well-defined evaluation to a morphism in \(\vect_{\C}^{\cala^M \sslash \calg^M}\) because the category is spherical and symmetric.
    \item Evaluating the \(\vect_{\C}^{\cala^M \sslash \calg^M}\)-labeled graph \(\Gamma^M\) as a morphism defines a natural transformation \(\mu_M := \langle \Gamma^M\rangle\colon \otimes_{v \in S_0^{\Sigma}} (F_{e_v}^{x_v})^v \Rightarrow \otimes_{u \in S_0^{\Sigma'}} (F_{e_u}^{x_u})^u\). Here, the notation is as follows: For a boundary vertex \(v\) induced by an outgoing edge, put \(x_v = s\), and \(x_v = t\) if it is incoming. If \(v\) is in the part of \(\partial M\) associated to \(\Sigma\) and positively oriented or in the part of \(\partial M\) associated to \(\Sigma'\) and negatively oriented, set \((-)^v = (-)^*\), and otherwise \((-)^v\) acts as the identity on the functor inside the brackets. Recall the functor \(P_{\partial}\) consisting of the components \(P\) and \(P'\) projecting from the bulk to the boundary groupoids. Then, since \(\rho_e = \rho_v\) for a vertex in \(\partial M\) and associated edge \(e\) in \(M\) and \(P_v \circ P_{\partial} = P_e^t\) or \(P_v \circ P_{\partial} = P_e^s\) depending on whether \(e\) is in- or outgoing at \(v\), \(\mu_M\) indeed defines a natural transformation \(F_{\Sigma} \circ P \Rightarrow F_{\Sigma'} \circ P'\).
\end{enumerate}
\begin{theorem}(\cite[Proposition 6.7]{FMM})
    There is a symmetric monoidal functor \(C\colon \Cob_3^{\mathsf{def}} \rightarrow \mathsf{Span}^{\mathsf{fib}}(\mathsf{repGrpd)} \) that assigns to stratified surfaces with defect data \(\Sigma\) the pair of gauge groupoid and its representation constructed in \cref{eqGGRep}, i.e. \(C(\Sigma) = (\cala^{\Sigma} \sslash \calg^{\Sigma}, F_{\Sigma})\), and to cobordisms \(M\colon \Sigma \rightarrow \Sigma'\) the equivalence class defined by the pair of fibrant span and natural transformation
    \begin{align*}
        C(M) = [\cala^{\Sigma} \sslash \calg^{\Sigma} \leftarrow \cala^M \sslash \calg^M \rightarrow \cala^{\Sigma'} \sslash \calg^{\Sigma'}, \mu_M].
    \end{align*}
\end{theorem}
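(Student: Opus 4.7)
The plan is to verify three things for the data just assembled: that $C$ is well-defined on objects and morphisms, that it respects identities and composition, and that it is symmetric monoidal. All building blocks have been set up in the preceding paragraphs, so the work is to check that they fit together consistently.

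First I would handle well-definedness. On objects, $\cala^{\Sigma} \sslash \calg^{\Sigma}$ is an end, hence intrinsically independent of choices, and $F_{\Sigma}$ is a tensor product in $\vect_{\C}^{\cala^{\Sigma} \sslash \calg^{\Sigma}}$ of composites of the form $\rho_v \circ P_v$ or $\rho_v^* \circ P_v$, and thus yields a well-defined representation. On morphisms three items must be checked. Fibrancy of the span $\cala^{\Sigma} \sslash \calg^{\Sigma} \xleftarrow{P} \cala^{M} \sslash \calg^{M} \xrightarrow{P'} \cala^{\Sigma'} \sslash \calg^{\Sigma'}$ follows from path lifting in the fundamental groupoid combined with the transversality and collar neighbourhood properties guaranteed by saturation of the stratification at $\partial M$. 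The natural transformation $\mu_M$ is obtained by evaluating the graph $\Gamma^M$ inside the spherical symmetric rigid category $\vect_{\C}^{\cala^{M} \sslash \calg^{M}}$; by the standard graphical calculus its value depends only on the ambient isotopy class of $\Gamma^M$ relative to its boundary data. Finally, the equivalence class of the span is preserved under refinements of basepoint system and triangulation, using \cref{propGG} and functoriality of $\Pi_1$.

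For functoriality, the identity case is direct: the cylinder $\Sigma \times I$ yields the interval groupoid $(\cala^{\Sigma} \sslash \calg^{\Sigma})^{\cali}$ with projections $p_0, p_1$, and $\Gamma^{\Sigma \times I}$ evaluates to the transformation $\sigma^{F_\Sigma}$ singled out in \cref{defFibSpan}. For composition, given $M\colon \Sigma \to \Sigma'$ and $N\colon \Sigma' \to \Sigma''$, the key technical step is a van Kampen style statement
\begin{align*}
\cala^{N \circ M} \sslash \calg^{N \circ M} \simeq \cala^{M} \sslash \calg^{M} \times_{\cala^{\Sigma'} \sslash \calg^{\Sigma'}} \cala^{N} \sslash \calg^{N},
\end{align*}
obtained from the end description by combining a Mayer--Vietoris pushout for the fundamental groupoids of the glued local models $\hat{s}$ with the compatibility of the classical defect groupoids $\cald_s$ under local strata. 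One then recognises that $\Gamma^{N \circ M}$ is the concatenation $\Gamma^M \cup_{\Sigma'} \Gamma^N$ after base change along the pullback projections, and checks that horizontal composition of $\mu_M$ and $\mu_N$ in the spherical symmetric category over the pullback reproduces this evaluation, yielding $\mu_{N \circ M}$. The symmetric monoidal compatibility is then routine: a disjoint union $X \sqcup Y$ gives a product $(\cala^{X} \sslash \calg^{X}) \times (\cala^{Y} \sslash \calg^{Y})$ via the end, and $F_{X \sqcup Y} \cong F_X \otimes F_Y$ follows directly from the definition as a tensor over vertex functors.

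The main obstacle will be the composition step. Threading the pullback of gauge groupoids through the evaluation of graphs in the varying target category $\vect_{\C}^{\cala^{M} \sslash \calg^{M}}$ is precisely where the end construction, the fibrancy of the span, and the spherical symmetric structure must mesh together, with careful bookkeeping of the edges, bivalent vertices, and orientation conventions inherited from $\partial M \cap \partial N$. Everything else can be arranged as either a choice-independence argument or a straightforward unwinding of definitions.
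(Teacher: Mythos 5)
This theorem is not proved in the paper; it is cited verbatim from \cite[Proposition 6.7]{FMM}, and the surrounding text of \cref{subSecGG} only recalls the data ($\cala^M\sslash\calg^M$, $F_\Sigma$, $\Gamma^M$, $\mu_M$) needed to state it. So there is no in-text proof to compare your attempt against.

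Judged on its own terms, your outline correctly identifies where the real work lives: fibrancy of the span $\cala^{\Sigma}\sslash\calg^{\Sigma} \leftarrow \cala^M\sslash\calg^M \rightarrow \cala^{\Sigma'}\sslash\calg^{\Sigma'}$; unitality via the path-groupoid presentation $(\cala^\Sigma\sslash\calg^\Sigma)^{\cali}$ of the cylinder; composition via a pullback decomposition of gauge groupoids; and monoidality via the product structure of the end. Those are the right targets, and they are the ones addressed by the auxiliary results the paper does cite, e.g.\ \cite[Corollary 5.13]{FMM} for fibrancy. However, two points need tightening. First, your well-definedness discussion invokes independence of basepoint system and triangulation, but $C(M)$ is defined using the unreduced gauge groupoid $\cala^M\sslash\calg^M$ as an end over $\mathcal Q^M$; the reduced, basepoint-dependent model of \cref{propGG} is an equivalent presentation used for computation, not part of the definition, so that independence is automatic and not where the subtlety lies. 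Second, and more seriously, the composition step is asserted rather than argued: a Seifert--van Kampen pushout for the $\Pi_1(\hat s)$ and a matching coend/end formula for the $\cald_s$'s must be combined to produce the claimed pullback $\cala^{N\circ M}\sslash\calg^{N\circ M} \simeq \cala^{M}\sslash\calg^{M}\times_{\cala^{\Sigma'}\sslash\calg^{\Sigma'}}\cala^{N}\sslash\calg^{N}$, and you must then show that this identification turns the evaluation of the concatenated graph $\Gamma^M\cup_{\Sigma'}\Gamma^N$ into the required whiskered composite $\mu_N T_2 \circ \mu_M T_1$. These are precisely the nontrivial ingredients in \cite{FMM}, and until they are supplied your proposal remains a plan rather than a proof.
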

This finally makes it possible to define the Dijkgraaf-Witten TQFT with defects as the symmetric monoidal functor (\cite[Theorem 6.8]{FMM})
\begin{align}
    Z_{DW} := L \circ C\colon \Cob_3^{\mathsf{def}} \rightarrow \vect_{\C},
\end{align}
given by composing the two functors \(C\) and the linearization functor \(L\) from \cref{propFunL}. 

Fixing a finite group \(G\) and restricting \(Z_{DW}\) to the subcategory of surfaces and cobordisms with trivial stratification and codimension-0 strata labeled by \(G\) specializes to ordinary Dijkgraaf-Witten theory \(Z_G\) for the finite group \(G\). This finishes our summary of the results of \cite{FMM}.

\section{A boundary perspective on Turaev-Viro theories}
\label{secTVRes}
The results we develop in this section characterize the Turaev-Viro TQFT through a set of conditions (cf. \cref{defPropInv}) that give rise to the notion of \textit{boundary local} TQFTs (or more generally, defect invariants). These properties are formulated with respect to certain local topological moves of decorated manifolds that transform the defect boundary. 

Throughout the section, we denote by \(\calc\) an arbitrary spherical fusion category.
\subsection{Boundary local invariants}
\label{subSecChar}
Let us introduce the moves \(\Delta_1, \Delta_2\) and \(\Delta_3\), which can be applied locally to the underlying defect manifolds for cobordisms \(M \in \Cob_3^{\mathcal{C}}(\Sigma, \Sigma')\) to produce new cobordisms \(\Delta_i(M) \in \Cob_3^{\mathcal{C}}(\Sigma, \Sigma')\).
\begin{enumerate}
    \item \textit{Pocket move:} The move \(\Delta_1\) removes an open 3-ball (\textit{pocket}) from the interior of \(M\). More explicitly, for any 3-ball \(B^3 \subset \mathrm{Int}(M) \) embedded in the interior of \(M\), we define \(\Delta_1(M) = M\setminus (B^3)^{\circ}\), which we can again view as a cobordism in \(\Cob_3^{\mathcal{C}}(\Sigma, \Sigma')\). Graphically, a neighborhood of the pocket transforms as shown in the following figure, where the vacuum region exposed by the removal of the pocket is depicted in white:
    \begin{align*}
        \adjincludegraphics[valign=c, scale = 0.75]{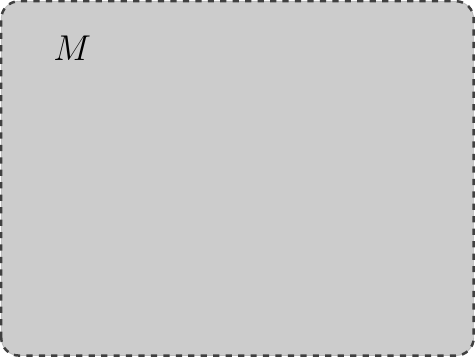} \, \,\, \, \xrightarrow{\Delta_1} \,\,\,\,\adjincludegraphics[valign=c, scale = 0.75]{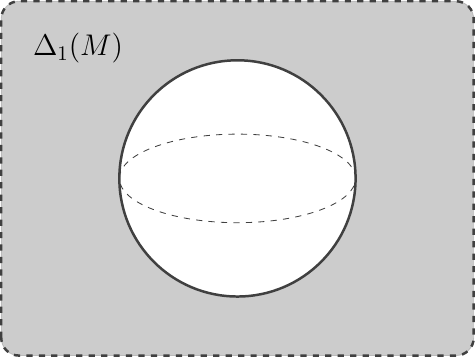}.
    \end{align*}
    \item \textit{Tunnel move:} This move removes a cylindrically shaped tunnel from \(M\) and decorates the remaining part of the manifold with a labeled defect loop. The tunnel move can only be applied in the case of a nonempty free boundary, so we assume that \(\partial_f M \neq \emptyset\). Consider a cylinder \(C = \mathbb{D}^2 \times I\) together with an embedding \(\iota\colon C \hookrightarrow M\setminus \partial_gM\) such that \(C \cap \partial_f M = \mathbb{D}^2 \times \{0\} \cup \mathbb{D}^2 \times \{1\}\) and \(C \cap \partial_f M\) does not intersect any boundary defect line or vertex. We call such a cylinder embedding a \textit{tunnel}. Let \(X \in \calc\) be an object of the fusion category \(\calc\).  Then, we define \(\Delta_2^X(\iota)(M) = M \setminus C^{\circ}\) with boundary decorated by an additional defect loop labeled by \(X\) embedded in \(M\) along the boundary of the slice \(\mathbb{D}^2 \times \{1/2\} \subset C \cap M\).\footnote{\label{footOrAmb}Note that there are two ways to orient the \(X\)-labeled defect loop. Instead of defining two different moves, we will deal with this ambiguity by simply specifying the orientation of the loop whenever we apply \(\Delta_2^X\).} A pictorial representation is as follows:
    \begin{align*}
        \adjincludegraphics[valign=c, scale = 0.75]{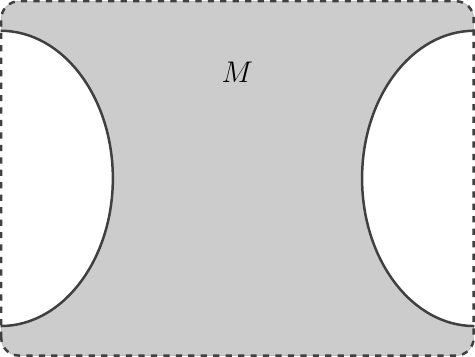} \, \xrightarrow{\Delta_2^X(\iota)} \, \adjincludegraphics[valign=c, scale = 0.75]{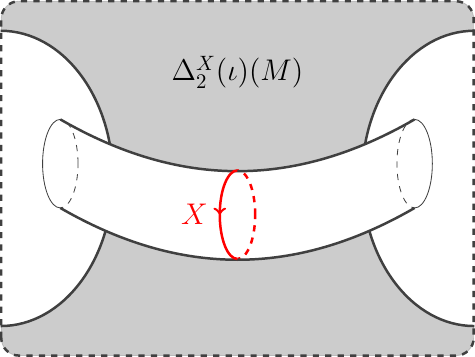}.
    \end{align*}
    The reason why we explicitly keep the embedding notationally is that for different embeddings \(\iota, \iota'\colon C \hookrightarrow M\), the cobordisms \(\Delta_2^X(\iota)(M)\) and \(\Delta_2^X(\iota')(M)\) need not be equivalent. 
    \item \textit{Tube-capping move:} The effect of the move described in the following is the removal of a cylindrical piece of manifold in combination with capping the unveiled boundary disks with 3-balls labeled by a pair of dual bases. 
    
    Consider again a cobordism \(M\) with nonempty free boundary. Consider further a cylinder \(C\) embedded in \(M\setminus\partial_gM\) such that precisely its lateral surface is on the free boundary, i.e. \(C \cap \partial_f M = \partial \mathbb{D}^2 \times [0,1]\). We moreover require that the \(\calc\)-colored graph \(\Gamma\) embedded in \(\partial_f M\) has a particularly simple shape in the intersection \(\Gamma \cap C \cap M\): It does not contain any vertex of \(\Gamma\) and consists of a collection of finitely many parallel intervals such that one of the ends of each interval embeds in the boundary of the bottom disk \(\mathbb{D}^2 \times \{0\}\) of the cylinder, and the other end embeds in the boundary of the top disk \(\mathbb{D}^2 \times \{1\}\). The cylinder is oriented in the sense that we choose an orientation of its height, i.e. the interval \(\{0\} \times [0,1]\). The intervals are labeled by objects \(X_1, \hdots, X_n \in \calc\), listed in agreement with the cyclic order defined by the orientation of \(M\) and the orientation of \(C\), similar to step \labelcref{itemTVCon2} of the construction of \(|\cdot|_{\calc}\) in \cref{subSecTV}. We also define signs \(\varepsilon(i)\) in the same manner.
    
    We define the two caps to be glued to the ends exposed by removing the cylindrical region \(C\) from the manifold. Pick a basis \(\{\alpha_i\} \subset \Hom_{\mathcal{C}}\left(X_1^{\varepsilon(1)} \otimes \hdots \otimes X_n^{\varepsilon(n)}, \mathbbm{1}\right)\) and let \(E^{\alpha_i}\) be a 3-ball with an open disk embedded in its boundary. This disk is decorated with a coupon labeled by \(\alpha_i\) in the center of the disk, and strands labeled by \(X_1, \hdots, X_n\) radially connecting (the bottom of) the coupon to the boundary of the disk. We identify the cylinder cap \(\mathbb{D}^2 \times \{0\}\) with the complement of the embedded open disk in the boundary of the 3-ball. It is a closed disk denoted by \(\overline\Sigma_X\) whose boundary is decorated with \(n\) points, each colored by a pair consisting of an object \(X_i\) together with the sign \(-\varepsilon(i)\).
    
    Considering the manifold \(E^{\alpha_i}\) as a cobordism \(\Sigma_X \rightarrow \emptyset \), we may reverse orientations, exchange source and target and decorate the vertex with an element of the dual basis \(\{\alpha_i^*\} \subset \Hom_{\mathcal{C}}\left(\mathbbm{1}, X_1^{\varepsilon(1)} \otimes \hdots \otimes X_n^{\varepsilon(n)}\right)\) to obtain a cobordism \(\overline{E}^{\alpha_i^*}: \emptyset \rightarrow \Sigma_X\) in \(\Cob_3^{\calc}\). We use it to define the cap on the opposite end, whose gluing boundary we identify with the top lid \(\mathbb{D}^2 \times \{1\}\) of \(C\). Now, we set
    \begin{align}
        \label{eqMovTube}
        \Delta_3^{\alpha_i}(M) = M \setminus C^{\circ} \cup_S \left(E^{\alpha_i} \cup \overline{E}^{\alpha_i^*}\right),
    \end{align}
    where the gluing happens along the set \(S = \mathbb{D}^2 \times \{0\} \cup \mathbb{D}^2 \times \{1\}\) so that the manifold \(\Delta_3^{\alpha_i}(M)\) emerging by this process is a well-defined defect manifold, regarded as a cobordism in \(\Cob_3^{\calc}(\Sigma, \Sigma')\). If the set of defect edges intersecting the lateral surface of \(C\) is empty, we will simply denote the resulting manifold by \(\Delta_3(M)\). We illustrate the situation in the case of four defect lines,
    \begin{align*}
        \adjincludegraphics[valign=c, scale = 0.75]{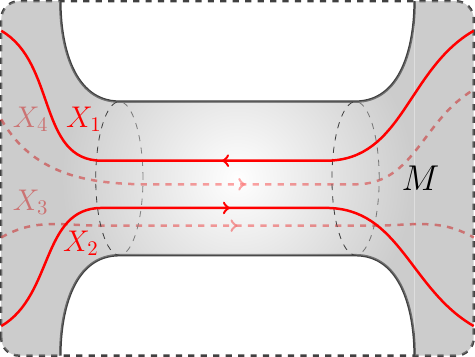} \,\, \xrightarrow{\Delta_3^{\alpha_i}} \,\, \adjincludegraphics[valign=c, scale = 0.75]{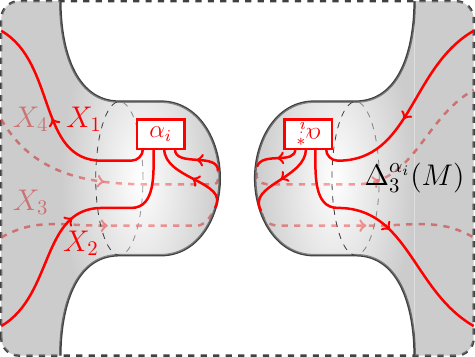},
    \end{align*}
    where \(\{\alpha_i\}\) is a basis of \(\Hom_{\calc}(X_1^* \otimes X_2 \otimes X_3 \otimes X_4, \mathbbm{1})\). The bottom of the right coupon in the second image appears at the top, as indicated by the rotation of its label. The black dotted lines trace the boundary of the two disks defining the gluing set \(S\).
    \item Finally, we introduce a special defect 3-manifold \(B_{\Gamma} \in \Cob_3^{\calc}(\emptyset, \emptyset)\), which consists of an underlying 3-ball \(B^3\) whose boundary \(\partial B^3\) is decorated with a \(\mathcal{C}\)-labeled graph \(\Gamma\). The figure below displays \(B_{\Gamma}\) for a graph \(\Gamma\) that consists of five edges labeled by objects \(V, W, X, Y, Z \in \calc\) and three vertices labeled by morphisms \(\alpha, \beta, \gamma\) in \(\calc\):
    \begin{align}
        \label{eqBGamma}
        B_{\Gamma} \, \, = \, \, \adjincludegraphics[valign=c, scale = 0.75]{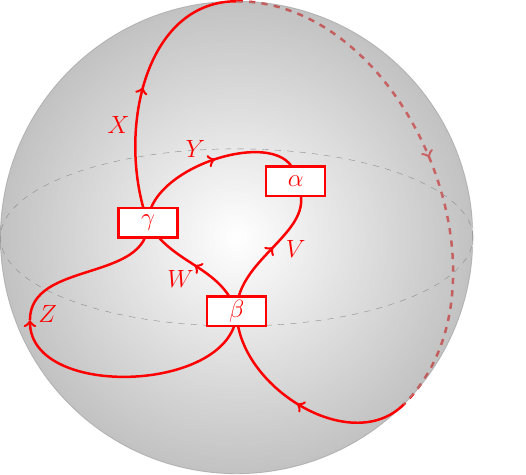}
    \end{align}
    A crucial feature of this cobordism is that the Turaev-Viro defect invariant only remembers its surface graph upon evaluation (see property \labelcref{P4} in \cref{defPropInv}), which reflects the trivial topological nature of the 3-ball. This fact will be proven in \cref{subSecProof}.
\end{enumerate}
\begin{remark}
    For a uniform treatment of the moves, one can associate a move \(\Delta_4^{\Gamma}\) with \(B_{\Gamma}\). For a cobordism \(M\colon \Sigma \rightarrow \Sigma'\) with a connected component \(B_{\Gamma}\), the move simply removes that component, that is, \(\Delta_4^{\Gamma}(M) = M \setminus B_{\Gamma}\). In the remainder of the text, we will not adopt this viewpoint and simply view the algebraic property \labelcref{P4} (cf. \cref{defPropInv}) as a normalization condition on decorated 3-balls.
\end{remark}
We introduce a set of algebraic properties of defect invariants (cf. \cref{defInv}) corresponding to the local moves \(\Delta_1, \Delta_2^X(\iota), \Delta_3^{\alpha_i}\) and the cobordism \(B_{\Gamma}\). The following definition is central.
\begin{definition}
    \label{defPropInv}
    Let \(\omega\colon \Cob_3^{\calc}(\emptyset, \emptyset) \rightarrow \K \) be a boundary-defect invariant. We say that \(\omega\) satisfies properties \labelcref{P1}, \labelcref{P2}, \labelcref{P3} or \labelcref{P4}, whenever the respective property in the following list holds with respect to every defect manifold \(M \in \Cob_3^{\calc}(\emptyset, \emptyset)\).
    \begin{enumerate}[(P1)]
        \item \label{P1} Whenever the pocket move \(\Delta_1\) can be applied locally to \(M\) to produce a defect manifold \(\Delta_1(M)\), the relation \(\omega(\Delta_1(M)) = \dim (\mathcal{C})\, \omega(M)\) holds.
        \item \label{P2} For any tunnel \(\iota\colon C \hookrightarrow M\), the relation \(\omega(M) = \sum_{x \in \mathcal{O}(\mathcal{C})}\dim(x) \, \omega(\Delta_2^x(\iota)(M))\) between the invariant of \(M\) and of the modified manifolds \(\Delta_2^x(\iota)(M)\) is valid.
        \item \label{P3} For any possible tube-capping move on \(M\), the equality \(\omega(M) = \sum_{i = 1}^{k}\omega(\Delta_3^{\alpha_i}(M))\) holds, where \(X_1, \hdots, X_n \in \calc\) are objects labeling the lateral surface of the tube, \(\{\alpha_i\}\) is a basis of the vector space \(\Hom_{\calc}\left(X_1^{\varepsilon(1)} \otimes \hdots \otimes X_n^{\varepsilon(n)}, \mathbbm{1}\right)\) and \(k\) is its dimension.
        \item \label{P4} On the cobordism \(B_\Gamma\), the invariant \(\omega\) evaluates as \(\omega(B_{\Gamma}) = \langle \Gamma \rangle\), where \(\langle \Gamma \rangle \in \K\) denotes the value of the string diagram represented by \(\Gamma\) considered as a morphism in \(\End_{\calc}(\mathbbm{1})\).
    \end{enumerate}
    We will speak of a \textit{boundary local} defect invariant if \(\omega\) satisfies all of the properties listed above.
\end{definition}
If an invariant satisfying some of the properties listed in \cref{defPropInv} comes from a defect TQFT, we will also say that the defect TQFT has the respective properties.
\begin{remark}
    \label{remPPrime}
    For a boundary-defect TQFT \(Z\colon \Cob_3^{\calc} \rightarrow \vect_{\C}\), it is a natural question whether \(Z\) satisfies the generalizations (P1)*, (P2)*, (P3)* and (P4)* of properties \labelcref{P1} to \labelcref{P4} to arbitrary cobordisms \(\Sigma \rightarrow \Sigma'\), not just cobordisms in \(\Cob_3^{\calc}(\emptyset, \emptyset)\). This is true for the Turaev-Viro TQFT, cf. \cref{thmMainTV} and \cref{remTVPPrime}.
\end{remark}
\begin{remark}
    \label{remGenTun}
    It is natural to generalize tunnels by allowing embeddings of cylinders over bases \(A_n\), which are disks with \(n\) disjoint open disks removed from their interior. For example, \(A_1\) is the annulus \(\Sp^1 \times I\) and \(A_0 \cong \mathbb{D}^2\) the disk. For an object \(X \in \calc\), we can generalize the tunnel move by removing the interior of \(C = A_n \times I\) from a defect cobordism \(M\) (with base and top of the embedded cylinder at the free boundary) and embed one defect loop labeled by \(X\) for every boundary component of \(A_n\) along the boundary of the slice \(A_n \times \{1/2\} \subset C \cap M\). The orientations of the loops are induced from a chosen orientation of \(A_n\) (cf. \cref{footOrAmb}). By convention, a tunnel without further mention of the base will always have a disk \(A_0\) as the base of its defining cylinder.
    
    For a tunnel with base \(A_n\), we can modify \labelcref{P2} and demand for a defect invariant \(\omega\)
    \begin{align}
        \label{eqGenTun}
        \omega(M) = \sum_{x \mspace{1mu} \in \mspace{1mu} \mathcal{O}(\mathcal{C})}\dim(x)^{\chi(A_n)} \, \omega(\Delta_2^x(\iota, A_n)(M)),
    \end{align}
    where \(\Delta_2^x(\iota, A_n)\) is the generalized tunnel move and \(\chi(A_n) = 1-n\) is the Euler characteristic of \(A_n\). Note that the case \(n = 0\) specializes to property \labelcref{P2}.
\end{remark}
We sketch several (admittedly somewhat contrived) examples of defect invariants derived from the Turaev-Viro invariant that satisfy none, some or all of the properties \labelcref{P1} to \labelcref{P4}. We hope this allows the interested reader to gain some insight into the mutual independence of the requirements. 
\begin{example}
    \label{exProp}
    \begin{enumerate}[(i)]
        \item The invariant assigning \(1 \in \K\) to every cobordism in \(\Cob_3^{\calc}(\emptyset, \emptyset)\) is multiplicative, satisfies properties \labelcref{P1} and  \labelcref{P2} if and only if \(\dim(\calc) = 1\) and \(\sum_{x \in \calo(\calc)}\dim(x) = 1\) respectively, and fails property \labelcref{P4} for any \(\calc\). It never satisfies \labelcref{P3}, as that would require \(\dim_{\K}\Hom_{\calc}\left(X_1 \otimes \hdots \otimes X_n, \mathbbm{1}\right) = 1\) for any collection \(X_i \in \calc\). But for \(n = 1\) and the object \(X_1 = \mathbbm{1} \oplus \mathbbm{1}\), which exists in every fusion category, this is obviously false.
        \item The Turaev-Viro invariant \(| \cdot |_{\calc}\) is boundary local, i.e. it satisfies all of the properties \labelcref{P1} up to \labelcref{P4}. This is the content of \cref{thmMainTV}.
        \item The forgetful functor \(U\colon \mathcal{Z}(\calc) \rightarrow \calc\) induces a symmetric monoidal functor \(\widetilde{U}\colon \Cob_3^{\mathcal{Z}(\calc)} \rightarrow \Cob_3^{\calc}\). The restriction of the composition \(|\cdot|_{\calc} \circ \widetilde{U}\) to a defect invariant \(\omega_U\colon \Cob_3^{\mathcal{Z}(\calc)}(\emptyset, \emptyset) \rightarrow \K\) is multiplicative, fails properties \labelcref{P1} to \labelcref{P3} for general \(\calc\), but satisfies \labelcref{P4}. The latter claim directly follows from the fact that \(U\) faithfully embeds \(\Hom_{\mathcal{Z}(\calc)}(X, X') \subset \Hom_{\calc}(UX, UX')\). That \labelcref{P1} (respectively \labelcref{P2}) are not satisfied is immediately checked on, for example, \(\Sp^3\) (respectively \(\Sp^2 \times I\)) using that \(\dim(\mathcal{Z}(\calc))\) is not equal to \(\dim(\calc)\) in general. As a counterexample to \labelcref{P3}, let for instance \(\calc = \vect_{\mathbb{Z}/2}\), consider the nontrivial simple object \(\mathbbm{1}\ncong e \in \mathcal{Z}(\calc) \simeq \vect_{\mathbb{Z}/2 \times \mathbb{Z}/2}\) with \(U(e) = \mathbbm{1} \in \calc\) and use it to decorate the defect manifold \(\mathbb{T}_{e, \mathbbm{1}}^s\) introduced in \cref{lemTorxy} in \cref{secTVRes}. Assuming \labelcref{P3} for \(\omega_U\), we see as in \cref{remTorus} that 
        \begin{align*}
            \omega_U\left(\mathbb{T}_{e, \mathbbm{1}}^s\right) = \dim_{\K}\Hom_{\mathcal{Z}(\calc)}(e, \mathbbm{1}) = 0.
        \end{align*}
        On the other hand, by \cref{lemTorxy}, it must hold that \(\omega_U\left(\mathbb{T}_{e, \mathbbm{1}}^s\right) = \big|\mathbb{T}_{U(e), \mathbbm{1}}^s\big|_{\calc}\) is equal to \(\dim_{\K}\Hom_{\calc}(U(e), \mathbbm{1}) = 1\), a contradiction.
        \item \label{itemExProp} Define a symmetric monoidal functor \(F\colon \Cob_3^{\calc} \rightarrow \Cob_3^{\calc}\) which sends any defect cobordism \(M_{\Gamma}\colon \Sigma_{\Gamma} \rightarrow \Sigma_{\Gamma}'\) decorated with a \(\calc\)-colored graph to the undecorated underlying cobordism \(M\colon \Sigma \rightarrow \Sigma'\), forgetting the defect structure. The invariant associated to the composition \(|\cdot|_{\calc} \circ F\) is multiplicative and satisfies \labelcref{P1}, but generally fails \labelcref{P2} to \labelcref{P4}.
        \item It suffices to check the properties \labelcref{P1} to \labelcref{P4} on connected defect manifolds. The direct sum \(|\cdot|_{\calc} \oplus |\cdot|_{\calc}\) of TQFTs gives a multiplicative defect invariant that coincides on connected manifolds with \(2|\cdot|_{\calc}\). Hence, it satisfies \labelcref{P1} -- \labelcref{P3}, but not \labelcref{P4}.
        \item Postcomposing \(| \cdot |_{\calc}\) with a nontrivial field automorphism \(f\colon \K \rightarrow \K\), the invariant \(f \circ | \cdot |_{\calc}\) is multiplicative, satisfies \labelcref{P3} and never satisfies \labelcref{P4}. If we fix \(\K = \C\) and \(f = \overline{(-)}\) complex conjugation, we can say more: As for any spherical fusion category over the complex numbers, the dimensions \(\dim(X)\) for \(X \in \calc\) are real numbers (\cite{ENO}, Corollary 2.10), which are fixed by \(f\), properties \labelcref{P1} and \labelcref{P2} hold as well. 
        \item \label{itemAlg} Consider the example from the last point for \(\K = \mathbb{A} \subset \C\) the field of algebraic numbers. Denote by \(\varphi, \psi \in \mathbb{A}\setminus \mathbb{Q}\) the positive and negative solution of the equation \(\lambda^2 = \lambda + 1\), respectively. Fix the standard inclusion \(\mathbb{Q} \hookrightarrow \mathbb{A}\). Since \(\varphi\) is algebraic over \(\mathbb{Q}\), we can extend the inclusion to a field homomorphism \(\mathbb{Q}(\varphi) \hookrightarrow \mathbb{A}\) sending \(\varphi\) to \(\psi\), which can further be extended to a field automorphism \(f\colon \mathbb{A} \rightarrow \mathbb{A}\) by the isomorphism extension theorem, since the field extension \(\mathbb{A}/\mathbb{Q}\) is algebraic. Let \(\calc = \mathcal{F}_{\varphi}\) denote the Fibonacci category with two simple objects \(\mathbbm{1}, \tau\) satisfying the fusion rule \(\tau \otimes \tau \cong \mathbbm{1} \oplus \tau\). It can be equipped with the structure of a spherical fusion category with dimension \(\dim(\tau) = \varphi\) of the nontrivial simple object and categorical dimension \(\dim(\mathcal{F}_a) = 2 + \varphi\).
        
        Let \(M = \mathbb{T}^s \setminus (B^3)^{\circ}\) be a solid torus with a pocket removed from its interior. Then, 
        \begin{align*}
            |M|_{\mathcal{F}_{\varphi}} \overset{\labelcref{P3}}{=} |\Sp^2 \times I|_{\mathcal{F}_{\varphi}} = |\Delta_1(B^3)|_{\mathcal{F}_{\varphi}} \overset{\labelcref{P1}}{=} \dim(\mathcal{F}_{\varphi})|B^3|_{\mathcal{F}_{\varphi}} \overset{\labelcref{P4}}{=} 2+\varphi,
        \end{align*}
        where we have used that \(|\cdot|_{\mathcal{F}_{\varphi}}\) is boundary local. Consider the tunnel \(\iota\colon C \hookrightarrow M\) connecting the pocket with itself along a non-contractible loop in interior of the solid torus. One can verify that \(|\Delta_2^{\mathbbm{1}}(\iota)(M)|_{\mathcal{F}_{\varphi}} = 2\) and \(|\Delta_2^{\tau}(\iota)(M)|_{\mathcal{F}_{\varphi}} = 1\), see \cref{obs}\textcolor{blue}{.}\labelcref{obsEx} (where the notation \(\vartheta(H, \mathbbm{1},\mathbbm{1})\) respectively \(\vartheta(H, \mathbbm{1},\tau)\) is used for the two invariants). The equation \labelcref{P2} for \(|\cdot|_{\mathcal{F}_{\varphi}}\) then turns into
        \begin{align*}
            2+\varphi = 1\cdot 2 + \varphi \cdot 1.
        \end{align*}
        However, for the invariant \(\omega_f = f \circ |\cdot|_{\mathcal{F}_{\varphi}}\), the right hand side stays the same, but the left hand side changes into \(2 + \psi\). Thus, \(\omega_f\) does not satisfy \labelcref{P2}, and one can easily show that it also does not satisfy \labelcref{P1}.
    \end{enumerate}
\end{example}

We are now ready to state and prove the first main result of this article, which asserts that a multiplicative defect invariant \(\omega \colon \Cob_3^{\calc}(\emptyset, \emptyset) \rightarrow \vect_{\K}\) having all of the properties listed in \cref{defPropInv} is in fact the Turaev-Viro invariant.

\begin{theorem}
    \label{thmMain}
    Let \(\mathcal{C}\) be a spherical fusion category and  \(\omega\colon \Cob_3^{\mathcal{C}}(\emptyset, \emptyset) \rightarrow \K\) a multiplicative boundary-defect invariant. If \(\omega\) is boundary local (cf. \cref{defPropInv}), then \(\omega\) is the Turaev-Viro invariant for the spherical fusion category \(\calc\), that is, \(\omega = | \cdot |_{\calc}\). 
\end{theorem}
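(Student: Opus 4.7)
Proof plan. Fix a boundary local multiplicative invariant $\omega$ and a cobordism $M_\Gamma\in\Cob_3^\calc(\emptyset,\emptyset)$. By \cref{hyp} we may choose a graph skeleton $P$ of $M_\Gamma$, and after refining $P$ if necessary we may assume all faces of $P$ are disks. The plan is to apply $\Delta_1,\Delta_2^x$ and $\Delta_3^\alpha$ in order of decreasing codimension to reduce $M_\Gamma$ to a disjoint union of decorated $3$-balls $B_{\Gamma_v}$, one per node $v\in N(P)$, each carrying exactly the link graph from the Turaev-Viro construction recalled in \cref{subSecTV}. As a first step, apply $\Delta_1$ once inside every 3-cell of $M\setminus P$; iterating \labelcref{P1} yields
\begin{align*}
\omega(M_\Gamma)=\dim(\calc)^{-|M\setminus P|}\,\omega(M_1),
\end{align*}
where $M_1$ has the original free boundary together with one new pocket 2-sphere inside each 3-cell, and the skeleton $P$ now sits between these spheres.

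Next, for each face $a\in\Fac(P)$ choose a tunnel whose cylinder is a regular neighborhood of a small subdisk of $a$, with the two lids on the pocket spheres adjacent to $a$ and the lateral surface disjoint from $P$. Applying \labelcref{P2} face by face yields
\begin{align*}
\omega(M_1)=\sum_{c\colon\Fac(P)\to\calo(\calc)}\dim(c)\,\omega(M_{2,c}),
\end{align*}
where $M_{2,c}$ is obtained by removing the chosen tunnels and inserting along each face $a$ a defect loop labeled by $c(a)$, oriented compatibly with $a$. Then, for every internal rim $r\in R(P)$ apply the tube-capping move \labelcref{P3} to a tube encircling $r$. By step \labelcref{itemTVCon2} of \cref{subSecTV} the cyclically ordered labels on the lateral surface of this tube are exactly $x_1^{\varepsilon(1)},\dots,x_k^{\varepsilon(k)}$, so the basis appearing in \labelcref{P3} is a rim basis and the coupons $\alpha_{j_r}^r,(\alpha_{j_r}^r)^*$ inserted at the two caps are precisely the ones contributing to the distinguished vector $*_r^c\in H_c(M)$. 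Summing gives
\begin{align*}
\omega(M_{2,c})=\sum_{\alpha}\omega(M_{3,c,\alpha}).
\end{align*}

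At this point the manifold $M_{3,c,\alpha}$ splits as a disjoint union of 3-balls, one for each node $v\in N(P)$, whose boundary graph is exactly the Turaev-Viro link graph $\Gamma_v$: its strands are recorded by the coloring $c$ of faces incident to $v$, its coupons by the components of $\alpha$ on the rims at $v$, together with the original coupons of $\Gamma$ at free-boundary vertices and switches. Multiplicativity and \labelcref{P4} then give $\omega(M_{3,c,\alpha})=\prod_{v\in N(P)}\langle\Gamma_v\rangle$. Summing first over $\alpha$ reassembles the distinguished vector $*_c\in H_c(M)$, so that $\sum_\alpha\prod_v\langle\Gamma_v\rangle=V_c(*_c)=|c|$; subsequently summing over $c$ with the weight $\dim(c)$ and the overall prefactor $\dim(\calc)^{-|M\setminus P|}$ reproduces the Turaev-Viro formula \cref{eqInv}, so $\omega(M_\Gamma)=|M_\Gamma|_\calc$.

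The principal difficulty is the topological verification implicit in the ball decomposition above: one must check that the combined effect of the three moves is exactly a disjoint union of small ball neighborhoods of the nodes of $P$ and that the resulting boundary graphs coincide, in terms of labels, orientations, cyclic orders and signs $\varepsilon$, with the Turaev-Viro link graphs. This requires a careful choice of the tunnels and tubes as regular neighborhoods respecting the skeletal stratification, together with bookkeeping at nodes that sit on the free boundary or coincide with coupons of $\Gamma$. Handling non-disk faces requires either refining the skeleton, using that the final answer is $P$-independent by \cref{hyp}, or upgrading \labelcref{P2} to the generalized version of \cref{remGenTun}.
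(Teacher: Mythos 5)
Your proposal follows essentially the same strategy as the paper's proof: choose a skeleton with disk faces, apply $\Delta_1$ inside each 3-cell, apply $\Delta_2$ through each face to collect the sum over colorings, apply $\Delta_3$ along each rim to collect rim bases, split into balls around the nodes, and use multiplicativity with \labelcref{P4} to recover the state-sum contraction $V_c(*_c)=|c|$. The paper's version gives a more explicit construction of the tunnels (identifying two neighboring 3-cells glued along a face with a standard ball or solid torus so that the tunnel is unknotted), which is the main point you flag as a remaining difficulty; otherwise the argument, including the final remark on non-disk faces via the generalized tunnel move or Hypothesis \ref{hyp}, coincides with the paper's treatment.
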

\begin{proof}
    We divide the proof into several steps.
    \begin{enumerate}[label={ \arabic*.}, leftmargin=*]
     \item For simplicity, assume first that \(M\) is a closed compact 3-manifold, thus necessarily without defects. Then, the nodes, rims and faces of any graph skeleton coincide with the vertices, edges and regions of the skeleton, respectively. We select a skeleton \(P\) of \(M\) with the feature that every region \(a \in \mathrm{Reg}(P)\) is homeomorphic to an open disk, \(a \cong (\mathbb{D}^2)^{\circ}\). This choice is always possible, since a compact 3-manifold admits a finite triangulation \(t\) with 2-skeleton \(t^{(2)}\), and we may put, for example, \(P = t^{(2)}\) to obtain a skeleton of the above kind. Note that the constraint on \(P\) imposes \(\chi(a) = 1\) for the Euler characteristic of any region \(a\). 
     
     We now remove a 3-ball from each 3-cell of \(P\) by applying the pocket move \(\Delta_1\) a total of \(|M\setminus P|\) times. If \(M'\) denotes the manifold after this modification (which has a free boundary homeomorphic to a disjoint union of \(|M\setminus P|\) spheres), we have by condition \labelcref{P1}
    \begin{align}
    \label{eqT1}
        \omega(M) = \dim(\mathcal{C})^{-|M\setminus P|} \, \omega(M').
    \end{align}
    Notice that by properties of a skeleton, every 3-cell is homeomorphic to an open 3-ball. This implies that \(P\) is a deformation retract of \(M'\). 
    
    \item Let \(c\colon \mathrm{Reg}(P) \rightarrow \mathcal{O}({\mathcal{C}})\) be an arbitrary coloring of the skeleton. Every region \(a\) is adjacent to at most two different 3-cells. For every \(a\) we intend to apply the tunnel move \(\Delta_2^{c(a)}(\iota_a)\) along a natural tunnel \(\iota_a\colon C \rightarrow M'\) piercing through \(a\), connecting a pair of neighboring previously removed 3-balls in the case of two adjacent 3-cells, and connecting a removed pocket to itself in case of only one adjacent cell. The idea is to avoid that this tunnel is knotted, and we can build such a tunnel as follows.
    
    First, assume that \(a\) has two distinct neighboring 3-cells \(o_1, o_2\) in the original manifold \(M\). Since 3-cells are homeomorphic to open balls, there is a homeomorphism \(f\) that identifies the glued space \(o_1 \cup a \cup o_2\) with a standard open 3-ball \(B_1(0)^{\circ} \subseteq \R^3\) in such a way that \(f(a)\) is the intersection \(B_1(0)^{\circ} \cap \R^2 \times \{0\}\) of the ball with the \(xy\)-plane, dividing it into two solid hemispheres. Additionally, we require \(f\) to identify the two pockets in \(o_1\) respectively \(o_2\) with small balls \(B_{\varepsilon/2}((0, 0, 1-\varepsilon)\) and \(B_{\varepsilon/2}((0, 0, -1+\varepsilon)\) on the \(z\)-axis. We can now define an embedding of a cylinder \(C\) whose cylinder axis lies on the \(z\)-axis and connects both pockets, and postcompose this embedding with \(f^{-1}\) to obtain a tunnel \(\iota_a\colon C \hookrightarrow M\) of the desired shape. If \(a\) is neighbored by only one 3-cell \(o\) on both sides, then \(o \cup a\) is homeomorphic to an open solid torus \((\Sp^1 \times \mathbb{D}^2)^{\circ}\) and we can drill the tunnel along the core curve \(\{0\} \times \Sp^1\) in the presence of the appropriate identifications.
    
    By abuse of notation, we view \(\iota_a\) as a tunnel in \(M'\) or any modification of \(M'\) that arises by applying tunnel moves for regions \(a' \neq a\), where we tacitly assume that the homeomorphisms chosen in the construction of the tunnels \(\iota_a\) are such that different tunnels do not intersect. Iterating over the regions \(a \in \mathrm{Reg}(P)\), we successively perform the tunnel moves \(\Delta_2^{c(a)}(\iota_a)\). We let the orientation of the defect loop on each region be induced by the encircled interior (removed by the tunnel) and denote by \(M'_c\) the resulting defect manifold. We denote by \(\mathrm{im}(c) = \{x_1, \hdots, x_{s}\} \subset \mathcal{O}({\mathcal{C}})\) the collection of simple objects that decorate the regions of \(P\), where we have put \(s = |\mathrm{Reg}(P)|\). 
    
    Then, it follows by condition \labelcref{P2} that the invariant transforms as
    \begin{align}
    \label{eqT2}
        \omega(M') = \sum_{x_1, \hdots, \, x_{s} \mspace{1mu} \in \mspace{1mu} \mathcal{O}({\mathcal{C}})} \dim(x_1) \cdot \hdots \cdot \dim(x_{s})\,\omega\left(M'_c\right).
    \end{align}
    \item Note that topologically, the 3-manifold with boundary \(M'_c\) deformation retracts onto the 1-skeleton \(P^{(1)}\) of the skeleton \(P\). This follows from the fact that \(P\) is a deformation retract of \(M'\), every region \(r\) of \(P\) is homeomorphic to a disk, and the shape of the tunnels \(\iota_r\). Thus, along every edge \(r \in R(P)\), we can embed a cylinder \(C_r = \mathbb{D}^2 \times I\) in \(M'_c\) with its height in the interior of \(r\) in a way that the tube-capping move is applicable at \(C_r\). Each of these cylinders \(C_r\) is decorated with as many parallel defect lines as adjacent branches of \(r\). We now pick for each edge \(r \in R(P)\) a rim basis (cf. \cref{subSecTV}) \(\{\alpha_{j_r}^r\}_{j_r=1}^{k_r}\) of \(\Hom_{\calc}\left(y_1^{\varepsilon(1)} \otimes \hdots \otimes y_{n_r}^{\varepsilon(n_r)}, \mathbbm{1}\right)\), where \(y_1, \hdots, y_{n_r} \in \mathrm{im}(c)\) decorate the branches of \(r\).
    
    Consider the index set \(J = \prod_{r \in R(P)} \{1, \hdots, k_r\}\). For a multiindex \(j = (j_r)_r \in J\), we may perform for all \(r \in R(P)\) the tube-capping move \(\Delta_3^{\alpha_{j_r}^r}\) on \(C_r\) and obtain a manifold \(M'_{c,j}\) from \(M'_c\). Topologically, this manifold is homeomorphic to a disjoint union of 3-balls, that is, \(M'_{c,j} \cong \sqcup_{v \in N(P)}B_{\Upsilon_v^c(j)}\), where for each vertex of \(P\) there is one 3-ball decorated with a \(\calc\)-colored graph \(\Upsilon_v^c(j)\). By construction, this graph is given as the link graph of the vertex \(v\) in the Turaev-Viro construction, evaluated on the chosen basis elements. More precisely, we have that \(\langle\Upsilon_v^c(j)\rangle = \Gamma_c(v)\left( \otimes_{e \in R^h(v)}\alpha_{j_e}^e\right)\). 
    
    We now evaluate the invariant \(\omega\) on this collection of balls to obtain
    \begin{align*}
        \omega\left(M'_{c,j}\right) = \omega\Bigg(\bigsqcup_{v \mspace{1mu} \in \mspace{1mu} N(P)}B_{\Upsilon_v^c(j)}\Bigg) = \prod_{v \mspace{1mu} \in \mspace{1mu} N(P)}\omega\left( B_{\Upsilon_v^c(j)}\right) \overset{\labelcref{P4}}{=} \prod_{v \mspace{1mu} \in \mspace{1mu} N(P)} \langle \Upsilon_v^c(j) \rangle,
    \end{align*}
    where we used multiplicativity of \(\omega\) and applied condition \labelcref{P4}. With the choice of rim bases above, we can express the vector \(*_c\) by reindexing over \(J\) as
    \begin{align*}
        *_c = \bigotimes_{r \mspace{1mu} \in \mspace{1mu} R(P)} \sum_{j_r = 1}^{k_r} (\alpha_{j_r}^r)^* \otimes \alpha_{j_r}^r = \sum_{j \mspace{1mu} \in \mspace{1mu} J}  \otimes_{e \mspace{1mu} \in \mspace{1mu} R^h} \mspace{1mu}\alpha_{j_e}^e.
    \end{align*}
    Using property \labelcref{P3} and the characterization of the graph \(\Upsilon_v^c(\alpha)\) obtained above yields
    \begin{align}
    \label{eqT3}
        \omega\left(M'_c\right) &\overset{\labelcref{P3}}{=} \sum_{j \mspace{1mu} \in \mspace{1mu} J} \prod_{v \mspace{1mu} \in \mspace{1mu} N(P)}\langle \Upsilon_v^c(j) \rangle = \sum_{j \mspace{1mu} \in \mspace{1mu} J} \prod_{v \mspace{1mu} \in \mspace{1mu} N(P)}\Gamma_c(v)\left( \otimes_{e \in R^h(v)} \mspace{1mu} \alpha_{j_e}^e\right) \nonumber  \\
        &= \sum_{j \mspace{1mu} \in \mspace{1mu} J} \left( \otimes_{v \mspace{1mu} \in \mspace{1mu} N(P)}\Gamma_c(v)\right)\left( \otimes_{e \mspace{1mu} \in \mspace{1mu} R^h} \mspace{1mu} \alpha_{j_e}^e\right) = \left( \otimes_{v \mspace{1mu} \in \mspace{1mu} N(P)}\Gamma_c(v)\right)(*_c) = V_c(*_c),
    \end{align}
    where \(V_c(*_c) = |c| \in \K\) is the \(c\)-dependent scalar appearing in the definition of the Turaev-Viro invariant \(|\cdot|_{\mathcal{C}}\) (cf. \cref{eqInv}). 
    \item Combining the results from \cref{eqT1}, \cref{eqT2} and \cref{eqT3} and recalling that all regions have Euler characteristic equal to 1, we see that
    \begin{align*}
        \omega(M) &= \dim(\mathcal{C})^{-|M\setminus P|}\sum_{x_1, \hdots, \, x_{s} \mspace{1mu} \in \mspace{1mu} \mathcal{O}({\mathcal{C}})} \dim(x_1) \cdot \hdots \cdot \dim(x_{s})\,|x_1, \hdots, x_{s}| \\
        &= \dim(\mathcal{C})^{-|M\setminus P|} \sum_{c: \, \mathrm{Reg}(P) \, \rightarrow \, \mathcal{O}({\mathcal{C}})} \dim(c) \, |c|.
    \end{align*}
    But the last formula is simply the state sum formula for \(|\cdot|_{\mathcal{C}}\), proving that \(\omega(M) = |M|_{\mathcal{C}}\). 
    \item Now, suppose that \(M \in \Cob_3^{\mathcal{C}}(\emptyset, \emptyset)\), which generalizes the previous situation to cobordisms with free boundary decorated with defect graphs. It is easy to adapt the proof for closed manifolds given above to this more general setting: Since there always exists a triangulation for a manifold with boundary, we can find a skeleton \(P\) with contractible regions compatible with the boundary of \(M\). We can ensure compatiblity of \(P\) with the defect structure in the sense of \cref{defSkel} by deforming \(P\) by a homotopy if necessary. The pocket move \(\Delta_1\) is applied in the same manner as before to every 3-cell. The tunnel move \(\Delta_2^{c(a)}\) is now applied to every face \(a \in \mathrm{Fac}(P)\), including in particular the faces \(a \subset \partial M\) on the boundary. The tube-capping move is applied to every rim, including the defect rims (note that the cylinders are decorated with defect lines from the tunnel move and additional ones from \(M\)), and the skeletal vertices numbering the collection of balls are replaced by the entire collection of nodes. With these replacements, the resulting collection of 3-balls is now decorated with the generalized link graphs including information obtained from the \(\calc\)-colored defect graph, and the argument carries through as before. \qedhere
    \end{enumerate}
\end{proof}

\begin{remark}
    \label{remSkelTun}
    If \(\omega\) is compatible with the generalized tunnel move from \cref{remGenTun}, the argument in the proof of the preceding theorem no longer relies on the fact that skeletal regions can be chosen homeomorphic to open disks. Let \(P\) be \textit{any} skeleton of a defect manifold \(M\). The condition that 3-cells be homeomorphic to 3-balls from \cref{defGraphSkel} restricts the possible topologies of the faces of \(P\). Every face \(a\) embeds in a 2-sphere and is therefore homeomorphic to (the interior of) a sphere with multiple (including none at all) disjoint open disks removed. This leaves the options \(a \cong \Sp^2\) and \(a \cong A_n^{\circ}\). 

    Assume first that there is a region \(a \cong \Sp^2\). Since \(\omega\) is multiplicative, we can restrict attention to connected 3-manifolds. The only possible connected (defect) 3-manifolds admitting a face homeomorphic to a 2-sphere are the 3-ball and the 3-sphere. The case of the 3-ball is covered by \labelcref{P4} and for the 3-sphere it is easily verified that the conditions on \(\omega\) imply \(\omega(\Sp^3) = \dim(\calc)^{-1}\).

    In the case where for every face \(a \in \Fac(P)\) there is a natural number \(n\) with \(a \cong A_n^{\circ}\), we perform the first step of the proof as before and then use the generalized tunnel move \(\Delta_2^{c(a)}(\iota_a, A_n)\) for suitable tunnels \(\iota_a\) at each face to obtain once more a manifold of which \(P^{(1)}\) is a deformation retract. Otherwise, we can leave the proof of \cref{thmMain} unchanged. This adds Euler characteristic exponents and modifies the final expression in the above proof to 
    \begin{align*}
        \omega(M) &= \dim(\mathcal{C})^{-|M\setminus P|}\sum_{c: \, \Fac(P) \, \rightarrow \, \calo(\calc)} \prod_{a \mspace{1mu} \in \mspace{1mu} \mathrm{Fac}(P)}\dim(c(a))^{\chi(a)}\,|c|, 
    \end{align*}
    the right-hand side of which is exactly the state sum formula from \cref{eqInv} for an arbitrary skeleton \(P\). 
\end{remark}

\begin{remark}
    Assume that the invariant \(\omega = Z|_{\Cob_3^{\calc}(\emptyset, \emptyset)}\) is the restriction of a full-fledged boundary-defect TQFT \(Z\). This is a strong requirement, as it allows verifying \labelcref{P1} up to \labelcref{P3}, which correspond to the local topological moves \(\Delta_1, \Delta_2^x(\iota)\) and \(\Delta_3^{\alpha_i}\), for small pieces of manifold excised around the local modification. In this approach, we use that a TQFT is local in the sense that it is functorial with respect to gluing in the cobordism category. We will apply this strategy for Turaev-Viro theory while proving \cref{thmMainTV} in \cref{secTVRes} and similarly for Dijkgraaf-Witten theory in \cref{secDW}.
\end{remark}
We reiterate that under the assumptions of \cref{thmMain}, in particular the restrictions of the invariants \(\omega\) and \(| \cdot |_{\calc}\) to closed 3-manifolds have to agree, which are morphisms in the subcategory \(\Cob_3 \subset \Cob_3^{\calc}\) and therefore necessarily come without any defect structure. However, a compatibility of the invariant with defects is necessary to even state the result. \cref{exProp} \labelcref{itemExProp} moreover shows that a defect invariant that only agrees with Turaev-Viro on closed manifolds does not have to be boundary local. Nevertheless, there is a converse of \cref{thmMain}, stating that the Turaev-Viro defect invariant is boundary local. The proof of this comprises \cref{subSecProof}.

\subsection{Boundary-defect characterization of Turaev-Viro TQFTs}
\subsubsection{Boundary locality of the Turaev-Viro defect invariant}
\label{subSecProof}
In this section, we will give the converse of \cref{thmMain} and its proof. 
\begin{theorem}
    \label{thmMainTV}
    For every spherical fusion category \(\calc\), the Turaev-Viro boundary-defect theory \(| \cdot |_{\calc}\colon \Cob_3^{\calc} \rightarrow \vect_{\K}\) defines a boundary local (\cref{defPropInv}) defect TQFT.
\end{theorem}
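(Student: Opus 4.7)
The plan is to exploit functoriality of the Turaev-Viro TQFT $|\cdot|_\calc$ to localize each verification. Each of the moves $\Delta_1, \Delta_2^x(\iota), \Delta_3^{\alpha_i}$ is supported in a small neighborhood of $M$, so I cut $M$ along a closed surface isolating this neighborhood. This decomposes $|M|_\calc$ as a composition of linear maps, and since only the small piece changes under the move, the verification reduces to a local identity between maps attached to small ``elementary'' cobordisms in $\Cob_3^\calc$. These local identities are then checked directly from the state-sum formula \cref{eqVecSS} on a convenient graph skeleton (admissible by \cref{hyp}), each one reducing to a graphical-calculus identity already recorded in \cref{subsecCatCon}.

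Concretely, for \labelcref{P1} I excise a small closed ball $B$ around the pocket, writing $M$ as the composition of some $N : \emptyset \to S^2$ with $B : S^2 \to \emptyset$, while $\Delta_1(M)$ becomes $N$ composed with the collar cobordism $F : S^2 \to \emptyset$, whose underlying manifold is $S^2 \times I$ with gluing source $S^2 \times \{0\}$ and free boundary $S^2 \times \{1\}$. Property \labelcref{P1} then reduces to $|F|_\calc = \dim(\calc) \cdot |B|_\calc$ as linear maps $|S^2|_\calc \to \K$, which follows from a direct state-sum computation: $F$ introduces exactly one additional sum over simple colorings of the new boundary region compared to $B$, producing the factor $\dim(\calc)$ by \cref{lemBub}. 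For \labelcref{P2}, I excise a thickened solid cylinder $R$ containing the tunnel, glued to its complement along the lateral annulus $S^1 \times I$; the move replaces $R$ with $R \setminus C^\circ$ equipped with an $x$-labeled defect loop, so \labelcref{P2} becomes the local identity $|R|_\calc = \sum_{x \in \calo(\calc)} \dim(x)\,|(R \setminus C^\circ)_{x}|_\calc$, which is a direct application of \cref{lemDecId} after unpacking the state sums. For \labelcref{P3}, I cut along the top and bottom disks of the tube; the extracted cobordism $C$ is then replaced by the disjoint union $E^{\alpha_i} \sqcup \overline{E}^{\alpha_i^*}$, and the reduction yields $|C|_\calc = \sum_i |E^{\alpha_i}|_\calc \otimes |\overline{E}^{\alpha_i^*}|_\calc$, which is precisely the dual-bases decomposition of the distinguished vector $*_r^c$ along the core rim of the tube. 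For \labelcref{P4}, I pick a graph skeleton of $B_\Gamma$ with a single internal vertex at the center whose link graph is exactly $\Gamma$; the state-sum formula \cref{eqInv} then collapses to one term, and the normalization $\dim(\calc)^{-|M \setminus P|}$ cancels the sum over the unique internal coloring, producing $|B_\Gamma|_\calc = \langle \Gamma \rangle$.

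The main obstacle is the geometric bookkeeping required to set up each local computation cleanly: one must match skeleta across the cut, track orientations and the sign convention $\varepsilon(i)$ on branches, and account for both the Euler characteristic factors entering $\dim(c)$ and the change in the number $|M \setminus P|$ of $3$-cell components induced by each move. Once these choices are made consistently, the categorical algebra in each case reduces to one of the string-diagram identities already established in \cref{subsecCatCon}; the depth of the argument lies in the geometric setup rather than in new categorical content.
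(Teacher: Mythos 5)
Your strategy is exactly the one the paper follows: localize each move by cutting $M$ along a small closed surface, reduce to an identity between the Turaev-Viro maps of small local cobordisms, and verify that identity by a state-sum computation on a convenient skeleton. The decompositions you describe for \labelcref{P1}, \labelcref{P2}, \labelcref{P3} are the ones used in \cref{propProp1TV}, \cref{lemProp2}/\cref{propProp2}, and \cref{propProp3TV}, respectively. However, two of your local verifications are either imprecise or contain a genuine gap.

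For \labelcref{P1}, the factor $\dim(\calc)$ does \emph{not} arise from an extra coloring sum collapsed by \cref{lemBub}. The paper chooses a \emph{single} graph skeleton $P=\Sp^2\cup_{\Sp^1}(\Sp^1\times I)$ that serves simultaneously as a skeleton for the ball $B$ and for the collar $C$. With this choice the faces and colorings are literally the same for both pieces; what changes is only the number $|M\setminus P|$ of $3$-cells (the ball has one more than the collar, namely the interior of the inner sphere). The factor $\dim(\calc)$ comes entirely from the normalization $\dim(\calc)^{-|M\setminus P|}$ in \cref{eqInv}. Your mechanism would require choosing two genuinely different skeleta matching along $\partial_g$, which is possible but adds work and is not what you would call a direct application of \cref{lemBub}.

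For \labelcref{P4}, your argument has a real gap. You propose a graph skeleton of $B_\Gamma$ with a single internal vertex $c$ at the center whose link graph is $\Gamma$, claiming the state sum then collapses to one term. This cannot be arranged. First, the vertices of $\Gamma$ themselves lie in the free boundary $\partial_f B_\Gamma$; by the Turaev-Viro construction they are internal nodes $v\in N_0$ (\cref{subSecTV}), hence each contributes its own link-graph evaluation $\Gamma_c(v)$ to the state sum regardless of your choice of $P$. Second, for $c$ to be a node of $P^{(1)}$ at all, the skeleton must be singular at $c$, forcing a cone-like structure with additional 2-cells joining $c$ to $\partial B_\Gamma$; these introduce further faces to color and further nodes. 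So the state sum never reduces to a single term. The paper instead uses the minimal skeleton $Q=\partial B_\Gamma$ (one $3$-cell, nodes precisely $V(\Gamma)$) and then does a nontrivial combinatorial reduction (spanning trees in $\Gamma$, Euler's formula, repeated use of \cref{lemDisconn} and \cref{lemDecId}, and a careful argument for the existence of a contractible edge at each step) to show that $\sum_c \dim(c)\prod_v\langle\Gamma_c(v)\rangle$ telescopes to $\dim(\calc)\langle\Gamma\rangle$. That reduction is the substantial content of \labelcref{P4}, and your proposal omits it.

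Finally, for \labelcref{P2} the local identity $|B|_\calc=\sum_{x}\dim(x)\,|\mathbb{T}^s_x|_\calc$ is correct but is not a direct application of \cref{lemDecId}: the paper needs \cref{lemTorxy} and \cref{lemProp2} (which identifies $|A_1|_\calc\cong K_0(\calc)\otimes_{\mathbb Z}\K$ and pins down the images of both cobordisms in that basis), a multi-page computation using \cref{lemDisconn}, \cref{lemDecId}, \cref{lemQuad}, and \cref{lemBub}.
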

Let us start by proving that \(|\cdot|_{\calc}\) satisfies \labelcref{P1}.
\begin{proposition}
    \label{propProp1TV}
    Let \(M \in \Cob_3^{\mathcal{C}}(\emptyset, \emptyset)\). Then, it holds that \(|\Delta_1(M)|_{\mathcal{C}} = \dim(\mathcal{C})|M|_{\mathcal{C}}\) under application of the pocket move \(\Delta_1\).
\end{proposition}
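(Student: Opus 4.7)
The plan is to evaluate both sides from the state sum formula \cref{eqInv} using a single common skeleton of $M$. The key observation is that one can always choose a graph skeleton $P$ of $M$ in which the open pocket $B^3$ is precisely a 3-cell of $M \setminus P$: starting from any graph skeleton of $M$, one refines it in a neighborhood of $\overline{B^3}$ by adding $\partial B^3$ together with enough auxiliary 2-cells to cut the ambient 3-cell of the original skeleton into open balls. Since the pocket lies in the interior, compatibility with the defect graph $\Gamma$ on $\partial_f M$ is automatic.

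With this choice, the same 2-complex $P$ is automatically a valid graph skeleton of $\Delta_1(M)$: the free boundary $\partial_f \Delta_1(M) = \partial_f M \cup \partial B^3$ is contained in $P$; the complement $\Delta_1(M) \setminus P = (M \setminus P) \setminus B^3$ is a disjoint union of open 3-balls (namely the 3-cells of $M \setminus P$ other than $B^3$); and compatibility with $\Gamma$ is inherited. One reads off $|\Delta_1(M) \setminus P| = |M \setminus P| - 1$, while $\Fac(P)$, $N(P)$ and $R(P)$ are unchanged.

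The main step is to show that for every coloring $c \colon \Fac(P) \to \calo(\calc)$ the scalar $|c|$ is identical whether the ambient manifold is $M$ or $\Delta_1(M)$. Since $|c| = (\otimes_{v \in N(P)} \Gamma_c(v))(*_c)$ and the vector $*_c$ depends only on the rims of $P$, it suffices to compare the link-graph evaluation $\Gamma_c(v)$ at each node. For nodes whose neighborhood does not meet $\partial B^3$ this is immediate. For a node $v$ on $\partial B^3$, the crucial point is that, because $B^3$ was chosen to be a 3-cell of $M \setminus P$, no 2-cell of $P$ enters the interior of $B^3$ in $M$; hence on the link sphere $\partial B_\varepsilon(v)$ the hemisphere facing $B^3$ carries no arcs and no vertices. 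In $\Delta_1(M)$ this same node lies on the free boundary, and its link graph is constructed after embedding a neighborhood into $\R^3$ (cf.\ \cite{F}): the ``missing'' hemisphere is now vacuum and again contributes nothing, whereas the equatorial cycle coming from $\partial B^3$ (with its color $c(\partial B^3)$) and the outer hemisphere together with all its labels are exactly those in $M$. The two link graphs therefore coincide as $\calc$-labeled graphs on the sphere and evaluate to the same element of $\End_\calc(\mathbbm{1}) \cong \K$.

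Substituting into \cref{eqInv} yields
\[
|\Delta_1(M)|_\calc = \dim(\calc)^{-(|M \setminus P| - 1)} \sum_{c \colon \Fac(P) \to \calo(\calc)} \dim(c)\, |c| = \dim(\calc) \cdot |M|_\calc,
\]
as claimed. The principal obstacle is the third step: one must check rigorously that the convention of \cite{F} for link graphs at free-boundary nodes produces the same scalar as the interior convention in the precise situation where the absent hemisphere carries no skeletal 2-cells. Once this local identification is in place, the rest of the argument is pure bookkeeping with the state sum formula.
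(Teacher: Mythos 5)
Your proof is correct and rests on the same underlying observation as the paper's: with a skeleton $P$ of $M$ in which the pocket $B^3$ is a 3-cell, the pocket move changes only the exponent $|M\setminus P|$ in the state-sum formula \cref{eqInv}, since $\Fac(P)$, $R(P)$, $N(P)$, the Euler characteristics, and the link-graph scalars $|c|$ are all unchanged. The paper packages this differently: it invokes TQFT functoriality to excise a small shell around the pocket, reducing the problem to the comparison $|C|_{\mathcal{C}} = \dim(\mathcal{C})|B|_{\mathcal{C}}$ of two explicit local cobordisms $\emptyset \to \Sp^2$ (a 3-ball and a hollow shell $\Sp^2 \times I$) carried out on a single concrete skeleton, and then runs a commutative-diagram argument to lift the scaling relation through the limit over surface skeleta. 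You work globally with one fixed $P$, which bypasses the functoriality and limit-over-skeleta machinery; the price is that you must verify --- as you correctly flag --- that the free-boundary link-graph convention (embedding a neighborhood of a node into $\R^3$, cf.\ \cite{F}) at nodes on $\partial B^3$ in $\Delta_1(M)$ produces the same scalar as the interior link graph at the same node in $M$. This does hold precisely because, with your choice of $P$, no 2-cell enters the interior of $B^3$, so the hemisphere absent in $\Delta_1(M)$ carried no skeletal data in $M$ either; and in fact the paper's ``we see immediately'' step silently relies on exactly this agreement, since its local skeleton also has vertices on the inner sphere which change from interior (for $B$) to free-boundary (for $C$). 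Two minor points worth adding: $\partial B^3$ will typically be subdivided into several faces by the auxiliary 2-cells, so the phrase ``its color $c(\partial B^3)$'' should be read as the coloring of each of those faces; and one should fix identical orientations on the skeletal regions when viewing $P$ as a skeleton of $M$ and of $\Delta_1(M)$ (and observe that $\partial$-cylindricity is vacuous here since $\partial_g = \emptyset$). Both are easily arranged and do not affect the argument.
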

    \begin{proof}
    The move \(\Delta_1\) (as all moves introduced in \cref{subSecChar}) is local. Hence, the general idea is to compare the value of the Turaev-Viro TQFT on a local piece of manifold excised in the neighborhood of the region modified by \(\Delta_1\) both before and after application of the pocket move, and then utilize functoriality of \(|\cdot |_{\mathcal{C}}\) to compare the TQFT-values on the entire cobordisms \(M\) and \(\Delta_1(M)\). 
    
    Let us fix a pocket and a small embedded 2-sphere \(\Sp^2 \subset M\) just surrounding it. We decompose the original cobordism as \(M\colon \emptyset \xrightarrow{B} \mathbb{S}^2 \xrightarrow{M'} \emptyset\) and the cobordism modified by the pocket move as \(\Delta_1(M)\colon \emptyset \xrightarrow{C} \mathbb{S}^2 \xrightarrow{M'} \emptyset\), where \(B\colon \emptyset \rightarrow \mathbb{S}^2\) is the 3-ball and \(C\colon \emptyset \rightarrow \mathbb{S}^2\) is a cylinder \(C \cong \mathbb{S}^2 \times I\) over \(\Sp^2\). The cobordism \(C\) has as free boundary \(\partial_fC \cong \mathbb{S}^2 \times \{0\}\), the inner sphere. The outer sphere \(\Sp^2 \times \{1\}\) is the gluing boundary (the same as for \(B\)). Noting that \(M'\) is the same cobordism in both decompositions, namely \(M' = M \setminus B = \Delta_1(M) \setminus C\), it is then enough to show that \(|C|_{\mathcal{C}}  = \dim(\mathcal{C})|B|_{\mathcal{C}}\), as this implies
        \begin{align*}
            |\Delta_1(M)|_{\mathcal{C}} = |M' \cup_{\mathbb{S}^2} C|_{\mathcal{C}} = |M'|_{\mathcal{C}} \circ |C|_{\mathcal{C}} = \dim(\mathcal{C})\cdot |M'|_{\mathcal{C}} \circ |B|_{\mathcal{C}} = \dim(\mathcal{C})|M|_{\mathcal{C}}.
        \end{align*}
    To show the identity claimed above, we pick a 2-polyhedron \(P\) serving as a graph skeleton simultaneously for both of the cobordisms \(B\) and \(C\), which we define as 
        \begin{align*}
            P := \Sp^2 \cup_{\Sp^1} (\Sp^1 \times I)  = \adjincludegraphics[valign=c, scale = 0.7]{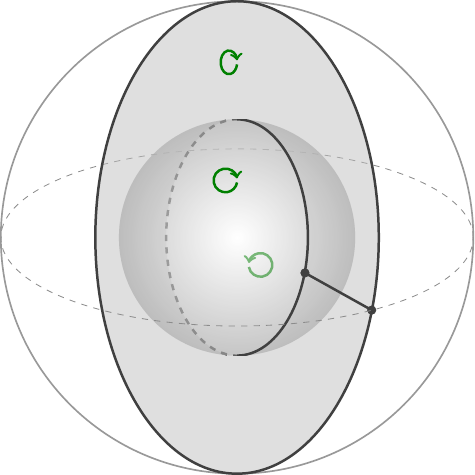},
        \end{align*}
    by gluing an annulus and a 2-sphere. Here, the gluing happens along a meridian of the inner 2-sphere \(\Sp^2\), and the stratification \(P^{(1)}\), alongside the chosen orientations, can be read from the picture: There are three faces, shaded in gray, three edges (the thick, gray lines) and two vertices (the gray dots). The gluing boundary is indicated by the outer thin lines. For the cobordism \(C\), we demand the extra condition that the inner sphere \(\Sp^2 \subset P\) covers the free boundary \(\partial_fC\), cf. \cref{defSkel}. 
    
    With this choice it is clear that \(B\), in comparison with \(C\), has an additional 3-cell with respect to the embedded skeleton \(P\), namely the inner ball. Thus, \(|C\setminus P| = |B \setminus P| +1\). From this we see immediately from the definition of the Tuarev-Viro invariant (cf. \cref{eqInv}) that
    \begin{align}
    \label{eqDim}
        |C, \emptyset, G_P|_{\mathcal{C}} = \dim(\mathcal{C}) |B, \emptyset, G_P|_{\mathcal{C}},
    \end{align}
    where \(G_P\) denotes the skeleton of \(\partial_gC = \partial_gB = \Sp^2\) induced by the graph skeleton \(P\). Now, since \(|B|_{\mathcal{C}} = \lim_G|B, \emptyset, G|_{\mathcal{C}}\), by considering the commutative diagram
    \[
        \centering
        \adjincludegraphics[valign=c, scale = 1.16]{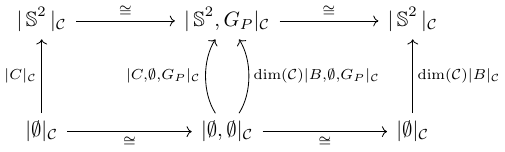}
    \]
    it becomes evident that the relation in \cref{eqDim} holds identically in the limit over graph skeleta, since the horizontal isomorphisms, the structure isomorphisms of the limit, compose to the identity.
\end{proof}
We will now prove two lemmas from which we can obtain \labelcref{P2} as an easy corollary. 

\begin{lemma}
\label{lemTorxy}
    Let \(x, y \in \mathcal{C}\) be two objects and define the cobordism \(\mathbb{T}_{x,y}^s\colon \emptyset \rightarrow \emptyset\) in \(\Cob_3^{\mathcal{C}}\) as the solid torus \(\mathbb{T}^s = \mathbb{D}^2 \times \Sp^1\) whose free boundary surface is decorated with two defect lines labeled by \(x\) and \(y\), as depicted in the following figure:
    \begin{align*}
            \mathbb{T}_{x,y}^s = \adjincludegraphics[valign=c, scale = 0.7]{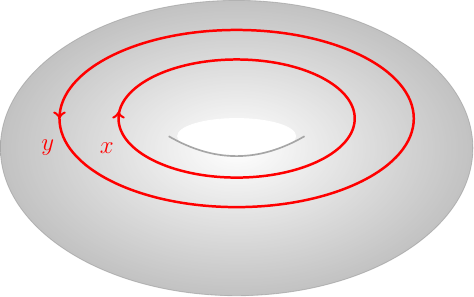}.
    \end{align*}
    Then, the Turaev-Viro invariant is given by \(|\mathbb{T}_{x,y}^s|_{\mathcal{C}} = \dim_{\K}\left( \Hom_{\mathcal{C}}(x, y) \right) \in \K\). If \(x\) and \(y\) are simple, then \(|\mathbb{T}_{x,y}^s|_{\mathcal{C}} = \delta_{[x], [y]}\).
\end{lemma}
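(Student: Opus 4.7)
The plan is to recognize $\mathbb{T}_{x,y}^s$ as the categorical trace of an identity cobordism in $\Cob_3^{\calc}$ and to exploit the fact that $|\cdot|_{\calc}$, being a symmetric monoidal functor, takes such traces to dimensions of the associated state spaces.

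First, I would cut the solid torus $\mathbb{T}^s = \mathbb{D}^2 \times \Sp^1$ along a meridional disk $\mathbb{D}^2 \times \{*\}$. This produces a cylinder $\mathbb{D}^2 \times I$ whose cylindrical free boundary $\partial \mathbb{D}^2 \times I$ carries two parallel defect arcs, running from the bottom disk to the top disk, inherited from the longitudinal defect circles labeled by $x$ and $y$. Orienting the $x$-arc upward and the $y$-arc downward, the top and bottom disks become isomorphic colored surfaces $\Sigma = \bigl(\mathbb{D}^2;\, (x,+),(y,-)\bigr)$, and the decorated cylinder is, by the identity-cobordism description in the last paragraph of \cref{defCobDef}, precisely $\id_{\Sigma} \in \Cob_3^{\calc}(\Sigma,\Sigma)$. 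Gluing its top back to its bottom via the identity parametrization reproduces $\mathbb{T}_{x,y}^s$, and this closure is the categorical trace in the rigid symmetric monoidal category $\Cob_3^{\calc}$ (duals being orientation reversal together with sign-swapping on colored points).

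Applying the symmetric monoidal functor $|\cdot|_{\calc}$, which preserves traces of endomorphisms, then gives
\[
|\mathbb{T}_{x,y}^s|_{\calc} \;=\; \Tr_{\K}\bigl(|\id_{\Sigma}|_{\calc}\bigr) \;=\; \Tr_{\K}\bigl(\id_{|\Sigma|_{\calc}}\bigr) \;=\; \dim_{\K}|\Sigma|_{\calc}.
\]
The state space $|\Sigma|_{\calc}$ is computed from Step 7 of the Turaev-Viro construction by picking the boundary skeleton $G = \partial \mathbb{D}^2$ (enriched with vertices at the two colored points): the result is
\[
|\Sigma|_{\calc} \;\cong\; \Hom_{\calc}\bigl(\mathbbm{1},\, x \otimes y^*\bigr) \;\cong\; \Hom_{\calc}(y, x),
\]
where the latter isomorphism is an instance of \cref{eqDual}. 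Since $\dim_{\K}\Hom_{\calc}(y, x) = \dim_{\K}\Hom_{\calc}(x, y)$, this proves the first assertion. The second assertion $|\mathbb{T}_{x,y}^s|_{\calc} = \delta_{[x],[y]}$ for simple $x, y$ then follows immediately from Schur's lemma together with $\End_{\calc}(x) \cong \K$.

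The main technical hurdle is the trace identification: one must verify that closing up $\id_{\Sigma}$ in the boundary-defect cobordism category indeed recovers $\mathbb{T}_{x,y}^s$ with its stated defect configuration (a topologically transparent fact modulo careful bookkeeping of orientations and signs) and that the functor $|\cdot|_{\calc}$ transports this categorical trace to the vector-space trace. If one prefers an entirely direct argument, the same outcome can be obtained by a state sum on the graph skeleton $P = T^2 \cup (\mathbb{D}^2 \times \{*\})$ of $\mathbb{T}_{x,y}^s$, after which the sum over the coloring of the single interior face collapses, via \cref{lemDecId} and \cref{lemBub} applied to the link graph at the resulting interior node, to $\dim_{\K}\Hom_{\calc}(x, y)$.
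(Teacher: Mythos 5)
Your proof is correct, but it takes a genuinely different route from the paper's. The paper's argument is a direct state-sum computation: it equips \(\mathbb{T}_{x,y}^s\) with a graph skeleton built from the boundary torus together with a meridional disk, reads off the two link graphs at the two switch nodes where the defect lines cross the disk, and then contracts the resulting string diagram using \cref{lemDisconn}, \cref{lemDecId}, \cref{lemQuad}, \cref{lemBub} and finally \cref{lemTrace} until only \(\tr\bigl(\id_{\Hom_{\calc}(\mathbbm{1},\,x\otimes y^*)}\bigr)\) survives. You instead observe that \(\mathbb{T}_{x,y}^s\) is the closure (categorical trace) of \(\id_\Sigma\) for the marked disk \(\Sigma\), that a symmetric monoidal functor between rigid symmetric monoidal categories takes traces to traces, and hence that \(|\mathbb{T}_{x,y}^s|_\calc = \dim_{\K}|\Sigma|_\calc\); all that remains is to identify \(|\Sigma|_\calc\) with \(\Hom_\calc(\mathbbm{1}, x\otimes y^*)\). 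This is slicker and more conceptual, but it imports the computation of disk state spaces from \cite{F} (the paper explicitly notes this result in a footnote to the proof of \cref{propProp3TV} and remarks that it does not need it). So you trade the paper's fully self-contained graphical computation for reliance on an external state-space formula. Note also that the paper supplies yet another shortcut in \cref{remTorus}, via \labelcref{P3} and \labelcref{P4}, but that one is only available once \cref{thmMainTV} is proved, so it cannot be used to establish this lemma, which enters the proof of \labelcref{P2}; your trace argument does not suffer from that circularity. The one place where you should be careful is the identification of \(\mathbb{T}_{x,y}^s\) with the trace of \(\id_\Sigma\) in \(\Cob_3^\calc\): the colored points on the two disk copies produced by the meridional cut carry opposite signs (by the convention in \cref{defCobDef}), consistent with the bottom copy being parametrized by \(\overline{\Sigma}\); this bookkeeping works out, but it is worth stating explicitly rather than waving at.
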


\begin{proof} 
    Define the graph skeleton \(P\) for the cobordism \(\mathbb{T}_{x,y}^s\) pictorially by
    \begin{align*}
            P \, \, = \, \, \adjincludegraphics[valign=c, scale = 0.75]{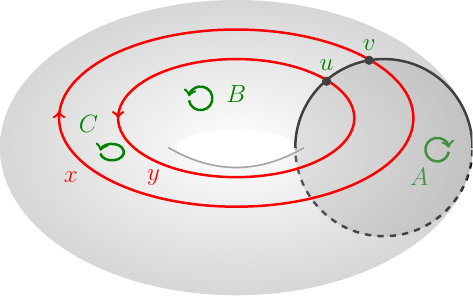}.
    \end{align*}
    It consists of the underlying 2-polyhedron \(\mathbb{D}^2 \cup_{\partial \mathbb{D}^2} (\Sp^1 \times \Sp^1)\), glued as illustrated in the picture, subject to the additional condition that the free boundary 2-torus \(\partial_f\mathbb{T}_{x,y}^s = \Sp^1 \times \Sp^1\) is part of the skeleton \(P\). The skeleton \(P\) has one 3-cell, three faces \(A, B\) and \(C\) with orientation as indicated in the image, and two nodes (switches) labeled by \(u\) and \(v\) at the intersection points of the defect lines with the face \(A\). The Euler characteristic of the faces are \(\chi(A) = \chi(B) = \chi(C) = 1\). For a chosen coloring \(\mu\colon \Fac(P) \rightarrow \calo(\calc)\) of \(P\), the link graphs for the two nodes are
    \begin{align*}
        \Gamma_{\mu}(v) = \adjincludegraphics[valign=c, scale = 0.75]{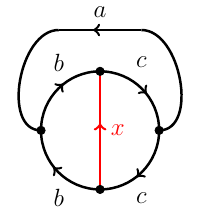}, \, \, \, \, \, \, \, \Gamma_{\mu}(u) = \adjincludegraphics[valign=c, scale = 0.75]{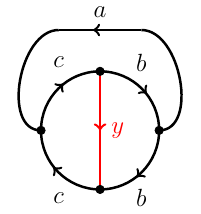},
    \end{align*}
    viewed as maps from \(H_{\mu}(v)\) (resp. \(H_{\mu}(u)\)) to the ground field \(\K\) (in the sense that for a vector in e.g. \(H_{\mu}(v)\), its decomposition into pure tensors provides morphism labels for the black vertices in the graph). The labeling in the picture is fixed by setting \({\mu}(A) = a, {\mu}(B) = b\) and \({\mu}(C) = c\). We are ready to start evaluating the invariant: With respect to the graph skeleton \(P\), we can compute (leaving out sums over dual bases from the notation as before)
    {\allowdisplaybreaks
    \begin{align*}
        |\mathbb{T}_{x, y}^s|_{\mathcal{C}} &= \dim(\mathcal{C})^{-|\mathbb{T}_{x, y}^s \setminus P|}\sum_{\mu: \,\mathrm{Fac}(P) \,\rightarrow \, \mathcal{O}(\mathcal{C})}\dim(\mu)\,|\mu| \\
        &= \dim(\mathcal{C})^{-1}\sum_{a, b, c \mspace{1mu} \in \mspace{1mu} \mathcal{O}(\mathcal{C})} \dim(a) \dim(b) \dim(c) \adjincludegraphics[valign=c, scale = 0.75]{images/lemTorEv1.pdf} \, \adjincludegraphics[valign=c, scale = 0.75]{images/lemTorEv2.pdf} \\
        &\overset{(\text{\cref{lemDisconn}})}{=} \dim(\mathcal{C})^{-1}\sum_{a, b, c \mspace{1mu} \in \mspace{1mu} \mathcal{O}(\mathcal{C})} \dim(a) \dim(b) \dim(c) \, \, \adjincludegraphics[valign=c, scale = 0.75]{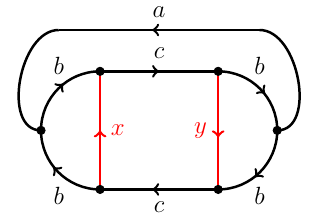} \\
        &\overset{(\text{\cref{lemDecId}})}{=} \dim(\mathcal{C})^{-1}\sum_{ b, c \mspace{1mu} \in \mspace{1mu} \mathcal{O}(\mathcal{C})} \dim(b) \dim(c) \adjincludegraphics[valign=c, scale = 0.75]{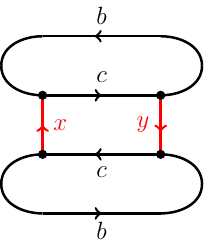} \\
        &\overset{(\text{\cref{lemQuad}})}{=} \dim(\mathcal{C})^{-1}\sum_{b \mspace{1mu} \in \mspace{1mu} \mathcal{O}(\mathcal{C})} \dim(b)\adjincludegraphics[valign=c, scale = 0.75]{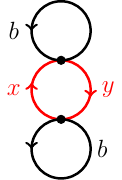} \overset{(\text{\cref{lemBub}})}{=} \dim(\mathcal{C})^{-1}\dim(\mathcal{C}) \adjincludegraphics[valign=c, scale = 0.75]{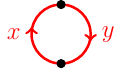}.
    \end{align*}}
    The vertices of the final diagram are implicitly labeled by a pair of dual bases for the vector spaces \(\Hom_{\mathcal{C}}(\mathbbm{1}, x \otimes y^*)\) and \(\Hom_{\mathcal{C}}(x \otimes y^*, \mathbbm{1})\), over which we sum. By \cref{lemTrace}, we have
    \begin{align}
        \label{eqTor2simp}
         |\mathbb{T}_{x, y}^s|_{\mathcal{C}}
         = \adjincludegraphics[valign=c, scale = 0.75]{images/lemTorEv6.pdf} = \tr\left(\id_{\Hom_{\mathcal{C}}(\mathbbm{1}, x \otimes y^*)}\right) = \dim_{\K}\left(\Hom_{\mathcal{C}}(x,y)\right). 
    \end{align}
    If \(x\) and \(y\) are both simple, the final expression is equal to \(\delta_{[x], [y]}\) by Schur's lemma.
\end{proof}

\begin{remark}
    \label{remTorus}
    Computing the Turaev-Viro invariant of cobordisms \(\emptyset \rightarrow \emptyset\) by using its characterizing properties instead of directly working with the explicit formula is often much less involved. Once we have proven that \cref{thmMainTV} holds, we can argue as follows in the situation of \cref{lemTorxy}: Let \(\{\alpha_i\}_{i = 1}^k\) be a basis for \(\Hom_{\mathcal{C}}(x \otimes y^*, \mathbbm{1})\). Notice that applying the move \(\Delta_3^{\alpha_i}\) to \(\mathbb{T}_{x,y}^s\) produces a 3-ball with the graph from \cref{eqTor2simp} on its boundary. By \labelcref{P3} and \labelcref{P4}, we can directly compute
    \begin{align*}
        |\mathbb{T}_{x,y}^s|_{\mathcal{C}} \overset{\labelcref{P3}}{=} \sum_{i = 1}^k|\Delta_3^{\alpha_i}\left(\mathbb{T}_{x,y}^s\right)|_{\mathcal{C}} \overset{\labelcref{P4}}{=} \adjincludegraphics[valign=c, scale = 0.75]{images/lemTorEv6.pdf}.
    \end{align*}
\end{remark}
We thank Julian Farnsteiner for contributing the following lemma.
\begin{lemma}
\label{lemProp2}
    Let \(x \in \mathcal{O}(\mathcal{C})\) be a simple object and let \(B, \mathbb{T}_x^s\colon \emptyset \rightarrow A_1\) be the two cobordisms in \(\Cob_3^{\mathcal{C}}\) from the empty manifold to the annulus \(A_1 = \Sp^1 \times I\), defined in pictures by
    \begin{align*}
        B \, = \, \adjincludegraphics[valign=c, scale = 0.7]{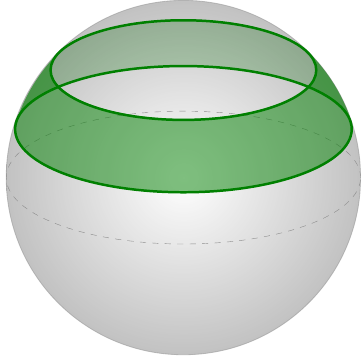} \, \, \, \, \, \, \, \, \, \, \, \, \, \, \, \, \mathbb{T}_x^s \, = \, \adjincludegraphics[valign=c, scale = 0.7]{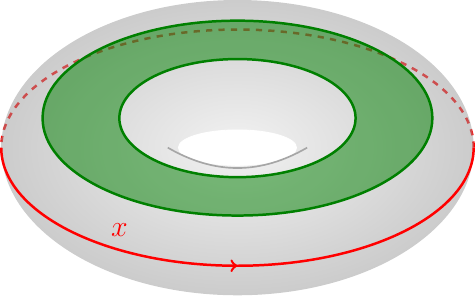},
    \end{align*}
    where \(x\) labels a non-contractible defect loop on the boundary of the torus and the area shaded in green marks the gluing boundary homeomorphic to \(A_1\). Then, the following claims hold:
    \begin{enumerate}[(i)]
        \item The cobordism \(\mathbb{T}_x^s\) induces an isomorphism between the vector space assigned to the annulus by the Turaev-Viro TQFT and the Grothendieck algebra, that is \(|A_1|_{\mathcal{C}} \cong K_0(\mathcal{C}) \otimes_{\mathbb{Z}} \K\).
        \item Under the identification of (i), the homomorphism \(|\mathbb{T}_x^s|_{\mathcal{C}}\colon \K \rightarrow |A_1|_{\mathcal{C}}\) maps \(1_{\K}\) to \([x]\).
        \item Under the same identification, the linear map \(|B|_{\mathcal{C}}\colon \K \mapsto |A_1|_{\mathcal{C}}\) sends \(1_{\K}\) to the element \(\sum_{x \in \mathcal{O}(\mathcal{C})}\dim(x) [x]\).
    
    \end{enumerate}
\end{lemma}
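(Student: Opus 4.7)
The plan is to prove (i) and (ii) simultaneously via a pairing argument built on \cref{lemTorxy}, and then to evaluate $|B|_\calc(1_\K)$ in (iii) by pairing it against the basis vectors coming from the $\mathbb{T}_x^s$.

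To set up the pairing, observe that $\mathbb{T}_{x,y}^s$ admits a geometric decomposition along a properly embedded annulus: writing $\mathbb{T}^s = \mathbb{D}^2 \times \Sp^1$ with the two defect loops placed as longitudes on opposite sides of the boundary torus, we cut along $\gamma \times \Sp^1$ for a properly embedded arc $\gamma \subset \mathbb{D}^2$ separating the two longitudes. Each of the resulting pieces is homeomorphic to a copy of $\mathbb{T}_z^s$ for the respective defect label, so $\mathbb{T}_{x,y}^s \cong \overline{\mathbb{T}_x^s} \circ \mathbb{T}_y^s$ in $\Cob_3^{\calc}$. Functoriality of $|\cdot|_{\calc}$ together with \cref{lemTorxy} then gives
\begin{align*}
|\overline{\mathbb{T}_x^s}|_{\calc}\bigl(|\mathbb{T}_y^s|_{\calc}(1_\K)\bigr) = |\mathbb{T}_{x,y}^s|_{\calc} = \delta_{[x],[y]} \qquad \text{for simple } x, y \in \calo(\calc).
\end{align*}
In particular, $\{|\mathbb{T}_x^s|_\calc(1_\K)\}_{x \in \calo(\calc)} \subset |A_1|_\calc$ is linearly independent with dual family $\{|\overline{\mathbb{T}_x^s}|_\calc\}_{x \in \calo(\calc)} \subset |A_1|_\calc^*$.

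Next I would show $\dim_\K |A_1|_\calc = |\calo(\calc)|$ from the state sum. A convenient skeleton $G \subset A_1$ consists of the two boundary circles (one marked vertex each) joined by a single arc, so $A_1 \setminus G$ is a single open disk. With this choice, $H(A_1, G) = \bigoplus_{x \in \calo(\calc)} H_x(A_1, G)$ with every summand one-dimensional (colored by the unique face), giving $\dim_\K |A_1|_\calc \leq |\calo(\calc)|$. Combined with the lower bound from the pairing, this forces $\dim_\K |A_1|_\calc = |\calo(\calc)|$, and the vectors $|\mathbb{T}_x^s|_\calc(1_\K)$ form a basis. The $\K$-linear map $\Phi\colon K_0(\calc) \otimes_\mathbb{Z} \K \to |A_1|_\calc$, $[x] \mapsto |\mathbb{T}_x^s|_\calc(1_\K)$, is therefore an isomorphism, proving (i), with (ii) built into its definition.

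For (iii), writing $|B|_\calc(1_\K) = \sum_{x \in \calo(\calc)} c_x\, |\mathbb{T}_x^s|_\calc(1_\K)$ in this basis and pairing with $|\overline{\mathbb{T}_y^s}|_\calc$ isolates
\begin{align*}
c_y = |\overline{\mathbb{T}_y^s}|_\calc\bigl(|B|_\calc(1_\K)\bigr) = |\overline{\mathbb{T}_y^s} \circ B|_\calc.
\end{align*}
A short topological argument (van Kampen: the core of the longitudinal gluing annulus on $\partial \mathbb{T}^s$ is killed inside the ball, so the glued manifold is simply connected with spherical boundary) identifies $\overline{\mathbb{T}_y^s} \circ B$ with a 3-ball carrying a single $y$-labeled defect loop on its boundary sphere. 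A direct state-sum calculation on a skeleton consisting of the boundary sphere together with an interior disk bounded by the defect loop, simplified using the graphical lemmas from \cref{subsecCatCon}, evaluates this invariant to $\dim(y)$. Hence $c_y = \dim(y)$ and $|B|_\calc(1_\K) = \sum_{x \in \calo(\calc)} \dim(x)\,[x]$, proving (iii). The main technical obstacle is the dimension count: verifying that each summand $H_x(A_1, G)$ is one-dimensional requires unpacking the state-sum construction of \cref{subSecTV} for a mildly nontrivial surface with boundary; the topological identification of $\overline{\mathbb{T}_y^s} \circ B$ is cleaner but still needs an explicit fundamental-group argument and a small state-sum evaluation.
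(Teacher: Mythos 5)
Your first step---cutting $\mathbb{T}^s_{x,y}$ along a longitudinal annulus to write it as $\overline{\mathbb{T}^s_x}\circ\mathbb{T}^s_y$ and reading off $\tau_y^*(\tau_x)=\delta_{[x],[y]}$ from \cref{lemTorxy}---is exactly the paper's first move, and your idea of finishing (iii) by pairing $|B|_{\calc}(1_{\K})$ against the $|\overline{\mathbb{T}^s_y}|_{\calc}$ and identifying $\overline{\mathbb{T}^s_y}\circ B$ as a $3$-ball with a single $y$-labeled boundary loop (which is a correct identification: the gluing annulus is longitudinal, so attaching $B$ performs a $2$-handle attachment killing the core) is a genuinely slicker route to (iii) than the paper's two explicit skeleton computations of $|B,\emptyset,G_Q|_{\calc}$ and $\sum_x\dim(x)|\mathbb{T}^s_x,\emptyset,G_P|_{\calc}$.

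The gap is in your proof of (i), on which your (iii) logically depends. You claim that for the minimal skeleton $G$ of $A_1$ (two boundary circles with one marked vertex each, joined by an arc), one has $H(A_1,G)=\bigoplus_{x\in\calo(\calc)}H_x(A_1,G)$ with one-dimensional summands labeled by the unique face of $G$. This misidentifies what gets colored. In the construction of \cref{subSecTV}, the state space $H(\Sigma,G)$ is a direct sum over colorings of the \emph{edges} of $G$ (equivalently, over colorings of $\Fac_{\partial}(P)$, which are in bijection with $E(G)$), not over colorings of the faces $\Sigma\setminus G$; those faces are where the $3$-cells of the graph skeleton meet $\partial_g M$ and carry no $\calc$-labels. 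Your $G$ has three edges (the two boundary loops and the arc), so colorings are triples of simple objects, and each summand is a tensor product of hom-spaces sitting at the two vertices of $G$, which are generically several-dimensional. Compare the paper's own computation of $|\mathbb{T}^s_x,\emptyset,G_P|_{\calc}$, which is a direct sum over $a,d,e\in\calo(\calc)$---and note that the $G_P$ used there \emph{is} your proposed skeleton. So $\dim_{\K}H(A_1,G)$ is far larger than $|\calo(\calc)|$, and since $|A_1|_{\calc}$ is merely the image of a nontrivial idempotent inside $H(A_1,G)$, no upper bound $\dim_{\K}|A_1|_{\calc}\le|\calo(\calc)|$ can be read off this way. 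The paper sidesteps this by arguing surjectivity of $\varphi$ differently: it verifies, by a skeleton computation on $A_1\times I$, that the identity $|A_1\times I,G_O,G_O|_{\calc}$ equals $\sum_x|\mathbb{T}^s_x\sqcup\overline{\mathbb{T}^s_x},G_O,G_O|_{\calc}$, so every vector in $|A_1|_{\calc}$ is $\varphi\bigl(\sum_x\tau_x^*(v)[x]\bigr)$. It also proves (iii) by a direct comparison of vectors in $H(A_1,G_P)$, so that (iii) does not depend on (i) at all. As written, your argument does not establish (i), and (iii) collapses with it; you would need either to substitute the paper's surjectivity computation or find a genuinely different upper bound on $\dim_{\K}|A_1|_{\calc}$.
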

\begin{proof}
    We start by defining a map \(\varphi\colon K_0(\mathcal{C}) \otimes_{\mathbb{Z}} \K \rightarrow |A_1|_{\mathcal{C}}\) and establishing its injectivity. We then proceed by showing statements (ii) and (iii) and conclude by proving the surjectivity of \(\varphi\) and hence (i). 
    
    Let us introduce the notation \(\tau_x = |\mathbb{T}_x^s|_{\mathcal{C}}(1_{\K})\) for the vector in \(|A_1|_{\calc}\) defined by the cobordism \(\mathbb{T}_x^s\) for any simple \(x \in \mathcal{O}(\mathcal{C})\). The cobordism \(\overline{\mathbb{T}_x^s}\colon A_1 \rightarrow \emptyset \) obtained from \(\mathbb{T}_x^s\colon \emptyset \rightarrow A_1\) by inverting the orientation of the torus and interchanging source and target is sent under the functor \(|\cdot|_{\mathcal{C}}\) to a linear map \(\tau_x^*:= \left|\overline{\mathbb{T}_x^s}\right|_{\mathcal{C}}\colon |A_1|_{\mathcal{C}} \rightarrow \K\). Hence, for any simple object \(y \in \mathcal{O}(\mathcal{C})\), we have, composing the cobordisms, the identity
    \begin{align*}
        \tau_y^*(\tau_x) = \left| \mathbb{T}_x^s \cup_{A_1} \overline{\mathbb{T}_y^s}\right|_{\mathcal{C}}(1_{\K}) = |\mathbb{T}_{x,y}^s|_{\mathcal{C}} = \delta_{x,y},
    \end{align*}
    where we used \cref{lemTorxy} in the last step. This implies linear independence of \(\{\tau_x\}_{x \in \mathcal{O}(\mathcal{C})}\). Thus, the map \(\varphi\colon K_0(\mathcal{C}) \otimes_{\mathbb{Z}} \K \rightarrow |A_1|_{\mathcal{C}}, [x] \mapsto \tau_x\) is injective, from which (ii) follows. Before showing surjectivity, we prove the third claim (iii). 

    We equip the cobordism \(\mathbb{T}_x^s\) with a graph skeleton \(P\), defined pictorially as follows:
    \begin{align*}
        P \, \, = \, \, \adjincludegraphics[valign=c, scale = 0.75]{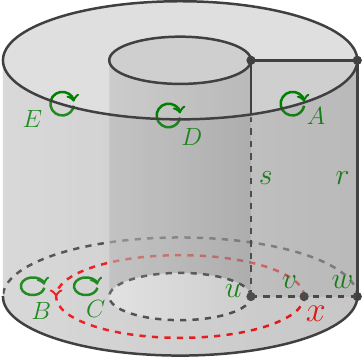}.
    \end{align*}
    The skeleton \(P\) is drawn in a cylindrical shape to simplify the graphical presentation. It has five faces \(A, B, C, D\) and \(E\) with specified orientation, ten rims (including the defect line drawn on the bottom in red) and five nodes. All faces apart from \(A\) are contained in the free boundary \(\partial_f\mathbb{T}_x^s\), and the face \(D\) wraps around the hole of the torus. We emphasize that \(P\) (as any skeleton) does not have faces in the gluing boundary \(\partial_g\mathbb{T}_x^s \cong A_1\), but \(P\) induces a skeleton \(G_P\) for \(A_1\), consisting of the two marked points on the upper lid together with the three upper edges. For the other three nodes \(u,v\) and \(w\), at which the evaluation to determine \(|\mathbb{T}_x^s|_{\mathcal{C}}\) takes place, we can easily read off the link graphs from the graphical presentation of \(P\). They are given by
    \begin{align*}
        \Gamma_{\mu}(u) = \adjincludegraphics[valign=c, scale = 0.75]{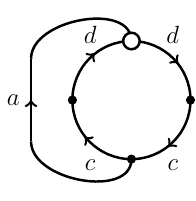} \, \, \,\,\, \, \, \Gamma_{\mu}(v) = \adjincludegraphics[valign=c, scale = 0.75]{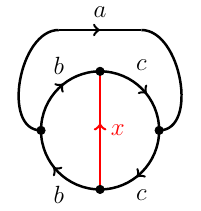} \, \, \,\, \, \, \Gamma_{\mu}(w) = \adjincludegraphics[valign=c, scale = 0.75]{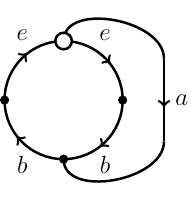} 
    \end{align*}
    for an arbitrary coloring \(\mu\colon \mathrm{Fac}(P) \rightarrow \mathcal{O}(\mathcal{C})\), where \(\mu(A) = a, \mu(B) = b\), etc. Here, we have introduced the convention to represent link graph vertices which come from rims adjacent to the gluing boundary by hollow nodes, and the remaining vertices by solid ones. 
    
    Choose a rim basis \(\alpha\) for the rim \(r\) and represent \(*_r = \alpha \otimes \alpha^*\) with the sum convention introduced in \cref{subSecTV}. Here, by convention, \(\alpha_i \in H_{\mu}(e_r)\) for \(e_r\) the half-rim of \(r\) at the vertex \(w\). Similarly, for the rim \(s\) we can write \(*_s = \beta \otimes \beta^*\), and these two vectors together determine the vector \(*^{\partial} = *_r \otimes *_s\). Now, using \cref{lemDisconn}, we can compute the vector \(|\mu| \in H_{\mu_{\partial}}^{\partial}(\mathbb{T}_x^s)\), which still depends on the boundary coloring \(\mu_{\partial}\):
    {\allowdisplaybreaks
    \begin{align*}
        |\mu| &= |a, \hdots, e| = \left(\Gamma_{\mu}(u) \otimes \Gamma_{\mu}(v) \otimes \Gamma_{\mu}(w) \otimes \id_{H_{\mu}^{\partial}(\mathbb{T}_x^s)}\right)\left(*^0 \otimes *^{\partial}\right) \\
        &= \adjincludegraphics[valign=c, scale = 0.75]{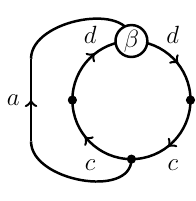} \, \, \adjincludegraphics[valign=c, scale = 0.75]{images/lemTunLinkv.pdf} \, \, \adjincludegraphics[valign=c, scale = 0.75]{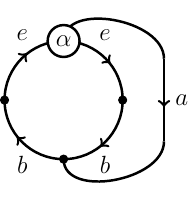} \alpha^* \otimes \beta^* 
        = \adjincludegraphics[valign=c, scale = 0.75]{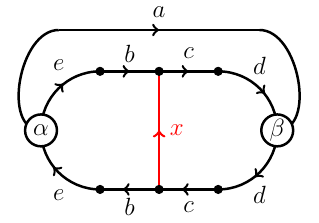} \, \, \alpha^* \otimes \beta^*.
    \end{align*}}
    Note that we have implied sums over dual bases for the solid black vertices as well as for the hollow ones.
    
    Recall that \(G_P\) denotes the skeleton of the annulus \(A_1\) with respect to the graph skeleton \(P\). The graph underlying the skeleton \(G_P\) has two vertices and three edges, coming from the three faces \(A, D\) and \(E\) of \(P\) adjacent to the gluing boundary. Notice that for any face \(f \in \Fac(P)\) the Euler characteristic \(\chi(f)\) is equal to \(1\). We will now determine the \(\K\)-linear map \(|\mathbb{T}_x^s, \emptyset, G_P|_{\mathcal{C}}\colon |\emptyset, \emptyset|_{\calc} \rightarrow |A_1, G_P|_{\calc}\) by explicitly computing
    {\allowdisplaybreaks
    \begin{align}
        \label{eqGenTunPrep}
        |\mathbb{T}_x^s, &\emptyset, G_P|_{\mathcal{C}} = \bigoplus_{\mu_{A_1}: \, E(G_P) \, \rightarrow \, \mathcal{O}(\mathcal{C})}\frac{\dim(\mathcal{C})^{|A_1\setminus G_P|}}{\dim^{\tr}(\mu_{A_1})}\dim(\mathcal{C})^{-|\mathbb{T}_x^s\setminus P|} \!\!\! \sum_{\mu_0: \, \mathrm{Fac}_{0}(P) \, \rightarrow \, \mathcal{O}(\mathcal{C})} \dim(\mu_0 \sqcup \mu_{A_1})|\mu_0 \sqcup \mu_{A_1}| \nonumber \\
        &= \bigoplus_{a, \mspace{1mu} d, \mspace{1mu} e \mspace{1mu} \in \mspace{1mu} \mathcal{O}(\mathcal{C})} \frac{\dim(\calc)\dim(\calc)^{-1}}{\dim(a) \dim(d) \dim(e)} \sum_{b, \mspace{1mu} c \mspace{1mu} \in \mspace{1mu} \calo(\calc)} \dim(a)\dim(b)\dim(c)\dim(d)\dim(e) |a, \hdots, e| \nonumber \\
        &= \bigoplus_{a, \mspace{1mu} d, \mspace{1mu} e \mspace{1mu} \in \mspace{1mu} \mathcal{O}(\mathcal{C})}\sum_{b, \mspace{1mu} c \mspace{1mu} \in \mspace{1mu} \calo(\calc)} \dim(b) \dim(c)  \adjincludegraphics[valign=c, scale = 0.75]{images/lemTunEv1.pdf} \alpha^* \otimes \beta^* \nonumber \\
        &= \bigoplus_{a, \mspace{1mu} d, \mspace{1mu} e \mspace{1mu} \in \mspace{1mu} \mathcal{O}(\mathcal{C})}\sum_{b, \mspace{1mu} c \mspace{1mu} \in \mspace{1mu} \calo(\calc)} \dim(b) \dim(c) \, \delta_{c,d} \, \delta_{b,e}  \adjincludegraphics[valign=c, scale = 0.75]{images/lemTunEv1.pdf}\alpha^* \otimes \beta^* \nonumber \\
        &\overset{\text{(\cite[Lemma 12.3]{TV})}}{=} \bigoplus_{a, \mspace{1mu} d, \mspace{1mu} e \mspace{1mu} \in \mspace{1mu} \mathcal{O}(\mathcal{C})} \adjincludegraphics[valign=c, scale = 0.75]{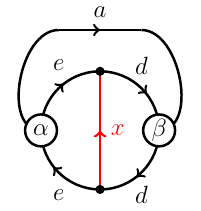}\alpha^* \otimes \beta^* = \bigoplus_{a, \mspace{1mu} d, \mspace{1mu} e \mspace{1mu} \in \mspace{1mu} \mathcal{O}(\mathcal{C})} \adjincludegraphics[valign=c, scale = 0.75]{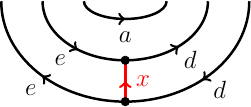},
    \end{align}}
    where we have used a lemma from \cite{TV} in the second to last step and evaluated the sum over \(\alpha\) and \(\beta\) in the last step with the help of \cref{lemDisconn}. In the above equation, we view \(|\mathbb{T}_x^s, \emptyset, G_P|_{\mathcal{C}}\) as an element in \(|A_1, G_P|_{\calc} \subset H(A_1, G_P) = \bigoplus_{\mu_{A_1}}H_{\mu_{A_1}}(e_r') \otimes H_{\mu_{A_1}}(e_s')\), where \(e_r'\) and \(e_s'\) are the two half-rims that are adjacent to the gluing boundary, associated with the rims \(r\) and \(s\), respectively (cf. \cref{subSecTV}). The sum over simple objects weighted by dimensions gives 
    \begin{align*}
        \sum_{x \mspace{1mu} \in \mspace{1mu} \mathcal{O}(\mathcal{C})}\dim(x) |\mathbb{T}_x^s, \emptyset, G_P|_{\mathcal{C}} \overset{(\text{\labelcref{lemDecId}})}{=} \bigoplus_{a, \mspace{1mu} d, \mspace{1mu} e \mspace{1mu} \in \mspace{1mu} \mathcal{O}(\mathcal{C})}\adjincludegraphics[valign=c, scale = 0.75]{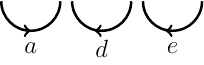} = \bigoplus_{a, \mspace{1mu} d, \mspace{1mu} e \mspace{1mu} \in \mspace{1mu} \mathcal{O}(\mathcal{C})} \widetilde{\mathrm{coev}}_a \otimes \mathrm{coev}_d \otimes \mathrm{coev}_e.
    \end{align*}
    On the other hand, we equip \(B\) with a graph skeleton \(Q\) in order to compare this with the expression just obtained. We define the skeleton \(Q\) graphically by the picture
    \begin{align*}
        Q \, \, = \, \, \adjincludegraphics[valign=c, scale = 0.75]{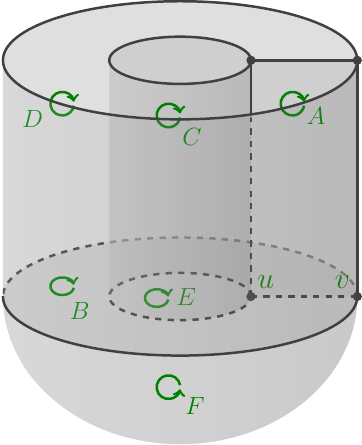}.
    \end{align*}
    Compared to \(P\), there are two completely new faces labeled by \(E\) and \(F\), and the free boundary is covered by the faces \(C, D\) and \(F\). Again, all of the faces have Euler characteristic 1. Notice that both \(P\) and \(Q\) induce the same skeleton for the annulus \(A_1\), that is \(G_P = G_Q\). For a chosen coloring \(\mu\colon \Fac(Q) \rightarrow \calo(\calc)\), the two link graphs \(\Gamma_{\mu}(u)\) and \(\Gamma_{\mu}(v)\) for the vertices \(u,v\) are 
    \begin{align*}
        \Gamma_{\mu}(u) = \adjincludegraphics[valign=c, scale = 0.75]{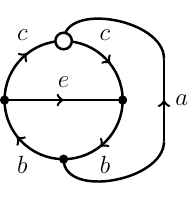} \, \, \, \, \, \, \, \, \Gamma_{\mu}(v) = \adjincludegraphics[valign=c, scale = 0.75]{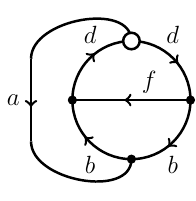}.
    \end{align*}
    Just as before, we pick rim bases \(\alpha\) and \(\beta\) and evaluate \(|B, \emptyset, G_Q|_{\mathcal{C}}\):
    {\allowdisplaybreaks
    \begin{align*}
        |B, &\emptyset, G_Q|_{\mathcal{C}} = \bigoplus_{a, \mspace{1mu} c,\mspace{1mu} d\mspace{1mu} \in \mspace{1mu} \calo(\calc)} \frac{\dim(\calc)\dim(\calc)^{-2}}{\dim(a)\dim(c)\dim(d)}\sum_{b,\mspace{1mu} e,\mspace{1mu} f\mspace{1mu} \in \mspace{1mu} \calo(\calc)}\dim(a) \cdot \hdots \cdot \dim(f)\,|a, \hdots, f| \\
        &= \bigoplus_{a, \mspace{1mu} c,\mspace{1mu} d\mspace{1mu} \in \mspace{1mu} \calo(\calc)}\sum_{b,\mspace{1mu} e,\mspace{1mu} f\mspace{1mu} \in \mspace{1mu} \calo(\calc)} \frac{\dim(b) \dim(e) \dim(f)}{\dim(\calc)} \, \adjincludegraphics[valign=c, scale = 0.75]{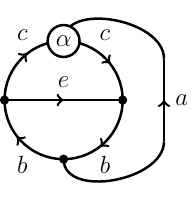} \adjincludegraphics[valign=c, scale = 0.75]{images/lemTunSkelQVFill.pdf} \, \, \alpha^* \otimes \beta^* \\
        &\overset{(\text{Lem. \labelcref{lemDisconn}})}{=}  \bigoplus_{a, \mspace{1mu} c,\mspace{1mu} d\mspace{1mu} \in \mspace{1mu} \calo(\calc)}\sum_{b,\mspace{1mu} e,\mspace{1mu} f\mspace{1mu} \in \mspace{1mu} \calo(\calc)} \frac{\dim(b) \dim(e) \dim(f)}{\dim(\calc)} \adjincludegraphics[valign=c, scale = 0.75]{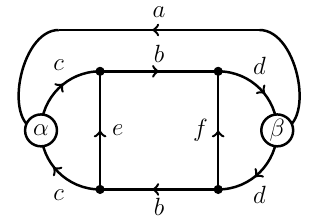}\alpha^* \otimes \beta^* \\
        &\overset{(\text{Lem. \labelcref{lemDecId}})}{=} \sum_{b \mspace{1mu} \in \mspace{1mu} \calo(\calc)} \frac{\dim(b)}{\dim(\calc)}\bigoplus_{a,c,d \in \calo(\calc)} \adjincludegraphics[valign=c, scale = 0.75]{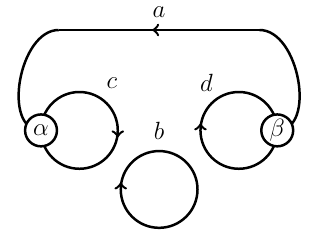} \alpha^* \otimes \beta^*\\
        &\overset{(\text{Lem. \labelcref{lemDisconn}})}{=} \sum_{b \mspace{1mu} \in \mspace{1mu} \calo(\calc)} \frac{\dim(b)^2}{\dim(\calc)} \, \,  \bigoplus_{a, \mspace{1mu} c,\mspace{1mu} d\mspace{1mu} \in \mspace{1mu} \calo(\calc)} \adjincludegraphics[valign=c, scale = 0.75]{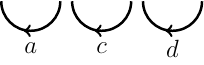}=  \bigoplus_{a, \mspace{1mu} c,\mspace{1mu} d\mspace{1mu} \in \mspace{1mu} \calo(\calc)}  \mathrm{coev}_a \otimes \mathrm{coev}_c \otimes \mathrm{coev}_d.  
    \end{align*}}
    By a relabeling, this is identical to the expression we have obtained before. Now, it is clear that in the limit over graph skeleta, the expression just obtained and the one from before still coincide. This yields (iii), as \(\varphi([x]) = \tau_x = \left|\overline{\mathbb{T}_x^s}\right|_{\mathcal{C}}(1_{\K})\) and the linear map \(\varphi\) is injective. 

    What remains is to demonstrate the surjectivity of \(\varphi\). For this, we will proceed as follows. We will equip the identity cobordism \(A_1 \times I\colon A_1 \rightarrow A_1\) with a graph skeleton \(O\) so that the skeleton \(G_O\) of \(A_1\) coincides with the skeleton \(G_P\), and is the same for the input and output surface. We will similarly equip the cobordism \(\mathbb{T}_x^s \sqcup \overline{\mathbb{T}_x^s}\colon A_1 \rightarrow A_1\) with a suitable skeleton for any simple object \(x \in \calo(\calc)\) so that the induced skeleton of \(A_1\) is \(G_O\), and proceed by showing
    \begin{align}
        \label{eqProp2last}
        |A_1 \times I, G_O, G_O|_{\calc} = \sum_{x \mspace{1mu} \in \mspace{1mu} \calo(\calc)} |\mathbb{T}_x^s \sqcup \overline{\mathbb{T}_x^s}, G_O, G_O|_{\calc}.
    \end{align}
    Taking note of the fact that in the limit over skeleta we have \(\id_{|A_1|_{\calc}} = \lim_G|A_1 \times I, G, G|_{\calc}\), it then follows that any vector \(v \in |A_1|_{\calc}\) can be expressed as the image of \(\sum_{x \mspace{1mu} \in \mspace{1mu} \calo(\calc)}\tau_x^*(v)[x]\) under the map \(\varphi\). This shows that the linear map \(\varphi\) is surjective and hence defines a linear isomorphism \(|A_1|_{\mathcal{C}} \cong K_0(\mathcal{C}) \otimes_{\mathbb{Z}} \K\). Thus, to prove (i), it remains to show \cref{eqProp2last}. We define the graph skeleton \(O\) by the following image:
    \begin{align*}
        O \, \, = \, \, \adjincludegraphics[valign=c, scale = 0.75]{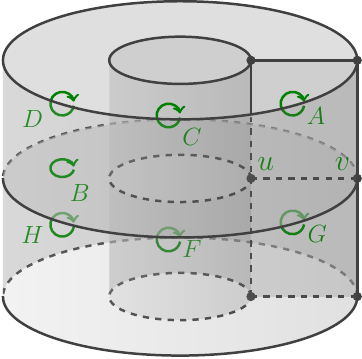}.
    \end{align*}
    The bottom and top lids are both contained in the gluing boundary. There are seven faces in total, labeled \(A, \hdots, H\). Given a coloring \(\mu\) of \(O\), the link graphs \(\Gamma_{\mu}(u)\) and \(\Gamma_{\mu}(v)\) for the two vertices \(u\) and \(v\) are extracted from \(O\) as usual. Then, \(|A_1 \times I, G_O, G_O|_{\calc}\) can be evaluated as
    {\allowdisplaybreaks
    \begin{align*}
        |A_1 \times I, &G_O, G_O|_{\calc} = \bigoplus_{a, \mspace{1mu} c, \mspace{1mu} d, \mspace{1mu} f, \mspace{1mu} g, \mspace{1mu} h \mspace{1mu} \in \mspace{1mu} \calo(\calc)} \frac{\dim(\calc)\dim(\calc)^{-2}}{\dim(a)\dim(c) \dim(d)} \sum_{b \mspace{1mu} \in \mspace{1mu} \calo(\calc)} \dim(a) \dim(b) \dim(c) \dim(d) \, \cdot \\
        &\cdot \dim(f) \dim(g) \dim(h) \, \, \adjincludegraphics[valign=c, scale = 0.75]{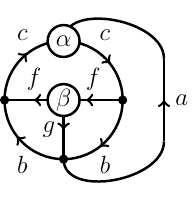} \, \adjincludegraphics[valign=c, scale = 0.75]{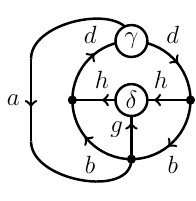} \, \, \alpha^* \otimes \beta^* \otimes \gamma^* \otimes \delta^* \\
        &\overset{\text{(\cref{lemDisconn})}}{=} \frac{1}{\dim(\calc)}\bigoplus_{a, \mspace{1mu} c, \mspace{1mu} d, \mspace{1mu} f, \mspace{1mu} g, \mspace{1mu} h \mspace{1mu} \in \mspace{1mu} \calo(\calc)} \sum_{b \mspace{1mu} \in \mspace{1mu} \calo(\calc)} \dim(b) \dim(f) \dim(g) \dim(h) \cdot \\
        & \cdot \adjincludegraphics[valign=c, scale = 0.75]{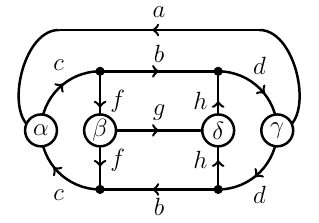} \alpha^* \otimes \beta^* \otimes \gamma^* \otimes \delta^*.
    \end{align*}}
    \noindent On the other hand, we define a graph skeleton \(P'\) for the cobordism \(\mathbb{T}_y^s \sqcup \overline{\mathbb{T}_y^s}\) by combining two copies of \(P\) so that \(G_{P'} = G_O\). We calculate, reusing the earlier computations for \(P\) and a suitable choice for the orientations of the skeletal faces: 
    {\allowdisplaybreaks
    \begin{align*}
        \sum_{y \mspace{1mu} \in \mspace{1mu} \calo(\calc)} &|\mathbb{T}_y^s \sqcup \overline{\mathbb{T}_y^s}, G_{P'}, G_{P'}|_{\calc} = \frac{1}{\dim(\calc)} \bigoplus_{a, \mspace{1mu} b, \mspace{1mu} c, \mspace{1mu} d, \mspace{1mu} e, \mspace{1mu} f \mspace{1mu} \in \mspace{1mu} \calo(\calc)} \sum_{y \mspace{1mu} \in \mspace{1mu} \calo(\calc)} \dim(b) \dim(c) \dim(f) \cdot \\
        &\cdot \alpha^* \otimes \beta^* \otimes \gamma^* \otimes \delta^* \,\adjincludegraphics[valign=c, scale = 0.75]{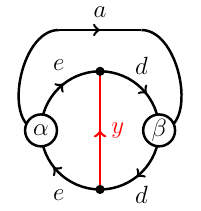} \adjincludegraphics[valign=c, scale = 0.75]{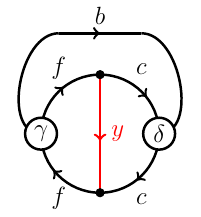}\\
        &\overset{\substack{(\text{\cite[Lemma 12.3]{TV}} \\ \text{+ sphericity})}}{=} \frac{1}{\dim(\calc)} \bigoplus_{a, \mspace{1mu} b, \mspace{1mu} c, \mspace{1mu} d, \mspace{1mu} e, \mspace{1mu} f \mspace{1mu} \in \mspace{1mu} \calo(\calc)} \sum_{y \mspace{1mu} \in \mspace{1mu} \calo(\calc)} \dim(y)\dim(b) \dim(c) \dim(f) \cdot \\
        &\cdot \alpha^* \otimes \beta^* \otimes \gamma^* \otimes \delta^* \adjincludegraphics[valign=c, scale = 0.75]{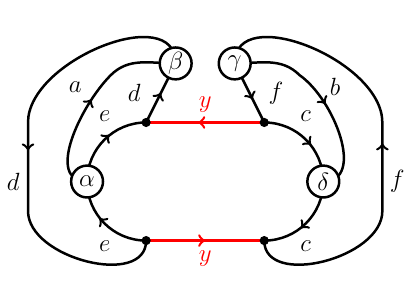} \\
        &\overset{\text{(\cref{lemQuad})}}{=} \frac{1}{\dim(\calc)} \bigoplus_{a, \mspace{1mu} b, \mspace{1mu} c, \mspace{1mu} d, \mspace{1mu} e, \mspace{1mu} f \mspace{1mu} \in \mspace{1mu} \calo(\calc)} \sum_{y \mspace{1mu} \in \mspace{1mu} \calo(\calc)} \dim(y)\dim(b) \dim(c) \dim(f) \cdot \\
        &\cdot \alpha^* \otimes \beta^* \otimes \gamma^* \otimes \delta^* \adjincludegraphics[valign=c, scale = 0.75]{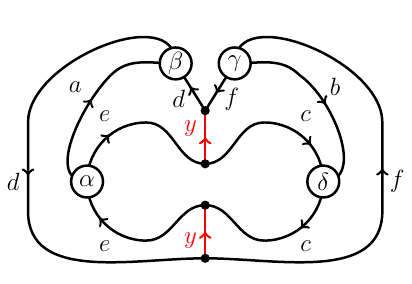}.
    \end{align*}}
    Using sphericity of \(\calc\) (that is, pulling strands to the other side of the diagram), it is possible to transform the diagram in the last expression into a diagram isotopic to the one we computed in the calculation for \(|A_1 \times I, G_O, G_O|_{\calc}\) earlier, so that the labeling matches up to renaming of the summation indices. Since \(G_{P'} = G_O\), this shows \cref{eqProp2last} and finishes the proof of the proposition.
\end{proof}
\begin{corollary}
    \label{corAnn}
    The Turaev-Viro invariant of the manifold \(A_1 \times \Sp^1\) is given by \(|A_1 \times \Sp^1|_{\calc} = |{\calo}(\calc)|\), the number of isomorphism classes of simples in \(\calc\).
\end{corollary}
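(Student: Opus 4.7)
The plan is to recognize $A_1 \times \Sp^1$, viewed as a cobordism $\emptyset \to \emptyset$ with entirely free boundary $\Sp^1 \times \Sp^1 \sqcup \Sp^1 \times \Sp^1$, as the categorical trace inside $\Cob_3^{\calc}$ of the identity cobordism on the annulus $A_1$, and then to transport this identification through the Turaev-Viro functor $|\cdot|_{\calc}$.

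Since $\Cob_3^{\calc}$ is spherical with duals given by orientation reversal (cf.\ \cref{subSecTQFTs}), the object $A_1$ admits a dual $\overline{A_1}$, and the associated evaluation and coevaluation cobordisms are represented by $A_1 \times I$ with both lids $A_1 \sqcup \overline{A_1}$ serving as gluing boundary (as target for $\mathrm{coev}_{A_1}$, as source for $\mathrm{ev}_{A_1}$) and with the lateral surface $\partial A_1 \times I$ assigned to the free boundary. Computing $\tr(\id_{A_1}) = \mathrm{ev}_{A_1} \circ (\id_{A_1} \otimes \id_{\overline{A_1}}) \circ \mathrm{coev}_{A_1}$ amounts to gluing the two lids of $A_1 \times I$ to each other in the orientation-compatible way, producing precisely $A_1 \times \Sp^1$ with its entire boundary treated as free.

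Since $|\cdot|_{\calc}\colon \Cob_3^{\calc} \rightarrow \vect_{\K}$ is a symmetric monoidal functor between spherical categories, it preserves categorical traces. Consequently
\begin{equation*}
    |A_1 \times \Sp^1|_{\calc} \;=\; \tr\bigl(\id_{|A_1|_{\calc}}\bigr) \;=\; \dim_{\K} |A_1|_{\calc}.
\end{equation*}
By \cref{lemProp2}(i), there is an isomorphism $|A_1|_{\calc} \cong K_0(\calc) \otimes_{\mathbb{Z}} \K$, and since $K_0(\calc)$ is the free abelian group on $\calo(\calc)$, we conclude $\dim_{\K} |A_1|_{\calc} = |\calo(\calc)|$, as desired.

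The only mildly delicate step is the identification of the evaluation and coevaluation cobordisms for the surface-with-boundary $A_1$, but this is dictated by the spherical structure on $\Cob_3^{\calc}$ from \cref{subSecTQFTs} and only requires the standard convention of placing the lateral boundary $\partial A_1 \times I$ in the free boundary. Everything else is bookkeeping.
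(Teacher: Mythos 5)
Your argument is essentially the same as the paper's: both of you identify $A_1 \times \Sp^1$ as the categorical trace of the identity cobordism on $A_1$ in the spherical category $\Cob_3^{\calc}$, pass through the monoidal functor $|\cdot|_{\calc}$, and invoke \cref{lemProp2}(i) to read off $\dim_{\K} |A_1|_{\calc} = |\calo(\calc)|$. Your additional discussion of the evaluation/coevaluation cobordisms just spells out the trace identification that the paper leaves implicit, and is correct.
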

\begin{proof}
    This follows from the fact that \(A_1 \times \Sp^1 = \tr(A_1 \times I)\) is the trace of the identity cobordism on \(A_1\), so it is sent to \(\dim_{\K}(K_0(\mathcal{C}) \otimes_{\mathbb{Z}} \K)\) by \cref{lemProp2} under the application of \(|\cdot|_{\calc}\).
\end{proof}
\begin{remark}
    From the above corollary it becomes apparent that the presence of free boundaries already breaks the Morita invariance of the state sum invariant. For any \textit{closed} 3-manifold \(M\), the invariant only depends on the Drinfeld center \(\mathcal{Z}(\calc)\), since \(|M|_{\calc} = \mathrm{RT}_{\mathcal{Z}(\calc)}(M)\), where \(\mathrm{RT}_{\mathcal{Z}(\calc)}\) denotes the Reshitikhin-Turaev invariant. The spherical fusion categories \(\calc = \Rep_{\C}(\mathfrak{S}_3)\) and \(\calc' = \vect_{\mathfrak{S}_3}\) have equivalent centers \(\mathcal{Z}(\calc) \simeq D(\mathfrak{S}_3)\mathrm{-mod}\simeq \mathcal{Z}(\calc')\), but not the same number of isomorphism classes of simple objects, so \(|A_1 \times \Sp^1|_{\calc}\) and \(|A_1 \times \Sp^1|_{\calc'}\) are different.
\end{remark}
\begin{remark}
    In the proof of \cref{lemProp2}, we have derived with \cref{eqProp2last} the behavior of the Turaev-Viro invariant under the transformation of cutting a cylinder over an annulus \(A\) (as part of a defect manifold) in two, transforming only the free boundary. Explicitly, we have
    \begin{align}
        \label{eqAnnCut}
        |A_1 \times I|_{\calc} = \sum_{x \mspace{1mu} \in \mspace{1mu} \calo(\calc)} \Big|\mathbb{T}_x^s \sqcup \overline{\mathbb{T}_x^s}\Big|_{\calc}.
    \end{align}
    This should be seen as an analog of the tube-capping move \(\Delta_3\) in the absence of defect lines, where the base of the cylinder is an annulus instead of a disk.
\end{remark}
\noindent Using \cref{lemProp2}, we can now easily prove that the Turaev-Viro TQFT satisfies \labelcref{P2}.
\begin{proposition}
    \label{propProp2}
    Let \(M \in \Cob_3^{\mathcal{C}}(\emptyset, \emptyset)\) and \(x \in \mathcal{O}(\mathcal{C})\) a simple object. Then, the relation 
    \begin{align*}
        |M|_{\mathcal{C}} = \sum_{x \in \mathcal{O}(\mathcal{C})}\dim(x)|\Delta_2^x(\iota)(M)|_{\mathcal{C}}
    \end{align*}
    holds for any tunnel \(\iota\colon C \hookrightarrow M\) with respect to the tunnel move \(\Delta_2^x(\iota)\).
\end{proposition}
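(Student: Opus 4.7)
The plan is to exploit functoriality of the Turaev-Viro TQFT to reduce the statement to a purely local comparison around the embedded tunnel, and then to invoke \cref{lemProp2}, which has been set up precisely for this purpose. The key observation is that the tunnel move is a local modification, so cutting out a small neighborhood of the tunnel splits every cobordism on both sides of the desired identity as the composition of a common ``outside'' piece with a single local cobordism that differs between the two sides.

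Concretely, I would first choose a closed neighborhood $N \subset M$ of the image $\iota(C)$ which is topologically a 3-ball, with $\partial N \cap \partial_f M$ a pair of disks (slightly enlarged versions of the top and bottom of $C$) and $\partial N \cap \mathrm{Int}(M)$ an annulus $A_1$ connecting them. Cutting $N$ out of $M$ yields a decomposition in $\Cob_3^{\calc}$,
\begin{equation*}
    M \;=\; (M \setminus N^{\circ}) \circ_{A_1} N,
\end{equation*}
where $N\colon\emptyset \to A_1$ has the shape of the cobordism $B$ from \cref{lemProp2}. After the tunnel move, the exterior piece $M\setminus N^\circ$ is unchanged (by locality of $\Delta_2^x(\iota)$ and the hypothesis that the lateral surface of $C$ does not meet the defect graph), while $N$ becomes $N\setminus C^\circ$ with an $x$-labeled loop on its new free-boundary annulus. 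This piece is precisely the solid torus $\mathbb{T}_x^s$ from \cref{lemProp2}, giving
\begin{equation*}
    \Delta_2^x(\iota)(M) \;=\; (M \setminus N^{\circ}) \circ_{A_1} \mathbb{T}_x^s.
\end{equation*}

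Applying the functor $|\cdot|_{\calc}$ and the interchange law then reduces the statement to the identity
\begin{equation*}
    |B|_{\calc}(1_{\K}) \;=\; \sum_{x \mspace{1mu} \in \mspace{1mu} \calo(\calc)} \dim(x)\, |\mathbb{T}_x^s|_{\calc}(1_{\K})
\end{equation*}
as elements of $|A_1|_{\calc}$, which is exactly the combination of \cref{lemProp2} (ii) and (iii) under the identification $|A_1|_{\calc} \cong K_0(\calc) \otimes_{\mathbb{Z}} \K$. Composing with $|M\setminus N^\circ|_{\calc}$ on the left and using $\K$-linearity of this map yields the desired equality $|M|_{\calc} = \sum_{x}\dim(x)\,|\Delta_2^x(\iota)(M)|_{\calc}$.

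The main obstacle is merely book-keeping: one needs to verify that the neighborhood $N$ can be chosen so that $(M \setminus N^\circ, A_1)$ is genuinely a well-defined morphism of $\Cob_3^{\calc}$ whose source annulus carries the same skeleton in both decompositions, and that the defect loop introduced by $\Delta_2^x(\iota)$ along $\partial \mathbb{D}^2 \times \{1/2\}$ really lies on the copy of $\mathbb{T}_x^s$ after the identification of gluing annuli. Both points follow from the standing assumption that $\iota$ avoids $\partial_g M$ and that a small enough $N$ captures no other defect data; no further topological subtlety arises, so once the decomposition is in place, \cref{lemProp2} finishes the argument.
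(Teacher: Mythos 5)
Your proposal is correct and follows essentially the same route as the paper's proof: both cut $M$ along the lateral annulus of a slightly enlarged tunnel, identify the inner piece as the ball $B$ (resp.\ $\mathbb{T}_x^s$ after the move), and then reduce the claim to the identity $|B|_{\calc} = \sum_{x}\dim(x)\,|\mathbb{T}_x^s|_{\calc}$ supplied by \cref{lemProp2}(ii)--(iii) combined with functoriality of $|\cdot|_{\calc}$.
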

\begin{proof}
To isolate the local situation, we fix a second, slightly larger tunnel \(\iota'\colon C \hookrightarrow M\) (i.e. \(\mathrm{im}(\iota)\) is contained in the image of \(\iota'\)) so that the embedding agrees on the cylinder axis with the embedding \(\iota\). Recall the cobordisms \(B\) and \(\mathbb{T}_x^s\) from \cref{lemProp2}, where \(B\) is a ball and \(\mathbb{T}_x^s\) a solid torus decorated with a defect line, both having an embedded annulus \(A_1\) as their gluing boundary. Identifying \(A_1\) with the lateral surface \(\iota'(\Sp^1 \times I)\) and cutting along it, we may decompose the cobordism \(M\) as \(M = M' \cup_{\iota'(\Sp^1 \times I)}B\) and the cobordism \(\Delta_2^x(\iota)(M)\) as \(\Delta_2^x(\iota)(M) = M' \cup_{\iota'(\Sp^1 \times I)} \mathbb{T}_x^s\). From assertion (ii) and (iii) of \cref{lemProp2} it follows directly that \(|B|_{\mathcal{C}} = \sum_{x \in \mathcal{O}(\mathcal{C})}\dim(x) |\mathbb{T}_x^s|_{\mathcal{C}}\). Using this relation, we see that
\begin{align*}
    |M|_{\mathcal{C}} &= |M' \cup_{\iota'(\Sp^1 \times I)}B|_{\mathcal{C}} = |M'|_{\mathcal{C}} \circ |B|_{\mathcal{C}} = \sum_{x \in \mathcal{O}(\mathcal{C})}\dim(x)|M'|_{\mathcal{C}} \circ |\mathbb{T}_x^s|_{\mathcal{C}} \\
    &= \sum_{x \in \mathcal{O}(\mathcal{C})}\dim(x)|M'\cup_{\iota'(\Sp^1 \times I)}\mathbb{T}_x^s|_{\mathcal{C}} = \sum_{x \in \mathcal{O}(\mathcal{C})}\dim(x) |\Delta_2^x(\iota)(M)|_{\mathcal{C}},
\end{align*}
which is the identity asserted in the proposition.
\end{proof}
We prove property \labelcref{P3} for the Turaev-Viro TQFT.
\begin{proposition}
    \label{propProp3TV}
    Let \(M \in \Cob_3(\emptyset, \emptyset)\) be a defect cobordism to which the tube-capping move can be locally applied with respect to a suitably embedded oriented cylinder. For \(X_1, \hdots, X_n \in \mathcal{C}\) the objects labeling the defect edges on the lateral surface of the cylinder and \(\{\alpha_i\}_{i = 1}^k\) a basis of the vector space \(\Hom_{\calc}\left(X_1^{\varepsilon(1)} \otimes \hdots \otimes X_n^{\varepsilon(n)}, \mathbbm{1}\right) \), the relation \(|M|_{\calc} = \sum_{i = 1}^k|\Delta_3^{\alpha_i}(M)|_{\calc}\) holds.
\end{proposition}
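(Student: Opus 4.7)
The plan is to proceed by locality, paralleling the strategy used for \labelcref{P1} and \labelcref{P2}. I decompose $M$ along the two disks $S = \mathbb{D}^2 \times \{0,1\}$ bounding the embedded cylinder, writing $M = M' \cup_S C$ so that $\Delta_3^{\alpha_i}(M) = M' \cup_S (E^{\alpha_i} \sqcup \overline{E}^{\alpha_i^*})$. Both $C$ and $E^{\alpha_i} \sqcup \overline{E}^{\alpha_i^*}$ are morphisms $\Sigma_X \to \Sigma_X$ in $\Cob_3^{\calc}$, where $\Sigma_X \cong \mathbb{D}^2$ carries the $n$ colored points coming from the defect lines. Functoriality of $|\cdot|_{\calc}$ therefore reduces the proposition to the local identity of linear maps
\begin{align*}
|C|_{\calc} \, = \, \sum_{i=1}^k \, |E^{\alpha_i} \sqcup \overline{E}^{\alpha_i^*}|_{\calc}.
\end{align*}

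To prove this identity I pick compatible graph skeleta inducing the same surface skeleton on the two boundary disks. A natural skeleton for $C$ is $P := (\mathbb{D}^2 \times \{1/2\}) \cup (\partial \mathbb{D}^2 \times [0,1])$, formed by the horizontal middle disk together with the lateral free boundary carrying the $n$ defect lines. This produces two 3-cells and a distinguished internal rim $r = \partial \mathbb{D}^2 \times \{1/2\}$ hosting $n$ switches where the defect lines cross the middle disk. By the construction recalled in \cref{subSecTV}, the vector space associated to $r$ is precisely $\Hom_{\calc}(X_1^{\varepsilon(1)} \otimes \cdots \otimes X_n^{\varepsilon(n)}, \mathbbm{1})$, and the distinguished tensor factor $*_r^c = \sum_{i} \alpha_i^* \otimes \alpha_i$ of $*_c$ sums over precisely the basis $\{\alpha_i\}$ appearing in the statement. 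For each of $E^{\alpha_i}$ and $\overline{E}^{\alpha_i^*}$ I take as skeleton the free-boundary disk containing the coupon (together with its radial strands), so that the coupons appear directly as nodes labeled by $\alpha_i$ and $\alpha_i^*$ in the link-graph evaluations.

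Comparing the two state sums, the link-graph contributions coming from the rest of the skeleton (the defect rims on the lateral surface, the induced boundary nodes, and so on) coincide on both sides, since the local defect geometry near the gluing boundary is identical. The normalization prefactors $\dim(\calc)^{-|(\cdot) \setminus P|}$ also match: the skeleton of $C$ yields two 3-cells, while $E^{\alpha_i} \sqcup \overline{E}^{\alpha_i^*}$ is literally a disjoint union of two 3-balls. The essential algebraic content is that in both expressions one ends up with the same pairing over $\{\alpha_i\}$ and its dual $\{\alpha_i^*\}$: on the left it arises intrinsically from $*_r^c$, and on the right it arises from the explicit sum over $i$ of coupon decorations on the two disconnected caps. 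The underlying categorical fact that closes the loop is \cref{eqTV49}, which identifies $\sum_i \alpha_i \otimes \alpha_i^*$ with the identity endomorphism of the relevant hom-space.

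The main obstacle will be the careful bookkeeping of sign conventions $\varepsilon(i)$, half-rim orientations, and the dimension-weighted factors $\dim(c(a))^{\chi(a)}$, ensuring that the contributions at the middle rim and at the boundary of the middle disk on both sides match exactly, including the $\dim(x_i)$ factors attached to the defect-line faces on the lateral surface, which have to cancel between $C$ and the two caps (whose free-boundary faces have the same topology once the coupon is excised). Once this alignment is verified, the two state-sum expressions are seen to coincide term by term, and composing with $|M'|_{\calc}$ by functoriality yields the claimed identity $|M|_{\calc} = \sum_{i=1}^{k} |\Delta_3^{\alpha_i}(M)|_{\calc}$.
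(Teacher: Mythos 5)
Your overall reduction is the same as the paper's: cut at $\mathbb{D}^2 \times \{0,1\}$, observe that $|\cdot|_{\calc}$ is functorial under gluing, and thereby reduce to a local identity between cobordisms $\Sigma_X \to \Sigma_X$. The skeletal choices (the middle disk together with the lateral annulus for the cylinder; the two decorated free-boundary disks for the caps) also agree with the skeleta $Q$ and $P_i$ used in the paper. So far, so good.

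The problem is the central claim about the rim. You assert that $\partial\mathbb{D}^2 \times \{1/2\}$ is a single internal rim carrying the vector space $\Hom_{\calc}(X_1^{\varepsilon(1)} \otimes \cdots \otimes X_n^{\varepsilon(n)}, \mathbbm{1})$, with distinguished vector $*_r^c = \sum_i \alpha_i^* \otimes \alpha_i$, and that the basis $\{\alpha_i\}$ therefore ``arises intrinsically'' in the cylinder's state sum. This is incorrect. The $n$ switches where the defect lines cross $\partial\mathbb{D}^2 \times \{1/2\}$ are nodes of $P^{(1)} \cup \Gamma$, so $\partial\mathbb{D}^2 \times \{1/2\}$ splits into $n$ rims, one arc between each pair of consecutive switches. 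Each such rim has exactly three adjacent branches (the middle disk and the two lateral strips bounded by the switches), and the associated hom-space is of the form $\Hom_{\calc}(\mathbbm{1}, a^{\pm}\otimes b^{\pm}\otimes m^{\pm})$ involving only the \emph{coloring} simples $a, b, m$, not the defect labels $X_j$. In particular, no rim in the cylinder's skeleton carries $\Hom_{\calc}(X_1^{\varepsilon(1)} \otimes \cdots \otimes X_n^{\varepsilon(n)}, \mathbbm{1})$, and there is no vector $*_r^c$ summing over $\{\alpha_i\}$. The $\alpha_i$ appear \emph{only} in the coupon labels of the two caps, on the other side of the identity.

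Consequently, the mechanism by which you propose to conclude does not exist, and the ``careful bookkeeping'' you defer is in fact the entire content of the proof: one must explicitly compute $\sum_i |D_X^{\alpha_i}, G_P, G_P|_{\calc}$ by the graphical calculus (fusing the $\alpha_i$- and $\alpha_i^*$-coupons via \cref{lemDisconn}, introducing a sum over a simple object via \cref{lemDecId}, and after a further application of \cref{lemDisconn} recognizing the resulting graph components as the $n$ switch link graphs of the cylinder's skeleton). This is where the sum over $i$ is absorbed; \cref{eqTV49} captures the algebra but does not attach to any rim of $Q$. Alternatively, one could identify $|\Sigma_X|_{\calc} \cong \Hom_{\calc}(\mathbbm{1}, X_1 \otimes \cdots \otimes X_n)$ and compute $|E^{\alpha_i}|_{\calc}$ directly (as is done for the Dijkgraaf-Witten analogue in \cref{propDW3}), but that too requires a computation you have not supplied. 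As written, the proposal does not close.
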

\begin{proof}
    The Turaev-Viro theory is invariant under swapping the orientation of a labeled defect edge while simultaneously replacing the labeling object with its dual. Thus, without loss of generality, we will assume that the parallel defect strands on the cylinder are oriented in the same direction and that the vector space referred to in the proposition is given by \(\Hom_{\calc}\left(X_1 \otimes \hdots \otimes X_n, \mathbbm{1}\right)\). Let us denote by \(X = \{X_i\}_{i = 1}^n\) the family of objects labeling the cylinder and by \(\alpha = \{\alpha_i\}_{i = 1}^k\) the chosen basis.

    Similar to the proof of \cref{propProp2}, it suffices by functoriality and tensoriality of \(|\cdot|_{\calc}\) to compare the situation locally. Let us denote by \(\Sigma_X \cong \mathbb{D}^2\) the disk as a \(\calc\)-colored surface with point insertions on its boundary, all of which are positively oriented and colored by the individual objects in \(X\). Cutting at the bottom and top of the embedded cylinder with respect to which the tube-capping move is applied, we obtain the two cobordisms \(\Sigma_X \times I, D_X^{\alpha_i}\colon \Sigma_X \rightarrow \Sigma_X\), where \(D_X^{\alpha_i}\) is the composition \(\overline{E}^{\alpha_i^*} \sqcup E^{\alpha_i}\) of the two caps introduced at the beginning of \cref{subSecChar}.
    As the cobordism \(\Sigma_X \times I\) represents the identity on \(\Sigma_X\) in \(\Cob_3^{\calc}\), it is mapped by the TQFT \(|\cdot |_{\calc}\) to the identity homomorphism on \(|\Sigma_X|_{\calc}\)\footnote{This state space has been explicitly computed in \cite{F} with the result \(|\Sigma_X|_{\calc} \cong \Hom_{\calc}\left(\mathbbm{1}, X_1 \otimes \hdots \otimes X_n\right)\). We do not need this result here, but we will verify this explicitly later for Dijkgraaf-Witten theory in \cref{eqSS}.}. 
    
    We turn to the computation of \(|D_X^{\alpha_i}|_{\calc}\). As usual, this works by choosing a graph skeleton \(P_i\) for the cobordism for each index \(i \in \{1, \hdots, k\}\). Technically, this graph skeleton would have to include at least one vertex of \(P_i^{(1)}\) on both of the circular edges in the two boundary components of the skeleton, each adjacent to an edge not in \(\partial P_i\), which we omit for ease of presentation (its inclusion would result in a minor modification of the actual computation).
    
    We therefore define the graph skeleton \(P_i\) graphically as the following decorated 2-polyhedron,
    \begin{align*}
        P_i \, \, = \, \, \adjincludegraphics[valign=c, scale = 0.75]{images/tubeSkel.pdf},
    \end{align*}
    drawn here in the special case \(n = 4\). This means that we take \(P_i\) as the disjoint union of two disks covering the free boundary of \(D_X^{\alpha_i}\), decorated with the defect graph embedded in \(\partial_f D_X^{\alpha_i}\). The faces \(A_1, \hdots, A_n, B_1, \hdots, B_n\) are the connected components of \(\partial_f D_X^{\alpha_i}\) minus the defect graph, and are labeled by \(2n\) simple objects once a coloring has been chosen. There are two coupons, labeled by \(\alpha_i\) and \(\alpha_i^*\), respectively. The skeleton \(G_{P_i}\) of \(\Sigma_X\) is given by the upper boundary circle together with the insertions of the objects in \(X\), and the lower circle corresponds to \(G_{P_i}^{\mathrm{op}}\). Note that \(G_{P_i}\) does not depend on the index \(i\), so we will simply denote it by \(G_P\). We compute the sum of invariants over the collection of cobordisms labeled by the elements of \(\alpha\), where we allow ourselves to draw the vertices labeled by \(\alpha\) in circular shape:
    {\allowdisplaybreaks
    \begin{align*}
        \sum_{i = 1}^k|D_X^{\alpha_i}, &G_{P}, G_{P}|_{\calc} = \bigoplus_{a_1, \mspace{1mu} \hdots, \mspace{1mu} a_n, \mspace{1mu} b_1, \mspace{1mu} \hdots, \mspace{1mu} b_n \mspace{1mu} \in \mspace{1mu} \calo(\calc)}\frac{\dim(\calc)\dim(\calc)^{-2}}{\dim(b_1) \cdot \hdots \cdot \dim(b_n) }\dim(a_1)\cdot \hdots \cdot \dim(b_n) \cdot \\ 
        & \cdot \beta_1^* \otimes \hdots \otimes \beta_n^* \otimes \gamma_1^* \otimes \hdots \otimes \gamma_n^* \, \,\adjincludegraphics[valign=c, scale = 0.75]{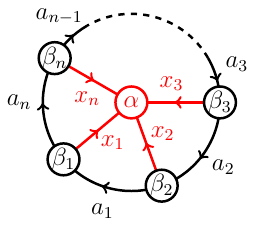} \, \adjincludegraphics[valign=c, scale = 0.75]{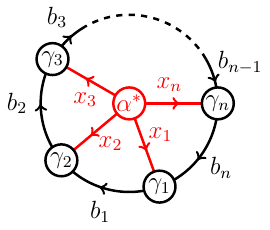} \\
        &\overset{\substack{\text{(\cref{lemDisconn} } \\ + \calc \text{ spherical)}}}{=} \frac{1}{\dim(\calc)} \bigoplus_{a_1, \mspace{1mu} \hdots, \mspace{1mu} a_n, \mspace{1mu} b_1, \mspace{1mu} \hdots, \mspace{1mu} b_n \mspace{1mu} \in \mspace{1mu} \calo(\calc)}\dim(a_1) \cdot \hdots \cdot \dim(a_n) \, \beta_1^* \otimes \hdots \otimes \beta_n^* \otimes \\
        & \otimes \gamma_1^* \otimes \hdots \otimes \gamma_n^*  \, \,\adjincludegraphics[valign=c, scale = 0.75]{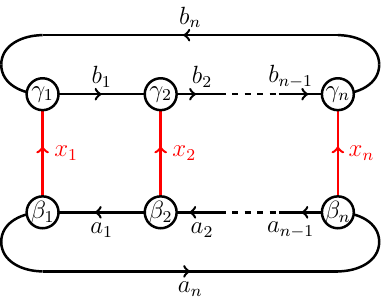} \\
        &\overset{\text{(\cref{lemDecId})}}{=} \frac{1}{\dim(\calc)} \bigoplus_{a_1, \mspace{1mu} \hdots, \mspace{1mu} a_n, \mspace{1mu} b_1, \mspace{1mu} \hdots, \mspace{1mu} b_n \mspace{1mu} \in \mspace{1mu} \calo(\calc)} \sum_{s \mspace{1mu} \in \mspace{1mu} \calo(\calc)} \dim(a_1) \cdot \hdots \cdot \dim(a_n) \dim(s) \cdot \\
        & \beta_1^* \otimes \hdots \otimes \beta_n^* \otimes \gamma_1^* \otimes \hdots \otimes \gamma_n^*  \, \,\adjincludegraphics[valign=c, scale = 0.75]{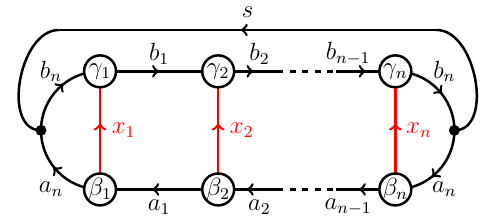} \\
        &\overset{\text{(\cref{lemDisconn})}}{=} \frac{1}{\dim(\calc)} \bigoplus_{a_1, \mspace{1mu} \hdots, \mspace{1mu} a_n, \mspace{1mu} b_1, \mspace{1mu} \hdots, \mspace{1mu} b_n \mspace{1mu} \in \mspace{1mu} \calo(\calc)} \sum_{s \mspace{1mu} \in \mspace{1mu} \calo(\calc)} \dim(a_1) \cdot \hdots \cdot \dim(a_n) \dim(s) \cdot \\
        & \beta_1^* \otimes \hdots \otimes \beta_n^* \otimes \gamma_1^* \otimes \hdots \otimes \gamma_n^*  \, \,\adjincludegraphics[valign=c, scale = 0.75]{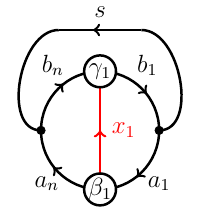} \, \adjincludegraphics[valign=c, scale = 0.75]{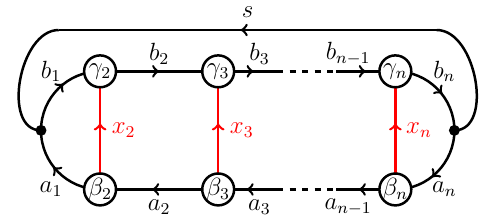} \\
        &\overset{\text{(\cref{lemDisconn})}}{=} \frac{1}{\dim(\calc)} \bigoplus_{a_1, \mspace{1mu} \hdots, \mspace{1mu} a_n, \mspace{1mu} b_1, \mspace{1mu} \hdots, \mspace{1mu} b_n \mspace{1mu} \in \mspace{1mu} \calo(\calc)} \sum_{s \mspace{1mu} \in \mspace{1mu} \calo(\calc)} \dim(a_1) \cdot \hdots \cdot \dim(a_n) \dim(s) \cdot \\
        & \beta_1^* \otimes \hdots \otimes \beta_n^* \otimes \gamma_1^* \otimes \hdots \otimes \gamma_n^*  \, \,\adjincludegraphics[valign=c, scale = 0.75]{images/propTube6.pdf} \hdots \adjincludegraphics[valign=c, scale = 0.75]{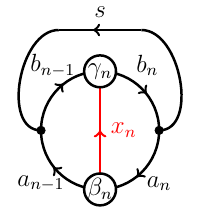},
    \end{align*}}
    where we dissolved the vertices labeled by \(\alpha\) and \(\alpha^*\) and introduced a sum over a simple object. Equipping \(\Sigma_X \times I\) with the graph skeleton with the underlying 2-polyhedron 
    \begin{align*}
        Q = \left(\Sp^1 \times I\right) \cup_{\mspace{1mu} \Sp^1 \times \{1/2\}} \mathbb{D}^2
    \end{align*}
    and the obvious stratification \(Q^{(1)}\) and \(\calc\)-colored defect graph (a collection of intervals labeled by the \(X_i\)), it is evident that the link graphs associated to the switches contained in \(\Sp^1 \times \{1/2\}\) have the same form as the graph components obtained in the final expression above. Since the coefficients agree and also \(\partial Q = G_P^{\mathrm{op}} \sqcup G_P\), we have 
    \begin{align*}
        |\Sigma_V \times I, G_{P}, G_P|_{\calc} = \sum_{i = 1}^k|D_X^{\alpha_i}, G_P, G_P|_{\calc}
    \end{align*}
    and in the limit over surface skeleta this implies the assertion of the proposition.
\end{proof}
Finally, in order to complete the proof of \cref{thmMainTV}, we need to show condition \labelcref{P4}, the normalization condition on 3-balls. 
\begin{proposition}
    \label{propProp4TV}
Denote as in the paragraph before \cref{eqBGamma} by \(B_{\Gamma} \in \Cob_3^{\calc}(\emptyset, \emptyset)\) a 3-ball whose boundary sphere is decorated with a \(\calc\)-colored graph \(\Gamma\). Then, the Tuarev-Viro invariant \(|B_{\Gamma}|_{\calc} \in \K\) is given by \(\langle \Gamma \rangle\), the value of \(\Gamma\) considered as an endomorphism of the tensor unit in the spherical fusion category \(\calc\). 
\end{proposition}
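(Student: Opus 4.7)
The plan is to compute $|B_\Gamma|_\calc$ directly from the state sum formula \cref{eqInv} by choosing a graph skeleton whose geometric content makes the answer transparent.

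The first step is to take $P = \partial B^3 = S^2$, so that $P$ covers the free boundary and the sole 3-stratum of $B_\Gamma \setminus P$ is the open interior of the ball, giving $|B_\Gamma \setminus P| = 1$. The 1-skeleton $P^{(1)}$ is taken to be $\Gamma$, augmented if necessary by finitely many auxiliary edges and bivalent vertices so that every face $a \in \Fac(P)$ is an open 2-disk (hence $\chi(a) = 1$). With this choice, \cref{eqInv} reduces to
\begin{align*}
    |B_\Gamma|_\calc = \dim(\calc)^{-1}\sum_{c:\, \Fac(P) \,\rightarrow\, \calo(\calc)}\Big(\prod_{a \mspace{1mu} \in \mspace{1mu} \Fac(P)}\dim(c(a))\Big)|c|,
\end{align*}
where $|c|$ is the contraction of the link graphs $\Gamma_c(v)$ at the internal nodes $v$ (the coupons of $\Gamma$ together with the auxiliary vertices) against the distinguished vector $*_c$ of dual-basis insertions on the rims.

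The geometric key is that, since $P$ is literally the boundary sphere, the collection of link graphs at the free-boundary nodes glues, via the dual-basis insertions on the rims, into the single closed $\calc$-colored graph $P^{(1)}$ drawn on $S^2$, with coupon labels inherited from $\Gamma$ and strand labels inherited from $c$. Hence $|c|$ is the sphere-evaluation of this colored graph, which by sphericity of $\calc$ coincides with its graphical calculus value in $\End_\calc(\mathbbm{1})$.

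With that identification, I would telescope the sum over $c$ by successively eliminating the augmenting structure. For each auxiliary edge in $P^{(1)} \setminus \Gamma$, the weighted sum over the color of one of its adjacent faces collapses via \cref{lemDecId} to a resolution of identity on the opposite face's color, thereby erasing the auxiliary edge and merging the adjacent faces. Iterating until only $\Gamma$ remains on $S^2$, the remaining sum over colorings of the faces of $S^2 \setminus \Gamma$, each weighted by its dimension, is absorbed by a final application of \cref{lemBub} into one factor of $\dim(\calc)$, which cancels the $\dim(\calc)^{-1}$ prefactor and leaves precisely $\langle \Gamma \rangle$.

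The main obstacle will be the bookkeeping in the telescoping step: one has to check that each application of \cref{lemDecId} (and the final \cref{lemBub}) correctly accounts for both the $\dim(c(a))$ weights on faces and the orientation/sign conventions on the rims coming from step \labelcref{itemTVCon2} of the construction, and that no spurious dimension factor survives beyond the single $\dim(\calc)$ cancelling the prefactor. A useful sanity check is the empty-graph case $B^3 = B_\emptyset$, where the whole argument reduces to verifying $|B^3|_\calc = 1$ via a direct evaluation with \cref{lemBub} on a minimal augmentation of $P^{(1)}$ (for instance, a theta graph on $S^2$ yielding three disk faces).
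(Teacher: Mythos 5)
Your high-level plan — take $P = \partial B^3$ and compute the state sum, then use graphical-calculus lemmas to reassemble the link graphs into $\Gamma$ — is the same strategy as the paper's proof, but the central step is misstated and the real difficulty is not addressed.

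The claim that ``the collection of link graphs at the free-boundary nodes glues, via the dual-basis insertions on the rims, into the single closed $\calc$-colored graph $P^{(1)}$'' and hence that $|c|$ is the sphere-evaluation of $P^{(1)}$ is not correct as written. What $|c| = V_c(*_c)$ actually is, for the skeleton $P = \partial B^3$, is the evaluation of the juxtaposition of $|V(\Gamma)|$ \emph{wheel graphs} (one per node of $\Gamma$, with a central coupon, defect spokes, and a rim colored by faces of $c$), contracted pairwise along dual-basis insertions coming from $*_c$. That is a much more elaborate diagram than $P^{(1)}$ colored by $c$, and the whole content of the proof is that summing over $c$ with the $\dim(c(a))$ weights collapses it to $\langle\Gamma\rangle$. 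You cannot skip from ``$|c|$'' to ``sphere-evaluation of $P^{(1)}$.''

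Once this is recognized, the substantive obstacle in the telescoping step is not the ``bookkeeping'' of signs and dimension weights you flag, but proving that you can actually carry out the cascade of applications of \cref{lemDisconn} and \cref{lemDecId} to completion. The paper does this by first choosing a spanning tree $T$ of $\Gamma$ and fusing dual wheel-vertices along the $|V(\Gamma)|-1$ edges of $T$ with \cref{lemDisconn} (valid only while the pieces being fused are still disconnected), and then contracting the remaining $f-1$ edges with \cref{lemDecId}; the existence, at every stage, of a face adjacent to a unique not-yet-absorbed edge requires a genuine combinatorial argument (via cycles in the dual graph $\Gamma^*$ and the tree property of $T$). That argument is the crux and is absent from your proposal — without it you cannot know the process terminates. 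Also note that the final step in the paper is not an application of \cref{lemBub}: after the last contraction a single closed strand labeled $m_i$ remains, and one sums $\dim(m_i)^2$ directly to produce $\dim(\calc)$ and cancel the prefactor. Finally, your idea of augmenting $P^{(1)}$ by auxiliary edges to absorb the disconnected case would also need to be justified; the paper instead reduces the disconnected case to the connected one by several applications of the tube-capping property \labelcref{P3}, which is cleaner and already available.
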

\begin{proof}
    First, we assume that the graph \(\Gamma\) is connected. Equip \(B_{\Gamma}\) with a skeleton \(Q\) with only one region, the boundary \(\partial B_{\Gamma} \in \mathrm{Reg}(Q)\). This means that the skeleton has only one 3-cell, so \(|B\setminus Q| = 1\). Since \(\Gamma\) is connected, each face \(a \in \mathrm{Fac}(Q)\) is homeomorphic to an open 2-disk, hence, its Euler characteristic is \(\chi(a) = 1\). We will use this fact explicitly later in the proof. Let us abbreviate by \(f = |\mathrm{Fac}(Q)|\) the number of faces of the skeleton \(Q\). We can then express the invariant \(|B_{\Gamma}|_{\calc}\) as the following sum:
    \begin{align}
        \label{eqInvBGamma}
        |B_{\Gamma}|_{\calc} &= \dim(\calc)^{-|B\setminus Q|}\sum_{c: \, \mathrm{Fac}(Q)\, \rightarrow \, \calo(\calc)} \dim(c) |c| \nonumber \\ 
        &= \dim(\calc)^{-1}\sum_{m_1, \mspace{1mu} \hdots, \mspace{1mu} m_f \mspace{1mu} \in \mspace{1mu} \calo(\calc)} \left(\prod_{j = 1}^f \dim(m_j) \right) |m_1, \hdots, m_f|. 
    \end{align}
    To determine the scalar \(|c| = |m_1, \hdots, m_f|\) for a fixed coloring \(c\), we proceed as follows. The link graph \(\Gamma_c(v)\) of each vertex \(v\) of \(\Gamma\) is given by a wheel graph, 
    \begin{align*}
        \Gamma_c(v) = \adjincludegraphics[valign=c, scale = 0.75]{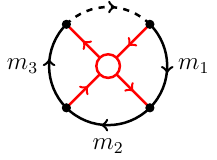},
    \end{align*}
    where the black labels come from the coloring \(c\). The data depicted in red is determined by the defect structure and the \(\calc\)-coloring of \(\Gamma\), for which we have suppressed the labels. Now, the scalar \(|c|\) can be expressed as the product \(|c| = \prod_{v \in V(\Gamma)}\langle \Gamma_c(v) \rangle\), represented by juxtaposition of all link graphs \(\Gamma_c(v)\) in the graphical calculus of \(\calc\). Here, we slightly abuse notation by omitting the evaluation on the canonical vector \(*_c \in H_c(B_{\Gamma})\). Denote by \(\Gamma' \subset \mathbb{R}^2\) the \(\calc\)-colored planar graph obtained from the juxtaposition of link graphs. We take note of some features of \(\Gamma'\):
    \begin{itemize}
        \item Each solid black vertex is trivalent, with one red defect edge and two black edges connected to it.
        \item The morphism labeling each solid vertex (under the evaluation on \(*_c\)) is part of a pair of dual bases. For two vertices labeled by a dual base pair, the objects coloring their red eges and the indices of the simple objects \(m_i\) labeling their black edges agree, and the edge orientations are compatible for contraction.
    \end{itemize}
    For the following explanation, we will not differentiate between a colored graph and its evaluation in \(\calc\). The main idea is to recover the original graph \(\Gamma\) labeling \(B_{\Gamma}\) by modifications applied to \(\Gamma'\) according to the computation rules of the graphical calculus in \(\calc\), while simultaneously keeping track of the effect on the invariant. We proceed as follows. Since the graph \(\Gamma\) is connected, it possesses a spanning tree \(T \subset \Gamma\). \cref{lemDisconn} allows us to fuse dual vertices in \(\Gamma'\) along red edges contained in \(E(T)\), according to the relation
    \begin{align*}
        \adjincludegraphics[valign=c, scale = 0.75]{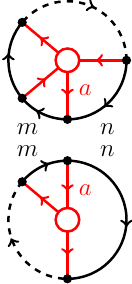} \, \, \, \, \, = \, \, \, \, \,\adjincludegraphics[valign=c, scale = 0.75]{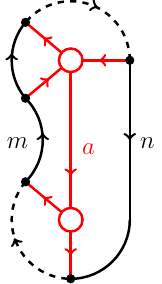}.
    \end{align*}
    This is allowed up to completion of \(T\) (that is, until all edges of \(T\) are recovered as complete red edges), because we are connecting components that were previously disjoint. The graph \(\Gamma''\) arising from this process is a connected graph, so \cref{lemDisconn} is no longer applicable. Since \(T\) is a spanning tree, its number of edges is given by \(|E(T)| = |V(\Gamma)| -1\). By applying Euler's formula, there are in total
    \begin{align}
        \label{eqCount}
        |E(\Gamma)|- |E(T)| = |V(\Gamma)| + f - 2 - |E(T)| = f-1
    \end{align}
    edges of \(\Gamma\) left to connect. We now apply \cref{lemDecId} \(f-1\) times, as in
    \begin{align}
        \label{eqContr}
        \sum_{m \mspace{1mu} \in \mspace{1mu} \calo(\calc)} \dim(m) \adjincludegraphics[valign=c, scale = 0.75]{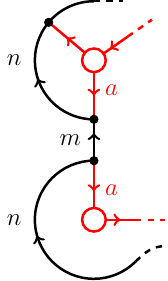} = \, \, \adjincludegraphics[valign=c, scale = 0.75]{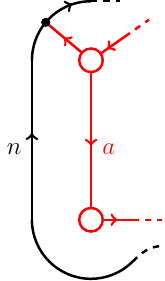},
    \end{align}
    using up the sums over dimension coefficients \(\dim(m)\) occurring in \cref{eqInvBGamma} one by one while contracting the corresponding edge labeled by \(m\). Note that the dimensions come with the correct power for contraction due to the fact that the Euler characteristic of all faces is equal to 1, which was a consequence of the connectedness of \(\Gamma\). To perform the contraction move, we have to argue why there exists a contractible black edge at each step of the procedure. This is an edge whose label, which is a summation index in \cref{eqInvBGamma}, appears exactly once in the graph \(\Gamma''\) (or any modification of \(\Gamma''\) derived by application of \cref{eqContr}). We present the following combinatorial argument.
    \begin{enumerate}
        \item Consider the original graph \(\Gamma \subset \partial B_{\Gamma}\) and the chosen spanning tree \(T\). First, we show that there exists a face \(a \in P(\Gamma) =\Fac(Q)\) such that there is exactly one edge \(e \in E(a)\) with \(e \notin E(T)\). This is clear if \(\Gamma\) has a vertex with an attached loop. In the case that \(\Gamma\) has no loops, assume the contrary. Since \(T\) is a tree, there can be no cycles in \(E(T)\), so there are at least two edges adjacent to every face that are not in \(T\). In other words, the set \(S_b := E(b) \cap \left(E(\Gamma)\setminus E(T)\right)\) has at least two elements for every \(b \in P(\Gamma)\). We will form a cycle in the dual graph \(\Gamma^*\) that contains only edges dual to edges not in \(T\) and see that this contradicts the assumption. Pick a starting face \(b_0\) (which is a vertex of \(\Gamma^*\)). Choose one of the two edges in \(S_{b_0}\) and connect \(b_0\) to a face \(b_1\) via the corresponding dual edge. There is at least one other edge in \(S_{b_1}\), via whose dual we can connect \(b_1\) to a face \(b_2\). Continuing this process, there must be a \(k \geq 0\) such that the ordered list \((b_0, \hdots, b_k)\) contains the same face twice, since there are only finitely many faces. Then, \(\Gamma^*\) contains a cycle \(\gamma\) whose edges are dual to edges not in \(T\). Removing all edges in \(\Gamma\) dual to the edges contained in \(\gamma\), we obtain a graph with two nonempty connected components. By definition of \(\gamma\), two vertices chosen from different components of this graph could not have been previously connected by an edge contained in \(E(T)\) in the graph \(\Gamma\), but this violates the assumption that \(T\) is connected and contains all vertices.
        \item Let \(a\) be a face of \(\Gamma\) such that a unique edge \(e\) adjacent to \(a\) does not lie in \(T\) and consider the pair of dual vertices \(v, w \in V(\Gamma'')\) at \(e\). The remaining edges in \(E(a)\setminus\{e\}\) have previously been fused while constructing the connected graph \(\Gamma''\). Due to the fact that all (dual) vertices in \(\Gamma'\) that gave rise to edges in \(E(a)\setminus\{e\} \subset E(\Gamma'')\) under fusion possessed a black edge labeled by \(m_a \in \mathrm{im}(c)\), this implies that there exists a unique black edge in \(\Gamma''\) from \(v\) to \(w\) labeled by \(m_a\) which can be contracted as in \cref{eqContr}, possibly after using sphericity of \(\calc\).
        \item Form the graph \(\Gamma'''\) by contracting \(e\). The graph \(T' = T \cup \{e\}\) is not a tree but is connected, contains all the vertices of \(\Gamma\) and has exactly one cycle which surrounds the face \(a\). We can now argue analogously to point 1. and 2. for the existence of a contractible back edge by replacing \(T\) by \(T'\) and \(\Gamma''\) by \(\Gamma'''\), and repeat this argument a total of \(f-1\) times.
    \end{enumerate}
    After \(f-2\) applications of the contraction move, due to \cref{eqCount}, we have the following: There are exactly two dual vertices remaining and, due to the trivalent nature of the vertices, there are two black edges left to connect. Upon connection, the vertices and one of the black edges (and the sum over its corresponding dimension prefactor) vanish, and the remaining black edge, labeled by some simple object \(m_i\), closes to a loop. The graph \(\Gamma\) is now fully recovered, and thus, we see that \cref{eqInvBGamma} has turned into
    \begin{align*}
        |B_{\Gamma}|_{\calc} = \dim(\calc)^{-1}\sum_{m_i \mspace{1mu} \in \mspace{1mu} \calo(\calc)} \dim(m_i) \, \adjincludegraphics[valign=c, scale = 0.75]{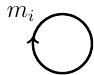} \, \, \langle \Gamma \rangle = \dim(\calc)^{-1}\sum_{m_i \mspace{1mu} \in \mspace{1mu} \calo(\calc)} \dim(m_i)^2 \, \langle \Gamma \rangle =  \langle \Gamma \rangle, 
    \end{align*}
    proving the asserted equality for the case of a connected graph. 
    
    In the general situation of a disconnected graph, we proceed as follows. Let \(\Gamma = \sqcup_{i = 1}^k \Gamma_i\) be a graph with connected components \(\Gamma_1, \hdots, \Gamma_k\). Between every two components, we can embed a cylinder in \(B_{\Gamma}\) with precisely its lateral surface on \(\partial B_{\Gamma}\) so that no defect lines intersect the image of the embedding. Then, we can apply the move \(\Delta_3\) to each of these embeddings. The result of the repeated application of \(\Delta_3\) is a collection of 3-balls each decorated with graph components, or more precisely, the cobordism \(\sqcup_{i = 1}^k B_{\Gamma_i}\colon \emptyset \rightarrow \emptyset\). By \cref{propProp3TV}, this procedure implies \(|B_{\Gamma}|_{\calc} = |\sqcup_{i = 1}^k B_{\Gamma_i}|_{\calc}\) for the Turaev-Viro invariant, and hence 
    \begin{align*}
        |B_{\Gamma}|_{\calc} = \prod_{i = 1}^k|B_{\Gamma_i}|_{\calc} = \prod_{i = 1}^k\langle \Gamma_i\rangle = \langle \Gamma_1 \sqcup \hdots \sqcup \Gamma_k \rangle = \langle \Gamma \rangle,
    \end{align*}
    where we used the result for connected graphs obtained earlier. This finishes the proof.
\end{proof}
\begin{proof}[Proof of \cref{thmMainTV}]
    \cref{propProp1TV}, \cref{propProp2}, \cref{propProp3TV} and \cref{propProp4TV} together directly imply \cref{thmMainTV}.
\end{proof} 
\begin{remark}
    \label{remTVPPrime}
    The arguments entering into the proof of \cref{thmMainTV} do not rely in any essential way on the fact that we were considering cobordisms \(\emptyset \rightarrow \emptyset\) with empty gluing boundary. In fact, the assertion of \cref{thmMainTV} remains true for cobordisms \(M \in \Cob_3^{\calc}(\Sigma, \Sigma')\). In the example of the tunnel move, for instance (cf. \cref{propProp2}), we can decompose
    \begin{align*}
        M\colon \Sigma \xrightarrow{(\Sigma \times I) \, \sqcup \, B} \Sigma \sqcup A_1 \xrightarrow{M'} \Sigma', \,\,\,\,\,\,\,\,\,\,\,\,\, \Delta_2^x(\iota)(M)\colon \Sigma \xrightarrow{(\Sigma \times I) \, \sqcup \, \mathbb{T}_x^s} \Sigma \sqcup A_1 \xrightarrow{M'} \Sigma'
    \end{align*}
    with \(B\) and \(\mathbb{T}_x^s\) as before and deduce the following identity for linear maps \(|\Sigma|_{\calc} \rightarrow |\Sigma'|_{\calc}\):
    \begin{align*}
        \allowbreak |M|_{\calc} = |M'|_{\calc} \circ \Big(\id_{|\Sigma|_{\calc}} \otimes \sum_{x \mspace{1mu} \in \mspace{1mu} \calo(\calc)}\dim(x)|\mathbb{T}_x^s|_{\calc} \Big) = \sum_{x \mspace{1mu} \in \mspace{1mu} \calo(\calc)}\dim(x)|\Delta_2^x(\iota)(M)|_{\calc}.
    \end{align*}
    By similar reasoning, it follows that the defect TQFT \(|\cdot|_{\calc}\) satisfies all the generalized properties (P1)* through (P4)* from \cref{remPPrime}.   
\end{remark}

\subsubsection{Boundary local TQFTs are Turaev-Viro theories}
We use the result established in \cref{thmMainTV} to prove that the Turaev-Viro defect theory is non-degenerate, from which we derive a uniqueness statement for the Turaev-Viro TQFT. 

Recall the notion of a non-degenerate (defect) TQFT (e.g. \cite[Section 15.2.3]{TV}): A boundary-defect TQFT \(Z\colon \Cob_3^{\calc} \rightarrow \vect_{\C}\) is called \textit{non-degenerate}, if for any surface \(\Sigma \in \Cob_3^{\calc}\), the associated vector space \(Z(\Sigma)\) is generated by images of linear maps \(Z(M)\colon \K \rightarrow Z(\Sigma)\),
\begin{align*}
    Z(\Sigma) = \left \langle \bigcup_{M: \, \emptyset \mspace{1mu} \rightarrow \mspace{1mu} \Sigma} \mathrm{im} \left( Z(M)\colon \K \rightarrow Z(\Sigma) \right)\right \rangle_{\K}.
\end{align*}
\cref{thmMainTV} allows for a direct proof that the Turaev-Viro defect TQFT \(|\cdot|_{\calc}\colon \Cob_3^{\calc} \rightarrow \vect_{\C}\) is non-degenerate for every spherical fusion category \(\calc\). 

\medskip
We prepare the argument by replacing a defect cobordism \(M\colon \Sigma \rightarrow \Sigma'\) with a family of cobordisms \(M^P_{X, \alpha}\colon \Sigma \rightarrow \emptyset \rightarrow \Sigma'\) which factor over the empty manifold and relate to \(M\) through the moves \(\Delta_1, \Delta_2\) and \(\Delta_3\).

More precisely, choose some graph skeleton \(P\) for \(M\) and label its faces with a family \(X = \{x_i\}_{i = 1}^p\) of simple objects \(x_i \in \calo(\calc)\). Similarly to the proof of \cref{thmMain}, where we transformed a cobordism with empty gluing boundary to a collection of decorated 3-balls, we can transform \(M\) through a series of applications of pocket, tunnel and tube-capping moves with reference to \(P\) to a cobordism that consists of two (decorated) cylindrical pieces over \(\Sigma\) and \(\Sigma'\), respectively, and a collection of decorated 3-balls. This says that there is a family \(\{\alpha^{\ell}\}_{\ell = 1}^q\) of bases \(\alpha^{\ell}\) of vector spaces each of the form \(\Hom_{\calc}\left(x_{j_1}^{\varepsilon(1)} \otimes \hdots \otimes x_{j_{\ell}}^{\varepsilon(\ell)}, \mathbbm{1}\right)\), such that for any choice of basis vectors \(\alpha = \{\alpha_{j_{\ell}}^{\ell}\}_{\ell = 1}^q\), one in each \(\alpha^{\ell}\), there are \(\calc\)-colored graphs \(\Gamma(X, \alpha), \Gamma'(X, \alpha)\) and \(\Upsilon(X, \alpha, 1), \hdots, \Upsilon(X, \alpha, r)\) such that applying the moves \(\Delta_1, \Delta_2^{x_i}(\iota_i)\) and \(\Delta_3^{\alpha_{j_{\ell}}^{\ell}}\) for appropriate tunnels \(\iota_i\), the cobordism \(M\) transforms into a cobordism \(M_{X, \alpha}^P\) that factors as
\begin{align}
    \label{eqNDFact}
    M_{X, \alpha}^P\colon \Sigma \xrightarrow{\mbox{\normalsize$C_{\Sigma, \Gamma(X, \alpha)}$}} \emptyset \xrightarrow{\mbox{\normalsize$\sqcup_{i = 1}^rB_{\Upsilon(X, \alpha, i)}$}} \emptyset \xrightarrow{\mbox{\normalsize$C'_{\Sigma', \Gamma'(X, \alpha)}$}} \Sigma'.
\end{align}
Here, \(C_{\Sigma, \Gamma(X, \alpha)}\) and \(C'_{\Sigma', \Gamma'(X, \alpha)}\) are topologically cylinders \(\Sigma \times I\) and \(\Sigma' \times I\), with gluing boundary one copy of \(\Sigma\) and \(\Sigma'\), respectively. Their free boundary is decorated by the \(\calc\)-colored graphs \(\Gamma(X, \alpha)\) and \( \Gamma'(X, \alpha)\), which are shaped by the \(\calc\)-coloring of \(M\) and the local moves.  
\begin{proposition}
    \label{propND}
    The Turaev-Viro defect TQFT \(|\cdot|_{\calc}\colon \Cob_3^{\calc} \rightarrow \vect_{\K}\) is non-degenerate.
\end{proposition}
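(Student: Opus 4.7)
The plan is to apply the factorization \labelcref{eqNDFact} not to an arbitrary cobordism but to the identity cylinder $\Sigma \times I \in \Cob_3^{\calc}(\Sigma, \Sigma)$, and then read off non-degeneracy from the fact that $|\Sigma \times I|_{\calc} = \id_{|\Sigma|_{\calc}}$. The key input is that the Turaev-Viro TQFT is compatible not only with properties \labelcref{P1}--\labelcref{P4} on closed cobordisms but also with their generalizations (P1)*--(P3)* to cobordisms with gluing boundary, as recorded in \cref{remTVPPrime}.

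Concretely, I would first fix a graph skeleton $P$ for $\Sigma \times I$, taken to be compatible with the cylindrical structure along the gluing boundary. Following the argument in the proof of \cref{thmMain}, I apply the pocket move $\Delta_1$ to each $3$-cell, then the tunnel move $\Delta_2^{x_i}(\iota_i)$ along suitable tunnels in each face (indexed by a coloring $X = \{x_i\}$), and finally the tube-capping move $\Delta_3^{\alpha_{j_\ell}^\ell}$ along cylinders around each internal rim (indexed by basis choices $\alpha = \{\alpha_{j_\ell}^\ell\}$). The result is exactly the factorization \labelcref{eqNDFact}, which in this case takes the form
\begin{align*}
    M_{X,\alpha}^P \colon \Sigma \xrightarrow{C_{\Sigma, \Gamma(X,\alpha)}} \emptyset \xrightarrow{\sqcup_{i=1}^r B_{\Upsilon(X,\alpha,i)}} \emptyset \xrightarrow{C'_{\Sigma, \Gamma'(X,\alpha)}} \Sigma.
\end{align*}
By invoking (P1)*--(P3)* term by term (exactly as in the proof of \cref{thmMain}, but now keeping the gluing boundary intact), I obtain an identity of linear maps $|\Sigma|_{\calc} \rightarrow |\Sigma|_{\calc}$ of the form
\begin{align*}
    \id_{|\Sigma|_{\calc}} = |\Sigma \times I|_{\calc} = \dim(\calc)^{-|M \setminus P|} \sum_{X,\alpha} \Big( \prod_i \dim(x_i) \Big) |M_{X,\alpha}^P|_{\calc}.
\end{align*}

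Next, I observe that each summand factors through the one-dimensional vector space $|\emptyset|_{\calc} = \K$, so by functoriality
\begin{align*}
    |M_{X,\alpha}^P|_{\calc} = |C'_{\Sigma, \Gamma'(X,\alpha)}|_{\calc} \circ \Big( \prod_{i=1}^r |B_{\Upsilon(X,\alpha,i)}|_{\calc} \Big) \circ |C_{\Sigma, \Gamma(X,\alpha)}|_{\calc},
\end{align*}
where the middle factor is a scalar (computed as a product of graph evaluations by \labelcref{P4}, though I only need that it is a scalar). In particular, the image of $|M_{X,\alpha}^P|_{\calc}$ is contained in the image of the linear map $|C'_{\Sigma, \Gamma'(X,\alpha)}|_{\calc} \colon \K \rightarrow |\Sigma|_{\calc}$ associated with the cobordism $C'_{\Sigma, \Gamma'(X,\alpha)} \colon \emptyset \rightarrow \Sigma$.

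Putting the two steps together, an arbitrary vector $v \in |\Sigma|_{\calc}$ is
\begin{align*}
    v = \id_{|\Sigma|_{\calc}}(v) = \dim(\calc)^{-|M \setminus P|} \sum_{X,\alpha} \Big( \prod_i \dim(x_i) \Big) \lambda_{X,\alpha}(v) \cdot |C'_{\Sigma, \Gamma'(X,\alpha)}|_{\calc}(1_{\K}),
\end{align*}
where $\lambda_{X,\alpha}(v) \in \K$ gathers the contributions of $|C_{\Sigma, \Gamma(X,\alpha)}|_{\calc}(v)$ and the graph evaluations. This exhibits $v$ as a finite $\K$-linear combination of vectors lying in images of maps $|M|_{\calc} \colon \K \rightarrow |\Sigma|_{\calc}$ for cobordisms $M \colon \emptyset \rightarrow \Sigma$, proving non-degeneracy. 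The main technical point I expect to need care with is the verification that the moves $\Delta_1, \Delta_2^{x_i}(\iota_i), \Delta_3^{\alpha_{j_\ell}^\ell}$ can be organized on the cylinder $\Sigma \times I$ so as to leave two thin collar neighborhoods of the source and target $\Sigma$ untouched — only then does the resulting modification genuinely factor through the empty manifold. This is the same combinatorial bookkeeping needed in the proof of \cref{thmMain}, lifted from closed cobordisms to cobordisms with gluing boundary.
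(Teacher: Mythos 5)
Your proof is correct and follows essentially the same approach as the paper's: apply the factorization of \cref{eqNDFact} to the identity cobordism $\Sigma \times I$, invoke the generalized properties (P1)*--(P3)* from \cref{remTVPPrime} to decompose $\id_{|\Sigma|_{\calc}}$ as a sum over labels $X, \alpha$, and observe that each summand, factoring through $|\emptyset|_{\calc} = \K$, has image contained in the image of the map $|C'_{\Sigma,\Gamma'(X,\alpha)}|_{\calc}: \K \to |\Sigma|_{\calc}$. Your note at the end about arranging the moves so as to preserve collar neighborhoods of the gluing boundary is exactly the mild care the paper glosses over.
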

\begin{proof}
    Consider the cylinder \(\Sigma \times I\) over a \(\calc\)-colored surface \(\Sigma \in \Cob_3^{\calc}\) and equip it with a graph skeleton \(P\). 
    For all families \(X = \{x_i\}_{i = 1}^p\) and \(\alpha = \{\alpha_{j_{\ell}}^{\ell}\}_{\ell = 1}^q\), specified as in the last paragraph, we have cobordisms \((\Sigma \times I)_{X, \alpha}^P\) which factor as in \cref{eqNDFact} as
    \begin{align*}
        \Sigma \xrightarrow{\mbox{\normalsize$C_{\Sigma, \Gamma(X, \alpha)}$}} \emptyset \xrightarrow{\mbox{\normalsize$\sqcup_{i = 1}^rB_{\Upsilon(X, \alpha, i)}$}} \emptyset \xrightarrow{\mbox{\normalsize$C'_{\Sigma, \Gamma'(X, \alpha)}$}} \Sigma.
    \end{align*}
    Evaluating \(|\cdot|_{\calc}\) on \(\Sigma \times I\) and using (P1)*, (P2)* and (P3)* (cf. \cref{remTVPPrime}), we can decompose any vector \(v \in |\Sigma|_{\calc}\) by summing over the labels \(X\) and \(\alpha\) as
    \begin{align}
        \label{eqNonDeg}
        v = |\Sigma \times I|_{\calc}(v) = \sum_{x_1, \mspace{1mu} \hdots, \mspace{1mu} x_p \mspace{1mu} \in \mspace{1mu} \calo(\calc)} \sum_{(j_{\ell})_{\ell}}|(\Sigma \times I)_{X, \alpha}^P|_{\calc} = \sum_{x_1, \mspace{1mu} \hdots, \mspace{1mu} x_p \mspace{1mu} \in \mspace{1mu} \calo(\calc)} \sum_{(j_{\ell})_{\ell}} \gamma(X, \alpha, v)\,w_{X, \alpha}
    \end{align}
    with the vectors \(w_{X, \alpha} = |C'_{\Sigma, \Gamma'(X, \alpha)}|_{\calc}(1) \in |\Sigma|_{\calc}\). Here, the coefficients \(\gamma(X, \alpha, v) \in \K\) are products of the scalars \(\dim(\calc)\) raised to some nonpositive power, \(\prod_{i = 1}^p\dim(x_i), \prod_{i = 1}^r\langle\Upsilon(X, \alpha, i)\rangle \) and \(|C_{\Sigma, \Gamma(X, \alpha)}|_{\calc}(v)\). But \cref{eqNonDeg} now shows that \(v\) lies in the span of the images of the linear maps defined by the cobordisms \(C_{\Sigma, \Gamma(X, \alpha)}\colon \emptyset \rightarrow \Sigma\), proving that \(|\cdot|_{\calc}\) is non-degenerate. 
\end{proof}
Non-degeneracy implies that the Turaev-Viro defect topological field theory is completely characterized by \labelcref{P1} to \labelcref{P4}.
\begin{theorem}
    \label{thmComb}
    For any spherical fusion category \(\calc\), there is a unique (up to monoidal isomorphism) boundary local defect TQFT \(\Cob_3^{\calc} \rightarrow \vect_{\K}\), the Turaev-Viro defect TQFT \(|\cdot|_{\calc}\).
\end{theorem}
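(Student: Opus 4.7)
The plan is to upgrade the equality of invariants on $\Cob_3^\calc(\emptyset, \emptyset)$ to a monoidal natural isomorphism $\eta \colon |\cdot|_\calc \Rightarrow Z$. Let $Z$ be any boundary local defect TQFT. Since $Z$ is symmetric monoidal, its restriction to $\Cob_3^\calc(\emptyset, \emptyset)$ is a multiplicative boundary local invariant, and so by \cref{thmMain} it coincides with $|\cdot|_\calc$ on all closed defect cobordisms.

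For each $\calc$-colored surface $\Sigma$, write $V_\Sigma := \K\langle \Cob_3^\calc(\emptyset, \Sigma) \rangle$ and denote the evaluation maps $\phi_\Sigma^{TV} \colon V_\Sigma \to |\Sigma|_\calc$ and $\phi_\Sigma^Z \colon V_\Sigma \to Z(\Sigma)$ sending a cobordism $M$ to $|M|_\calc(1_\K)$ and $Z(M)(1_\K)$ respectively; by \cref{propND} the first of these is surjective. Introduce the bilinear form $\beta \colon V_\Sigma \otimes V_{\overline{\Sigma}} \to \K$ obtained by closing off cobordisms with the cap $\overline{U}_\Sigma \colon \Sigma \sqcup \overline{\Sigma} \to \emptyset$, so that the same scalar is produced through either $|\cdot|_\calc$ or $Z$ by the first paragraph. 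Both theories factor $\beta$ through their duality pairings $\langle \cdot, \cdot \rangle_{TV}$ and $\langle \cdot, \cdot \rangle_Z$ on state spaces, and these are non-degenerate simply because a symmetric monoidal functor sends $\overline{\Sigma}$ to the linear dual of $\Sigma$. Using non-degeneracy of $\langle \cdot, \cdot \rangle_{TV}$ together with surjectivity of $\phi_{\overline{\Sigma}}^{TV}$ yields $\ker \phi_\Sigma^{TV} = \mathrm{rad}(\beta)$, while non-degeneracy of $\langle \cdot, \cdot \rangle_Z$ alone yields $\ker \phi_\Sigma^Z \subseteq \mathrm{rad}(\beta)$. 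Hence $\ker \phi_\Sigma^Z \subseteq \ker \phi_\Sigma^{TV}$, and the assignment $Z(M)(1_\K) \mapsto |M|_\calc(1_\K)$ descends to a well-defined surjective linear map $\psi_\Sigma \colon \mathrm{im}(\phi_\Sigma^Z) \twoheadrightarrow |\Sigma|_\calc$.

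The conclusion comes from a dimension count: $\dim Z(\Sigma) = Z(\Sigma \times \Sp^1) = |\Sigma \times \Sp^1|_\calc = \dim |\Sigma|_\calc$, combining the trace formula $\dim V = \tr(\id_V)$ with the first paragraph. Since $\dim \mathrm{im}(\phi_\Sigma^Z) \leq \dim Z(\Sigma) = \dim |\Sigma|_\calc \leq \dim \mathrm{im}(\phi_\Sigma^Z)$, every inequality is an equality: $\phi_\Sigma^Z$ is surjective (so $Z$ is itself non-degenerate), $\psi_\Sigma$ is an isomorphism, and I set $\eta_\Sigma := \psi_\Sigma^{-1}$. Naturality and monoidality then follow by evaluating on spanning vectors: $\eta_{\Sigma'}\bigl(|N|_\calc(|M|_\calc(1_\K))\bigr) = \eta_{\Sigma'}(|N \circ M|_\calc(1_\K)) = Z(N \circ M)(1_\K) = Z(N)\bigl(\eta_\Sigma(|M|_\calc(1_\K))\bigr)$ for $N \colon \Sigma \to \Sigma'$, together with an analogous check on disjoint unions. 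The main obstacle is that the well-definedness of $\psi_\Sigma$ is coupled to the non-degeneracy of $Z$: neither can be verified in isolation, but both are forced simultaneously by the dimension identity above.
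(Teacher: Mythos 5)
Your proof is correct and takes essentially the same route as the paper: both arguments rest on the identical three pillars of \cref{thmMain} (agreement of closed invariants), \cref{propND} (non-degeneracy of the Turaev-Viro theory), and the trace computation $\dim_\K Z(\Sigma) = Z(\Sigma\times\Sp^1) = |\Sigma\times\Sp^1|_\calc = \dim_\K|\Sigma|_\calc$. The only difference is that the paper feeds these inputs into the black-box classification \cite[Lemma 17.2]{TV}, whereas you re-derive its content from scratch via the kernel comparison $\ker\phi^Z_\Sigma \subseteq \mathrm{rad}(\beta) = \ker\phi^{TV}_\Sigma$ and the dimension pinch -- a legitimate and self-contained unrolling of the same argument, which also makes visible where the non-degeneracy of $Z$ is earned rather than assumed.
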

\begin{proof}
    By \cref{thmMainTV} the Turaev-Viro TQFT is a defect TQFT with the desired properties. 

    Let now \(Z\colon \Cob_3^{\calc} \rightarrow \vect_{\K}\) be a defect TQFT satisfying \labelcref{P1} to \labelcref{P4}. Since \(\Cob_3^{\calc}\) is spherical, we can form the dimension of any surface \(\Sigma \in \Cob_3^{\calc}\) by taking the trace over the identity cobordism \(\Sigma \times I\) to obtain the cobordism \(\Sigma \times \Sp^1 \in \Cob_3^{\calc}(\emptyset, \emptyset)\). Since \(\K\) has characteristic zero and \(Z\) is symmetric monoidal, the dimension is mapped by \(Z\) to the vector space dimension \(\dim_{\K}Z(\Sigma) = Z(\Sigma \times \Sp^1)\). Due to \cref{thmMain}, this agrees with \(|\Sigma \times \Sp^1|_{\calc} = \dim_{\K}|\Sigma|_{\calc}\). Note that this and \cref{thmMain} imply that requirements (b) and (c) of \cite[Lemma 17.2]{TV} are met. Since \(| \cdot |_{\calc}\) is non-degenerate by \cref{propND}, requirement (a) is also satisfied, and the assertion of Lemma 17.2 gives the desired monoidal isomorphism between \(Z\) and \(| \cdot |_{\calc}\).
\end{proof}
Restricting attention to the subcategory \(\Cob_3 \subset \Cob_3^{\calc}\), this implies the following.
\begin{corollary}
    \label{corThmMain2}
    The Turaev-Viro theory \(|\cdot|_{\calc}\) is the unique (up to monoidal isomorphism) topological field theory \(Z\colon \Cob_3 \rightarrow \vect_{\K}\) admitting an extension 
    \[
    \centering
    \adjincludegraphics[valign=c, scale = 1.16]{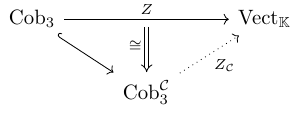}
    \]
    to a boundary local defect theory \(Z_{\calc}\) along the inclusion \(\Cob_3 \hookrightarrow \Cob_3^{\calc}\). In other words, if \(Z\) admits such an extension, then \(Z \cong |\cdot|_{\calc}\) as monoidal functors.
\end{corollary}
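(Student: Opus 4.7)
The plan is to deduce the corollary immediately from \cref{thmComb}. First I would fix an extension $Z_\calc\colon \Cob_3^\calc \to \vect_\K$ of $Z$ to a boundary local defect TQFT, meaning that $Z_\calc \circ \iota = Z$, where $\iota\colon \Cob_3 \hookrightarrow \Cob_3^\calc$ denotes the canonical inclusion discussed in \cref{subSecTQFTs}. Since $Z_\calc$ is boundary local by assumption, \cref{thmComb} applies and supplies a monoidal natural isomorphism $\eta\colon Z_\calc \Rightarrow |\cdot|_\calc$ of functors $\Cob_3^\calc \to \vect_\K$.

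The next step is to whisker $\eta$ with $\iota$, producing a monoidal natural isomorphism $\eta \iota\colon Z_\calc \circ \iota \Rightarrow |\cdot|_\calc \circ \iota$ of functors $\Cob_3 \to \vect_\K$. By the extension hypothesis, the domain equals $Z$; by the observation recorded at the end of \cref{subSecTV}, the codomain equals the classical (non-defect) Turaev-Viro TQFT on $\Cob_3$, which is denoted by the same symbol $|\cdot|_\calc$. Composing these two identifications with $\eta\iota$ yields the desired monoidal isomorphism $Z \cong |\cdot|_\calc$ claimed in the corollary.

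The argument is entirely formal, so there is no serious obstacle once \cref{thmComb} is available. The only minor point worth confirming is that restriction of the defect theory $|\cdot|_\calc$ along $\iota$ coincides with the classical Turaev-Viro TQFT; this is immediate from the construction in \cref{subSecTV}, since graph skeleta specialize to ordinary skeleta and the state sum formula reduces to the standard one in the absence of any free boundary and \(\calc\)-colored defect graphs.
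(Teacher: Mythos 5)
Your proposal is correct and takes exactly the paper's route: invoke \cref{thmComb} to obtain $Z_\calc \cong |\cdot|_\calc$ on $\Cob_3^\calc$, then restrict (whisker) along $\Cob_3 \hookrightarrow \Cob_3^\calc$. The extra sanity check at the end, that the restriction of the defect theory recovers the classical Turaev-Viro TQFT, is implicit in the paper but correct.
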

\begin{proof}
By \cref{thmComb}, \(Z_{\calc} \cong | \cdot |_{\calc}\), and restriction along the monoidal functor \(\Cob_3 \hookrightarrow \Cob_3^{\calc}\) gives the desired statement.
\end{proof}
\begin{example}
    An example of a nontrivial TQFT that does not admit a boundary local defect extension to \(\Cob_3^{\calc}\) for any spherical fusion category \(\calc\) over \(\C\) is the dimensional reduction of four-dimensional Dijkgraaf-Witten theory to three dimensions along the circle. To be precise, let \(G\) be a finite nontrivial group and \(Z_G^4\colon \Cob_4 \rightarrow \vect_{\C}\) four-dimensional Dijkgraaf-Witten theory. We can define the TQFT \(\widetilde{Z_G}\colon \Cob_3 \rightarrow \vect_{\C}\) by setting \(\widetilde{Z_G}(X) = Z_G^4(X \times \Sp^1)\) for \(X\) a surface or a cobordism in \(\Cob_3\). For the 3-manifold \(X = \Sp^3\), the TQFT computes the invariant 
    \begin{align*}
        \widetilde{Z_G}(\Sp^3) = \frac{|\Hom(\pi_1(\Sp^3) \times \pi_1(\Sp^1), G)|}{|G|} = \frac{|\Hom(\mathbb{Z}, G)|}{|G|} = 1,
    \end{align*}
    whereas \(| \Sp^3|_{\calc} = \dim(\calc)^{-1}\) for any spherical fusion category \(\calc\). This only leaves the possibility \(\calc = \vect_{\C}\), but the associated Turaev-Viro TQFT is trivial, whereas \(\widetilde{Z_G}\) is not. 

   However, note that three-dimensional Dijkgraaf-Witten theory \textit{is} an example associated to the fusion category \(\vect_G\), which will be treated extensively in \cref{secDW}.
\end{example}
\subsubsection{Further applications of the defect boundary formalism}
\label{subSecApp}
This section contains some remarks on the utility of the various moves, minor computations referenced earlier and the proof that the Turaev-Viro theory satisfies the generalized tunnel move. The results discussed here will not be needed in the remainder of the article.
\begin{remark}
    We finish the series of remarks started in \cref{remGenTun} and \cref{remSkelTun} and show that the Turaev-Viro TQFT respects the generalized tunnel move for a tunnel \(\iota\colon A_n \times I \hookrightarrow M\) in a cobordism \(M \in \Cob_3^{\calc}(\Sigma, \Sigma')\). As in the proof of \cref{propProp2}, we embed a slightly larger tunnel in \(\Delta_2^{x}(\iota, A_n)(M)\) and \(M\), respectively, and cut along its lateral surface, which is homeomorphic to \(\partial A_n \times I \cong A^{\sqcup n +1}\). Locally, we obtain the cobordisms \((\mathbb{T}_x^s)^{\sqcup n +1}\colon \emptyset \rightarrow A^{\sqcup n+1}\), where \(\mathbb{T}_x^s\) is from \cref{lemProp2}, and \(K_n\colon \emptyset \rightarrow A^{\sqcup n+1}\), where \(K_n\) is the cylinder over \(A_n\) with its lateral surface as gluing boundary. Pictorially, we represent \(K_n\) as
    \begin{align*}
        K_n \, \,  = \, \,   \adjincludegraphics[valign=c, scale = 0.8]{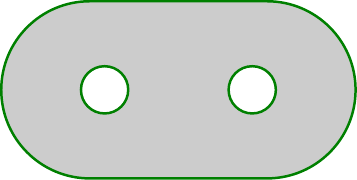} \, \, \times \,  I,
    \end{align*}
    where we have drawn the case \(n = 2\) and marked the gluing boundary in green. We compute the homomorphism \(|K_n|_{\calc}\), where we represent the cobordisms by their two-dimensional projections:
    \begin{align*}
        &|K_n|_{\calc} = \left| \, \adjincludegraphics[valign=c, scale = 0.8]{images/genTun1.pdf} \, \right|_{\calc} \overset{\text{(P2)*}}{=} \sum_{x \mspace{1mu} \in \mspace{1mu} \calo(\calc)} \dim(x) \left| \, \adjincludegraphics[valign=c, scale = 0.8]{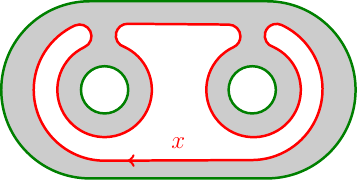} \, \right|_{\calc} \\[0.8em]
        &\overset{\text{(P3)*}}{=} \! \! \sum_{x \mspace{1mu} \in \mspace{1mu} \calo(\calc)} \! \! \dim(x) \left| \, \adjincludegraphics[valign=c, scale = 0.8]{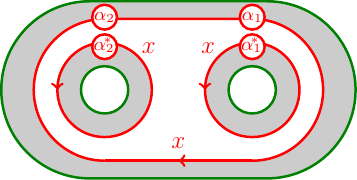} \, \right|_{\calc} \! = \! \sum_{x \mspace{1mu} \in \mspace{1mu} \calo(\calc)} \! \! \dim(x) \Big(\bigotimes_{j = 1}^n |\mathbb{T}_{x, \alpha_j^*}^s|_{\calc}\Big) \otimes |\mathbb{T}_{x, \alpha_1, \hdots, \alpha_n}^s|_{\calc}.
    \end{align*}
    Here, we have inserted an appropriate tunnel (with base \(A_0 = \mathbb{D}^2\)) and used the tube-capping move \(n\) times, giving rise to \(n\) pairs of dual bases \(\alpha_j, \alpha_j^*\). The cobordisms \(\mathbb{T}_{x, \alpha_j^*}^s\) and \(\mathbb{T}_{x, \alpha_1, \hdots, \alpha_n}^s\) are modifications of \(\mathbb{T}_{x}^s\), where the \(x\)-labeled defect loop has been decorated with a node labeled by elements of \(\alpha_j^*\) and \(\alpha_1 \circ \hdots \circ \alpha_n\) (with the obvious notation), respectively. The sum over dual bases is implied in the expression as usual. It is easy to see that these evaluate under the application of \(| \cdot , G_P|_{\calc}\), where \(P\) is the graph skeleton from \cref{lemProp2} and \(G_P\) the corresponding surface skeleton, to a variant of the expression from \cref{eqGenTunPrep}, where the strand labeled by \(x\) now features nodes decorated with basis elements. Using sphericity of \(\calc\) and \cref{lemDisconn}, the \(n+1\) graph components become connected while summing over the dual bases. Applying \cite[Lemma 12.3]{TV} \(n\) times, we obtain the \(n\)-fold tensor product of exactly \cref{eqGenTunPrep} while picking up a correction factor \(\dim(x)^{-n}\). Since \(\chi(A_n) = 1-n\), we arrive at
    \begin{align*}
        |K_n|_{\calc} = \sum_{x \mspace{1mu} \in \mspace{1mu} \calo(\calc)} \dim(x)^{\chi(A_n)} \, |\mathbb{T}_x^s|_{\calc}^{\mspace{1mu} \otimes \mspace{1mu} n},
    \end{align*}
    which is precisely the local version of \cref{eqGenTun}. From here, the global version follows as in \cref{propProp2}.
    
\end{remark}
\begin{remark}
    We comment on the realization of generalized Frobenius-Schur indicators as Turaev-Viro invariants of decorated solid tori as presented in \cite{FS}. Recall from \cite[Definition 2.1]{NS} the notion of the \(r\)-twisted \(n\)-th generalized Frobenius-Schur indicator \(\nu_{n, r}^X(V)\) for \(n, r \in \mathbb{Z}\), \(V \in \calc\) an object in a spherical fusion category and \(X \in \mathcal{Z}(\calc)\) an object in the Drinfeld center of \(\calc\). As is shown in \cite[Proposition 14]{FS} (or \cite[Theorem 5.0.1]{F}), by defining a certain cobordism \(\mathcal{T}_{n,r}^{X, V}\colon \emptyset \rightarrow \emptyset\), which is a solid torus with a non-contractible defect loop labeled by \(X\) in its interior and \(n\) non-intersecting defect loops labeled by \(V\) that twist around the free boundary, the Frobenius-Schur indicator can be expressed as \(\nu_{n, r}^X(V) = \left|\mathcal{T}_{n,r}^{X, V}\right|_{\calc}\). As we do not allow any internal defect lines in this article, we can only rigorously treat the case \(X = \mathbbm{1}\), which is equivalent to the absence of the internal defect line. Using \labelcref{P3} and \labelcref{P4}, the proof of the above statement then becomes particularly simple. For instance, in the case \(n = 3\), \(r = 1\), 
    \begin{align}
        \label{eqFS}
        \left|\mathcal{T}_{3, 1}^{\mathbbm{1}, V}\right|_{\calc} \, \overset{\labelcref{P3}}{=} \, \sum_{i = 1}^k \, \left| \! \adjincludegraphics[valign=c, scale = 0.75]{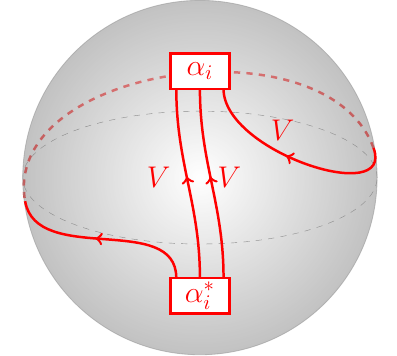} \! \right|_{\calc} \, \overset{\text{\labelcref{P4} } + \text{ \cite[(2.4)]{FS}}}{=} \, \nu_{3, 1}^{\mathbbm{1}}(V),
    \end{align}
    where we have introduced the sum over an appropriate pair of dual bases by cutting along a cylindrical piece of \(\mathcal{T}_{3, 1}^{\mathbbm{1}, V}\) using \labelcref{P3}, which is possible since we can locally untwist the defect lines. The general case for \(n, r \in \mathbb{Z}\) arbitrary works completely analogously. 

    The same proof should be possible even in the case of arbitrary \(X \in \mathcal{Z}(\calc)\). For this, internal defect lines and especially junctions between internal and free boundary defect lines have to be included in the formalism presented in \cref{subSecTV}. Internal defect lines have for instance been considered in \cite{TV, FS, F}. Using the half-braidings, the properties \labelcref{P3} and \labelcref{P4} have obvious generalizations if one allows this extra structure, and the obvious analog of \cref{eqFS} works in the general setting to provide a very short proof of the equality \(\nu_{n, r}^X(V) = \left|\mathcal{T}_{n,r}^{X, V}\right|_{\calc}\).
\end{remark}
\begin{observation}
\label{obs}
We discuss the dependence on the choice of embedding defining a tunnel in the context of link invariants, and show how the compatibility \labelcref{P2} of the Turaev-Viro invariant with the tunnel move leads to a certain invariance property.
    \begin{enumerate}
        \item  Let \(L \subset \Sp^3\) be a tame link embedded in the 3-sphere and \(\widetilde{L}\) a sufficiently small tubular neighborhood. For a choice of spherical fusion category \(\calc\), the Turaev-Viro invariant of the complement \(\Sp^3 \setminus \widetilde{L}\) is a link invariant, and we set \(\vartheta(L) = \big|\Sp^3 \setminus \widetilde{L}\big|_{\calc}\).
        \item For two links, even if their numbers of components match, the invariants \(\vartheta(L)\) and \(\vartheta(L')\) are generally different. As an example, we consider the unlink \(L_0\) and the Hopf link \(H\) and show that their link invariants are different by using \cref{thmMainTV}. First, note that \(\Sp^3 \setminus \widetilde{L_0}\) is a solid torus \(\mathbb{T}^s\) with another solid torus embedded along a contractible loop removed from its interior. Cutting twice along an appropriately chosen disk with the tube-capping move \(\Delta_3\), this manifold transforms into \(\Sp^2 \times I\), which in turn arises from the pocket move \(\Delta_1\) applied to the 3-ball. Thus, by \labelcref{P1}, \labelcref{P2} and \labelcref{P3}, we find \(\vartheta(L_0) = \dim(\calc)\).
        
        On the other hand, we obtain \(\Sp^3\setminus \widetilde{H} \cong A_1 \times \Sp^1\), and by \cref{corAnn}, we have \(\vartheta(H) = |A_1 \times \Sp^1|_{\calc} = |\calo(\calc)|\). These two numbers are in general different from each other. For example, for \(\calc = \Rep_{\C}(\mathfrak{S}_3)\), the former is 6, while the latter is 3.
        \item We can generalize the link invariants by decorating the boundary of each component with a defect loop. Denoting by \(|L|\) the number of components of \(L\), consider on \(\partial(\Sp^3 \setminus \widetilde{L})\) the collection of defect loops labeled by objects \(x_1, \hdots, x_{|L|} \in \calo(\calc)\) so that each loop is contained in the boundary of a different component of \(\widetilde{L}\) and is contractible in \(\widetilde{L}\). Denote this defect manifold by \(\big(\Sp^3 \setminus \widetilde{L}\big)_{x_1, \hdots, x_{|L|}}\) and set
        \begin{align*}
            \vartheta(L, x_1, \hdots, x_{|L|}) :=\big|\big(\Sp^3 \setminus \widetilde{L}\big)_{x_1, \hdots, x_{|L|}}\big|_{\calc} \in \K,
        \end{align*}
        where it should be noted that \(\vartheta(L, \mathbbm{1}, \hdots, \mathbbm{1})\) coincides with the unlabeled invariant \(\vartheta(L)\).\footnote{There are two different ways to orient each labeled loop, giving rise to two different invariants. We suppress this freedom of choice notationally and simply make one (arbitrary) choice on how to orient the defect loops.} 
        \item Although these invariants do not coincide for two links \(L, L'\) with \(|L| = |L'|\), the \(\dim(x_1)\cdot \hdots \cdot \dim(x_{|L|})\)-weighted sum of the link invariants \(\vartheta(L, x_1, \hdots, x_{|L|})\) over all simple objects agrees for \(L\) and \(L'\), which is a consequence of the tunnel-move compatibility \labelcref{P2} of \(| \cdot|_{\calc}\). We see this as follows. Let \(L_1, \hdots, L_{|L|}\) denote the components of a link \(L\), labeled by arbitrary simple objects \(x_1, \hdots, x_{|L|} \in \calo(\calc)\). Consider for each link component \(L_j\) a 3-ball \(B^3 \subset \widetilde{L_j}\) in \(\Sp^3\) that intersects \(\partial\widetilde{L_j}\) in \(B^3 \cap \partial\widetilde{L_j} \cong \Sp^1 \times I\). This distinguishes two disks in the complement of this intersection on the boundary of \(B^3\), which define a tunnel \(\iota_j\colon C \hookrightarrow \Sp^3 \setminus (B^3)^{\circ}\) ending at the specified disks such that \(\mathrm{im} (\iota_j) = \widetilde{L_j}\). Then, using the tunnel move \(\Delta_2^{x_j}(\iota_j)\) and property \labelcref{P2} for every \(j \in \{1, \hdots, |L|\}\), we can in a first step go from the 3-sphere minus \(\widetilde{L}\) to the 3-sphere minus a collection of 3-balls, and then use \labelcref{P1} and the pocket move \(\Delta_1\) to derive 
        \begin{align}
            \label{eqLink}
             \sum_{x_1, \mspace{1mu} \hdots, \mspace{1mu} x_{|L|} \mspace{1mu} \in \mspace{1mu} \calo(\calc)}\prod_{j= 1}^{|L|} \dim(x_j) \, \vartheta(L, x_1, \hdots, x_{|L|}) \overset{\labelcref{P2}}{=} \left|\Sp^3\setminus \sqcup_{j = 1}^{|L|}(B^3)^{\circ}\right|_{\mathcal{C}} \overset{\labelcref{P1}}{=} \dim(\calc)^{|L|-1}.
        \end{align}
        Indeed, this expression only depends on the number of components of the link. For example, for two (tame) knots \(K, K'\), we have \(\allowbreak \sum_{x \in \calo(\calc)}\dim(x) \vartheta(K, x) = \sum_{x \in \calo(\calc)}\dim(x) \vartheta(K', x)\).
        \item Consider once more the Hopf link \(H\). For \(x \in \calo(\calc)\) simple, \(\big(\Sp^3\setminus \widetilde{H}\big)_{\mathbbm{1}, x} \cong (A_1 \times \Sp^1)_x\), where in the latter manifold a defect loop labeled by \(x\) runs around the boundary of a contractible disk in a solid torus whose interior is removed from another solid torus to form \(A_1 \times \Sp^1\). Using \cref{eqAnnCut} to cut along an annulus \(A_1\) and an analogous computation as in \cref{remTorus} or \cref{lemTorxy}, we obtain for the invariant \(\vartheta(H, \mathbbm{1}, x)\) the expression
        \begin{align}
            \label{eqKnotInv}
            \vartheta(H, \mathbbm{1}, x) = \sum_{y \mspace{1mu} \in \mspace{1mu} \calo(\calc)} \dim_{\K}\Hom_{\calc}(x, y \otimes y^*). 
        \end{align}
        Due to symmetry, we have \(\vartheta(H, \mathbbm{1}, x) = \vartheta(H, x, \mathbbm{1})\). Hence, with \cref{eqKnotInv}, we have computed the dimension of \(|A_x|_{\calc}\), where \(A_x\) denotes the annulus with a point labeled by \(x\) inserted in the outer boundary circle of \(A_1\). Thus, there is a vector space isomorphism
        \begin{align*}
            |A_x|_{\calc} \cong \Hom_{\calc}\Big(x, \bigoplus_{y \mspace{1mu} \in \mspace{1mu} \calo(\calc)}y \otimes y^*\Big).
        \end{align*}
        This is consistent with \cref{lemProp2}, which gave for \(x = \mathbbm{1}\) the result \(|A_{\mathbbm{1}}|_{\calc} \cong K_0(\mathcal{C}) \otimes_{\mathbb{Z}} \K\), since the latter is isomorphic as a vector space to \(\Hom_{\calc}\Big(\mathbbm{1}, \bigoplus_{y \mspace{1mu} \in \mspace{1mu} \calo(\calc)}y \otimes y^*\Big)\).
        \item \label{obsEx} Let \(\calc = \mathcal{F}_{\varphi}\) denote the Fibonacci category, where \(\varphi \in \C\) is the golden ratio (cf. \cref{exProp} \labelcref{itemAlg}). For the  Hopf link \(H\), \cref{eqLink} reads
        \begin{align}
            \label{eqFib}
            2 + \varphi = \vartheta(H, \mathbbm{1}, \mathbbm{1}) + 2\varphi\cdot \vartheta(H, \mathbbm{1}, \tau) + \varphi^2 \cdot \vartheta(H, \tau, \tau).
        \end{align}
        In particular, at least one of the above \(\tau\)-labeled invariants is nonzero, since \(\vartheta(H, \mathbbm{1}, \mathbbm{1}) = 2\) as we have seen before. Using \cref{eqKnotInv}, we have with \(\tau^* = \tau\) that
        \begin{align*}
            \vartheta(H, \mathbbm{1}, \tau) = \dim_{\C}\Hom_{\calc}(\tau, \mathbbm{1}) + \dim_{\C}\Hom_{\calc}(\tau, \mathbbm{1} \oplus \tau) = 1.
        \end{align*}
        Utilizing \cref{eqFib} we obtain the expression \(\vartheta(H, \tau, \tau) = -\frac{1}{\varphi} = 1-\varphi\). Thus, the link invariants for the Hopf link are completely determined by the list
        \begin{align*}
            \vartheta(H, \mathbbm{1}, \mathbbm{1}) = 2, \, \, \, \, \, \vartheta(H, \mathbbm{1}, \tau) = \vartheta(H, \tau, \mathbbm{1}) = 1, \, \, \, \, \, \vartheta(H, \tau, \tau) = 1-\varphi.
        \end{align*}
        The last item in this list can also be computed explicitly, but the direct computation is more involved than the argument based on \cref{eqLink}.
    \end{enumerate}
\end{observation}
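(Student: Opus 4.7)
The observation packages several concrete calculations whose proofs, given the boundary-locality of $|\cdot|_\calc$ established in \cref{thmMainTV}, reduce to applying \labelcref{P1} through \labelcref{P4} to geometrically natural surgeries on link complements. The guiding principle is that a thickened link $\widetilde{L}$ can be resolved into a collection of 3-balls by using the tunnel move $\Delta_2$ on each component (together with a decorated loop inherited from $\partial\widetilde{L}$) and then the pocket move $\Delta_1$ on the resulting balls; alternatively, the tube-capping move $\Delta_3$ cuts cylindrical regions in $\Sp^3 \setminus \widetilde{L}$ where the defect structure is trivial.

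For the unlink (item 2), two applications of $\Delta_3$ collapse $\Sp^3\setminus \widetilde{L_0}$ to $\Sp^2\times I$, which in turn is $\Delta_1(B^3)$, so combining \labelcref{P1}, \labelcref{P3} and \labelcref{P4} yields $\vartheta(L_0) = \dim(\calc)$. For the Hopf link, the identification $\Sp^3 \setminus \widetilde{H} \cong A_1 \times \Sp^1$ together with \cref{corAnn} gives $\vartheta(H) = |\calo(\calc)|$. The central invariance identity (item 4, \cref{eqLink}) is obtained by choosing, for each component $L_j$, a tunnel $\iota_j$ whose image equals $\widetilde{L_j}$: remove a 3-ball from $\widetilde{L_j}$ and use the two disks on $\partial B^3$ complementary to $\partial \widetilde{L_j} \cap B^3$ as tunnel endpoints. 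Applying \labelcref{P2} for each $j$ converts the dimension-weighted sum $\sum \prod_j \dim(x_j)\, \vartheta(L, x_1, \hdots, x_{|L|})$ into $|\Sp^3 \setminus \sqcup_j (B^3)^\circ|_\calc$, and repeated applications of \labelcref{P1}, together with the easy consequence $|\Sp^3|_\calc = \dim(\calc)^{-1}$ of \labelcref{P1} and \labelcref{P4} applied to $B^3$, yield $\dim(\calc)^{|L|-1}$.

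For the labeled Hopf invariant (item 5), I would apply the annulus cut \cref{eqAnnCut} to $(A_1 \times \Sp^1)_x$ and reduce each summand to a wheel-type diagram on a 2-sphere, proceeding as in the proof of \cref{lemTorxy}; the graphical identities of \cref{subsecCatCon} collapse the evaluation to $\sum_y \dim_\K \Hom_\calc(x, y \otimes y^*)$, from which $\dim_\K |A_x|_\calc$ is identified by duality, yielding the stated isomorphism. The Fibonacci identities (item 6) then follow by direct bookkeeping: compute $\vartheta(H, \mathbbm{1}, \tau)$ from \cref{eqKnotInv} and the fusion rule $\tau\otimes\tau \cong \mathbbm{1} \oplus \tau$, and solve \cref{eqFib} for $\vartheta(H, \tau, \tau)$. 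The main subtlety will be geometric rather than algebraic: in item 4 one must ensure that the tunnels $\iota_j$ for different components have disjoint images and that no spurious knotting is introduced by the choice of embeddings, so that the successive applications of \labelcref{P2} indeed produce the disjoint-ball manifold $\Sp^3 \setminus \sqcup_j (B^3)^\circ$ rather than some knotted modification; in item 5 the analogous challenge is to track how the extra $x$-insertion on the outer circle of the annulus threads through the wheel reduction without disturbing the dual-basis bookkeeping.
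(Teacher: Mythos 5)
Your proposal is correct and follows essentially the same route as the paper's own sketch embedded in the Observation; your flagged ``subtleties'' (disjointness of the $\iota_j$, tracking the $x$-insertion) are non-issues since $\mathrm{im}(\iota_j) \subset \widetilde{L_j}$ are automatically disjoint and the tunnel move is defined for arbitrary embeddings. One minor improvement over the text: in item 2 you correctly invoke \labelcref{P4} for the normalization $|B^3|_{\calc} = 1$ after the pocket move, whereas the paper's ``by \labelcref{P1}, \labelcref{P2} and \labelcref{P3}'' appears to be a slip for \labelcref{P1}, \labelcref{P3} and \labelcref{P4}.
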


\section{The Dijkgraaf-Witten TQFT with boundary defects}
\label{secDW}
In this section, we will prove that a restriction of Dijkgraaf-Witten theory with defects as introduced in \cite{FMM} and reviewed in \cref{subSecDW} provides an example of a boundary local defect TQFT (cf. \cref{defPropInv}). \cref{thmComb} then shows that the Dijkgraaf-Witten boundary-defect TQFT for a group \(G\) is monoidally isomorphic to the Turaev-Viro boundary-defect TQFT for the spherical fusion category \(\vect_G\). This establishes a state sum description of Dijkgraaf-Witten theories with boundary defects. The results are collected in \cref{thmDW}.

We will start by adapting the relevant cobordism categories to treat Turaev-Viro theory and Dijkgraaf-Witten theory (cf. \cref{subSecTV} and \cref{subSecDW} repectively) on the same footing. Since the Dijkgraaf-Witten TQFT so far does not (directly) incorporate free boundaries, we first have to accommodate the description in that regard.

\subsection{Cobordism categories with free boundaries for Dijkgraaf-Witten theories}
\label{subSecFB}
Recall the category \(\Cob_3^{\mathsf{def}}\) from \cref{defCobDW}. In this category, cobordisms are not allowed to possess a free boundary, only gluing boundary. Since our setting (namely, the one from \cref{defCobDef}) works with free boundaries, we will define the larger category \(\Cob_3^{\mathsf{def}}(\partial)\) that contains \(\Cob_3^{\mathsf{def}}\) as a (non-full) subcategory.
\paragraph{Stratifications on manifolds with free boundary decomposition.} Again, we restrict the dimension of our manifolds to \(n \leq 3\). Since we want to include free boundaries, we will extend the homogeneity condition from \cite[Definition 3.7]{FMM} to the setting of \(n\)-manifolds \(X\) with a \textit{free boundary decomposition}, that is, a decomposition of the boundary \(\partial X = X_{\partial}^g \cup X_{\partial}^f\) into two disjoint parts \(X_{\partial}^g, X_{\partial}^f\) such that \(X_{\partial}^g\) is a closed submanifold of \(\partial X\). 
\begin{example}
    \label{exBound}
    As our goal is to generalize the category of cobordisms in three dimensions to include free boundaries, we are interested in the following examples.
    \begin{enumerate}[(i)]
        \item \label{exFreeBoundCob} Any 3-manifold \(M\) with an embedding \(h\colon \Sigma \sqcup \Sigma' \hookrightarrow \partial M\) as considered in \cref{defCobDef} provides an example of a free boundary decomposition, setting \(M_{\partial}^g := \mathrm{im}(h) = \partial_gM\) and \(M_{\partial}^f = \partial M \setminus \partial _g M = \partial_fM\).
        \item \label{exFreeBoundSurf} For a surface \(\Sigma\), defining \(\Sigma_{\partial}^f = \partial \Sigma\) and \(\Sigma_{\partial}^g = \emptyset\) provides a free boundary decomposition.
    \end{enumerate}
\end{example}

Let \(\Sp_+^n = \{x \in \Sp^n| x_n \geq 0\} \subset \Sp^n\) denote the closed half-sphere and \(\mathbb{R}_+^n = \{x \in \mathbb{R}^n| x_n \geq 0\}\) the closed half-space in \(\mathbb{R}^n\). Denote by \(cS\) the open cone over a manifold \(S\). If \(S\) is equipped with a stratification (in the most basic sense of the term, cf. the first appearance of `stratification' in  \cref{subSubSecDWCob}), the cone \(cS\) carries an induced stratification, which has a \((k+1)\)-stratum for every \(k\)-stratum of \(S\) and a unique 0-stratum, the apex \(v_c\) of the cone. In the following, let \(\mathbb{R}^n\) and \( \mathbb{R}_+^n\) have trivial stratification. Then, the products \(cS \times \mathbb{R}^k\) and \(cS \times \mathbb{R}_+^k\) inherit a stratification as well. We generalize (\cite{FMM}, Definition 3.7).
\begin{definition}
    \label{defFBHomStr}
    Let \(X\) be an \(n\)-manifold with a stratification and a free boundary decomposition \(\partial X = X_{\partial}^g \cup X_{\partial}^f\). Let \(s\) be a \(k\)-stratum of \(X\) and \(x \in s\). Define non-compact manifolds \(N_x\) (so far without choice of stratification), serving as local models of neighborhoods of \(x\), as:
    \begin{itemize}
        \item \(N_x = c\Sp^{n-k-1} \times \, \mathbb{R}^k\) for \(x \in \mathrm{Int}(X)\),
        \item \(N_x = c\Sp^{n-k-1} \times \, \mathbb{R}_+^k\) for \(x \in \mathrm{Int}(X_{\partial}^g)\),
        \item \(N_x = c\Sp_+^{n-k-1} \times \, \mathbb{R}^k\) for \(x \in X_{\partial}^f\)
        \item and \(N_x = c\Sp_+^{n-k-1} \times \, \mathbb{R}_+^k\) for \(x \in \partial X_{\partial}^g\).
    \end{itemize}
    We call the stratum \(s\) \textit{homogeneous} if there exists a stratification of the relevant sphere \(\Sp^{n-k-1}\) or half-sphere \(\Sp_+^{n-k-1}\), inducing a stratification on \(N_x\) as outlined above, such that for any \(x \in s\), there is an open neighborhood \(U_x \subset X\) of \(x\) and an isomorphism \(N_x \rightarrow U_x\) of manifolds with stratification mapping \((v_c, 0)\) to \(x\). We call the stratification on \(X\) homogeneous if it consists entirely of homogeneous strata.
\end{definition}
The situation in \cite{FMM} is recovered by choosing the free boundary decomposition given by \(X_{\partial}^g = \partial X\) and \(X_{\partial}^f = \emptyset\). 

Similar to the situation \(X_{\partial}^f = \emptyset\) considered in \cite{FMM}, homogeneity forces strata intersecting \(X_{\partial}^g\) to be transversal to \(X_{\partial}^g\). Moreover, it follows that the (manifold-) interior \(\mathrm{Int}(s)\) of a stratum \(s\) that intersects \(X_{\partial}^f\) is contained entirely in \(X_{\partial}^f\). We will refer to such strata of \(X\) as \textit{free boundary strata}.  

We will keep the saturation condition from \cite[Definition 3.7]{FMM} essentially unchanged: A homogeneous stratum \(s\) is called \textit{saturated} if for every \(x \in s\) the sphere or half-sphere defining the neigborhood \(N_x\) has a stratum in every codimension. The stratification is saturated if all strata are saturated. Similarly to \cite[Lemma 3.8]{FMM}, a homogeneous and saturated stratification of a manifold with free boundary decomposition induces a stratification with the same properties on \(X_{\partial}^g\), with free boundary decomposition specified by \((X_{\partial}^g)_{\partial}^f = \partial X_{\partial}^g\) and \((X_{\partial}^g)_{\partial}^g = \emptyset\).

Akin to \cite[Definition 3.9]{FMM}, we can also consider oriented stratifications.
\begin{definition}
    \label{defFBOrStr}
    Let \(X\) be an \(n\)-manifold with free boundary decomposition. An \textit{oriented} stratification of \(X\) is a homogeneous and saturated stratification where every stratum \(s\) of codimension \(\mathrm{codim}(s) \in \{1, 2\}\) carries an orientation, such that every free boundary stratum of codimension 1 has the same orientation as the boundary of \(X\) (cf. the convention of \cref{subSecTQFTs} for induced orientations).
\end{definition}
We will keep the conventions on framings of 1-strata as in \cite[Definition 3.10]{FMM}. Note that a free boundary 1-stratum can always be framed. 
Following the convention in \cite{FMM}, a stratified \(n\)-manifold with free boundary decomposition will from now on be assumed to carry a homogeneous, saturated, oriented and framed stratification. 

The assignment of (Dijkgraaf-Witten type) defect data to a manifold with free boundary decomposition remains virtually unchanged in comparison to \cref{defDefData}. The only difference is that to a free boundary \((n-1)\)-stratum \(s\), we assign a \(G_{L(s)}\)-set \(M_s\), since there is only one adjacent 3-stratum \(L(s)\), defined with respect to the normal vector on the boundary. The modifications to the action groupoids \(\cald_u\) assigned to strata from these data are obvious and we can repeat \cref{defDefData} in this more general situation verbatim. 

The result of these definitions is a sensible notion of \textit{stratified manifold with free boundary decomposition and defect data}. We can now define the category \(\Cob_3^{\mathsf{def}}(\partial)\).
\begin{definition}
    The symmetric monoidal category \(\Cob_3^{\mathsf{def}}(\partial)\) is the category with objects stratified surfaces \(\Sigma\) with defect data and free boundary decomposition as in \cref{exBound} \labelcref{exFreeBoundSurf}.

    Morphisms \(\Sigma \rightarrow \Sigma'\) are equivalence classes of pairs \((M, h)\), called cobordisms, where \(M\) is a stratified manifold with free boundary decomposition and defect data, and \(h\colon \overline{\Sigma} \sqcup \Sigma' \hookrightarrow \partial M\) is an embedding of stratified manifolds with defect data that induces the free boundary decomposition on \(M\) as in \cref{exBound} \labelcref{exFreeBoundCob}. Two such cobordisms \((M, h)\) and \((M', h')\) are equivalent whenever there is an isomorphism of stratified manifolds with defect data \(f\colon M \rightarrow M'\) such that \(f \circ h = h'\).

    The identity and the rest of the data of a symmetric monoidal category are defined analogously to \cref{defCobDW}.
\end{definition}

Henceforth, we shall simply speak of stratified surfaces and stratified cobordisms/3-manifolds whenever we refer to objects or morphisms in this category, where it is understood that they come equipped with the free boundary decompositions from \cref{exBound}.

\paragraph{Free boundary extensions.} There is a canonical non-full inclusion functor \(J\colon \Cob_3^{\mathsf{def}} \hookrightarrow \Cob_3^{\mathsf{def}}(\partial)\). In order to extend Dijkgraaf-Witten theory \(Z_{DW}\colon \Cob_3^{\mathsf{def}} \rightarrow \vect_{\C}\) to this larger cobordism category, we define a symmetric monoidal functor
\begin{align*}
     R\colon \Cob_3^{\mathsf{def}}(\partial) \rightarrow \Cob_3^{\mathsf{def}}
\end{align*}
in the other direction by which we can precompose \(Z_{DW}\). We formulate some natural criteria that such a functor ought to satisfy.
\begin{definition}
    \label{defFBE}
    We say that a symmetric monoidal functor \(R\colon \Cob_3^{\mathsf{def}}(\partial) \rightarrow \Cob_3^{\mathsf{def}}\) \textit{realizes a free boundary extension} if it satisfies the following conditions:
    \begin{enumerate}[F1)]
        \item \label{F1} For any surface \(\Sigma \in \Cob_3^{\mathsf{def}}(\partial)\), there is an embedding in \(R(\Sigma)\) as a stratified manifold with defect data and for any cobordism \(M \in \Cob_3^{\mathsf{def}}(\partial)(\Sigma_1, \Sigma_2)\), there is an embedding in \(R(M)\) as a stratified manifold with defect data such that the diagram
       \[
        \centering
        \adjincludegraphics[valign=c, scale = 1.16]{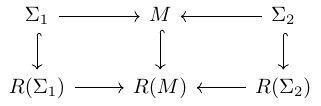}
    \]
        commutes, where the horizontal arrows are provided by the cobordism and its image under the functor \(R\).
        \item \label{F2} For any surface \(\Sigma\), the stratification on \(R(\Sigma)\) satisfies \(\Sigma^k = R(\Sigma)^k\) for \(k \leq 1\) 
        with respect to the embedding \(\Sigma \hookrightarrow R(\Sigma)\). The additional 2-strata in the case \(R(\Sigma)\setminus\Sigma \neq \emptyset\) are all labeled by the trivial group \(1\). Similarly, for any cobordism \(M\), it holds that \(M^k = R(M)^k\) for \(k \leq 2\) with respect to \(M \hookrightarrow R(M)\), with the additional 3-strata labeled by the trivial group.
    \end{enumerate}
\end{definition}
\begin{remark}
\label{remFBE}
\begin{enumerate}[(i)]
    \item We can precompose any TQFT \(\ Z\colon \Cob_3^{\mathsf{def}} \rightarrow \vect_{\C}\) with a functor \(R\) as above to obtain a symmetric monoidal functor \(Z^R := Z \circ R\colon \Cob_3^{\mathsf{def}}(\partial) \rightarrow \vect_{\C}\). Note that we do not demand the natural condition \(R\circ J \cong \id_{\Cob_3^{\mathsf{def}}}\). The name `extension' will be justified by the fact that, nonetheless, the axioms imply that \(Z_{DW}^R|_{\Cob_3^{\mathsf{def}}} \cong Z_{DW}\) for Dijkgraaf-Witten theory, as we see in \cref{propEqExt}.
    \item Functoriality of \(R\) implies that \(R(\Sigma \times I) = R(\Sigma) \times I\) as cobordisms in \(\allowbreak \Cob_3^{\mathsf{def}}(\partial)(R(\Sigma), R(\Sigma))\), since both 3-manifolds represent the class of the identity on \(R(\Sigma)\).
    \item Concerning condition \labelcref{F2}, it should be noted that a \(G_{L(s)}\)-set \(M_s\) labeling a 1-stratum \(s\) of a surface \(\Sigma\) needs to be interpreted as a \(G_{L(s)} \times 1^{\mathrm{op}}\)-set in \(R(\Sigma)\), with similar obvious modifications in the other cases. The condition is stricter than it could be: we could allow for more complicated stratifications of \(R(\Sigma)\setminus\Sigma\), as long as they are labeled by trivial group data. The choice above is made for simplicity.
\end{enumerate}    
\end{remark}
\noindent We observe that Dijkgraaf-Witten theory is built in a way that extending by two different functors realizing a free boundary extension yields equivalent symmetric monoidal functors \(\Cob_3^{\mathsf{def}}(\partial) \rightarrow \vect_{\C}\).
\begin{proposition}
    \label{propEqExt}
    For any two symmetric monoidal functors \(R, \widetilde{R}\colon \Cob_3^{\mathsf{def}}(\partial) \rightarrow \Cob_3^{\mathsf{def}}\) realizing a free boundary extension, there is a natural monoidal isomorphism \(\eta\colon Z_{DW}^R \Rightarrow Z_{DW}^{\widetilde{R}}\), where \(Z_{DW}^R\) is obtained by precomposing the Dijkgraaf-Witten TQFT \(Z_{DW}\) from \cite{FMM} with \(R\). Moreover, the restriction \(Z_{DW}^R|_{\Cob_3^{\mathsf{def}}}\) is monoidally isomorphic to \(Z_{DW}\).
\end{proposition}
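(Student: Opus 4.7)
The central observation is that $n$-strata labeled by the trivial group $1$ contribute trivially to the reduced gauge groupoid of \cref{propGG}: the functor $A_s\colon \Pi_1(\hat s, V_s) \to \bullet \sslash 1$ and the map $\gamma_s\colon V_s \to 1$ at such a stratum $s$ are forced to be constant, so $s$ carries no genuine gauge data. Likewise, any compatibility condition at an $(n-1)$-stratum $t$ involving a trivially-labeled neighbor degenerates on that side, reducing to the analogous condition with only the nontrivial neighbor present. The plan is to convert this observation into an equivalence of groupoids, transport it through the functor $C\colon \Cob_3^{\mathsf{def}} \rightarrow \mathsf{Span}^{\mathsf{fib}}(\mathsf{repGrpd})$, and then apply the linearization functor $L$ from \cref{propFunL}.

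First I would establish the following key lemma. If a stratified manifold $X$ with defect data arises from $X'$ by adjoining $n$-strata labeled by the trivial group while leaving all lower-dimensional strata and their labels unchanged, then the forgetful map defines an equivalence of groupoids $\cala^X \sslash \calg^X \xrightarrow{\sim} \cala^{X'} \sslash \calg^{X'}$, natural in stratum-preserving morphisms of stratified manifolds and commuting with the boundary projection $P_\partial$. Since the representation $F_X$ of \cref{eqGGRep} and the natural transformation $\mu_M$ constructed in \cref{subSecGG} are built from $0$- and $1$-strata, both transport canonically along this equivalence. By condition \labelcref{F2}, both $R(\Sigma)$ and $\widetilde R(\Sigma)$ arise from the embedded $\Sigma$ by adjoining trivially-labeled $2$-strata only, and analogously for cobordisms $R(M), \widetilde R(M)$ by trivially-labeled $3$-strata. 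Applying the lemma to each extension separately produces a zigzag of equivalences
\[
\bigl(\cala^{R(\Sigma)}\!\sslash\!\calg^{R(\Sigma)},\, F_{R(\Sigma)}\bigr) \;\xrightarrow{\sim}\; \bigl(\cala^{\Sigma}\!\sslash\!\calg^{\Sigma},\, F_{\Sigma}\bigr) \;\xleftarrow{\sim}\; \bigl(\cala^{\widetilde R(\Sigma)}\!\sslash\!\calg^{\widetilde R(\Sigma)},\, F_{\widetilde R(\Sigma)}\bigr)
\]
in $\mathsf{Span}^{\mathsf{fib}}(\mathsf{repGrpd})$, together with the analogous span-level equivalences for each cobordism. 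Naturality in cobordisms follows from \labelcref{F1}, and monoidality from that of $R$ and $\widetilde R$. Since $L$ descends to equivalence classes of pairs (fibrant span, natural transformation) as in \cref{defFibSpan}, postcomposing with $L$ yields the desired natural monoidal isomorphism $\eta\colon Z_{DW}^R \Rightarrow Z_{DW}^{\widetilde R}$. The restriction claim follows by the same route: for $\Sigma \in \Cob_3^{\mathsf{def}}$ the inclusion $\Sigma \hookrightarrow R(\Sigma)$ is itself of the form handled by the lemma, so $Z_{DW}^R(\Sigma) \cong Z_{DW}(\Sigma)$ naturally in cobordisms, and analogously for morphisms.

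The main obstacle will be a clean proof of the key lemma together with its compatibility with the pullback composition of fibrant spans underlying \cref{defFibSpan}: one must verify that the equivalence $\cala^X \sslash \calg^X \xrightarrow{\sim} \cala^{X'} \sslash \calg^{X'}$ commutes with the pullbacks arising when two cobordisms are glued, so that the zigzag above promotes to a monoidal natural transformation rather than merely a levelwise isomorphism of invariants. A secondary technical point is selecting a coherent basepoint set in $\Sigma$ that remains coherent in both $R(\Sigma)$ and $\widetilde R(\Sigma)$ simultaneously; this is forced by \labelcref{F1} and the triviality of the added strata, but needs to be spelled out in order to apply \cref{propGG} uniformly on both sides of the zigzag.
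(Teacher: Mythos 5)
Your central observation is exactly the one the paper uses, and your plan mirrors the paper's proof in spirit: trivially-labeled top-dimensional strata carry no gauge data, so $\cala^{R(X)}\sslash\calg^{R(X)}$ and $\cala^{\widetilde R(X)}\sslash\calg^{\widetilde R(X)}$ are isomorphic, and this isomorphism should be transported through $C$ and then $L$. However, there are two gaps that keep this from being a proof, and both are precisely the points you flag as ``obstacles'' rather than resolve.

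First, the middle term of your zigzag is not available. You propose to factor the comparison through $\bigl(\cala^{\Sigma}\sslash\calg^{\Sigma}, F_{\Sigma}\bigr)$ for the embedded $\Sigma$, but $\Sigma$ is an object of $\Cob_3^{\mathsf{def}}(\partial)$ with nonempty free boundary, and the gauge groupoid construction of \cite{FMM} (and of \cref{propGG}) is set up only for stratified manifolds in $\Cob_3^{\mathsf{def}}$, i.e.\ with all boundary being gluing boundary. Extending the end construction to free-boundary manifolds is exactly what the functor $R$ exists to avoid. The paper circumvents this by defining a canonical isomorphism $\phi_X\colon \cala^{R(X)}\sslash\calg^{R(X)} \to \cala^{\widetilde{R}(X)}\sslash\calg^{\widetilde{R}(X)}$ \emph{directly}, matching gauge data on the shared embedded copy of $X$ and observing that the complementary data are forced to be trivial by \labelcref{F2}, with no middle object required.

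Second, and more importantly, the step you defer --- promoting the groupoid-level isomorphism to an actual natural \emph{monoidal} isomorphism in $\mathsf{Span}^{\mathsf{fib}}(\mathsf{repGrpd})$ and checking naturality against the pullback composition of \cref{defFibSpan} --- is the real content of the proof, not a routine follow-through. The span $\cala^{R\Sigma}\sslash\calg^{R\Sigma} \xleftarrow{\id} \cala^{R\Sigma}\sslash\calg^{R\Sigma} \xrightarrow{\phi_\Sigma} \cala^{\widetilde R\Sigma}\sslash\calg^{\widetilde R\Sigma}$ is not obviously fibrant. The paper resolves this by building $\alpha_\Sigma$ from the gauge groupoid of the identity cobordism $R\Sigma\times I$, postcomposing one leg with $\phi_\Sigma$, and pairing it with the natural transformation $\mu_{R\Sigma\times I}$; fibrancy then comes for free because $\phi_\Sigma$ is an isomorphism and the other leg arises from $C$ of an identity cobordism. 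Naturality is then a specific diagram chase in which two squares are pullbacks (because $R\Sigma_i\times I$ represents an identity) and two commute by \labelcref{F1}. Your proposal acknowledges this must be done but does not outline how; without it the argument stops at an isomorphism of the underlying groupoids and does not produce a morphism in $\mathsf{Span}^{\mathsf{fib}}(\mathsf{repGrpd})$, hence no natural transformation to feed into $L$.
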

\begin{proof}
    Decomposing \(Z_{DW}\colon \Cob_3^{\mathsf{def}} \xrightarrow{C} \mathsf{Span}^{\mathsf{fib}}(\mathsf{repGrpd)} \xrightarrow{L} \vect_{\C}\) (see \cref{subSecDW}), we only have to check the existence of a natural monoidal isomorphism \(\alpha\colon C \circ R \Rightarrow C \circ \widetilde{R}\) and can then set \(\eta = L\alpha\), because \(L\) is monoidal. 
    
    From the explicit characterization \cite[Proposition 5.12]{FMM} of the gauge groupoid, we see that there is a canonical isomorphism \(\phi_X\) between the gauge groupoids \(\mathcal{A}^{R(X)}\sslash \mathcal{G}^{R(X)}\) and \(\mathcal{A}^{\widetilde{R}(X)}\sslash \mathcal{G}^{\widetilde{R}(X)}\) for \(X\) an object or a cobordism in \(\Cob_3^{\mathsf{def}}(\partial)\), mapping a gauge configuration \((A, h)\) of \(R(X)\) to a gauge configuration \((\widetilde{A}, \widetilde{h})\) of \(\widetilde{R}(X)\) defined as follows. Choosing embeddings \(X \subset R(X)\) and \(X \subset \widetilde{R}(X)\) as stratified manifolds with defect data by \labelcref{F1}, for any \(n\)- or \((n-1)\)-stratum \(t\) contained in \(X\), we can identify components \(A_t = \widetilde{A}_t\) or \(h_t = \widetilde{h}_t\), respectively. By \labelcref{F2}, for an \(n\)-stratum \(t_R \subset R(X) \setminus X\), the component \(A_{t_R}\colon \Pi_1(t_R) \rightarrow \bullet \sslash 1\) is necessarily trivial (independent of the topology of \(t_R\)), and for the same reason components of \((\widetilde{A}, \widetilde{h})\) associated to \(n\)-strata \(t_{\widetilde{R}} \subset \widetilde{R}(X) \setminus X\) must be chosen to be trivial. The same logic applies to gauge transformations.
    So gauge data for \(R(X)\) and \(\widetilde{R}(X)\) are in one-to-one correspondence, and the functor \(\phi_X\) provides the obious match. 

    Furthermore, strata of codimension 2 and 3 only appear in the image of the embedding of \(X\) and associated defect data agree by \labelcref{F2} in \(R(X)\) and \(\widetilde{R}(X)\). Thus, for any surface \(\Sigma \in \Cob_3^{\mathsf{def}}(\partial)\) and any cobordism \(M\), one has 
    \begin{align}
        \label{eqQD}
        F_{R\Sigma} = F_{\widetilde{R}\Sigma} \circ \phi_{\Sigma}, \,\,\,\, \mu_{\widetilde{R}M} \circ \phi_M = \mu_{RM}.
    \end{align}
    We need to define the components \(\alpha_{\Sigma}\colon \mathcal{A}^{R(\Sigma)}\sslash \mathcal{G}^{R(\Sigma)} \rightarrow \mathcal{A}^{\widetilde{R}(\Sigma)}\sslash \mathcal{G}^{\widetilde{R}(\Sigma)}\) of the natural transformation \(\alpha\colon C \circ R \Rightarrow C \circ \widetilde{R}\) as morphisms in \(\mathsf{Span}^{\mathsf{fib}}(\mathsf{repGrpd)}\) (cf. \cref{defFibSpan}). Consider the upper and lower spans \(s\) and \(\widetilde{s}\) in the diagram 
    \[
        \centering
        \adjincludegraphics[valign=c, scale = 1.16]{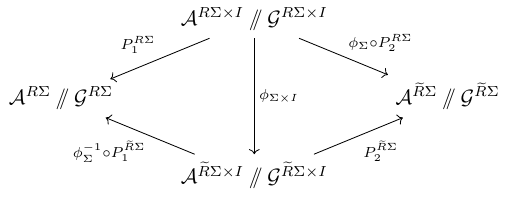}
    \]
    paired with the natural transformations \(F_{R\Sigma} \circ P_1^{R\Sigma} \xRightarrow{\mu_{R\Sigma \times I}} F_{R\Sigma} \circ P_2^{R\Sigma} = F_{\widetilde{R}\Sigma} \circ \phi_{\Sigma}\circ P_2^{R\Sigma}\) and \(F_{R\Sigma} \circ \phi_{\Sigma}^{-1} \circ P_1^{\widetilde{R}\Sigma} = F_{\widetilde{R}\Sigma} \circ  P_1^{\widetilde R\Sigma} \xRightarrow{\mu_{\widetilde R\Sigma \times I}} F_{\widetilde{R}\Sigma} \circ  P_2^{\widetilde R\Sigma}\), respectively. Here, the projection functors \(P_i^{R\Sigma}\) come from the inclusions of \(R\Sigma\) into the identity cobordism \(R\Sigma \times I\) (and the same for \(\widetilde{R}\)). Since \(\phi_{\Sigma}\) is an isomorphism of categories and \(s\) and \(\widetilde{s}\) arise by composing \(\phi_{\Sigma}\) with one of the legs of a span coming from a morphism in the image of \(C\), the spans are indeed fibrant. Due to \labelcref{F1}, the diagram commutes and by \cref{eqQD} it follows that \((s, \mu_{R\Sigma \times I})\) and \((\widetilde s, \mu_{\widetilde R\Sigma \times I})\) represent the same morphism in \(\mathsf{Span}^{\mathsf{fib}}(\mathsf{repGrpd)}\). We therefore set \(\alpha_{\Sigma} = [s, \mu_{R \Sigma \times I}]\).
    
    Due to \cref{eqQD}, we essentially only need to check naturality for \(\alpha\) on the level of spans. The naturality square for a cobordism \(M\colon \Sigma_1 \rightarrow \Sigma_2\) is then given by
  \[
        \centering
        \adjincludegraphics[valign=c, scale = 1.16]{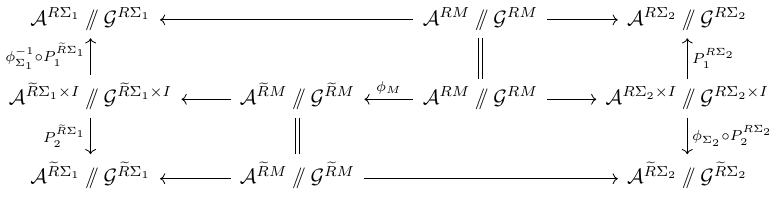}
    \]
    Note that the left and right vertical arrows come from spans representing \(\alpha_{\Sigma_1}\) and \(\alpha_{\Sigma_2}\), respectively. The upper right and the lower left squares are both pullback diagrams, since \(R\Sigma_1 \times I\) is the identity cobordism. They represent the composition of the classes of the top and right, and left and bottom spans, respectively. The upper left and the lower right square commute due to condition \labelcref{F1}, since restricting gauge configurations or transformations happens by restricting from \(M \subset R(M)\) to \(\Sigma_i \subset R(\Sigma_i)\) and \(M \subset \widetilde R(M)\) to \(\Sigma_i \subset \widetilde R(\Sigma_i)\) along the same embedding \(\Sigma_i \hookrightarrow M\), and gauge data in the complements are trivial. Hence, the two composed spans are equivalent, proving the naturality of \(\alpha\). The monoidality of \(\alpha\) is a consequence of the fact that \(\phi_{RX \, \sqcup  \,RX'} = \phi_{RX} \times \phi_{R X'}\) and \(F_{R\Sigma \, \sqcup \, R\Sigma'} = F_{R\Sigma} \otimes F_{R \Sigma'}\) under the identification 
    \begin{align*}
        \cala^{RX \sqcup RX'} \sslash \calg^{RX \sqcup RX'} \cong \cala^{RX} \sslash \calg^{RX} \times \cala^{RX'} \sslash \calg^{RX'}.
    \end{align*}
    The last assertion follows by showing that \(C \circ R \circ J \cong C\), using arguments very similar to those presented above.
\end{proof}

We show existence of a functor \(\Cob_3^{\mathsf{def}}(\partial) \rightarrow \Cob_3^{\mathsf{def}}\) realizing a free boundary extension by fixing an explicit construction, based on the double of a manifold.

\begin{itemize}
    \item For a surface \(\Sigma\) with possibly non-empty boundary, we set \(\Sigma_c = \Sigma \cup_{\partial \Sigma} \overline{\Sigma}\), where we glue along the identity morphism \(\id\colon \partial \Sigma \rightarrow \partial \overline{\Sigma}\). The resulting manifold \(\Sigma_c\) is oriented and closed. The stratification of \(\Sigma_c\) is as demanded in \cref{defFBE}: It consists of the stratification in \(\Sigma \subset \Sigma_c\) and the trivial stratification in \(\overline{\Sigma}\setminus \partial \Sigma \subset \Sigma_c\), which means that there are only additional 2-strata after gluing. The defect labeling is such that strata contained in \(\Sigma \subset \Sigma_c\) carry the same labels as before (cf. \cref{remFBE}) and the additional 2-strata in \(\overline{\Sigma}\) are each labeled by the trivial group \(1\).
    \item For \(M\colon \Sigma_1 \rightarrow \Sigma_2\) a cobordism in the category \(\Cob_3^{\mathsf{def}}(\partial)\), we set 
    \begin{align*}
        M_c = M \cup_{\mathrm{cl}(\partial_fM)} \overline{M},
    \end{align*}
    where \(\mathrm{cl}(\partial_fM)\) denotes the closure of the free boundary \(\partial_fM\) in \(M\). Again, we demand that on the piece \(\overline{M} \setminus \mathrm{cl}(\partial_f M) \subset M_c\) the stratification is trivial, with 3-strata labeled by the trivial group \(1\), and that the stratification stays the same otherwise. The result of this procedure is a cobordism \(M_c\colon \left(\Sigma_1\right)_c \rightarrow \left(\Sigma_2\right)_c\).
\end{itemize}

\begin{proposition}
    \label{propCobF}
    The assignment \((-)_c\colon \Cob_3^{\mathsf{def}}(\partial) \rightarrow \Cob_3^{\mathsf{def}}\) is a symmetric monoidal functor realizing a free boundary extension as in \cref{defFBE}.
\end{proposition}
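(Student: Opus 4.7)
The plan is to verify the required properties in four stages: well-definedness on objects and morphisms, functoriality, the symmetric monoidal structure, and the free boundary extension axioms \labelcref{F1} and \labelcref{F2}.

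First I would show well-definedness. For a surface $\Sigma$, the space $\Sigma_c = \Sigma \cup_{\partial\Sigma} \overline{\Sigma}$ is a closed oriented surface, and the prescribed stratification (keeping the one on $\Sigma$ and trivializing on the $\overline{\Sigma}$-side with trivial group labels) is homogeneous, saturated, oriented and framed: the stratification of $\Sigma$ already has these properties near $\partial\Sigma$ by the induced-stratification property for manifolds with free boundary decomposition (cf.\ \cref{defFBHomStr}), and everything on the other side glues to a trivially labeled 2-stratum. For a cobordism $M\colon\Sigma_1\to\Sigma_2$, the key geometric point is that $\partial M_c$ naturally decomposes as $\overline{(\Sigma_1)_c}\sqcup(\Sigma_2)_c$: after gluing $M$ to $\overline{M}$ along $\mathrm{cl}(\partial_fM)$, the two copies of $\partial_gM\cong\overline{\Sigma_1}\sqcup\Sigma_2$ survive in $\partial M_c$ and are joined along the common boundary curves $\partial\Sigma_1\sqcup\partial\Sigma_2$, producing precisely $\overline{(\Sigma_1)_c}\sqcup(\Sigma_2)_c$. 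The original embedding $h$ then induces a canonical parametrization $h_c$ of $\partial M_c$ that respects defect data by the gluing convention.

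Next I would verify functoriality. Identities are easy: doubling the cylinder $\Sigma\times I$ (whose free boundary is $\partial\Sigma\times I$) along $\partial\Sigma\times I$ yields $(\Sigma\cup_{\partial\Sigma}\overline{\Sigma})\times I = \Sigma_c\times I$, which represents the identity on $\Sigma_c$ in $\Cob_3^{\mathsf{def}}$. For composition, the crucial step is an explicit identification $(N\circ M)_c \cong N_c \circ_{(\Sigma_2)_c} M_c$ obtained by observing that doubling commutes with gluing along the gluing boundary: both constructions assemble the disjoint union $M\sqcup\overline{M}\sqcup N\sqcup\overline{N}$ with identifications along $\Sigma_2\sqcup\overline{\Sigma_2}$ (composition) and along $\mathrm{cl}(\partial_fM)\cup\mathrm{cl}(\partial_fN)$ (doubling), which can be performed in either order to yield the same space. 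One must then verify that the induced stratifications, orientations, framings and defect labels match on both sides.

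Symmetric monoidality follows because disjoint union commutes with the doubling construction, and the symmetry and unit isomorphisms are transported without modification. For \labelcref{F1}, the embeddings $\Sigma\hookrightarrow\Sigma_c$ and $M\hookrightarrow M_c$ as the first summand are tautological, and the required square commutes since $h$ is unchanged inside $M\subset M_c$. Axiom \labelcref{F2} holds directly by construction, since the new strata in $\Sigma_c\setminus\Sigma$ are 2-strata labeled by the trivial group, and similarly for cobordisms. I expect the main obstacle to be the compositional identity above: checking the interchange of gluings near the corners $\partial\Sigma_i\subset\partial(\partial_gM)$ requires careful attention to the simultaneous match of stratifications, framings and defect data, though no single step of this verification is deep.
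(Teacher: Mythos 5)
Your proof is correct and takes essentially the same approach as the paper, which simply declares the compatibility with composition, disjoint union, and the unit to be "clear by construction," explicitly verifies only that $(\Sigma\times I)_c = \Sigma_c\times I$ for identities, and then states that \labelcref{F1} and \labelcref{F2} hold by construction. Your proposal spells out the same observations in greater detail (in particular the interchange-of-gluings argument for composition and the boundary decomposition of $M_c$), but adds no genuinely different idea.
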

\begin{proof}
    It is clear that \((-)_c\) respects composition and the symmetric structure given by disjoint union. Moreover, \(\emptyset_c = \emptyset\), so the assignment is unital. Consider the identity morphism \(M = \Sigma \times I\colon \Sigma \rightarrow \Sigma\) for a surface \(\Sigma\). Then, \(\partial_fM = \partial \Sigma \times I\) and we obtain that \(M_c = \Sigma_c \times I\), which represents the identity morphism on \(\Sigma_c\). Thus, \((-)_c\) is a symmetric monoidal functor. It is moreover clear by construction that the two properties in \cref{defFBE} are satisfied.
\end{proof}
Note that \((-)_c \circ J\) is certainly not isomorphic to \( \id_{\Cob_3^{\mathsf{def}}}\), as can be seen by evaluating it on any nonempty closed surface.
\begin{example}
    \label{exCompl}
        Consider the 3-ball with a disk embedded in its boundary, \(\mathbb{D}^2 \hookrightarrow \partial B^3\). We can regard the ball as a cobordism \(B^3\colon \mathbb{D}^2 \rightarrow \emptyset\) in \(\Cob_3^{\mathsf{def}}(\partial)\) with the complement of the embedded disk in the boundary as the only 2-stratum and trivially stratified interior. The interior is labeled by a finite group \(G\) and the boundary stratum by a \(G\)-set \(M\). Then \((B^3)_c\) is a 3-ball with an interior 2-stratum homeomorphic to a disk whose boundary is glued to the equator, labeled by \(M\) and separating two regions labeled by \(G\) and \(1\), respectively. This defines a cobordism \(\mathbb{S}^2 \rightarrow \emptyset\) in \(\Cob_3^{\mathsf{def}}\) from the 2-sphere with its equator as a 1-stratum and induced labels to the empty manifold.
\end{example}
We can now extend Dijkgraaf-Witten TQFT \(Z_{DW}\) to the defect cobordism category that allows free boundaries along the functor \((-)_c\colon \Cob_3^{\mathsf{def}}(\partial) \rightarrow \Cob_3^{\mathsf{def}}\) from \cref{propCobF}.
\begin{definition}
    \label{defExtDW}
    We define the extension of Dijkgraaf-Witten TQFT to the category \(\Cob_3^{\mathsf{def}}(\partial)\) by \(Z_{DW}^{\partial} := Z_{DW}^{(-)_c} = Z_{DW} \circ (-)_c\colon \Cob_3^{\mathsf{def}}(\partial) \rightarrow \vect_{\C}\).
\end{definition}
We had already seen in \cref{propEqExt} that restriction to \(\Cob_3^{\mathsf{def}}\) recovers the functor \(Z_{DW}\).

\paragraph{The subcategory \(\Cob_3^G\).} Because we are interested in defect TQFTs that allow defects only on the free boundary, we need to restrict Dijkgraaf-Witten theory to a suitable subcategory of decorated cobordisms. In the following, let \(G\) be a finite group. Consider the non-full monoidal subcategory \(\Cob_3^G \subset \Cob_3^{\mathsf{def}}(\partial)\) defined as follows: 
\begin{itemize}
    \item Objects are given by stratified surfaces \(\Sigma\) with trivial stratification in the interior \(\mathrm{Int}(\Sigma)\), where all  2-strata are labeled by the group \(G\). Each 1-stratum (necessarily contained in the boundary \(\partial \Sigma\)) is labeled by the regular \(G\)-set \(_GG\).
    \item Morphisms are given by stratified defect cobordisms \(M\colon \Sigma_1 \rightarrow \Sigma_2\) with a trivially stratified interior and all 3-strata labeled by \(G\). The free boundary \(\partial_fM\) can be stratified and each 2-stratum (contained in the free boundary \(\partial_f M\)) is labeled by the regular \(G\)-set.
\end{itemize}

We add two further minor, technical restrictions.
First, no isolated loops are allowed on the free boundaries of cobordisms in \(\Cob_3^G\) (cf. \cref{footNoLoops}). Second, we specify that the representations \(\rho_e \colon \cald_e \rightarrow \vect_{\C}\) attached to edges \(e\) (and boundary vertices) and the natural transformations \(\sigma_v \colon \rho_v^t \Rightarrow \rho_v^s\) at vertices \(v\) are constant on isomorphism classes of \(\cald_e\) and \(\cald_v\), respectively.

Comparing this cobordism category with the cobordism category \(\Cob_3^{\vect_G}\) from \cref{subSecTQFTs} with labels in the spherical fusion category \(\vect_G\) yields the expected relationship.
\begin{lemma}
    \label{lemEqCob}
    There is a symmetric monoidal equivalence \(\Theta\colon \Cob_3^{\vect_G} \xrightarrow{\simeq} \Cob_3^G\) for every finite group \(G\), where the category \(\Cob_3^{\vect_G}\) is the one defined in \cref{defCobDef}.
\end{lemma}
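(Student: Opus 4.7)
The plan is to construct an explicit symmetric monoidal functor $\Theta\colon \Cob_3^{\vect_G} \to \Cob_3^G$ and verify that it is essentially surjective and fully faithful. On objects, a $\vect_G$-colored surface $\Sigma$ with colored points $p_1, \ldots, p_n \subset \partial\Sigma$ labeled by pairs $(V_i, \varepsilon_i)$ maps to the same underlying surface stratified so that the interior forms a single 2-stratum labeled by $G$, the $p_i$ become 0-strata, and the arcs of $\partial\Sigma$ between consecutive $p_i$ are 1-strata labeled by the regular $G$-set ${}_GG$. The key observation is that at such a boundary vertex the local groupoid $\cald_{p_i}$ has the form $({}_GG \times {}_GG) \sslash G$ with the diagonal $G$-action, which is free with orbits parametrized by $G$; this yields a canonical equivalence $\cald_{p_i} \simeq \underline{G}$ (the discrete groupoid on $G$) and hence an equivalence $\Rep(\cald_{p_i}) \simeq \vect_G$ of $\K$-linear categories, under which $V_i$ (dualized when $\varepsilon_i < 0$) transports to the representation $\rho_{p_i}$.

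On morphisms, given $M_\Gamma\colon \Sigma \to \Sigma'$ in $\Cob_3^{\vect_G}$, set $\Theta(M_\Gamma)$ to have underlying manifold $M$ with trivially stratified interior labeled by $G$ and free boundary stratified by $\Gamma$ together with the boundary arcs inherited from $\Theta(\Sigma)$ and $\Theta(\Sigma')$. The connected components of $\partial_f M \setminus \Gamma$ become 2-strata labeled by ${}_GG$. Each edge of $\Gamma$ becomes a 1-stratum whose representation label is obtained from the corresponding $\vect_G$-object via the analogous equivalence $\Rep(\cald_e) \simeq \vect_G$, valid because for a free-boundary edge with one adjacent 3-stratum and two adjacent 2-strata one again has $\cald_e \simeq ({}_GG \times {}_GG) \sslash G \simeq \underline{G}$. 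Each vertex of $\Gamma$ becomes a 0-stratum whose natural transformation label corresponds, under the identification of natural transformations between tensor-product pullbacks with $\vect_G$-morphisms between tensor products, to the $\vect_G$-morphism. Functoriality with respect to gluing and preservation of the symmetric monoidal structure by disjoint union are immediate from the local nature of all these decorations.

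For essential surjectivity, every object and morphism of $\Cob_3^G$ already carries the stratum labels produced by $\Theta$: 3-strata labeled by $G$, free-boundary 2-strata and surface 1-strata labeled by ${}_GG$. By the constancy-on-isomorphism-classes hypothesis in the definition of $\Cob_3^G$, the representations and natural transformations at lower-dimensional strata are determined by their orbit-representative values and thus correspond uniquely to $\vect_G$-objects and $\vect_G$-morphisms via the equivalences above. Faithfulness holds because the decoration correspondences are bijective on isomorphism classes, while fullness is proved by extracting a $\vect_G$-colored graph from the free-boundary stratification of any $\Cob_3^G$-morphism and inverting the representation equivalences.

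The main obstacle is a bookkeeping check at free-boundary graph vertices. For a vertex $v$ with $k$ incident edges, $\cald_v$ is of the form $({}_GG)^k \sslash G$ with diagonal action, whose orbit set is in bijection with $G^{k-1}$, giving $\cald_v \simeq \underline{G}^{k-1}$ (as a discrete groupoid). The tensor products $\rho_v^t$ and $\rho_v^s$ become functors $\underline{G}^{k-1} \to \vect_{\C}$ built by pullback along the projections $\cald_v \to \cald_e$, so a natural transformation $\sigma_v\colon \rho_v^t \Rightarrow \rho_v^s$ amounts to a $G^{k-1}$-indexed family of linear maps. One must identify this family, with correct sign and orientation assignments, with the space of $\vect_G$-morphisms $V_1^{\varepsilon_1} \otimes \cdots \otimes V_k^{\varepsilon_k} \to \mathbbm{1}$ whose nonzero graded components are indexed by tuples $(g_1, \ldots, g_k)$ with $g_1^{\varepsilon_1} \cdots g_k^{\varepsilon_k} = e$. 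Establishing the bijection amounts to careful tracking of signs and orientations rather than any conceptual difficulty, but it constitutes the bulk of the verification.
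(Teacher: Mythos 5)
Your argument follows essentially the same route as the paper's: you build $\Theta$ in exactly the same way on the geometric level, and your key structural observations — that the free action of $G$ on the regular $G$-sets makes the local groupoids $\cald_e$, $\cald_{p_i}$ and $\cald_v$ equivalent to discrete groupoids $\underline{G}$, $\underline{G}$ and $\underline{G}^{k-1}$ respectively, and that the constancy-on-isomorphism-classes clause in the definition of $\Cob_3^G$ is precisely what makes the correspondence of quantum defect data with $\vect_G$-labels bijective — are exactly the ones the paper's proof uses (the paper realizes the $\underline{G}^{k-1}$ orbit set as the admissible-coloring set $\Omega_v$ via the explicit map $\varphi_v$). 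The only difference is one of granularity: the paper writes down the explicit formula $\rho_e(g_{\ell},g_r) = V^e_{g_r^{-1}g_{\ell}}$ and the recipe decomposing $\alpha_v$ into graded pieces $\alpha_h^v$ to produce $\sigma_v$, while you defer that to bookkeeping, which is a fair description of the remaining work.
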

\noindent The proof of the lemma will be referenced repeatedly and used as a dictionary in \cref{subSecDWProp}.
\begin{proof}
    We first describe how the functor \(\Theta\) acts on the topological level. A surface \(\Sigma \in \Cob_3^{\vect_G}\) with \(\vect_G\)-colored points on \(\partial \Sigma\) is mapped to the same manifold \(\Sigma\), seen as a stratified surface with the obvious stratification: The vertices are the colored points, the edges are the connected components of \(\partial \Sigma\) with the colored points removed, and the 2-strata are the connected components of \(\mathrm{Int}(\Sigma)\). The stratification is homogeneous and saturated (in the sense of \cref{defFBHomStr}), the orientations of the vertices carry over from the orientation of colored points, and the orientation of the edges is specified by the orientation of the boundary. There is a unique choice of framing for the vertices. 
    
    Consider now a cobordism \(M\colon \Sigma_1 \rightarrow \Sigma_2\) with a \(\vect_G\)-colored graph \(\Gamma\) on its free boundary. The manifold underlying the stratified cobordism is sent by \(\Theta\) to itself. Taking the vertices \(v \in V(\Gamma)\) as the 0-strata, the edges \(e \in E(\Gamma)\) as the 1-strata and the plaquettes \(p \in P(\Gamma)\) as the 2-strata in \(M\), we obtain together with the connected components of \(\mathrm{Int}(M)\) as 3-strata a stratification of \(M\), which, similar to the case of surfaces, is homogeneous, saturated, oriented and framed. 

    On the level of algebraic defect labels, the functor \(\Theta\) assigns the group \(G\) to \(\mathrm{Int}(M)\) and \(\mathrm{Int}(\Sigma)\), respectively, and the regular \(G\)-set to any plaquette \(p\) of \(M\) or edge of \(\Sigma\). We now explain what \(\Theta\) assigns to codimension-2 and -3 strata. 
    
    The 1-strata \(e \in S_1^{\Theta(M)}\) need to be labeled by representations \(\rho_e\colon \mathcal{D}_e = G \times G \sslash G \rightarrow \vect_{\C}\), where in the action groupoid \(\cald_e\), the group \(G\) acts diagonally on the factors from the left. Any edge \(e \in E(\Gamma)\) comes with a \(G\)-graded vector space \(V^e = \oplus_{g \in G}V_g^e\), defining a functor \(\underline{G} \rightarrow \vect_{\C}\) by the equivalence \(\vect_G \simeq \mathsf{Fun}\left(\underline{G}, \vect_{\C}\right)\). Precomposing with the equivalence \(\pi\colon G \times G \sslash G \rightarrow \underline{G}, (g_{\ell(e)}, g_{r(e)}) \mapsto g_{r(e)}^{-1}g_{\ell(e)}\), where \(\ell(e)\) and \(r(e)\) mark the left and right plaquette of \(e\), we obtain a representation for any edge \(e\) and edge label \(V^e \in \vect_G\) by
    \begin{align*}
        \rho_e\colon \mathcal{D}_e \rightarrow \vect_{\C}, (g_{\ell(e)}, g_{r(e)}) \mapsto V_{g_{r(e)}^{-1}g_{\ell(e)}}^e,
    \end{align*}
    which the functor \(\Theta\) assigns to the 1-stratum \(e\) in \(\Theta(M)\). Note that this representation is constant on classes in \(\pi_0(\cald_e)\). This works in complete analogy for representations labeling the 0-strata of a surface \(\Theta(\Sigma)\).

    Let now \(v \in S_0^{\Theta(M)}\) denote a 0-stratum. The associated action groupoid is given by \(\mathcal{D}_v = \left(\prod_{p \in P(v)}G\right) \sslash G\). The functors \(D_{s(e)}\colon \mathcal{D}_{v} \rightarrow \mathcal{D}_e\) and \(D_{t(e')}\colon \mathcal{D}_{v} \rightarrow \mathcal{D}_{e'}\) are the obvious projections associated to local strata \(s(e)\colon v \rightarrow e\) or \(t(e')\colon v \rightarrow e'\). The functor \(\Theta\) is required to assign a natural transformation \(\sigma_v\colon \rho_v^t \Rightarrow \rho_v^s\), where the functor \(\rho_v^s\) (similarly \(\rho_v^t\)) is assembled as the following tensor product, indexed by local strata associated to starting edge ends:
    \begin{align*}
        \rho_v^s = \bigotimes_{s(e): \, v \, \rightarrow \, e}\rho_e \circ D_{s(e)}\colon \mathcal{D}_v \rightarrow \vect_{\C}.
    \end{align*}
    This is done as follows. The vertex in \(V(\Gamma)\) corresponding to \(v\) comes together with a labeling morphism \(\alpha_v\), for which, due to pivotality, we can assume without loss of generality
    \begin{align*}
        \alpha_v \in \vect_G\left(\C, \otimes_{e \in E(v)}V^{\varepsilon(e)}\right).
    \end{align*}
    Here, the order in the tensor product is induced by the orientation of the boundary \(\partial M\) and is fixed up to cyclic permutations. The notation convention is that \(V^{\varepsilon(e)} = V^e\) in the case that \(e\) is oriented away from \(v\) and \(V^{\varepsilon(e)} = (V^e)^*\) in the case it is oriented toward \(v\). Here, \(\varepsilon(e)\) depends on the vertex \(v\), so we sometimes write \(\varepsilon(v, e)\) instead of \(\varepsilon(e)\) to avoid confusion.
    The morphism \(\alpha_v\) defines a natural transformation \(\widetilde{\sigma}_v\colon \C \Rightarrow \otimes_{e \in E(v)} \rho_e^{\varepsilon(e)}\circ D_{\varepsilon(e)}\) between two functors \(\cald_v \rightarrow \vect_{\C}\), where \(\C\) denotes the constant functor, as follows. First, as a notational tool, define the subset
    \begin{align*}
        \Omega_v = \bigg\{h \in G^{E(v)} \, \bigg| \prod_{e \mspace{1mu} \in \mspace{1mu} E(v)}h_e^{\varepsilon(v, e)} = 1_G \bigg\} \subset G^{E(v)},
    \end{align*}
    where the elements are multiplied from left to right with respect to the cyclic order on \(E(v)\) and the convention is that \(h_e^{\varepsilon(e)} = h_e\) for \(e\) oriented away from \(v\) and \(h_e^{\varepsilon(e)} = h_e^{-1}\) in the other case. The morphism \(\alpha_v\) can be decomposed as \(\alpha_v = \sum_{h \in \Omega_v}\alpha_h^v\), where \(\alpha_h^v\colon \C \rightarrow \otimes_{e \in E(v)}\left(V_{h_e}^e\right)^{\varepsilon(e)}\) with \(\left(V_{h_e}^e\right)^{\varepsilon(e)} = \left(V^{\varepsilon(e)}\right)_{h_e^{\varepsilon(e)}}\). With the map \(\varphi_v\colon G^{P(v)} \rightarrow \Omega_v, \varphi_v(g) = \left( \pi \circ D_{\varepsilon(e)}(g)\right)_{e \in E(v)}\) we define the components of \(\widetilde{\sigma}_v\) for \(g \in G^{P(v)}\) as \((\widetilde{\sigma}_v)_g = \alpha_{\varphi_v(g)}^v\).
    Now note that due to the pivotal symmetric monoidal structure on \(\vect_{\C}^{\cald_v} = \Fun(\mathcal{D}_v, \vect_{\C})\) inherited from \(\vect_{\C}\), 
    \begin{align*}
        \mathrm{Nat}\Big(\C, \bigotimes_{e \mspace{1mu} \in \mspace{1mu} E(v)} \rho_e^{\varepsilon(e)}\circ D_{\varepsilon(e)}\Big) \cong \mathrm{Nat}\left(\C, \rho_v^s \otimes \left(\rho_v^t\right)^*\right) \cong \mathrm{Nat}\left( \rho_v^t, \rho_v^s \right).
    \end{align*}
    We let \(\sigma_v\) be the image of \(\widetilde{\sigma}_v\) under this isomorphism. This fixes the assignment of defect data for all codimensions.
    
    It is clear that \(\Theta\) defines a symmetric monoidal functor. That \(\Theta\) is an equivalence of categories requires explicit verification only on the level of defect data. This follows due to representations and natural transformations labeling codimension-2 and codimension-3 strata in \(\Cob_3^G\) being constant on isomorphism classes, and the fact that the map \(\varphi_v\colon G^{P(v)} \rightarrow \Omega_v\) induces a bijection \(\Omega_v \cong G^{P(v)}/G = \pi_0(\cald_v)\). In particular, each natural transformation \(\sigma_v\) labeling a 0-stratum that is constant on isomorphism classes determines a unique morphism \(\alpha_v\) in \(\vect_G\) by reversing the assignment from before.
\end{proof}

\noindent Most of the time, we will not bother typing out the action of \(\Theta\) and simply write \(X\) for \(\Theta(X)\).

We now define Dijkgraaf-Witten theory on the cobordism category \(\Cob_3^{\vect_G}\) by pulling back the restriction of \(Z_{DW}^{\partial}\) from \cref{defExtDW} to the subcategory \( \Cob_3^G\) along the equivalence \(\Theta\), which gives a symmetric monoidal functor
\begin{align*}
    Z_G\colon \Cob_3^{\vect_G} \rightarrow \vect_{\C}.
\end{align*}
This defines a boundary-defect TQFT in the sense of \cref{defTQFTDef}, which we again refer to as defect Dijkgraaf-Witten theory. By \cref{propEqExt}, restriction of \(Z_G\) to the monoidal subcategory \(\Cob_3 \subset \Cob_3^{\vect_G}\) yields Dijkgraaf-Witten TQFT without defects. 

For the remainder of the article, we restrict our attention to \(Z_G\).

\subsection{Boundary locality of Dijkgraaf-Witten theory}
\label{subSecDWProp}
In the following, our goal is to verify boundary locality of the TQFT \(Z_G\colon \Cob_3^{\vect_G} \rightarrow \vect_{\C}\). We start by showing property \labelcref{P1} for \(Z_G\), which is the first assumption required to apply \cref{thmMain}. Recall that \(\dim (\vect_G) = |G|\).
\begin{proposition}
    \label{propDW1}
    Let \(M \in \Cob_3^{\vect_G}(\emptyset, \emptyset)\) and denote by \(\Delta_1(M)\) the cobordism resulting from an application of the pocket move \(\Delta_1\). Then, \(Z_G(\Delta_1(M)) = |G| \cdot Z_G(M)\).
\end{proposition}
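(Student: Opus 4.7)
I will follow the strategy of \cref{propProp1TV} and exploit locality of $Z_G$ to reduce to a local computation near the pocket. The first step is to fix a small 2-sphere enclosing the pocket and decompose $M = M' \circ B$ and $\Delta_1(M) = M' \circ C$ in $\Cob_3^{\vect_G}$, where $B \colon \emptyset \to S^2$ is a 3-ball (without free boundary) and $C \colon \emptyset \to S^2$ is a spherical shell $S^2 \times I$ with inner sphere $S^2 \times \{0\}$ as free boundary and outer sphere $S^2 \times \{1\}$ as gluing boundary, interior labeled by $G$. By functoriality of $Z_G$, it then suffices to establish the local identity
\[
Z_G(C) \;=\; |G| \cdot Z_G(B) \colon \C \;\longrightarrow\; Z_G(S^2).
\]

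\textbf{Reduction to the doubled cobordisms.} The next step is to unpack the definition $Z_G = Z_{DW} \circ (-)_c \circ \Theta$ from \cref{defExtDW} and \cref{lemEqCob}. Since $B$ has empty free boundary, I expect $B_c = B \sqcup \overline{B}$, where $\overline{B}$ is a 3-ball whose interior is labeled by the trivial group. The doubled cobordism $C_c = C \cup_{S^2 \times \{0\}} \overline{C}$ is topologically $S^2 \times [-1,1]$ and carries two 3-strata $V_+$ (coming from $C$, labeled by $G$) and $V_-$ (coming from $\overline{C}$, labeled by $1$), separated by a single codimension-1 stratum $E \cong S^2$ at the former free boundary, labeled by the regular $G$-set ${}_GG$. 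In both cases all defect strata of codimension 2 and 3 are absent, so the span-of-representations datum reduces to a plain fibrant span of groupoids and the linearization is computed by the explicit cardinality formula of \cref{propFunL}. A short preliminary calculation also shows $Z_G(S^2) = \C$, using that the $\overline{S^2}$ component of $(S^2)_c$ has trivial gauge groupoid.

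\textbf{The key computation.} For $B$ the span is $\bullet \leftarrow \bullet \sslash G \xrightarrow{\sim} \bullet \sslash G$, and \cref{propFunL} gives $Z_{DW}(B) = |\bullet \sslash G| \cdot 1 = 1/|G|$; combined with the obvious $Z_{DW}(\overline{B}) = 1$ this yields $Z_G(B) = 1/|G|$. The main step is $Z_G(C) = Z_{DW}(C_c)$. Using the reduced gauge groupoid of \cref{propGG} with basepoints $b_+ \in V_+ \cap (S^2 \times \{1\})$, $b_- \in V_- \cap (S^2 \times \{-1\})$ and $b_0 \in E$, a reduced gauge configuration reduces to a pair $(g_+, h) \in G \times G$ (where $g_+$ is the value of $A_{V_+}$ on the unique path $b_+ \to b_0$ in the simply connected $\hat{V}_+$, the datum on $V_-$ is trivial, and $h = h_E(b_0)$), and a reduced gauge transformation is parametrized by $(\alpha, \beta) \in G \times G$ with action $(\alpha, \beta) \cdot (g_+, h) = (\beta g_+ \alpha^{-1}, \beta h)$. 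This action is free and transitive, so $\cala^{C_c} \sslash \calg^{C_c}$ is equivalent to the terminal groupoid $\bullet$. The right projection to $\cala^{(S^2)_c} \sslash \calg^{(S^2)_c} \simeq \bullet \sslash G$ retains only the component $\alpha$ at the genuine gluing boundary, and the fiber over the basepoint has $|G|^2$ objects acted on freely by $\{(1,\beta) : \beta \in G\}$, yielding $|G|$ orbits with trivial stabilizer and $|\mathrm{Fib}| = |G|$. \cref{propFunL} then gives $Z_G(C) = (1/|G|) \cdot |G| = 1 = |G| \cdot (1/|G|)$, completing the local identity and hence the proposition.

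\textbf{Main obstacle.} The main technical point is the analysis of the reduced gauge groupoid of $C_c$: choosing basepoints consistently on the defect stratum $E$, working out the compatibility conditions across the ${}_GG$-labeled codimension-1 stratum, verifying that the two-sided action of $(\alpha, \beta)$ on pairs $(g_+, h)$ is free, and confirming that the boundary projection keeps only the $\alpha$-component at the outer $S^2$ (the $\overline{S^2}$ component contributing trivial data because $V_-$ is labeled by the trivial group).
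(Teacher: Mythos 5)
Your proof is correct and follows essentially the same route as the paper: the same decomposition $M = M' \circ B$, $\Delta_1(M) = M' \circ C$ from \cref{propProp1TV}, the same doubling $(-)_c$ to reduce to $Z_{DW}$, the same choice of three basepoints (one on each gluing-boundary component and one on the intermediate defect 2-sphere), and the same identification of the reduced gauge groupoid of $C_c$ as $G\times G$ with a two-sided $G\times G$-action, giving $Z_G(B) = |G|^{-1}$ and $Z_G(C) = 1$. The only cosmetic difference is that you observe in passing that the gauge groupoid of $C_c$ is equivalent to the terminal groupoid, whereas the paper keeps the presentation $G\times G\sslash G\times G$; both then read off the fiber of the boundary projection as the diagonal action $G\times G\sslash G \simeq \underline G$ of cardinality $|G|$.
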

\begin{proof}
    As in the proof of \cref{propProp1TV}, by the axioms of a TQFT, it is enough to compare the situation locally. Recall from \cref{propProp1TV} the two cobordisms \(B\colon \emptyset \rightarrow \mathbb{S}^2\) and \(C\colon \emptyset \rightarrow \mathbb{S}^2\), topologically represented by a 3-ball and a cylinder over the 2-sphere, respectively. Showing the equality \(Z_G(C)  = |G|\cdot Z_G(B)\) then yields the proof of the proposition.
    
    We compute \(Z_G(B)\) and \(Z_G(C)\) directly. Since \(B\) has trivial free boundary, we have that \(B_c = B \sqcup B\), where the first copy is labeled by \(G\) and the second by the trivial group. The gauge groupoid of the second component is trivial, so we simply have \( \allowbreak \mathcal{A}^{B_c} \sslash \mathcal{G}^{B_c} \simeq \mathcal{A}^{{B_c} \bullet} \sslash \mathcal{G}^{{B_c} \bullet} \cong \mathcal{A}^{{B} \bullet} \sslash \mathcal{G}^{{B} \bullet}\). For the connected manifold \(B\), there is only one stratum, namely the 3-stratum \(s\) such that \(\hat{s} = B^3\), and we may choose a set of coherent basepoints \(V\) by picking a single point \(p \in \partial B\). Then, we have by \cref{propGG}:
    \begin{itemize}
        \item Gauge configurations are given by functors \(A\colon \Pi_1(\hat{s}, p) \rightarrow \bullet \sslash G\). Since the fundamental group of \(B^3\) is trivial, we have that \(\Pi_1(\hat{s}, p) \cong \bullet \sslash \pi_1(B^3, p) = \bullet\) is the trivial groupoid, so \(A\) is the trivial constant functor.
        \item Gauge transformations are given by maps \(\gamma\colon \{p\} \rightarrow G\), which are simply group elements.
    \end{itemize}
    It follows that \(\mathcal{A}^B \sslash \mathcal{G}^B \cong \bullet \sslash G\). By the same reasoning, we obtain that \(\mathcal{A}^{\partial B} \sslash \mathcal{G}^{\partial B} \cong \bullet \sslash G\). Restricting configurations and transformations to the boundary gives the identity functor \(P_{\partial} = \id\colon \mathcal{A}^B \sslash \mathcal{G}^B \rightarrow \mathcal{A}^{\partial B} \sslash \mathcal{G}^{\partial B}\), so we obtain as the image of \(B\colon \emptyset \rightarrow \Sp^2\) under the functor \(C \circ (-)_c \circ \Theta\colon  \Cob_3^{\vect_G} \rightarrow \mathsf{Span}^{\mathsf{fib}}(\mathsf{repGrpd)}\) the span
   \[
        \centering
        \adjincludegraphics[valign=c, scale = 1.16]{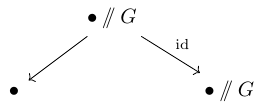}
    \]
    representing a morphism in the subcategory \(\mathsf{Span}^{\mathsf{fib}}(\mathsf{Grpd)}\). By \cref{propFunL}, \(Z_G(\emptyset) = \C\) and \(Z_G(\Sp^2) = \langle\pi_0(\mathcal{A}^{\partial B} \sslash \mathcal{G}^{\partial B})\rangle_{\C} \cong \C\), and \(Z_G(B) \in \C\) can, by \cref{eqSpGrpd}, be expressed as
    \begin{align*}
        Z_G(B) = |\mathrm{PC}_{\bullet}(\mathcal{A}^{\partial B} \sslash \mathcal{G}^{\partial B})| \cdot \big|\left\{\bullet \big| \mathcal{A}^{ B} \sslash \mathcal{G}^{ B} \big| \bullet\right\}\big| = |\bullet \sslash G| \cdot |\mathrm{Fib}((\bullet, \id), \bullet)| = |G|^{-1}.
    \end{align*}
    We move on to compute the invariant associated to \(C\). The cobordism \(C\) admits a free boundary homeomorphic to a 2-sphere, and so \(D := C_c\colon \emptyset \rightarrow \Sp^2 \sqcup \Sp^2\) is a cylinder \(\Sp^2 \times I\), where the two boundary components constitute the gluing boundary homeomorphic to two 2-spheres. There are two 3-strata \(s_1, s_2\) with \(\hat{s}_1 \cong \hat{s}_2 \cong \Sp^2 \times I\), labeled by \(G\) and \(1\), respectively. The 2-stratum \(t\) that separates \(s_1\) and \(s_2\), which is homeomorphic to a 2-sphere, is again labeled by the \(G \times 1^{\mathrm{op}}\)-set \(G\). We choose the set of coherent basepoints \(V = \{p,q_1, q_2\}\), where \(p\) is a point on \(t\) and \(q_1, q_2 \in \partial D\) are points in the boundary, with \(q_1\) lying in the component of the gluing boundary labeled by \(G\) and \(q_2\) lying in the trivial component. From the description of \(\mathcal{A}^{D} \sslash \mathcal{G}^{D}\), it is clear that we can disregard components of gauge configurations or transformations in \(\hat{s}_2\). We find:
   \begin{itemize}
       \item Gauge configurations are given by pairs \((A, h)\) of functors \(A\colon \Pi_1\left(\hat{s}_1, \{p,q_1\}\right) \rightarrow \bullet \sslash G\) and a group element \(h \in G\). Since any loop in \(\hat{s}_1\) is null-homotopic, the functor \(A\) acts trivially on loops. As a consequence, there is no further condition on the pair \((A,h)\) and \(A\) corresponds to the group element \(a\) it assigns to the homotopy class of a path \(\tau\colon [0,1] \rightarrow \hat{s}_1\) with \(\tau(0) = q_1, \tau(1) = p\).
       \item Gauge transformations \((a, h) \Rightarrow (a', h')\) are given as pairs \((\gamma, \delta) \in G \times G\) associated to maps \(\{p, q_1\} \rightarrow G\) such that the conditions \(a' = \gamma a \delta^{-1}\) coming from the path \(\tau\) and \(h' = \gamma h\) coming from the point \(p\) hold.
    \end{itemize}
    Altogether, we obtain \(\mathcal{A}^{ D} \sslash \mathcal{G}^{ D} \cong G \times G \sslash G \times G\) defined by the above action. Restricting gauge data to the boundary \(\partial D = \partial B\) gives the projection functor acting on morphisms as
    \begin{align*}
        P_{\partial}\colon \mathcal{A}^{ D} \sslash \mathcal{G}^{ D} \rightarrow \mathcal{A}^{ \partial D} \sslash \mathcal{G}^{\partial D} = \bullet \sslash G, \, ((\gamma, \delta)\colon (a, h) \Rightarrow (\gamma a\delta^{-1}, \gamma h)) \mapsto \delta.
    \end{align*}
    As above for \(B\), we obtain a span of groupoids in \(\mathsf{Span}^{\mathsf{fib}}(\mathsf{Grpd)}\) given as
   \[
        \centering
        \adjincludegraphics[valign=c, scale = 1.16]{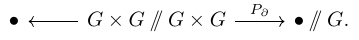}
    \]
    The fiber of \(P_{\partial}\) is the groupoid \(\left\{\bullet \big| \mathcal{A}^{ D} \sslash \mathcal{G}^{ D} \big| \bullet\right\} = \mathrm{Fib}(P_{\partial}, \bullet)\) equivalent to \(G \times G \sslash G\) equipped with the diagonal action. This, in turn, is equivalent to the discrete groupoid \(\underline{G}\). Thus, we get
    \begin{align}
        \label{eqSpanPocket}
        Z_G(C) = Z_{DW}(D) = |\bullet \sslash G| \cdot |\mathrm{Fib}(P_{\partial}, \bullet)| = |G|^{-1} |G| = 1.
    \end{align}
    This gives the right comparison factor as before and finishes the proof.
\end{proof}
\begin{remark}
    One has to be careful not to oversimplify the relevant groupoids. The gauge groupoid of \(D\) is equivalent to the trivial groupoid. However, we cannot simply make this replacement, since the resulting span \(\bullet \leftarrow \bullet \rightarrow \bullet \sslash G\) is not a fibrant span of groupoids (and is clearly not equivalent to the one from \cref{eqSpanPocket} with respect to the equivalence relation from \cref{defFibSpan}). In fact, if one were to use this span instead, one would obtain a wrong result.
\end{remark}
We continue by showing that \(Z_G\) satisfies \labelcref{P2} with respect to the tunnel move. 

\begin{proposition}
    \label{propDW2}
    Let \(M \in \Cob_3^{\vect_G}(\emptyset, \emptyset)\) be a cobordism and \(\iota\colon C \hookrightarrow M\) a tunnel. For a simple object \(\C_g \in \vect_G\), consider the move \(\Delta_2^{\C_g}(\iota)\) with respect to the tunnel \(\iota\). Then,
    \begin{align}
        \label{eqDWTun}
       Z_G(M) = \sum_{g \mspace{1mu} \in \mspace{1mu} G}\dim (\C_g)Z_G\big(\Delta_2^{\C_g}(\iota)(M)\big)
    \end{align}
    holds, with \(\dim (\C_g) = 1\) the dimension of the simple object. 
\end{proposition}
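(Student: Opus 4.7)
The proof follows the strategy of \cref{propProp2}, with gauge-groupoid computations in the style of \cref{propDW1} replacing the graphical-calculus arguments. The first step is reduction to a local comparison: embed a slightly enlarged tunnel $\iota' \colon C \hookrightarrow M$ containing $\iota$, and cut both $M$ and $\Delta_2^{\C_g}(\iota)(M)$ along its lateral surface $\iota'(\Sp^1 \times I) \cong A_1$. This produces decompositions
\begin{align*}
M = M' \cup_{A_1} B, \qquad \Delta_2^{\C_g}(\iota)(M) = M' \cup_{A_1} \mathbb{T}_{\C_g}^s,
\end{align*}
where $B, \mathbb{T}_{\C_g}^s \colon \emptyset \to A_1$ are the local building blocks from \cref{lemProp2}. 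By functoriality and monoidality of $Z_G$, the claim reduces to the purely local identity
\begin{align*}
Z_G(B) \,=\, \sum_{g \in G} Z_G(\mathbb{T}_{\C_g}^s) \quad \text{in } \Hom_\C(\C, Z_G(A_1)).
\end{align*}

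To evaluate both sides, I would translate $A_1$, $B$, and $\mathbb{T}_{\C_g}^s$ through the equivalence $\Theta$ of \cref{lemEqCob} and the free-boundary extension $(-)_c$, and compute the relevant spans using the reduced description of gauge groupoids from \cref{propGG}. For $A_1$, pairing the holonomies of the annular $G$-bundle with the regular $G$-set data on the two boundary circles yields, after applying the linearization functor $L$ of \cref{propFunL}, a canonical basis of $Z_G(A_1)$ indexed by equivalence classes of group-valued boundary data. For $B_c$ and $(\mathbb{T}_{\C_g}^s)_c$, a parallel computation produces fibrant spans of groupoids; in the torus case, the $\C_g$-decorated defect loop additionally gives rise, via the construction of \cref{subSubSecDWCob}, to a natural transformation $\mu_{(\mathbb{T}_{\C_g}^s)_c}$ whose components are controlled by the grading index $g$ of the simple object.

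The final step is to sum the matrix coefficients of $Z_G(\mathbb{T}_{\C_g}^s)$ over $g \in G$ and compare them with those of $Z_G(B)$. The heart of the verification is that the label $\C_g$ selects the boundary holonomy component $g$ through $\mu_{(\mathbb{T}_{\C_g}^s)_c}$, so that summation over all $g$ restores exactly the unconstrained contribution coming from the trivial defect decoration of $B$. The main obstacle is the explicit construction of $\mu_{(\mathbb{T}_{\C_g}^s)_c}$: following \cref{subSubSecDWCob} one must insert bivalent vertices on the defect loop, correctly assign the dual representations $\rho_e^s, \rho_e^t$ to the two oriented halves of the loop, and evaluate the resulting graph in $\vect_\C^{\cala^{(\mathbb{T}_{\C_g}^s)_c} \sslash \calg^{(\mathbb{T}_{\C_g}^s)_c}}$. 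Once this bookkeeping is controlled, the identity reduces to the decomposition of the regular representation $\C[G] \cong \bigoplus_{g \in G} \C_g$ at the level of boundary holonomies, yielding the claimed equality.
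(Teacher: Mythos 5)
Your proposal is correct and takes essentially the same route as the paper's proof. You reduce to the local identity $Z_G(B) = \sum_{g\in G} Z_G(\mathbb T^s_{\C_g})$ by cutting along the lateral surface $\iota'(\Sp^1\times I) \cong A_1$, then apply $(-)_c\circ\Theta$ and compute gauge groupoids via \cref{propGG} and the linearization functor \cref{propFunL}; this is exactly the paper's strategy, including the identification $Z_G(A_1) \cong \bigoplus_{h\in G}\C h$ and the observation that $\C_g$ singles out the boundary holonomy $g$ so that the sum over $g$ reproduces the unconstrained ball contribution. The one point of divergence is labour rather than method: where you plan to construct the natural transformation $\mu_{(\mathbb T^s_{\C_g})_c}$ from scratch by inserting bivalent vertices and evaluating in $\vect_\C^{\cala\sslash\calg}$, the paper shortcuts this by quoting Example 7.10 of \cite{FMM}, which already yields the matrix element $\langle h | Z_{DW}(T_g) | \bullet\rangle = \tr_{(\C_g)_h}(\id) = \delta_{g,h}$; with that in hand the verification collapses to $\sum_{g\in G}\delta_{g,h}=1$, matching the ball computation. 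Your closing appeal to the decomposition $\C[G]\cong\bigoplus_{g\in G}\C_g$ is a fair heuristic but is slightly loose: the precise mechanism is the matrix-element identity above, applied basis element by basis element in $Z_G(A_1)$, rather than a statement about the regular representation as such.
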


\begin{proof}
    One justifies as in the proof of \cref{propProp2} that it is enough to compare the situation locally. Recall the two cobordisms \(B\) and \(\mathbb{T}_g^s = \mathbb{T}_{\C_g}^s\) from \cref{lemProp2}, which we interpret here as cobordisms \(B\colon A_1 \rightarrow \emptyset\) and \(\mathbb{T}_g^s\colon A_1 \rightarrow \emptyset\) from the annulus \(A_1\) to the empty manifold (this is simply done by flipping their orientations). Here, \(B\) is a 3-ball and \(\mathbb{T}_g^s\colon A_1 \rightarrow \emptyset\) is a solid torus with a non-contractible defect loop embedded in its boundary labeled by the object \(\C_g\).
    In a first step, as usual, we have to apply \((-)_c \circ \Theta\). We obtain the following two cobordisms in \(\Cob_3^{\mathsf{def}}\) as the images of \(B\) and \(\mathbb{T}_g^s\), respectively:
    \begin{itemize}
        \item \(T\colon \mathbb{T} \rightarrow \emptyset\), where \(T \cong \mathbb{T}^s\) is a stratified solid torus viewed as a cobordism from its boundary torus \(\partial \mathbb{T}^s = \mathbb{T}\) to the empty manifold. The stratification of \(T\) consists of two 3-strata \(s_1, s_2\) homeomorphic to open balls, where \(s_1\) is labeled by \(G\) and \(s_2\) by the trivial group. There are two 2-strata \(t_1, t_2\) homeomorphic to open disks, both labeled by the regular \(G\)-set \(_G G\). The cobordism \(T\) is depicted in the figure below, where the \(G\)-labeled strata are shaded in blue.
        \item \(T_g\colon  \mathbb{T} \rightarrow \emptyset\), a solid torus with two 3-strata homeomorphic to tori labeled by \(G\) and \(1\) respectively, two 2-strata shaped like annuli labeled by the \(G\)-set \(_GG\), separated by a 1-stratum \(e\) which is a non-contractible loop (with suppressed node labeled by the identity) and is labeled by the representation \(\rho_e\colon G \times G \sslash G \rightarrow \vect_{\C}\) induced by \(\C_g\) as in \cref{lemEqCob}. The cobordism is shown below.
    \end{itemize}
    \begin{align*}
        T \, \, = \,\, \adjincludegraphics[valign=c, scale = 0.75]{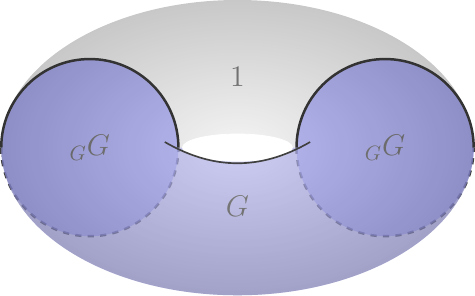} \, \, \, \, \, \, \, \, \, \, \, \,  \, \, \, \,  T_g \, \, = \,\, \adjincludegraphics[valign=c, scale = 0.75]{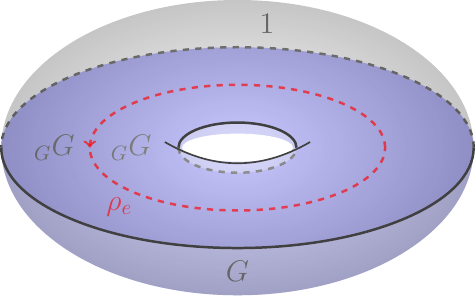}.
    \end{align*}
    In fact, the value of \(Z_{DW}\) on the cobordism \(T_g\) has been computed in greater generality in Example 7.10 in \cite{FMM}. Here, we have that \(Z_{DW}(\partial T_g) = \oplus_{h \in G} \C h\) (as we will see explicitly later in the proof when we consider \(T\), which has the same boundary) and the example specializes to the linear map defined on basis elements as
    \begin{align*}
        Z_{DW}(T_g)\colon Z_{DW}(\partial T_g) \rightarrow \C, \, \, h \mapsto \tr_{(\C_g)_h}(\id) = \delta_{g,h},
    \end{align*}
    taking into account the fact that simple objects in \(\vect_G\) are one-dimensional. Thus, we can evaluate the right-hand side of the local version of \cref{eqDWTun} applied to \(h \in G\) as
    \begin{align}
    \label{eqSoS}
        \sum_{g \mspace{1mu} \in \mspace{1mu} G} \dim (\C_g)Z_G(\mathbb{T}_g^s)(h) = \sum_{g \mspace{1mu} \in \mspace{1mu} G} \delta_{g,h} = 1.
    \end{align}
    We turn to the computation of the left-hand side \(Z_{DW}(T)\). As always, we need to determine the gauge groupoids for bulk and boundary. As every loop contained in \(\hat{s}_1\) is contractible, \(\mathcal{A}^T \sslash \mathcal{G}^T\) easily follows from the boundary groupoid \(\mathcal{A}^{\partial T} \sslash \mathcal{G}^{\partial T}\), which we compute in the following. In fact, these groupoids will turn out to be equal. We choose a set of coherent basepoints \(V = \{p, q\}\) for the induced stratification on \(\partial T\) with \(p \in t_1^{\partial}, q \in t_2^{\partial}\), where \(t_1^{\partial}, t_2^{\partial}\) denote the unique respective preimages of \(t_1, t_2\) under the map \(\iota_1\colon S_1^{\partial T} \rightarrow S_2^T\) that associates boundary to bulk strata. Similarly, denote by \(s_i^{\partial}\) the preimages of \(s_i\) under the map \(\iota_2\colon S_2^{\partial T} \rightarrow S_3^T\). Topologically, \(t_i^{\partial}\) are circles and \(s_i^{\partial}\) is an (open) cylinder mantle. One has:
    \begin{itemize}
        \item Gauge configurations are given by functors \(A\colon \Pi_1(\hat{s}_1^{\, \partial}, \{p,q\}) \rightarrow \bullet \sslash G\) and pairs \((h_p, h_q) \in G \times G\) associated to the respective points satisfying that for any closed path \(\tau_i\colon [0,1] \rightarrow \hat{t}_i^{ \, \partial}\) with start- and endpoint \(p\) or \(q\), the conditions \(h_p = A([\tau_1])h_p\) and \(h_q = A([\tau_2])h_q\) hold, respectively (again, the components of gauge configurations indexed by \(s_2^{\partial}\) are trivial). Thus, \(A\) acts trivially on these loops. If \(\tau\) denotes a path connecting \(q\) to \(p\) without looping around the cylinder \(\hat{s}_1^{\, \partial}\), and \(\sigma_p\) denotes a representative of a generator \([\sigma_p] \in \pi_1(\hat{t}_1^{\, \partial}) \cong \mathbb{Z}\), then the homotopy class of every path \(\omega\) in \(\hat{s}_1^{\, \partial}\) from \(q\) to \(p\) can be written as \([\omega] = [\sigma_p^k \star \tau]\) for some \(k \in \mathbb{Z}\), whence \(A([\omega]) = A([\sigma_p])^k \cdot A([\tau]) = A([\tau])\) and the functor \(A\) corresponds to a group element \(a \in G\).
        \item Gauge transformations \((a, h_p, h_q) \Rightarrow (a', h_p', h_q')\) correspond to pairs \((\gamma_p, \gamma_q) \in G \times G\) such that \(a' = \gamma_pa\gamma_q^{-1}, h_p' = \gamma_ph_p\) and \(h_q' = \gamma_q h_q\).
    \end{itemize}
    The restriction functor \(P_{\partial}\colon \mathcal{A}^{\partial T} \sslash \mathcal{G}^{\partial T} \rightarrow \mathcal{A}^{ T} \sslash \mathcal{G}^{ T}\) is the identity. This together with the equivalence \(\mathcal{A}^{\partial T} \sslash \mathcal{G}^{\partial T} \simeq \underline{G} \) allows us to replace the span of gauge groupoids by \(\underline{G} \leftarrow \underline{G} \rightarrow \bullet\) and thus, we find \(Z_{DW}(\partial T) = \oplus_{h \in G} \C h\). The matrix components of the morphism \(Z_{DW}(T)\) (recall \cref{eqSpGrpd}) are given as 
    \begin{align*}
        \langle h | Z_{DW}(T) | \bullet \rangle = |P_{\bullet}(\bullet)| \cdot \big|\left\{h \big| \mathcal{A}^{ T} \sslash \mathcal{G}^{ T} \big| \bullet\right\} \big| = 1,
    \end{align*}
    hence \(Z_G(B^3) = Z_{DW}(T)\) is the linear map sending every \(h \in G\) to \(1\), so together with \cref{eqSoS}, the equality asserted in the proposition follows.
\end{proof}

We proceed by showing \labelcref{P4} for the defect TQFT \(Z_G\) and postpone the proof of \labelcref{P3} for the moment. The reason for this is that it is convenient for us to apply \labelcref{P4} in the proof of \labelcref{P3} (even though it is not strictly necessary).  
\begin{proposition}
\label{propDW4}
    Let \(B_{\Gamma} \in \Cob_3^{\vect_G}(\emptyset, \emptyset)\) be a 3-ball decorated with a \(\vect_G\)-labeled graph \(\Gamma\) as introduced in \cref{eqBGamma}. Denoting by \(\langle \Gamma \rangle \in \C\) the scalar associated to \(\Gamma\) seen as an endomorphism of the tensor unit in the spherical fusion category \(\vect_G\), it holds that \(Z_G(B_{\Gamma}) = \langle \Gamma \rangle\).
\end{proposition}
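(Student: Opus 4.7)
The plan is to evaluate $Z_G(B_\Gamma)$ by direct computation of the composite $L \circ C \circ (-)_c \circ \Theta$ on $B_\Gamma$. Under $\Theta$ followed by $(-)_c$, the cobordism becomes a stratified 3-sphere whose equatorial 2-sphere carries $\Gamma$: the two hemispheres are 3-strata labeled by $G$ and by the trivial group $1$, plaquettes $p \in P(\Gamma)$ are 2-strata labeled by the regular $G$-set, and edges and vertices of $\Gamma$ are 1- and 0-strata carrying the representations $\rho_e$ and natural transformations $\sigma_v$ assembled in the proof of \cref{lemEqCob}.

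The key step is to identify the gauge groupoid $\cala^{(B_\Gamma)_c} \sslash \calg^{(B_\Gamma)_c}$ via \cref{propGG}. Fixing one coherent basepoint per stratum, both hemispheres $\hat{s}_i \cong B^3$ are simply connected, so the functor $A_{s_1}$ is parametrized (up to an overall constant $k \in G$) by values $\phi(y) \in G$ at basepoints, while $A_{s_2}$ is trivial. The compatibility for $h_p$ along paths in $\hat{p}$ then forces, on each plaquette, the combination $\tilde{h}_p := \phi(y)^{-1} h_p(y)$ (or $h_p(y)\phi(y)$, depending on the orientation of $p$) to be constant on $V_p$. Pointwise gauge transformations $\gamma \colon V_{s_1} \to G$ leave $\tilde{h}_p$ invariant, while the $k$-ambiguity acts on the family $(\tilde{h}_p)_{p \in P(\Gamma)}$ by diagonal left (or right) multiplication. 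Hence
\begin{align*}
\cala^{(B_\Gamma)_c} \sslash \calg^{(B_\Gamma)_c} \simeq G^{P(\Gamma)} \sslash G
\end{align*}
with a free $G$-action, yielding $|G|^{|P(\Gamma)|-1}$ isomorphism classes with trivial automorphism group. As a sanity check, $\Gamma = \emptyset$ gives $|P(\Gamma)| = 1$ and a single object with trivial automorphism, matching $Z_G(B^3) = 1$ and consistent with \cref{propDW1} applied to $S^3$.

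At a gauge configuration $\tilde{h}$, the projection to $\cald_e$ for an edge $e$ evaluates to $(\tilde{h}_{\ell(e)}, \tilde{h}_{r(e)})$, and the dictionary of \cref{lemEqCob} identifies $\rho_e$ at this object with the homogeneous component $V^e_{g_e}$ where $g_e := \tilde{h}_{r(e)}^{-1}\tilde{h}_{\ell(e)}$; the bivalent-vertex contribution $\rho_e P_e^d$ acts as the identity since $\rho_e$ factors through the discrete quotient $\underline{G}$ of $\cald_e$. At a vertex $v$, the natural transformation $\sigma_v$ picks out the grade component $\alpha_v^{g_*|_{E(v)}}$ of its label. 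Telescoping the values $g_e$ around $v$ gives $\prod_{e \in E(v)} g_e^{\varepsilon(v,e)} = 1$, which is exactly the condition for $\alpha_v^{g_*}$ to be nonzero. Consequently $\mu_{(B_\Gamma)_c}$ at $\tilde{h}$ is the evaluation in $\vect_{\C}$ of $\Gamma$ with edges labeled by $V^e_{g_e}$ and vertices by the appropriate graded components of $\alpha_v$.

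Applying \cref{propFunL} and summing over orbits with trivial stabilizers,
\begin{align*}
Z_G(B_\Gamma) = \sum_{[\tilde{h}]} \mu(\tilde{h}) = \frac{1}{|G|}\sum_{\tilde{h} \in G^{P(\Gamma)}} \mu(\tilde{h}).
\end{align*}
The assignment $\tilde{h} \mapsto g_*(\tilde{h})$ is exactly $|G|$-to-$1$ onto the set of vertex-consistent grade assignments $g_* \in G^{E(\Gamma)}$: surjectivity follows by integrating the 1-cochain $g_e$ on the dual graph $\Gamma^* \subset S^2$, whose faces are contractible so that vertex consistency is the only obstruction, and the $|G|$-fibers are the overall constant shifts. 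The $1/|G|$ prefactor cancels, yielding $Z_G(B_\Gamma) = \sum_{g_* \text{ consistent}} \prod_v \alpha_v^{g_*} = \langle \Gamma \rangle$ by comparison with the state-sum evaluation of $\Gamma$ in $\vect_G$. The main obstacle will be the careful orientation-dependent bookkeeping (left versus right residual action on $\tilde{h}_p$, and signs $\varepsilon(v,e)$ in the telescoping) needed to align the Dijkgraaf-Witten consistency condition with the state-sum consistency for morphisms in $\vect_G$.
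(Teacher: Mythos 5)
Your proposal takes essentially the same route as the paper: identify $(B_\Gamma)_c$ as a stratified $3$-sphere, compute the gauge groupoid as $G^{P(\Gamma)}\sslash G$ with free diagonal action, read off the natural transformation from the edge and vertex labels via the dictionary of \cref{lemEqCob}, and match the resulting sum with the state-sum evaluation of $\Gamma$ in $\vect_G$ through a $|G|$-to-one surjection onto vertex-consistent edge colorings. The one point where your argument is actually slicker than the paper's is the gauge-groupoid identification: you observe directly that, since both hemispheres are simply connected, the invariant combination $\tilde h_p = \phi(y)^{-1}h_p(y)$ is automatically constant on each plaquette, whereas the paper constructs a representative $(1_G, g)$ and verifies well-definedness by a recursive argument over a spanning tree of the dual graph. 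Your framing of surjectivity as integrating a $1$-cochain over the contractible faces of $\Gamma^* \subset S^2$ is likewise a compressed version of the paper's spanning-tree argument, and correctly identifies vertex consistency as the only obstruction. The orientation bookkeeping you flag as the main difficulty is precisely what the paper spends most of its effort on, so your assessment of where the work lies is accurate.
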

\begin{proof}
The result \(\Sp_{\Gamma}\) of applying \(\Theta \circ (-)_c\) to \(B_{\Gamma}\) is a stratified 3-sphere. We describe its stratification and algebraic labeling in \(\Cob_3^G\) to compute the gauge groupoid \(\mathcal{A}^{\mathbb{S}_{\Gamma}} \sslash \mathcal{G}^{\mathbb{S}_{\Gamma}}\): 
\begin{itemize}
    \item There are two 3-strata \(s_1, s_2\) homeomorphic to open 3-balls, where \(s_1\) is labeled by the group \(G\) and \(s_2\) is labeled by the trivial group \(1\).
    \item The 2-strata are given by the set \(P = P(\Gamma)\) of plaquettes of the graph \(\Gamma\), each homeomorphic to \(A_n^{\circ}\) for some \(n \in \mathbb{N}\), an open disk with \(n\) smaller disjoint open disks removed from the interior. Each plaquette is labeled by the regular \(G \times 1^{\mathrm{op}}\)-set \(G\).
    \item The 1-strata are in correspondence with the edges \(e \in E(\Gamma)\), labeled by representations \(\rho_e\colon G \times G \sslash G \times 1 \rightarrow \vect_{\C}\) constructed from the \(G\)-graded vector spaces \(V^e\) labeling the edges of \(B_{\Gamma}\), as provided in the proof of \cref{lemEqCob}.
    \item The 0-strata are the vertices \(v \in V(\Gamma)\), decorated with natural transformations \(\sigma_v\colon \rho_v^s \Rightarrow \rho_v^t\) derived from the vertex labels \(\alpha_v\) in \(\vect_G\) as in the proof of \cref{lemEqCob}.
\end{itemize}
As always, we will compute the gauge groupoid of \(\Sp_{\Gamma}\) by computing the reduced gauge groupoid \(\mathcal{A}^{\mathbb{S}_{\Gamma} \bullet} \sslash \mathcal{G}^{\mathbb{S}_{\Gamma \bullet}}\). We choose as a set of coherent basepoints the set of vertices \(V = V(\Gamma)\) and find the following (cf. \cref{propGG}):
\begin{itemize}
    \item Gauge configurations are given by pairs \((A, h)\) of functors \(A\colon \Pi_1\left(\hat{s}_1, V\right) \rightarrow \bullet \sslash G\) and a family of maps \(h_p\colon V(p) \rightarrow G\) for all \(p \in P(\Gamma)\) such that for paths \(\tau\colon [0,1] \rightarrow \hat{p}\) with \(\delta(0) = v, \delta(1) = v'\), where \(v, v' \in V(p)\), it holds that \(h_p(v')= A([\iota_{\ell(p)} \circ \tau]) \cdot h_p(v)\). Although the map \(\iota_{\ell(t)}\) is not necessarily injective, we will leave it out in the sequel to ease notation. This leaves the argument unaffected.
    \item Gauge transformations \((A, h) \Rightarrow (A', h')\) are given by maps \(\gamma\colon V \rightarrow G\) such that for any path \(\delta\colon [0,1] \rightarrow \hat{s}_1\) from \(v\) to \(v'\), where \(v, v' \in V\), and for any plaquette \(p \in P(\Gamma)\) with \(w \in V(p)\), it holds that \(A'([\delta]) = \gamma(v') \cdot A([\delta]) \cdot \gamma(v)^{-1}\) and \(h_p'(w) = \gamma(w) \cdot h_p(w)\).
\end{itemize} 
We will now show that \(\mathcal{A}^{\mathbb{S}_{\Gamma} \bullet} \sslash \mathcal{G}^{\mathbb{S}_{\Gamma \bullet}} \simeq \left(\prod_{p \in P(\Gamma)}G\right) \sslash G =: G^P \sslash G\), which relies crucially on the contractibility of the 3-ball. For this, we first prove that there always exists a gauge transformation \(\gamma\colon (A,h) \Rightarrow (1_G, g)\), where \(1_G\colon \Pi_1\left(\hat{s}_1, V\right) \rightarrow \bullet \sslash G\) denotes the constant functor sending every morphism to the trivial group element and \(g = (g_p)_{p \in P}\) is a family of group elements \(g_p \in G\), or, put differently, a family of constant functions \(V(p) \rightarrow G\).

First, for a subset \(S \subset P\) of the plaquettes, let 
\begin{align*}
    W(S) := \{p \in P \, | \, V(p) \cap V(s) \neq \emptyset \text{ for some }s \in S\} \subset P,
\end{align*}
and, moreover, set \(W^k(S) = W(W^{k-1}(S))\). Choose a plaquette \(p_0 \in P\) and a group element \(g_{p_0} \in G\) arbitrarily. Since \(\Gamma\) is a finite graph and its dual graph \(\Gamma^*\) is always connected, it holds that \(W^m(p_0) = P\) for some \(m \in \mathbb{N}\), since sharing an edge for two plaquettes implies them having a common vertex. We define the family \(g\) recursively: For every \(q \in W^{k+1}(p_0) \setminus W^k(p_0)\), choose a neighboring plaquette \(p \in W^k(p_0)\) and set \(g_{q} = g_ph_p(v)^{-1}h_{q}(v)\), where \(v\) is a vertex shared by \(p\) and \(q\). Due to \(W^m(p_0) = P\), this defines a full family \(g = (g_p)_{p \in P}\). Once we have confirmed well-definedness of the assignment, we would like to set \(\gamma(w) := g_ph_p(w)^{-1}\), for which we have to make sure that \(g_{p'}h_{p'}(w)^{-1} = g_ph_p(w)^{-1}\) whenever \(p\) and \(p'\) share the vertex \(w\). This boils down to checking the following two cases:
\begin{itemize}
    \item Suppose that well-definedness for \(W^k(p_0)\) has already been verified and that the two group elements assigned to two elements \(p, p' \in W^k(p_0)\) induce the one assigned to \(q\), where the notation is as above. This means that \(q\) shares a vertex \(v\) with \(p\) and a vertex \(v'\) with \(p'\). Assuming the relation \(g_q = g_ph_p(v)^{-1}h_q(v)\), we define a path \(\delta\colon [0,1] \rightarrow \cup_{t \in N^k(p_0)}\mathrm{cl}(t)\) with \(\delta(0) = v, \delta(1) = v'\) such that \(g_{q} = g_{p'}h_{p'}(v')^{-1}A([\delta])h_{q}(v)\): 
    
    Choose a sequence of plaquettes \(p = p_0, \hdots, p_{\ell} = p'\), forming a path in the connected graph \(\Gamma^*\) with \(p_i\) and \(p_{i+1}\) sharing the vertex \(v_{i+1}\), and choose paths \(\delta_i\colon [0,1] \rightarrow \hat{p_i}\) starting at \(v_i\) and ending at \(v_{i+1}\). Here, our notation is such that \(v_0 = v\) and \(v_{\ell+1} = v'\). Since \(A([\delta_i])h_{p_i}(v_i) = h_{p_i}(v_{i+1})\), the relations imply that setting \(\delta = \delta_{\ell} \star \hdots \star \delta_0\) gives the desired path. As \(\pi_1\left(\hat{s}_1\right)\) is trivial, the path \(\delta\) is homotopy equivalent relative to its endpoints to a path in \(\mathrm{cl}(q)\) that lifts to a path \(\sigma\) with \(\mathrm{im} (\sigma) \subset \hat{q}\) and \(A([\sigma])h_q(v) = h_q(v')\). We find 
    \begin{align*}
        g_q = g_{p'}h_{p'}(v')^{-1}A([\delta])h_{q}(v) = g_{p'}h_{p'}(v')^{-1}A([\sigma])h_{q}(v) = g_{p'}h_{p'}(v')^{-1}h_{q}(v'),
    \end{align*}
    showing that both \(p\) and \(p'\) induce the same group element for \(q\).
    \item If \(p\) and \(p'\) induce the group elements assigned to two neighboring plaquettes \(q\) and \(q'\) sharing a vertex \(v\), one needs to show the consistency relation \(g_{q'} = g_q h_q(v)^{-1}h_{q'}(v)\). This is done similarly to the first case.
\end{itemize}
Now, with the above definition of \(\gamma\), we also have the last remaining relation, namely
\begin{align*}
    \gamma(v')A([\tau])\gamma(v)^{-1} = g_ph_p(v')^{-1}A([\tau])h_p(v)g_p^{-1} = 1_G
\end{align*}
for any path \(\tau\colon [0,1] \rightarrow \hat{s}_1\) from \(v\) to \(v'\). This proves that indeed, \((A,h) \cong (1_G, g)\), which says that every gauge configuration is equivalent to one that is constant on a plaquette. However, two such objects \((1_G, g)\) and \((1_G, \widetilde{g})\) may be isomorphic. That is the case if and only if \(g_p(\widetilde{g}_{p})^{-1} = g_{p'}(\widetilde{g}_{p'})^{-1}\), that is, \(g, \widetilde{g} \in \mathcal{O}\) for some orbit \(\mathcal{O} \in G^P/G\). This proves \(\mathcal{A}^{\mathbb{S}_{\Gamma}} \sslash \mathcal{G}^{\mathbb{S}_{\Gamma}} \simeq G^P \sslash G\).

Let now \(\Gamma^{\mathbb{S}_{\Gamma}}\) denote the graph labeled in \(\vect_{\C}^{\mathcal{A}^{\mathbb{S}_{\Gamma}} \sslash \mathcal{G}^{\mathbb{S}_{\Gamma}}}\) that defines the natural transformation \(\mu_{\mathbb{S}_{\Gamma}}\) (cf. \cref{subSecGG}). Since \(\mathbb{S}^3\) is closed, Example 7.8 from \cite{FMM} applies and \(\mu_{\mathbb{S}_{\Gamma}}\colon \C \Rightarrow \C\) corresponds to a map \(\mathcal{A}^{\mathbb{S}_{\Gamma}} \rightarrow \C, h \mapsto \mu_h^{\mathbb{S}_{\Gamma}}\), where \(h = (h_p)_{p \in P} \in G^{P}\), which is constant on isomorphism classes. The invariant can then be expressed as
\begin{align}
    \label{eqBallInv}
    Z_G(B_{\Gamma}) = Z_{DW}(\mathbb{S}_{\Gamma}) = \sum_{[h] \mspace{1mu} \in \mspace{1mu} \pi_0(\mathcal{A}^{\mathbb{S}_{\Gamma}} \sslash \mathcal{G}^{\mathbb{S}_{\Gamma}})}\frac{\mu_h^{\mathbb{S}_{\Gamma}}}{|\Aut(h)|} = \frac{1}{|G|}\sum_{h \mspace{1mu} \in \mspace{1mu} G^P} \mu_h^{\mathbb{S}_{\Gamma}}.
\end{align}
We want to relate the labeling of the \(\vect_{G}\)-labeled graph \(\Gamma\) to the one of the graph \(\Gamma^{\mathbb{S}_{\Gamma}}\). For an edge \(e \in E(\Gamma)\), recall the functors 
\begin{align*}
    P_e^s \colon G^P\sslash G \simeq \mathcal{A}^{\mathbb{S}_{\Gamma}} \sslash \mathcal{G}^{\mathbb{S}_{\Gamma}} \xrightarrow{P_v} \mathcal{D}_v \xrightarrow{D_{s(e)}} \mathcal{D}_e = G \times G \sslash G,
\end{align*}
and \(P_e^t = D_{t(e)} \circ P_{v'}\), where \(s(e)\colon v \rightarrow e\) and \(t(e)\colon v' \rightarrow e\) are the local strata defined by the vertices \(v, v'\) where \(e\) starts and ends. In general, the functor \(P_v\colon \mathcal{A}^{\mathbb{S}_{\Gamma}} \sslash \mathcal{G}^{\mathbb{S}_{\Gamma}} \rightarrow \cald_v\) maps \((A, h)\) to \((h_p(v))_{p \in P(v)}\). Using the above equivalences replaces every gauge configuration by one constant on plaquettes, and so both functors \(P_e^s, P_e^t\) are simply given as the obvious projections: A (constant) plaquette labeling \((h_p)_{p \in P} \in G^P\) maps to the pair of group elements in \(\cald_e\) given by the element \(h_{\ell(e)}\) residing on the plaquette on the left of a given edge \(e\) (with respect to the orientation of \(e\) and the plaquettes) and the group element \(h_{r(e)}\) on the right plaquette. That is, both functors act as
\begin{align*}
    G^P \sslash G \rightarrow G \times G \sslash G, (h_p)_{p \in P} \mapsto \left(h_{\ell(e)}, h_{r(e)}\right), 
\end{align*}
and we write \(P^e := P_e^s = P_e^t\). Similarly, we see that the natural transformation \(\rho_e P_e^d\colon \rho_e \circ P_e^s \Rightarrow \rho_e \circ P_e^t\) labeling the new bivalent vertices \(v_e\) of \(\Gamma^{\mathbb{S}_{\Gamma}}\) corresponds to the identity transformation. Thus, these vertices do not contribute to the scalar value of the graph as an endomorphism in the symmetric monoidal category \(\vect_{\C}^{G^P\sslash G}\), and we can simply ignore the additional bivalent vertices in the remainder of the proof. Recall that to a vertex \(v \in V(\Gamma) \subset V(\Gamma^{\mathbb{S}_{\Gamma}})\), the natural transformation \(\mu_v =\sigma_v P_v\colon \rho_v^t \circ P_v \Rightarrow \rho_v^s \circ P_v\) is associated, where \(\sigma_v\) is obtained from a morphism \(\alpha_v \in \vect_G\left(\C, \otimes_{e \in E(v)}V^{\varepsilon(e)}\right)\) as in the proof of \cref{lemEqCob}.
Each \(\alpha_v\) can be identified with an element of its target and may be decomposed as
\begin{align*}
    \alpha_v = \sum_{g \mspace{1mu} \in \mspace{1mu} \Omega_v} \alpha_g^v \in \bigoplus_{g \mspace{1mu} \in \mspace{1mu} \Omega_v} \bigotimes_{e \mspace{1mu} \in \mspace{1mu} E(v)}\left(V^{\varepsilon(e)}\right)_{g_e^{\varepsilon(e)}} \subset \bigotimes_{e \in E(v)}V^{\varepsilon(e)},
\end{align*}
where we used the notation from \cref{lemEqCob}. Similar to the subsets \(\Omega_v \subset G^{E(v)}\), we define
\begin{align*}
    \Omega = \Bigg\{(g_e)_{e \in E} \in G^E \, \Bigg|  \prod_{e \mspace{1mu} \in \mspace{1mu} E(v)} g_e^{\mspace{1mu} \varepsilon(v, e)} = 1_G \text{ for all } v \in V\Bigg\} \subset G^E,
\end{align*}
which we can view as the set of `admissible' edge colorings, and we have an obvious family of projections \(\pi_v^E\colon \Omega \rightarrow \Omega_v\). This allows us to decompose \(\langle \Gamma \rangle = \sum_{g \in \Omega}\langle \Gamma \rangle_g\), where the components \(\langle \Gamma \rangle_g\) are built from the components \(\alpha_{\pi_v^E(g)}^v\) by tensoring and composition. The projection \(\pi\colon G \times G \rightarrow G, (g,h) \mapsto h^{-1}g\) and the functor \(P^e\) induce a map
\begin{align*}
    \varphi\colon G^P \rightarrow \Omega, h = (h_p)_{p \in P} \mapsto \left((\pi \circ P^e)(h)\right)_{e \in E} = \left(h_{r(e)}^{-1}h_{\ell(e)}\right)_{e \in E}.
\end{align*}
We claim that this is a well-defined and surjective map, such that for any \(g \in \Omega\), the preimage is the orbit of some \(h \in G^P\) under the left \(G\)-action, that is \(\varphi^{-1}(g) = Gh\). In fact, we have:
\begin{itemize}
    \item We check that \(\varphi\) is well-defined. Let \(h \in G^P\) be a plaquette labeling. We need to show that \(\prod_{e \in E(v)}\varphi(h)_e^{\varepsilon(e)} = 1_G\) for every \(v \in V(\Gamma)\). Assume that \(e, e' \in E(v)\) are two neighboring edges, where \(e\) follows \(e'\) in the cyclic order of \(E(v)\). Then, in the case that, for instance, both edges point toward \(v\), it holds that \(h_{r(e')} = h_{\ell(e)}\) and if e.g. only \(e'\) points outwards at \(v\), then \(h_{\ell(e)} = h_{\ell(e')}\). In the first case, the factor \(\varphi(h)_{e'}^{-1}\varphi(h)_e^{-1}\) occurring in the product evaluates to \(h_{\ell(e')}^{-1}h_{r(e)}\), in the second case, the factor \(\varphi(h)_{e'}\varphi(h)_e^{-1}\) contracts to \(h_{r(e')}^{-1}h_{r(e)}\). The remaining cases are similar. Moving once around the vertex according to the cyclic order of \(E(v)\), we see that the above product is trivial, so \(\varphi\) maps to \(\Omega\).
    \item We prove surjectivity. Let \(g \in \Omega\) be an admissible edge coloring. We consider the dual graph \(\Gamma^*\), where the vertices are the plaquettes, that is \(V(\Gamma^*) = P\), and the edges are perpendicular to the edges \(E\), but in one-to-one correspondence. We define an orientation on the dual edges so that \(e\) points from \(r(e)\) to \(\ell(e)\). The defining conditions for \(\Omega\) now translate into the requirement that the oriented product of the group elements \(g_e\) (or their inverses) associated with the edges in every loop of \(\Gamma^*\) be trivial. Since \(\Gamma^*\) is connected (even though \(\Gamma\) does not have to be), we can choose a spanning tree \(T \subset \Gamma^*\). We select a starting vertex \(p_0 \in V(T)\) and a group element \(h_{p_0}\) and color, starting from \(p_0\), the remaining vertices in \(T\) by the rule that \(h_{p'} = h_pg_e\) for \(e \in E(T)\) an edge pointing from \(p\) to \(p'\). Now, if this relation holds for all edges, then \(h = (h_p)_{p \in P} \in G^P\) defines a plaquette labeling such that \(\varphi(h) = g\). For this, we have to check that the relation above holds if an edge \(e \in E(\Gamma^*)\setminus E(T)\) not contained in \(T\) connects two dual vertices \(p\) and \(p'\). Due to the properties of a spanning tree, we can always find a path from \(p'\) to \(p\) by connecting edges \(e_1, \hdots, e_k \in E(T)\), which closes to a loop by adding the edge \(e\). Then, it holds that \(g_{e_1}^{\varepsilon(e_1)} \cdot \hdots \cdot g_{e_k}^{\varepsilon(e_k)} \cdot g_e = 1_G\), where the signs are evaluated with respect to the vertices of \(\Gamma\) on the left of the loop (with respect to the orientation on the boundary). But this directly implies that \(h_{p'} = h_pg_e\).
    
    \item For the second claim, assuming that \(\varphi(h) = \varphi(h')\) for \(h, h' \in G^P\) implies for any two neighboring plaquettes, and by connectivity of \(\Gamma^*\) for all plaquettes \(p\) and \(p'\), that \( h_{p}(h'_{p})^{-1} = h_{p'}(h'_{p'})^{-1}\). From that, it immediately follows that \(h, h' \in \mathcal{O}\) for some orbit \(\calo \in G^P/G\), so \(\varphi^{-1}(\varphi(h)) = \mathcal{O} = Gh\). Surjectivity of \(\varphi\) implies the claim.
\end{itemize}
Hence \(\varphi\) induces a bijection \(\overline{\varphi}\colon G^P/G \rightarrow \Omega\) from the orbit space to the set of edge colorings. We can now conclude, using that \(G\) acts freely on \(G^P\), that
\begin{align*}
    \langle \Gamma \rangle = \sum_{g \mspace{1mu} \in \mspace{1mu} \Omega}\langle \Gamma \rangle_g = \sum_{\calo \mspace{1mu} \in \mspace{1mu} G^P/G} \langle \Gamma \rangle_{\overline{\varphi}(\calo)} = \frac{1}{|G|}\sum_{h \mspace{1mu} \in \mspace{1mu} G^P}\langle \Gamma \rangle_{\varphi(h)} = \frac{1}{|G|}\sum_{h \mspace{1mu} \in \mspace{1mu} G^P}\mu_h^{\mathbb{S}_{\Gamma}}.
\end{align*}
For the last equality \(\langle \Gamma \rangle_{\varphi(h)} = \mu_h^{\mathbb{S}_{\Gamma}}\), we have used that the left-hand side arises by composing morphisms \(\alpha_{\pi_v^E(\varphi(h))}^v = (\widetilde{\sigma}_v)_{P_v(h)}\) (cf. the proof of \cref{lemEqCob}),
the right-hand side arises by composing morphisms \((\mu_v)_h = (\sigma_v)_{P_v(h)}\) and the underlying diagram representing this morphism in \(\vect_{\C}\) can be transformed into \(\Gamma\) by isotopy and using the symmetric braiding, mapping \(\sigma_v\) to \(\widetilde{\sigma}_v\). Together with \cref{eqBallInv}, we get \(Z_G(B_{\Gamma}) = \langle \Gamma \rangle\), finishing the proof.
\end{proof}
The final property we have to show is \labelcref{P3}.
\begin{proposition}
    \label{propDW3}
    Let \(M \in \Cob_3^{\vect_G}(\emptyset, \emptyset)\) be a defect cobordism to which the tube-capping move can be applied with respect to an embedding of an oriented cylinder. Denote by \(V_1, \hdots, V_n \in \vect_G\) the objects labeling the defect edges on the cylinder and let further \(\{\alpha_i\}_{i = 1}^k\) be a basis of \(\vect_G\left(V_1^{\varepsilon(1)} \otimes \hdots \otimes V_n^{\varepsilon(n)}, \C\right)\). Then, \(Z_G(M) = \sum_{i = 1}^{k}Z_G\left(\Delta_3^{\alpha_i}(M)\right)\) holds for Dijkgraaf-Witten theory \(Z_G\).
\end{proposition}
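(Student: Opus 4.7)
The plan is to follow the same local-reduction strategy employed in \cref{propDW1} and \cref{propDW2}. By tensoriality and functoriality of $Z_G$, I will cut $M$ along the bottom and top disks of a slightly larger cylinder containing the tube in the tube-capping move, leaving on both sides of the cut the same complementary cobordism $M'$. This reduces the proposition to proving the local equality
\begin{align*}
    Z_G(\Sigma_X \times I) = \sum_{i=1}^k Z_G(D_X^{\alpha_i}) \colon Z_G(\Sigma_X) \to Z_G(\Sigma_X),
\end{align*}
where $\Sigma_X \in \Cob_3^{\vect_G}$ is the disk with $n$ boundary insertions colored by $V_1^{\varepsilon(1)}, \ldots, V_n^{\varepsilon(n)}$, and $D_X^{\alpha_i} = \overline{E}^{\alpha_i^*} \sqcup E^{\alpha_i}$ decomposes as the composition $\Sigma_X \xrightarrow{E^{\alpha_i}} \emptyset \xrightarrow{\overline{E}^{\alpha_i^*}} \Sigma_X$, exactly as in the analogous step of the Turaev--Viro proof (\cref{propProp3TV}). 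Since $\Sigma_X \times I$ represents the identity cobordism on $\Sigma_X$, the left-hand side is just $\id_{Z_G(\Sigma_X)}$.

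Next, I will identify the state space $Z_G(\Sigma_X)$ explicitly. Under $(-)_c \circ \Theta$, the disk $\Sigma_X$ maps to a 2-sphere equipped with an equatorial circle containing $n$ vertex strata labeled by the representations constructed in \cref{lemEqCob} from the $V_j^{\varepsilon(j)}$, together with two hemispherical 2-strata labeled by $G$ and the trivial group $1$, respectively. A reduced-gauge-groupoid computation along the lines of \cref{propDW4}, exploiting simple-connectedness of the two hemispheres to normalize gauge configurations and gauge transformations, will show that the reduced gauge groupoid is equivalent to $G^n \sslash G$ under simultaneous left multiplication, and that the tensor-product representation assigns to $(g_1, \ldots, g_n) \in G^n$ the vector space $\bigotimes_j (V_j)_{g_j^{\varepsilon(j)}}$. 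Passing to the limit (the invariants of this representation) then yields the canonical identification $Z_G(\Sigma_X) \cong \vect_G\bigl(\C, V_1^{\varepsilon(1)} \otimes \cdots \otimes V_n^{\varepsilon(n)}\bigr)$.

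The third step is to evaluate the linear maps associated with the two caps. The doubled cap $(\overline{E}^{\alpha_i^*})_c$ is topologically a 3-ball whose gluing boundary is $(\Sigma_X)_c$; the original free-boundary defect graph (the coupon labeled by $\alpha_i^*$ with $n$ radial strands) becomes an internal 2-complex separating the $G$-labeled 3-stratum from the trivially labeled one. The combinatorial argument used in \cref{propDW4} -- choose a coherent basepoint set, observe that all gauge configurations on a contractible region are equivalent to ones constant on plaquettes, and enforce the admissibility constraint $\prod_j g_j^{\varepsilon(j)} = 1$ at the coupon -- will show that the vector $Z_G(\overline{E}^{\alpha_i^*})(1) \in Z_G(\Sigma_X)$ corresponds to $\alpha_i^*$ under the identification of the previous paragraph. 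Dually, $Z_G(E^{\alpha_i}) \colon Z_G(\Sigma_X) \to \C$ is the linear form sending $\alpha_j^* \mapsto \delta_{ij}$. Substituting back, $\sum_i Z_G(D_X^{\alpha_i}) = \sum_i \alpha_i^* \otimes \alpha_i$ acts as the identity on $Z_G(\Sigma_X)$ by the dual-basis relation, completing the argument.

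The hard part will be the third step: carefully matching the natural transformation $\mu_{(\overline{E}^{\alpha_i^*})_c}$, assembled according to the Dijkgraaf--Witten prescription at the 0-stratum corresponding to $\alpha_i^*$, with the $\vect_G$-morphism $\alpha_i^*$ itself, and verifying that the resulting vector in the limit of the gauge representation is the intended one. This requires unpacking the dictionary of \cref{lemEqCob} in the presence of nonempty gluing boundary and tracking the compatibility between the gluing-boundary gauge data on $(\Sigma_X)_c$ and the internal coupon; the proof of \cref{propDW4} handles the closed analogue and should supply the necessary combinatorics via a mild modification of the bijection $\varphi$ used there.
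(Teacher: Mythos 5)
Your proposal is correct and follows essentially the same strategy as the paper: reduce to the local comparison $Z_G(\Sigma_X \times I) = \sum_i Z_G(D_X^{\alpha_i})$, identify the state space of the disk with $\vect_G(\C, V_1^{\varepsilon(1)} \otimes \cdots \otimes V_n^{\varepsilon(n)})$ via a reduced gauge groupoid computation, and then show the two caps evaluate to the dual-basis pair. The one organizational difference worth noting: the paper evaluates $Z_G(E^{\alpha_i})$ directly (it lands in $\C$, so the computation is lightest there) and then deduces $Z_G(\overline{E}^{\alpha_i^*}) = \alpha_i^*$ by gluing into a 3-ball and invoking \cref{propDW4}; you instead propose to compute $Z_G(\overline{E}^{\alpha_i^*})(1)$ directly and argue the other cap ``dually.'' That avoids the (admittedly optional) dependence on \labelcref{P4}, which the paper itself flags as a convenience and not a necessity. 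Do be aware that ``dually'' is somewhat shorthand here -- you either need to establish that the isomorphism of step (i) is compatible with the pairing coming from reversing the cobordism's orientation, or simply run the parallel direct computation for $E^{\alpha_i}$ as well; the latter is what the paper does and costs essentially nothing extra once the $\varphi_w$-combinatorics from \cref{propDW4} is set up, which you already identify as the substantive content of the proof.
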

\begin{proof}
    As in the proof of \cref{propProp3TV}, we can assume that all signs are positive. We adopt the notation \(V\) for the family of \(G\)-graded vector spaces \(\{V_i\}\) and \(\alpha\) for the basis \(\{\alpha_i\}\) given in the proposition. Under the identification from \cref{eqDual},
    we denote by \(\alpha_i^* \in \vect_G\left(\C, V_1 \otimes \hdots \otimes V_n\right)\) the elements of the dual basis \(\alpha^*\).
    
    As usual, it suffices to consider the situation locally around the cylindrical region to which \(\Delta_3^{\alpha_i}\) is applied. On the one hand, as in \cref{propProp3TV}, we have the cylinder \(\Sigma_V \times I\colon \Sigma_V \rightarrow \Sigma_V \), where the \(\vect_G\)-colored surface \(\Sigma_V \cong \mathbb{D}^2\) is a disk whose boundary is decorated with \(n\) positively oriented points colored by the objects \(V_i\). As the cylinder \(\Sigma_V \times I\) represents the identity cobordism, we have \(Z_G(\Sigma_V \times I) = \id_{Z_G(\Sigma_V)}\). 
    
    The cobordism coming from the move \(\Delta_3^{\alpha_i}\) can be decomposed as \(\Sigma_V \xrightarrow{E^{\alpha_i}} \emptyset \xrightarrow{\overline{E}^{\alpha_i^*}} \Sigma_V\), where \(E^{\alpha_i}\) and \(\overline{E}^{\alpha_i^*}\) had been introduced just before \cref{eqMovTube}. We will prove the following two facts:
    \begin{enumerate}[(i)]
        \item There is a canonical isomorphism of vector spaces \(Z_G(\Sigma_V) \cong \vect_G\left(\C, V_1 \otimes \hdots \otimes V_n\right)\).
        \item Under the identification above, the linear map \(Z_G(E^{\alpha_i})\colon Z_G(\Sigma_V) \rightarrow \C\) is given by \(f \mapsto (\alpha_i \circ f)(1)\). 
    \end{enumerate}
    First, we show how the proposition follows from the above claims. Gluing \(\overline{E}^{\alpha_i^*}\) and \(E^{\alpha_j}\) along \(\Sigma_V\), we obtain a ball \(B_{\Gamma(\alpha_i^*, \alpha_j)}\), where \(\Gamma(\alpha_i^*, \alpha_j)\) is the obvious graph with two vertices that can be evaluated as \(\langle \Gamma(\alpha_i^*, \alpha_j)\rangle = \alpha_j \circ \alpha_i^* = \delta_{i,j}\id_{\C}\). Thus, by \cref{propDW4} and the claimed identities, we have
    \begin{align*}
        \alpha_i\left(Z_G\Big(\overline{E}^{\alpha_i^*}\Big)(1)\right) \overset{\text{(i) and (ii)}}{=} Z_G\left(E^{\alpha_i} \cup_{\mathbb{D}^2} \overline{E}^{\alpha_i^*}\right) = Z_G\left(B_{\Gamma(\alpha_i^*, \alpha_j)}\right) \overset{\labelcref{P4}}{=} \delta_{i,j},
    \end{align*}
   and thus \(Z_G\left(\overline{E}^{\alpha_i^*}\right) = \alpha_i^*\). Hence, using \cref{eqTV49}, we can compute
   \begin{align*}
       \sum_{i = 1}^k Z_G\left(\overline{E}^{\alpha_i^*} \cup_{\emptyset} E^{\alpha_i}\right)(f) = \sum_{i = 1}^kZ_G\left(\overline{E}^{\alpha_i^*}\right) \circ Z_G(E^{\alpha_i})(f) = \sum_{i = 1}^k \alpha_i^* \circ \alpha_i \circ f = f,
   \end{align*}
    and as \(Z_G(\Sigma_V \times I)\) is the identity, this proves the proposition.
   
   Thus, what is left is to prove the above two claims. Applying \((-)_c \circ \Theta\), we obtain the cobordism \(E_i = \left(E^{\alpha_i}\right)_c\colon \mathbb{S}_V \rightarrow \emptyset\). Here, \(E_i\) is a 3-ball, stratified as follows: All strata of codimension 1 and higher are contained in a closed disk embedded in such a way that removing it cuts the 3-ball into two disjoint pieces, each also homeomorphic to a 3-ball, one colored by \(G\), the other by the trivial group. The only 0-stratum is a vertex labeled by \(\alpha_i\) in the center of the disk, where the \(n\) 1-strata that are radii of the disk are attached, labeled by the objects \(V_i\). The 2-strata are given by the complement of the 0- and 1-strata in the disk, each homeomorphic to a disk and labeled by the regular \(G\)-set. The sphere \(\mathbb{S}_V\) has the induced, orientation-reversed stratification.
   
   The above describes a fine stratification (all strata are balls of the respective dimensions) and allows us to use \cite[Corollary 5.23]{FMM} to compute the gauge groupoid as \(\mathcal{A}^{E_i}\sslash \mathcal{G}^{E_i} \cong G^P \sslash G\), where \(P = S_2^{E_i}\) is the set of 2-strata. Noticing that \(\iota_1\colon S_1^{\partial Ei} \hookrightarrow S_2^{E_i}\) is a bijection, we obtain the same groupoid for the boundary gauge groupoid. By restriction of gauge data, it follows that the functor \(P_{\partial}\colon \mathcal{A}^{E_i}\sslash \mathcal{G}^{E_i} \rightarrow \mathcal{A}^{\partial E_i}\sslash \mathcal{G}^{\partial E_i}\) is the identity. We aim to compute the limit of
   \begin{align*}
       F_{\mathbb{S}_V} = F_{\overline{\partial E}_i} = \bigotimes_{v \mspace{1mu} \in \mspace{1mu} S_0^{\overline{\partial E}_i}} F_v^{\overline{\partial E}_i}\colon  \mathcal{A}^{\partial E_i}\sslash \mathcal{G}^{\partial E_i} \rightarrow \vect_{\C},
   \end{align*}
   since the TQFT assigns the space \(Z_G(\Sigma_V)= \lim F_{\mathbb{S}_V}\). The components \(F_v^{\overline{\partial E}_i}\) are given by
   \begin{align*}
       F_v^{\overline{\partial E}_i} \colon G^S \sslash G \xrightarrow{P_v} G \times G \sslash G \simeq G \xrightarrow{\rho_e} \vect_{\C},
   \end{align*}
   where \(e\) is the 1-stratum of \(E_i\) inducing the boundary stratum \(v\) (that is, \(\iota_0(v) = e\)), and \(\oplus_{x \in G}\rho_e(x)\) with \(\rho_e(x) = (V_j)_x\) is the \(G\)-graded vector space \(e\) is labeled by. Using that the orientations on the boundary and on \(\mathbb{S}_V\) are flipped, we obtain
   \begin{align}
    \label{eqSS}
       Z_G(\Sigma_V) &= \lim F_{\mathbb{S}_V} = \bigoplus_{Gh \mspace{1mu} \in \mspace{1mu} G^P/G} F_{\mathbb{S}_V}(h)^{\Aut(h)} = \bigoplus_{Gh \mspace{1mu} \in \mspace{1mu} G^P/G} \bigotimes_{e \mspace{1mu} \in \mspace{1mu} E} \rho_e(h_{\ell(e)}^{-1} h_{r(e)}) \nonumber \\ &= \bigoplus_{g \mspace{1mu} \in \mspace{1mu} \Omega_w} \bigotimes_{e \mspace{1mu} \in \mspace{1mu} E} \rho_e(g_e) \cong \vect_G(\C, V_1 \otimes \hdots \otimes V_n),   
   \end{align}
   where \(w\) is the sole vertex labeled by \(\alpha_i\), \(E = E(w) = S_1^{E_i}\) and we reindexed through the set \(\Omega_w = \big\{g \in G^E \, \big| \prod_{e \in E}g_e = 1_G \big\} \subset G^E\) and map \(\varphi_w\colon G^P \rightarrow \Omega_v\) from the proof of \cref{lemEqCob}. The inverse map \(\psi\) of the isomorphism in \cref{eqSS} simply maps a map \(f\) of \(G\)-graded vector spaces to its evaluation at \(1 \in \C\), or more precisely to \(\sum_{Gh \in G^P/G}f_{\varphi_w(h)}(1)\), where we decompose into direct sum components. This shows the first statement (i). 

   With this preparation, we turn to the proof of (ii). First, note that the projection functor \(P_w\colon \cala^{E_i} \sslash \calg^{E_i} \rightarrow \cald_w\) is the identity, where \(w\) is the unique vertex of \(E_i\). The natural transformation \(\mu_{E_i} = \langle \Gamma^{E_i}\rangle\colon F_{\mathbb{S}_V} \circ P_{\partial} \Rightarrow \C\) therefore has components
   \begin{align*}
       \mu_h^{E_i} = (\sigma_wP_w)_h = (\sigma_w)_h = (\alpha_i)_{\varphi_w(h)},
   \end{align*}
   where \(\sigma_w\colon  \rho_v^t \Rightarrow \C\) is the natural transformation derived from \(\alpha_i\) labeling the central vertex \(w\), as in the proof of \cref{lemEqCob} (here, \(\sigma_w = \widetilde{\sigma}_w^*\)). With this, we want to determine the map \(L(s, \mu_{E_i})\colon \lim F_{\mathbb{S}_V} \rightarrow \C\) from \cref{propFunL}, where \(s\) denotes the span \(\cala^{\partial E_i} \sslash \calg^{\partial E_i} \leftarrow \cala^{E_i} \sslash \calg^{E_i} \rightarrow \bullet\). Using that all fibers \(\{h |\mathcal{A}^{E_i}\sslash \mathcal{G}^{E_i}|\bullet\}\) of the gauge groupoid are trivial and recalling notation from \cref{subSecLin}, we compute for \(f \in \vect_G(\C, V_1 \otimes \hdots \otimes V_n) \mspace{1mu}\):
   \begin{align*}
       Z_G(E^{\alpha_i})(f) &= Z_{DW}(E_i)(f) = (L(s, \mu_{E_i}) \circ \psi)(f) =\sum_{[h] \mspace{1mu} \in \mspace{1mu} \pi_0\left(\mathcal{A}^{\partial E_i}\sslash \mathcal{G}^{\partial E_i}\right)}(\iota_{\bullet} \circ S_{\mu_{E_i}}^{h, \bullet} \circ \pi_h \circ \psi)(f) \\ &= \sum_{Gh \mspace{1mu} \in \mspace{1mu} G^P/G}\mu_h^{E_i}(f_{\varphi_w(h)}(1)) = \sum_{Gh \mspace{1mu} \in \mspace{1mu} G^P/G} (\alpha_i)_{\varphi_w(h)}(f_{\varphi_w(h)}(1)) \\
       &= \sum_{g \mspace{1mu} \in \mspace{1mu} \Omega_v} (\alpha_i)_g (f_g (1)) = (\alpha_i \circ f)(1).
   \end{align*}
   This computation shows the second claimed identity and thus finishes the proof.
\end{proof}

The main result of this section now follows from the preceding propositions. 
\begin{theorem}
    \label{thmDW}
    Let \(G\) be a finite group and \(Z_G\colon \Cob_3^{\vect_G} \rightarrow \vect_{\C}\) boundary-defect Dijkgraaf-Witten theory. Then, the following are true: 
    \begin{enumerate}[(i)]
        \item \(Z_G\) is boundary local, that is, it satisfies conditions \labelcref{P1} -- \labelcref{P4} from \cref{defPropInv}.
        \item \(Z_G\) is monoidally isomorphic to the Turaev-Viro boundary-defect TQFT \(|\cdot|_{\vect_G}\) for the spherical fusion category \(\vect_G\).
    \end{enumerate}   
\end{theorem}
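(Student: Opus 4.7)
For part (i), the plan is to verify each of the four properties \labelcref{P1}--\labelcref{P4} individually for the boundary-defect TQFT $Z_G$, since by \cref{defPropInv} boundary locality is exactly the conjunction of these four conditions. Each verification is local in the sense that, by functoriality of $Z_G$, it suffices to compute the value of $Z_G$ on the small pieces of cobordism excised around the region where the move is applied and compare the results. These local computations are carried out in \cref{propDW1,propDW2,propDW3,propDW4}, so once those are in place, (i) follows immediately by collecting them.

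The strategy for each property is uniform: translate the relevant local cobordism through $\Theta$ and $(-)_c$ into the stratified cobordism category $\Cob_3^{\mathsf{def}}$, compute the (reduced) gauge groupoid via \cref{propGG}, identify the associated representation, natural transformation and fibrant span, and evaluate the linearization functor $L$ using \cref{propFunL} and \cref{eqSpGrpd}. For \labelcref{P1} one compares a 3-ball and a cylinder over $\Sp^2$ and observes the ratio $|G|$ arising from the fibers of the boundary projection; for \labelcref{P2} one reduces to the cobordisms $B$ and $\mathbb{T}_g^s$ of \cref{lemProp2}, whose Dijkgraaf-Witten values can be read off from \cite[Example~7.10]{FMM}; for \labelcref{P3} one computes $Z_G(\Sigma_V)$ as the limit of the representation $F_{\Sigma_V}$ and identifies it with $\vect_G(\C, V_1 \otimes \cdots \otimes V_n)$, then matches the cap $E^{\alpha_i}$ to the linear functional $f\mapsto (\alpha_i\circ f)(1)$; and for \labelcref{P4} one shows $\cala^{\Sp_\Gamma}\sslash \calg^{\Sp_\Gamma}\simeq G^P\sslash G$ using contractibility of $\hat{s}_1$ and then matches the graph evaluation $\mu^{\Sp_\Gamma}_h$ to the components $\langle\Gamma\rangle_g$ via the bijection $G^P/G\cong \Omega$.

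For part (ii), once (i) is established, the conclusion is an immediate application of \cref{thmComb}: since the Turaev-Viro boundary-defect TQFT $|\cdot|_{\vect_G}$ is (by \cref{thmMainTV}) boundary local, and \cref{thmComb} asserts that the Turaev-Viro boundary-defect TQFT is the unique (up to monoidal isomorphism) boundary local defect TQFT $\Cob_3^{\vect_G}\to \vect_{\C}$, the monoidal isomorphism $Z_G \cong |\cdot|_{\vect_G}$ follows from applying this uniqueness statement to $Z_G$.

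The main obstacle is expected to be \cref{propDW4}, i.e.\ the verification of property \labelcref{P4}. The difficulty has two sources: first, one must carefully identify the gauge groupoid of the stratified 3-sphere $\Sp_\Gamma$ coming from a 3-ball decorated by an arbitrary $\vect_G$-colored graph $\Gamma$, which requires using contractibility of $\hat{s}_1$ to gauge-fix each configuration to be constant on plaquettes, together with a nontrivial induction over the layers $W^k(p_0)$ of the dual graph to show well-definedness of the gauge transformation. Second, matching the resulting sum $|G|^{-1}\sum_{h\in G^P}\mu^{\Sp_\Gamma}_h$ to the scalar $\langle\Gamma\rangle$ requires constructing the bijection $\overline{\varphi}\colon G^P/G\to \Omega$ between orbits of plaquette colorings and admissible edge colorings of $\Gamma$, which relies on a spanning-tree argument in the dual graph $\Gamma^*$ together with the fact that the representations $\rho_e$ and natural transformations $\sigma_v$ produced by the equivalence $\Theta$ of \cref{lemEqCob} are constant on isomorphism classes. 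The remaining properties are comparatively straightforward once the groupoid-theoretic machinery is set up.
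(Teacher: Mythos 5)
Your proposal matches the paper's proof exactly: part (i) is established by combining \cref{propDW1}, \cref{propDW2}, \cref{propDW3} and \cref{propDW4}, and part (ii) follows from the uniqueness statement of \cref{thmComb}. The additional strategic commentary on each proposition and the identification of \cref{propDW4} as the hardest step accurately reflect how the paper's local gauge-groupoid computations are carried out.
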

\begin{proof}
    With the results of \cref{propDW1}, \cref{propDW2}, \cref{propDW3} and \cref{propDW4}, (i) follows, and \cref{thmComb} for the spherical fusion category of \(G\)-graded vector spaces \(\mathcal{C} = \vect_G\) yields \(Z_G \cong |\cdot|_{\vect_G}\) and hence statement (ii).
\end{proof}
As a corollary, we have the following well-known result (for a different proof, see, for example, \cite[Theorem H.1]{TV}).
\begin{corollary}
    The restrictions of \(|\cdot|_{\vect_G}\) and \(Z_G\) to the subcategory \(\Cob_3\) are isomorphic as monoidal functors, that is, Turaev-Viro theory for \(\vect_G\) without defects and untwisted Dijkgraaf-Witten theory for \(G\) without defects are equivalent.
\end{corollary}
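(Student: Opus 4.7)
The plan is to deduce this statement directly from \cref{thmDW}, which is the main result of the section. By part (ii) of that theorem, there is a monoidal isomorphism $\eta\colon Z_G \Rightarrow |\cdot|_{\vect_G}$ of symmetric monoidal functors $\Cob_3^{\vect_G} \to \vect_{\C}$. Since a monoidal natural isomorphism restricts along any monoidal subcategory inclusion to a monoidal natural isomorphism between the restricted functors, I would simply restrict $\eta$ along the canonical inclusion $\Cob_3 \hookrightarrow \Cob_3^{\vect_G}$.

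The only point to verify is that the two restricted functors are indeed the ones advertised in the statement. For $|\cdot|_{\vect_G}$, this is the content of the remark concluding \cref{subSecTV}: restriction to $\Cob_3$ yields the classical Turaev-Viro theory for $\vect_G$ without defects. For $Z_G$, this is the defining property of the free boundary extension established in \cref{propEqExt} together with the equivalence $\Theta\colon \Cob_3^{\vect_G} \xrightarrow{\simeq} \Cob_3^G$ from \cref{lemEqCob}: applying $\Theta$ to a cobordism in $\Cob_3 \subset \Cob_3^{\vect_G}$ gives a trivially stratified cobordism, and $Z_{DW}^{\partial}$ on such a cobordism reduces to $Z_{DW}$ precomposed with the doubling functor $(-)_c$, which on closed cobordisms (no free boundary) simply disjoint-unions with a trivially labeled copy contributing a factor of $1$ to the invariant.

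There is no real obstacle here; the work has already been done in establishing \cref{thmDW}, which itself combines the boundary locality of $Z_G$ (\cref{propDW1,propDW2,propDW3,propDW4}) with the uniqueness statement \cref{thmComb}. The corollary is essentially a bookkeeping consequence, and the only mild subtlety is confirming that the restriction of the boundary-defect Dijkgraaf-Witten theory to $\Cob_3$ coincides with ordinary Dijkgraaf-Witten theory as defined in \cite{FMM}, which follows from the construction of $Z_G$ via $Z_{DW}^{\partial} = Z_{DW} \circ (-)_c$ together with the observation that $(-)_c$ acts trivially up to tensoring with an inert trivially labeled component on cobordisms without free boundary.
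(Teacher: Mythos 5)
Your proof is correct and takes essentially the same route as the paper's: the paper invokes \cref{corThmMain2} (which is precisely the restriction-along-$\Cob_3 \hookrightarrow \Cob_3^{\calc}$ argument, packaged as a uniqueness statement) together with \cref{thmDW}, while you restrict the monoidal isomorphism from \cref{thmDW}(ii) directly. Your additional care in tracing through \cref{propEqExt} and the behavior of $(-)_c$ on closed cobordisms to confirm that $Z_G|_{\Cob_3}$ really is ordinary Dijkgraaf--Witten theory is a legitimate and useful verification that the paper handles implicitly via its earlier setup.
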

\begin{proof}
    This is a direct consequence of \cref{thmDW} and \cref{corThmMain2}.
\end{proof}

\end{document}